\documentclass{amsart}
\usepackage{amsfonts,amscd,amsthm,amsgen,amsmath,amssymb}
\usepackage[all]{xy}
\usepackage{epsfig,color}
\usepackage[vcentermath]{youngtab}
\newtheorem{theorem}{Theorem}[section]
\newtheorem{lemma}[theorem]{Lemma}
\newtheorem{corollary}[theorem]{Corollary}
\newtheorem{proposition}[theorem]{Proposition}

\theoremstyle{definition}
\newtheorem{definition}[theorem]{Definition}
\newtheorem{example}[theorem]{Example}

\theoremstyle{remark}
\newtheorem{remark}[theorem]{Remark}

\numberwithin{equation}{section}




\begin{document}

\setlength\parskip{0.5em plus 0.1em minus 0.2em}

\title[Constraints on embedded spheres and real projective planes]{Constraints on embedded spheres and real projective planes in 4-manifolds from Seiberg--Witten theory}
\author{David Baraglia}

\address{School of Mathematical Sciences, The University of Adelaide, Adelaide SA 5005, Australia}

\email{david.baraglia@adelaide.edu.au}


\date{\today}

\begin{abstract}

We calculate the Seiberg--Witten invariants of branched covers of prime degree, where the branch locus consists of embedded spheres. Aside from the formula itself, our calculations give rise to some new constraints on configurations of embedded spheres in $4$-manifolds. Using similar methods, we also obtain new constraints on embeddings of real projective planes and spheres with a cusp singularity. Moreover, we show that the existence of certain configurations of surfaces would give rise to $4$-manifolds of non-simple type. Our proof makes use of equivariant Seiberg--Witten invariants as well as a gluing formula for the relative Seiberg--Witten invariants of $4$-manifolds with positive scalar curvature boundary.

\end{abstract}

\maketitle




\section{Introduction}

In this paper we study Seiberg--Witten theory on cyclic branched covers of $4$-manifolds where the branch locus is assumed to be a union of spheres (in the case of double covers we also consider real projective planes). Our main objective was to obtain a formula expressing the Seiberg--Witten invariants of the branched cover in terms of Seiberg--Witten invariants of the base and we succeed in doing this. Recently the author defined the {\em equivariant Seiberg--Witten invariants} for a $4$-manifold with a finite group action \cite{bar}. Our methods actually provide a calculation of the equivariant Seiberg--Witten invariants of the branched cover with respect to the cyclic covering action. The calculation of the ordinary Seiberg--Witten invariant follows from this by applying a forgetful map. In the process of studying Seiberg--Witten theory of branched covers, we obtain some non-trivial consequences for configurations of embedded spheres or embedded real projective planes.

\subsection{Configurations of embedded surfaces}\label{sec:config}

Let us recall the adjunction inequality of Seiberg--Witten theory (for the version given here, see \cite[Corollary 1.7]{os2}). Let $X$ be a compact, oriented, smooth $4$-manifold of simple type with $b_+(X) > 1$ and let $S \subset X$ be a smooth embedded compact oriented surface of genus $g > 0$. If $\mathfrak{s}$ is a spin$^c$-structure on $X$ with $SW(X , \mathfrak{s}) \neq 0$, then
\[
| \langle c(\mathfrak{s}) , [S] \rangle | + [S]^2 \le 2g-2.
\]
The above result does {\em not} hold in the case $g=0$. Consider for example a sphere of self-intersection $-1$ in a blowup. However if $[S]$ can be represented by a sphere, then it can also be represented by a torus by adding a handle, hence the $g=1$ case can be applied, giving
\[
| \langle c(\mathfrak{s}) , [S] \rangle | + [S]^2 \le 0.
\]
In particular, $[S]^2 \le 0$. In fact, if $[S]$ is non-torsion then one has $[S]^2 < 0$ \cite[Lemma 5.1]{fs}. We are interested in the bordeline case of this inequality where $|\langle c(\mathfrak{s}) , [S] \rangle | + [S]^2 = 0$. The previously mentioned example of a sphere of self-intersection $-1$ in a blowup is an example of this. Our first main result essentially says that if $[S]$ is divisible by a prime $p$ and $SW(X , \mathfrak{s})$ is not divisible by $p$, then the borderline case does not occur:

\begin{theorem}\label{thm:1sph}
Let $p$ be a prime number. Let $X$ be a compact, oriented, smooth $4$-manifold with $H_1(X ; \mathbb{Z}_p) = 0$ and $b_+(X) > 1$. Suppose $X$ is of simple type and let $\mathfrak{s}$ be a spin$^c$-structure such that $SW(X , \mathfrak{s}) \neq 0 \; ({\rm mod} \; p)$. Let $S \subset X$ be a smoothly embedded sphere with $[S] = 0 \; ({\rm mod} \; p)$ and $[S]$ non-torsion. Then $| \langle c(\mathfrak{s}) , [S] \rangle | + [S]^2 < 0$.
\end{theorem}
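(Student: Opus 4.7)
The plan is to pass to the $p$-fold cyclic branched cover $\pi : \tilde X \to X$ branched over $S$, use the paper's branched-cover formula (with its mod-$p$ strengthening coming from the equivariant Seiberg--Witten invariants) to transfer non-vanishing of $SW$ to $\tilde X$, and then apply the adjunction inequality to the preimage sphere.

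First, since $H_1(X;\mathbb{Z}_p) = 0$ and $[S] \equiv 0 \pmod{p}$, there is a $p$-fold cyclic branched cover $\pi : \tilde X \to X$ with deck group $\mathbb{Z}/p$ fixing the smooth embedded sphere $\tilde S := \pi^{-1}(S)$. From $\pi_{*}[\tilde S] = [S]$ and $\pi^{*}[S] = p[\tilde S]$ one obtains $[\tilde S]^2 = [S]^2/p$, which is an integer (writing $[S] = p\alpha$ gives $[S]^2 \in p^2 \mathbb{Z}$) and nonzero (since $[S]$ non-torsion forces $[S]^2 \neq 0$, so $[\tilde S]$ is also non-torsion). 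The spin$^c$-structure $\mathfrak{s}$ admits an equivariant lift $\tilde{\mathfrak{s}}$ to $\tilde X$ whose first Chern class, after the standard branching correction along $\tilde S$, satisfies
\[
\langle c(\tilde{\mathfrak{s}}), [\tilde S] \rangle = \langle c(\mathfrak{s}), [S] \rangle + (p-1)[\tilde S]^2.
\]

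Next, I would invoke the branched-cover formula: the hypothesis $SW(X, \mathfrak{s}) \not\equiv 0 \pmod{p}$ yields $SW(\tilde X, \tilde{\mathfrak{s}}) \neq 0$. Since $\tilde S$ is a smoothly embedded sphere in $\tilde X$ (which inherits $b_+ > 1$ and simple type from the formula), the genus-$1$ version of the adjunction inequality recalled in the excerpt gives $|\langle c(\tilde{\mathfrak{s}}), [\tilde S] \rangle| + [\tilde S]^2 \le 0$. Now assume for contradiction the borderline case $|\langle c(\mathfrak{s}), [S] \rangle| + [S]^2 = 0$ on $X$; after possibly replacing $\mathfrak{s}$ by its conjugate, $\langle c(\mathfrak{s}), [S] \rangle = -[S]^2$. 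Substituting the identity above yields $\langle c(\tilde{\mathfrak{s}}), [\tilde S] \rangle = -[\tilde S]^2$, so the adjunction inequality on $\tilde X$ is also precisely saturated rather than strictly violated.

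This saturation is the main obstacle: naive substitution into the ordinary adjunction inequality on $\tilde X$ does not by itself produce a contradiction, because the borderline case on $X$ propagates cleanly to the borderline case on $\tilde X$. The finishing step must therefore exploit the finer \emph{equivariant} mod-$p$ content of the branched-cover formula rather than just the ordinary $SW$. I expect the argument to proceed by showing that in the saturation case the local equivariant contribution attached to the fixed sphere $\tilde S$ (an equivariant analogue of a blowup contribution) is divisible by $p$, so that the formula forces $SW(X, \mathfrak{s}) \equiv 0 \pmod{p}$, contradicting the hypothesis. Carrying out this equivariant localization at $\tilde S$ — presumably via a gluing formula near a neighbourhood of $\tilde S$ of the sort alluded to in the abstract — is the technical crux of the theorem.
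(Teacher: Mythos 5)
Your broad strategy---pass to a degree-$p$ cyclic branched cover and exploit the mod-$p$ equivariant Seiberg--Witten machinery rather than ordinary $SW$ invariants---is indeed the one the paper uses, and you correctly identify the core difficulty: naive substitution into the adjunction inequality on $\widetilde{X}$ only propagates the borderline case, so it produces no contradiction. However, the mechanism you conjecture to close the argument is not the one that works, and there are two concrete gaps.

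First, the step ``the hypothesis $SW(X, \mathfrak{s}) \not\equiv 0 \pmod{p}$ yields $SW(\widetilde{X}, \widetilde{\mathfrak{s}}) \neq 0$'' is unjustified and in fact is the wrong target: in the borderline case the dimension formula (Lemma~\ref{lem:dimnu}, specialized to one sphere) gives $d(\widetilde{X}, \widetilde{\mathfrak{s}}) = p - 1$, which is odd for $p > 2$, so there is no single $SW$ number to be nonzero; and even when it is defined, the formula relating $SW(\widetilde{X}, \widetilde{\mathfrak{s}})$ to the $SW(X, \mathfrak{s}_j)$ is a weighted sum over several spin$^c$-structures and could in principle vanish. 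Second, your proposed finishing step---showing a local equivariant contribution attached to $\widetilde{S}$ is divisible by $p$, forcing $SW(X,\mathfrak{s}) \equiv 0 \pmod p$---is a plausible-sounding heuristic but is not what the paper does, and I do not see how to make it rigorous in that form.

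The actual argument is dimension-theoretic rather than adjunction-theoretic, and never applies adjunction on $\widetilde{X}$ at all. Under the borderline assumption $|\langle c(\mathfrak{s}), [S]\rangle| + [S]^2 = 0$ one computes $d(\widetilde{X}_0, \widetilde{\mathfrak{s}}_0) = p - 1 > 0$ from Lemma~\ref{lem:dimnu}. The unbranched covering $\widetilde{X}_0 \to X_0$ determines $p$ spin$^c$-structures $\mathfrak{s}_0, \dots, \mathfrak{s}_{p-1}$ on $X_0$ pulling back to $\widetilde{\mathfrak{s}}_0$, and Proposition~\ref{prop:dimbound}---derived from consistency of the equivariant $SW$ formula of Theorem~\ref{thm:sweqcov} across all choices of $m_0, \dots, m_{p-1}$---gives $d(\widetilde{X}_0, \widetilde{\mathfrak{s}}_0) \leq 2k - 2$, where $k$ is the number of $j$ with $SW(X_0, \mathfrak{s}_j) \not\equiv 0 \pmod{p}$. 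A short calculation of $d(X_0, \mathfrak{s}_j) - d(X_0, \mathfrak{s}_0)$ shows, using simple type, that in the borderline case only $j = 0$ can have nonvanishing $SW$ mod $p$, so $k = 1$ and hence $d(\widetilde{X}_0, \widetilde{\mathfrak{s}}_0) \leq 0$, contradicting $p - 1 > 0$. Your proposal is missing both of these ingredients: the explicit computation $d(\widetilde{X}, \widetilde{\mathfrak{s}}) = p-1$, and the dimension bound $d \leq 2k - 2$ extracted from the equivariant formula.
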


The next result is an extension of this to the case of multiple disjoint embedded spheres $S_1, \dots , S_r \subset X$. Applying the adjunction inequality to each sphere individually gives $\sum_i |\langle c(\mathfrak{s}) , [S_i] \rangle | + [S_i]^2 \le 0$, but in fact a stronger result is true under a suitable divisibility condition:

\begin{theorem}\label{thm:2sph}
Let $(X,\mathfrak{s})$ be as in Theorem \ref{thm:1sph}. Suppose that $S_1, \dots , S_r \subset X$ are disjoint embedded spheres whose Poincar\'e dual classes are linearly depenent in $H^2(X ; \mathbb{Z}_p)$, but any $r-1$ of them are linearly independent in $H^2(X ; \mathbb{Z}_p)$. Then 
\[
\sum_i | \langle c(\mathfrak{s}) , [S_i] \rangle | + [S_i]^2 \le 4 - 2r.
\]
\end{theorem}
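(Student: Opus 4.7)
The plan is to pass to the cyclic $p$-fold branched cover of $X$ determined by the given linear dependency, transfer the hypothesis $SW(X,\mathfrak{s})\not\equiv 0 \pmod{p}$ upstairs using the branched-cover formula for Seiberg--Witten invariants developed in this paper, and then apply sphere adjunction inequalities (both basic and in the sharpened form of Theorem \ref{thm:1sph}) to the preimages of the $S_j$.

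Since any $r-1$ of the classes $PD[S_i]$ are linearly independent in $H^2(X;\mathbb{Z}_p)$ while all $r$ together are not, the unique (up to scaling) relation $\sum_{i=1}^{r} a_i\, PD[S_i]\equiv 0 \pmod{p}$ has every $a_i$ nonzero modulo $p$. Pairing this relation with $[S_j]$ and using that the $S_i$ are pairwise disjoint gives $a_j[S_j]^2\equiv 0\pmod{p}$, hence $p\mid[S_j]^2$ for each $j$. Using $H^1(X;\mathbb{Z}_p)=0$ and the cohomology long exact sequence of the pair $(X,X\setminus S)$ with $S=\bigsqcup_i S_i$, the relation corresponds to a class in $H^1(X\setminus S;\mathbb{Z}_p)$, which determines a cyclic $p$-fold branched cover $\pi\colon\tilde X\to X$ ramified of index $p$ along each $S_i$. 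The preimages $\tilde S_j=\pi^{-1}(S_j)$ are disjoint embedded spheres with $\pi^*PD[S_j]=p\,PD[\tilde S_j]$, and $[\tilde S_j]^2=[S_j]^2/p\in\mathbb{Z}$ by the divisibility just established.

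Next, apply the branched-cover formula to obtain a lift $\tilde{\mathfrak{s}}$ of $\mathfrak{s}$ with $SW(\tilde X,\tilde{\mathfrak{s}})\not\equiv 0\pmod{p}$, hence nonzero, and with $\tilde X$ of simple type. The basic sphere adjunction on $\tilde X$, applied to each $\tilde S_j$, gives $|\langle c(\tilde{\mathfrak{s}}),[\tilde S_j]\rangle|+[\tilde S_j]^2\le 0$. Using the projection formula together with the fact that $c(\tilde{\mathfrak{s}})$ differs from $\pi^*c(\mathfrak{s})$ only by a multiple of the ramification divisor $\sum_i PD[\tilde S_i]$, these bounds pull back to $X$ and on summing produce the preliminary inequality $\sum_j|\langle c(\mathfrak{s}),[S_j]\rangle|+[S_j]^2\le 0$; this already proves the theorem when $r=2$.

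For $r\ge 3$, the remaining improvement of $2(r-2)$ should be extracted by applying Theorem \ref{thm:1sph} on $\tilde X$ to well-chosen $p$-divisible sphere classes. Although the individual $[\tilde S_j]$ need not be divisible by $p$ in $\tilde X$, the new relation $\sum_i a_i[\tilde S_i]=\pi^*\beta$ in $\tilde X$ (with $p\beta=\sum_i a_i[S_i]\in H_2(X;\mathbb{Z})$), combined with ambient tubings of the $\tilde S_j$, produces $p$-divisible, sphere-represented classes on $\tilde X$, each of which contributes a strict improvement of $2$ by Theorem \ref{thm:1sph}. The hard part will be to verify that precisely $r-2$ such independent $p$-divisible sphere classes can be constructed from the given configuration and that their sharpened inequalities combine cleanly with the basic bound to yield exactly $4-2r$.
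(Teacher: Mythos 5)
Your proposal and the paper's proof both pass to the degree-$p$ cyclic branched cover $\widetilde{X}\to X$, but after that they diverge, and your version has a genuine gap that you yourself flag in the last paragraph.

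The immediate problem is the claim that the branched-cover formula produces a lift $\widetilde{\mathfrak{s}}$ with $SW(\widetilde X,\widetilde{\mathfrak{s}})\not\equiv 0\pmod p$ and that $\widetilde X$ has simple type. Neither follows. The covering formula (Theorem~\ref{thm:swcov}) expresses $SW(\widetilde X,\widetilde{\mathfrak{s}})$ mod~$p$ as a signed combination of $SW(X,\mathfrak{s}_j)$ for several spin$^c$-structures $\mathfrak{s}_j$ on $X$, and there is no reason the sum is nonzero just because one term is. More seriously, simple type of $\widetilde X$ is not available — in fact a key point of the paper is that in the borderline cases the branched cover is forced to be of \emph{non-}simple type (see Theorem~\ref{thm:p=2}(3) and the constructions in Sections~\ref{sec:rp2}--\ref{sec:cusp}). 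Without simple type, the sphere adjunction inequality on $\widetilde X$ that you invoke is exactly what one cannot assume, so the ``preliminary inequality'' $\sum_j|\langle c(\mathfrak{s}),[S_j]\rangle|+[S_j]^2\le 0$ is not established by this route, and the stronger improvement to $4-2r$ is left entirely open in your sketch.

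The paper's mechanism is different and avoids all of this. It never applies adjunction upstairs. Instead it works with the \emph{expected dimension} $d(\widetilde X_0,\widetilde{\mathfrak{s}}_0)$ of the moduli space on the cover: Lemma~\ref{lem:dimnu} and Lemma~\ref{lem:nu2} give a lower bound
\[
d(\widetilde X_0,\widetilde{\mathfrak{s}}_0) \ge r(p-1) + \tfrac{p-1}{2}\sum_i\bigl(\langle c(\mathfrak{s}),[S_i]\rangle + [S_i]^2\bigr)
\]
purely from the local branching data along the spheres, while Proposition~\ref{prop:dimbound} gives the upper bound $d(\widetilde X_0,\widetilde{\mathfrak{s}}_0)\le 2k-2\le 2(p-1)$, where $k$ is the number of $j\in\mathbb{Z}_p$ with $SW(X,\alpha_j+\mathfrak{s})\not\equiv 0\pmod p$. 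That upper bound comes from the fact that the formula of Theorem~\ref{thm:swcov} must return the same value for every choice of partition $m_0+\cdots+m_{p-1}=m$; if $d$ were too large one could choose partitions forcing contradictory vanishing and non-vanishing. Combining the two bounds yields $\sum_i\langle c(\mathfrak{s}),[S_i]\rangle + [S_i]^2\le 4-2r$. Your idea of extracting the extra $2(r-2)$ by iterating Theorem~\ref{thm:1sph} on tubed $p$-divisible classes upstairs is not how the improvement arises, and you would in any case need simple type on $\widetilde X$ for that, which fails precisely at the boundary of the inequality. So the proposal, as written, does not prove the theorem.
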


Our next result concerns embedings of real projective planes in $4$-manifolds. Let $S \subset X$ be an embedded real projective plane. Then $S$ has a mod $2$ Poincar\'e dual class $[S] \in H^2(X ; \mathbb{Z}_2)$ and an Euler number $e(S) \in \mathbb{Z}$, the twisted Euler class of the normal bundle evaluated against the fundamental class of $S$ (which is a homology class with coefficients in the orientation local system of $S$). Alternatively, $e(S)$ is the self-intersection number of $S$. Unlike the oriented case, the self-intersection number is not determined by $[S]$, although its value mod $2$ is since $e(S) = [S]^2 \; ({\rm mod} \; 2)$. It is an interesting and challenging problem to determine what pairs $([S] , e(S)) \in H^2(X ; \mathbb{Z}_2) \times \mathbb{Z}$ can be realised by a smoothly embedded real projective plane. One restriction due to Rokhlin \cite{rok} is as follows. Assume that $H_1(X ; \mathbb{Z}_2) = 0$ and $[S] = 0 \in H^2(X ; \mathbb{Z}_2)$. Then
\[
-2 - 4b_-(X) \le e(S) \le 2 + 4b_+(X).
\]
Another restriction, due to Guillou--Marin \cite{gm} says that if $X$ is non-spin and $[S] = w_2(X) \in H^2(X ; \mathbb{Z}_2)$, then 
\[
e(S) = \sigma(X) \pm 2 \; ({\rm mod} \; 16).
\]
There are also some known restrictions on the Euler number in the case of definite $4$-manifolds \cite{law}, \cite{lrs}. However, there does not appear to be any known counterpart of the adjunction inequality for non-orientable surfaces. Our next result can be thought of as filling this gap, at least in the case that $[S] = 0 \; ({\rm mod} \; 2)$.

\begin{theorem}\label{thm:1rp2}
Let $X$ be a compact, oriented, smooth $4$-manifold with $H_1(X ; \mathbb{Z}_2) = 0$ and $b_+(X) > 1$. Suppose $X$ is of simple type and let $\mathfrak{s}$ be a spin$^c$-structure such that $SW(X , \mathfrak{s}) \neq 0 \; ({\rm mod} \; 2)$. Let $S \subset X$ be a smoothly embedded real projective plane with $[S] = 0 \in H^2(X ; \mathbb{Z}_2)$. Then $e(S) \le 6$. Furthermore, if $b_+(X) = 3 \; ({\rm mod} \; 4)$ then $e(S) \le 2$.
\end{theorem}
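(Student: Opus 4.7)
The plan is to pass to the double branched cover of $X$ along $S$ and apply the paper's Seiberg--Witten branched-cover formula. Since $H_1(X;\mathbb{Z}_2)=0$ and $[S]=0 \in H^2(X;\mathbb{Z}_2)$, the long exact sequence for $(X,X\setminus S)$ combined with the mod-$2$ Thom isomorphism gives $H^1(X\setminus S;\mathbb{Z}_2)\cong\mathbb{Z}_2$, and the non-trivial class determines a unique double branched cover $\pi\colon\tilde X\to X$ with covering involution $\tau$ fixing $\tilde S:=\pi^{-1}(S)\cong\mathbb{RP}^2$.

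The first step is to compute the invariants of $\tilde X$. Multiplicativity gives $\chi(\tilde X)=2\chi(X)-1$, and the $G$-signature formula applied to $\tau$ (with non-orientable fixed surface) produces $\sigma(\tilde X)=2\sigma(X)+\tfrac{1}{2}e(S)$. Integrality of $b_\pm(\tilde X)$ then recovers Massey's congruence $e(S)\equiv 2\pmod{4}$, and $b_+(X)>1$ forces $b_+(\tilde X)>1$, so Seiberg--Witten invariants of $\tilde X$ are well-defined. Applying the branched-cover formula at $p=2$ with the hypothesis $SW(X,\mathfrak{s})\not\equiv 0\pmod{2}$ then yields a spin$^c$-structure $\tilde{\mathfrak{s}}$ on $\tilde X$ with $SW(\tilde X,\tilde{\mathfrak{s}})\not\equiv 0\pmod{2}$. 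Consequently the formal moduli-space dimension
\[
d(\tilde{\mathfrak{s}}) = \tfrac{1}{4}\bigl(c(\tilde{\mathfrak{s}})^2 - 2\chi(\tilde X) - 3\sigma(\tilde X)\bigr)
\]
must be a non-negative integer whose parity is determined by $b_+(\tilde X)$. Substituting the simple-type identity $c(\mathfrak{s})^2=2\chi(X)+3\sigma(X)$ together with the explicit formula for $c(\tilde{\mathfrak{s}})^2$ coming from the cover construction, the non-negativity $d(\tilde{\mathfrak{s}})\ge 0$ collapses to a linear inequality in $e(S)$ which I expect to read $e(S)\le 6$. The refinement in the case $b_+(X)\equiv 3\pmod{4}$ should come from the parity requirement on $d(\tilde{\mathfrak{s}})$: using the formula $b_+(\tilde X)=2b_+(X)+(2+e(S))/4$, one can check that the congruence $b_+(X)\equiv 3\pmod 4$ is precisely the case in which the required parity excludes $e(S)=6$, leaving $e(S)\le 2$.

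The main obstacle I anticipate is pinning down $c(\tilde{\mathfrak{s}})^2$ correctly, since $\tilde S$ is non-orientable and so has no integer Poincaré dual; one cannot simply write $c(\tilde{\mathfrak{s}})=\pi^*c(\mathfrak{s})+k[\tilde S]$ as in the orientable-branch case treated for Theorems \ref{thm:1sph} and \ref{thm:2sph}. This is exactly where the equivariant Seiberg--Witten framework and the gluing formula for relative Seiberg--Witten invariants over positive scalar curvature boundary (announced in the abstract) do the heavy lifting: the boundary of a tubular neighborhood $\nu(S)$ is a spherical space form, namely a free $\mathbb{Z}/2$-quotient of the lens space $L(e(S),1)$, and hence admits a PSC metric, making the gluing formula applicable. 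The relative Seiberg--Witten invariant of $\nu(S)$ can then be computed explicitly, fixing $\tilde{\mathfrak{s}}$ and the numerical value of $c(\tilde{\mathfrak{s}})^2$ that feeds into the dimension count above.
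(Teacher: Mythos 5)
Your plan is to pass to the closed branched double cover $\widetilde{X}\to X$ branched along $S$, produce a $\operatorname{spin}^c$-structure $\widetilde{\mathfrak{s}}$ there with $SW(\widetilde{X},\widetilde{\mathfrak{s}})\not\equiv 0\bmod 2$, and then read off $e(S)\le 6$ from $d(\widetilde{X},\widetilde{\mathfrak{s}})\ge 0$. This strategy has two fatal problems, one of which you partially anticipate and one of which you do not.

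The first, which you flag as a difficulty in ``pinning down $c(\widetilde{\mathfrak{s}})^2$'', is actually worse than you suggest: there is no $\widetilde{\mathfrak{s}}$ to pin down. The pullback $\widetilde{\mathfrak{s}}_0=\pi^*(\mathfrak{s}|_{X_0})$ over the open part does \emph{not} extend across $\widetilde{X}$. The boundary of a tubular neighbourhood of $S$ is the prism manifold $Y(n)$, $n=e(S)$, double-covered by $Y(n/2)$, and by Lemma~\ref{lem:pullbackyn} the pulled-back $\operatorname{spin}^c$-structure on $Y(n/2)$ is of the form $\mathfrak{s}_{u,0}$, which is exactly the one that does \emph{not} extend over the disc bundle $R(n/2)$. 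This is stated explicitly in Section~\ref{sec:rp2} of the paper and is the reason the paper never works with the closed branched cover $\widetilde{X}$: instead it works with the unbranched double cover $\widetilde{X}_0\to X_0$ of complements, and (when needed) caps off with a negative definite $D$-plumbing to form a different closed manifold $\widehat{X}$. Your proposal cannot get off the ground as written because the object $c(\widetilde{\mathfrak{s}})^2$ does not exist.

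The second problem is that even granting some closed model (say $\widehat{X}$ with a sharp extension $\widehat{\mathfrak{s}}$), the dimension $d$ is an \emph{increasing} function of $e(S)$, so $d\ge 0$ gives a lower bound on $e(S)$, not the desired upper bound. Concretely, Lemma~\ref{lem:rpcov} with $r=1$ and $d(X_0,\mathfrak{s}_0)=0$ gives $d(\widetilde{X}_0,\widetilde{\mathfrak{s}}_0)=\tfrac{1}{2}(1+n-\varepsilon)$ with $|\varepsilon|\le 1$, which is nonnegative for all $n\ge 0$. The actual upper bound in the paper does not come from $d\ge 0$; it comes from a self-consistency argument: the formula of Lemma~\ref{lem:swrp}(1),(2) for $SW(\widetilde{X}_0,\widetilde{\mathfrak{s}}_0)$ (mod $2$) must be independent of the splitting $m_0+m_1=m$, and when $n+2r$ is too large, inspecting the binomial coefficients for two different splittings produces two different answers, a contradiction. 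This combinatorial argument is the heart of Theorem~\ref{thm:rpadj} and has no counterpart in your sketch. Relatedly, your signature formula has the wrong sign: the $G$-signature theorem (and Proposition~\ref{prop:ubcov} applied to $Y(n)$, which has $\rho=-n/2$) gives $\sigma(\widetilde{X}_0)=2\sigma(X_0)-e(S)/2$, not $+e(S)/2$, so your $b_+(\widetilde{X})=2b_+(X)+(2+e(S))/4$ should read $2b_+(X_0)+(2-e(S))/4$. It is this wrong sign that makes $d\ge 0$ \emph{look} like an upper bound in your sketch. Finally, the refinement for $b_+(X)\equiv 3\bmod 4$ in the paper also does not come from a parity constraint on $d(\widetilde{\mathfrak{s}})$; it uses the mod~$4$ relation between $b_+$ and the degree of a nonvanishing mod-$2$ SW invariant applied to $SW(X_0,A\otimes\mathfrak{s}_0)$, whose degree is forced to be positive in the borderline case.
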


\begin{example}
Let $X_0 = E(n) \# \overline{\mathbb{CP}^2}$ where $n \ge 2$ and $X_1 = (2n-1)\mathbb{CP}^2 \# 10n \overline{\mathbb{CP}^2}$. Then $X_0$ and $X_1$ are homeomorphic. Theorem \ref{thm:1rp2} implies that any smoothly embedded real projective plane $S$ with $[S] = 0 \in H^2(X_0 ; \mathbb{Z}_2)$ satisfies $e(S) \le 6$ (or $e(S) \le 2$ when $n$ is even). On the other hand, for $0 \le k \le 2n-1$ we can find a smoothly embedded real projective plane $S$ in $X_1$ with $[S] = 0$ and $e(S) = 2+4k$. To see this take $S$ to be the connected sum $S_0 \# Q_1 \# \cdots \# Q_k$ where $Q_i$ is a non-singular quadric in the $i$-th copy of $\mathbb{CP}^2$ and $S_0$ is a copy of $\mathbb{RP}^2$ trivially embedded in an open ball and with $e(S_0) = 2$. Note also that the case $k = 2n-1 = b_+(X_1)$ gives $e(S) = 2 + 4 b_+(X)$, which is the maximum possible according to Rokhlin's bound.
\end{example}

\begin{remark}
It is natural to expect an upper bound on $e(S)$ but not a lower bound. One justification for this is that we can blow up $X$ multiple times and attach spheres of self-interesection $-1$ in order to make $e(S)$ as negative as we wish. An interesting question is whether the upper bound $e(S) \le 6$ in Theorem \ref{thm:1rp2} holds without the constraint $[S] = 0$. We have not been able to find any counter-examples to this.
\end{remark}

The next result is an extension to the case of multiple disjoint, embedded real projective planes.

\begin{theorem}
Let $X$ be as in Theorem \ref{thm:1rp2}. Suppose that $S_1, \dots , S_r \subset X$ are disjoint embedded real projective planes whose Poincar\'e dual classes satisfy $[S_1] + \cdots + [S_r] = 0 \in H^2(X ; \mathbb{Z}_2)$, but any $r-1$ of them are linearly independent in $H^2(X ; \mathbb{Z}_2)$. Then $e(S_1) + \cdots + e(S_r) \le \max\{ 0 , 8-2r\}$.
\end{theorem}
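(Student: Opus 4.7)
My plan is to follow the same recipe used to prove Theorems \ref{thm:1sph}--\ref{thm:1rp2}: form the appropriate double branched cover, transport the Seiberg--Witten class using the paper's main technical formula, and then apply an adjunction-type bound on the cover. The extension from one to $r$ projective planes should parallel the way Theorem \ref{thm:2sph} upgrades Theorem \ref{thm:1sph}.

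\textbf{Step 1 (branched cover).} Since $H_1(X;\mathbb{Z}_2)=0$ and $[S_1]+\cdots+[S_r]=0$ in $H^2(X;\mathbb{Z}_2)$, there is a unique smooth double cover $\pi\colon\tilde X\to X$ branched along $S=S_1\sqcup\cdots\sqcup S_r$. The manifold $\tilde X$ is closed, oriented and smooth, with covering involution $\tau$ and fixed point set $\tilde S=\bigsqcup \tilde S_i$; each restriction $\pi\colon \tilde S_i\to S_i$ is a diffeomorphism, and the local model $(z,w)\mapsto(z,w^2)$ gives $e(\tilde S_i)=\tfrac12 e(S_i)$.

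\textbf{Step 2 (topology of $\tilde X$).} Compute $\chi(\tilde X)=2\chi(X)-r$ and $\sigma(\tilde X)=2\sigma(X)-\tfrac12\sum_i e(S_i)$ (via the $G$-signature theorem), and hence $b_\pm(\tilde X)$. I would verify $H_1(\tilde X;\mathbb{Z}_2)=0$, $b_+(\tilde X)>1$ and simple type, using the assumption that any $r-1$ of the $[S_i]$ are independent to guarantee that the Smith sequence contributes no unwanted classes.

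\textbf{Step 3 (SW on the cover).} Apply the branched-cover formula for Seiberg--Witten invariants developed in this paper to the $\tau$-invariant lift $\tilde{\mathfrak{s}}=\pi^*\mathfrak{s}$, obtaining $SW(\tilde X,\tilde{\mathfrak{s}})\not\equiv 0\pmod 2$ from the hypothesis $SW(X,\mathfrak{s})\not\equiv 0\pmod 2$.

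\textbf{Step 4 (combine the branch components).} The relation $\sum[S_i]=0\in H_2(X;\mathbb{Z}_2)$ lifts: if $\sum_i S_i=\partial T$ for a mod-$2$ $3$-chain $T$ in $X$, then $\pi^{-1}(T)$ is a mod-$2$ $3$-chain in $\tilde X$ with boundary $\sum_i \tilde S_i$, so $\sum_i[\tilde S_i]=0\in H_2(\tilde X;\mathbb{Z}_2)$. Tubing the disjoint $\tilde S_i$ along arcs in $\tilde X\setminus\tilde S$ then produces a single smoothly embedded non-orientable surface $F\cong \#^{r}\mathbb{RP}^2$ with $[F]=0\in H^2(\tilde X;\mathbb{Z}_2)$ and $e(F)=\sum_i e(\tilde S_i)=\tfrac12\sum_i e(S_i)$.

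\textbf{Step 5 (non-orientable adjunction).} Apply the non-orientable Seiberg--Witten adjunction bound to $F\subset\tilde X$. I expect this to give $e(F)\le 4-r$ for a surface of non-orientable genus $r$ with null-homologous mod-$2$ class (the bound is $-1$ per additional crosscap beyond the single-$\mathbb{RP}^2$ case where it would read $e(F)\le 3$, consistent with Theorem \ref{thm:1rp2} upstairs). Unfolding gives $\sum_i e(S_i)\le 8-2r$. When this becomes negative ($r\ge 4$), the bound can be at best $0$ by combining it with the trivial output of the adjunction, explaining the $\max\{0,8-2r\}$ in the conclusion.

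\textbf{Main obstacle.} The real work is Step 5: extracting the right slope ($-2$ per additional $\mathbb{RP}^2$, or equivalently $-1$ per extra crosscap of $F$). Applying Theorem \ref{thm:1rp2} to the individual $\tilde S_i$ is obstructed because the classes $[\tilde S_i]$ are generally nonzero mod $2$, while applying Theorem \ref{thm:1rp2} naively to the connected sum $F$ only recovers the single-$\mathbb{RP}^2$ bound $e(F)\le 6$, which is too weak for $r\ge 2$. The most promising route is a second iteration of the double-branched-cover construction, now branched along $F$ inside $\tilde X$, with the Seiberg--Witten class tracked through both covers and the $b_+\pmod 4$ parity data recorded at each stage, so that the full $\max\{0,8-2r\}$ bound emerges from a single moduli-space dimension count in the iterated cover.
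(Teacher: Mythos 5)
Your Steps 1--4 are all reasonable set-up, but the argument breaks down at two points and the proposed escape route in Step 5 does not work.

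First, Step 3 is already problematic: the pullback $\tilde{\mathfrak s}_0 = \pi^*(\mathfrak s|_{X_0})$ does \emph{not} extend to a spin$^c$-structure on the closed branched cover $\widetilde{X}$. This is pointed out explicitly in Section~\ref{sec:rp2}: the restriction of $\tilde{\mathfrak s}_0$ to each boundary prism manifold $Y(n_i/2)$ is one of $\mathfrak s_{0,0}, \mathfrak s_{1,0}$ (Lemma~\ref{lem:pullbackyn}), and neither extends over the disc bundle $R(n_i/2)$. So there is no closed $(\widetilde X, \tilde{\mathfrak s})$ to which the paper's branched-cover SW formula can be applied. The paper circumvents this by working purely with the complement $\widetilde X_0$ (a $4$-manifold with psc boundary) and its relative Seiberg--Witten invariants, or by gluing on a different negative definite piece (the $D$-plumbings) to form $\widehat X$ rather than the naive $\widetilde X$.

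Second, Step 5 asserts a non-orientable adjunction bound $e(F)\le 4-r$ for a connected sum $F\cong\#^r\mathbb{RP}^2$ with $[F]=0$ in $\tilde X$, but no such result exists in the paper and you acknowledge this yourself; Theorem~\ref{thm:1rp2} only gives $e(F)\le 6$ regardless of the non-orientable genus of $F$, which is too weak. The iterated-double-cover fix you sketch would run into the same spin$^c$-extension problem as Step~3 and in any case is not carried out, so the slope $-2$ per crosscap is unsupported. The paper's actual mechanism is quite different: it uses the equivariant-to-reduced formula (Theorem~\ref{thm:eqre}) for the free $\mathbb Z_2$-action on $\widetilde X_0$, together with the fact (Proposition~\ref{prop:indep}) that the resulting expression for $SW(\widetilde X_0,\tilde{\mathfrak s}_0)$ as a function of non-negative integers $m_0,m_1$ with $m_0+m_1 = \lceil d(\widetilde X_0,\tilde{\mathfrak s}_0)/2\rceil$ must be independent of the choice of $(m_0,m_1)$. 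Comparing different choices forces $SW(X_0, A\otimes\mathfrak s_0)\equiv 1\pmod 2$ and then $m\le\delta_1+1$, which translates into $(n+2r)/8\le 1$, i.e.\ $n\le 8-2r$ (and $n\le 0$ when $n+2r\ge 16$). That internal-consistency constraint on the two-variable equivariant SW series is the substance of the theorem, and it has no counterpart in your proposal.
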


\subsection{Seiberg--Witten invariants of branched covers}

We give a formula for the Seiberg--Witten invariants of cyclic branched covers. The general formula is too complicated to give in the introduction, so we will give the $p=2$ case here and give only a rough description of the formula for $p > 2$. Prior to this paper there has been surprisingly little work on computing Seiberg--Witten invariants of branched covers. Ruan--Wang give a formula for branched double covers where the branch locus consists of a single surface of genus $g > 1$ and non-negative self-intersection \cite{ruwa}. Park extends this result to the case of surfaces $g=1$ and zero self-intersection \cite{park}. Covers of degree greater than $2$ and covers with branch locus consisting of multiple surfaces do not seem to have been considered previously.

\begin{theorem}
Let $X$ be a compact, oriented, smooth $4$-manifold with $H_1(X ; \mathbb{Z}_2) = 0$ and $b_+(X) > 1$ and suppose $X$ has simple type. Let $\pi : \widetilde{X} \to X$ be a branched double cover such that $b_1(\widetilde{X}) = 0$ and the branch locus $S_1, \dots , S_r \subset X$ consists of disjoint spheres. Given a spin$^c$-structure $\mathfrak{s}$ on $X$ with $d(X , \mathfrak{s}) = 0$, orient the spheres $S_1, \dots , S_r$ such that $\langle c(\mathfrak{s}) , [S_i] \rangle \ge 0$ for each $i$. We have an associated spin$^c$-structure $\widetilde{\mathfrak{s}}$ on $\widetilde{X}$ such that $c(\widetilde{\mathfrak{s}}) = \pi^*(\mathfrak{s}) + \sum_i [\widetilde{S}_i]$, where $\widetilde{S}_i = \pi^{-1}(S_i)$. Then
\[
d(\widetilde{X} , \widetilde{\mathfrak{s}} ) = r + \frac{1}{2}\left( \sum_i |\langle c(\mathfrak{s}) , [S_i] \rangle | + [S_i]^2 \right)
\]
(where $d(\widetilde{X} , \widetilde{\mathfrak{s}} )$ is the expected dimension of the Seiberg--Witten moduli space) and we have:
\begin{itemize}
\item[(1)]{If $d(\widetilde{X} , \widetilde{\mathfrak{s}}) = 0$, then 
\[
SW(\widetilde{X} , \widetilde{\mathfrak{s}}) = SW(X , \mathfrak{s}) + SW(X , L \otimes \mathfrak{s}) \; ({\rm mod} \; 2)
\]
where $L \in H^2(X ; \mathbb{Z})$ satisfies $[S_1]+ \cdots + [S_r] = 2L$.}
\item[(2)]{If $d(\widetilde{X} , \widetilde{\mathfrak{s}}) = 2$, then
\[
SW(\widetilde{X} , \widetilde{\mathfrak{s}}) = SW(X , \mathfrak{s}) = SW(X , L \otimes \mathfrak{s}) \; ({\rm mod} \; 2).
\]
}
\item[(3)]{In all other cases, $SW(\widetilde{X} , \widetilde{\mathfrak{s}}) = 0 \; ({\rm mod} \; 2)$.}
\end{itemize}

\end{theorem}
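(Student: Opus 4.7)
The plan is to compute the equivariant Seiberg--Witten invariant of $(\widetilde{X},\widetilde{\mathfrak{s}})$ for the deck involution $\tau$, and then recover the ordinary invariant by reducing mod $2$ via the forgetful map. The fixed set of $\tau$ is $\bigsqcup_i \widetilde{S}_i$, each a sphere of self-intersection $[\widetilde{S}_i]^2 = [S_i]^2/2$. I choose $\tau$-invariant tubular neighborhoods $\nu_i$ of $\widetilde{S}_i$ whose boundaries $Y_i$ are lens spaces carrying $\tau$-invariant PSC metrics, and set $\widetilde{X}_0 := \widetilde{X}\setminus\bigsqcup_i\nu_i$, on which $\tau$ acts freely with quotient $X_0 := X\setminus\bigsqcup_i\nu(S_i)$.

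The dimension count is routine: substituting $\chi(\widetilde{X}) = 2\chi(X) - 2r$, the branched-cover signature formula $\sigma(\widetilde{X}) = 2\sigma(X) - \tfrac{1}{2}\sum_i[S_i]^2$, and the pullback computation
\[
c(\widetilde{\mathfrak{s}})^2 = 2c(\mathfrak{s})^2 + 2\sum_i \langle c(\mathfrak{s}),[S_i]\rangle + \tfrac{1}{2}\sum_i [S_i]^2
\]
into the usual formula for $d$, then combining with $d(X,\mathfrak{s})=0$ and the orientation convention $\langle c(\mathfrak{s}),[S_i]\rangle \ge 0$, reproduces the stated formula for $d(\widetilde{X},\widetilde{\mathfrak{s}})$.

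Next I stretch the metric along each $Y_i$ and apply the gluing formula for relative Seiberg--Witten invariants over PSC boundary announced in the abstract. The relative contribution of each piece $(\nu_i,\widetilde{\mathfrak{s}}|_{\nu_i})$ reduces to a standard count of reducibles on the PSC lens space $Y_i$. On $\widetilde{X}_0$, freeness of $\tau$ identifies the relative equivariant moduli space with the ordinary relative moduli space on $X_0$; the key subtlety is that $\widetilde{\mathfrak{s}}|_{\widetilde{X}_0}$ admits two inequivalent $\tau$-equivariant liftings (parametrized by the two characters of $\mathbb{Z}/2$). Pushing these liftings down to $X_0$ and then extending across the disk bundles $\nu(S_i)\subset X$ produces exactly the two spin$^c$-structures $\mathfrak{s}$ and $L\otimes\mathfrak{s}$, where $L$ is the unique class with $2L = \sum_i[S_i]$ (well-defined because $H_1(X;\mathbb{Z}_2) = 0$).

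The case analysis then follows. If $d(\widetilde{X},\widetilde{\mathfrak{s}}) = 0$, the two contributions pair at top degree and sum mod $2$ to $SW(X,\mathfrak{s}) + SW(X,L\otimes\mathfrak{s})$, giving (1). If $d = 2$, the excess degree is absorbed by the disk-bundle factor through a single $\mu$-class evaluation common to both lifts, forcing the two contributions to coincide mod $2$; each then equals the corresponding zero-dimensional SW invariant of $X$ by simple type, giving (2). For other values of $d$ a dimension count against the simple-type hypothesis on $X$ kills both pairings, giving (3). The main obstacle is making the equivariant gluing formula rigorous at the PSC neck in the presence of a $\tau$-action with codimension-two fixed set: one must control how the two equivariant lifts interact with the reducible moduli on each $Y_i$, and verify that the resulting signed count reduces mod $2$ to exactly the stated combination of ordinary SW invariants, with no spurious contribution from other spin$^c$-structures.
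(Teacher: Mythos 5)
Your overall strategy is the same as the paper's: remove $\tau$-invariant tubular neighbourhoods of the fixed spheres to get a free cover $\widetilde{X}_0\to X_0$ with PSC lens-space boundaries, use a gluing theorem to transfer the computation to $\widetilde{X}_0$, identify the two $\tau$-equivariant lifts of $\widetilde{\mathfrak{s}}_0$ with $\mathfrak{s}_0$ and $A\otimes\mathfrak{s}_0$ on $X_0$, and then run a dimension/simple-type argument. The dimension computation in your second paragraph is fine and matches the paper's Lemma~\ref{lem:dimnu} after specialising to $p=2$.

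The gap is in the passage from ``two lifts'' to the precise conclusions (1)--(3). You say the two contributions ``sum mod $2$,'' that ``the excess degree is absorbed \dots forcing the two contributions to coincide,'' and that ``a dimension count \dots kills both pairings,'' but none of these is a mechanism you have actually produced. The paper's engine is the identity relating equivariant and reduced Seiberg--Witten invariants (Theorem~\ref{thm:eqre}, inherited from~\cite{bar} as an abstract statement about $\mathbb{Z}_p$-monopole maps), specialised to $p=2$:
\[
SW_{G , \widetilde{X}_0 , \widetilde{\mathfrak{s}}_0}( x^{m_0}(x+v)^{m_1}) = \left( \binom{m_1 - d_1}{\delta_0 - m_0}\, SW(X_0,\mathfrak{s}_0) + \binom{m_0-d_0}{\delta_1 - m_1}\, SW(X_0,\mathfrak{s}_1) \right) v^{m_0+m_1-\delta}.
\]
Everything nontrivial flows from two facts about this formula: the binomial coefficient with $j$-th index vanishes as soon as $m_j>0$ once one knows $d_j=(b_0+1)/2$ (which is exactly the simple-type constraint on $X$ recast through the APS index), and it is a unit mod $2$ when $m_j=0$; combined with the freedom to vary $(m_0,m_1)$ subject to $m_0+m_1 = d(\widetilde{X}_0,\widetilde{\mathfrak{s}}_0)/2$ (Proposition~\ref{prop:indep}), this forces the dimension bound $d\le 2$ (Proposition~\ref{prop:dimbound}), gives the sum in case (1), the equality of all three invariants in case (2), and the vanishing in case (3). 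Your ``$\mu$-class evaluation on the disk-bundle factor'' is not this; the disk bundles contribute nothing beyond the normalisation because $b_+ = 0$ there, and the gluing theorem already disposes of them. In addition, you need Proposition~\ref{prop:spincext} (all sharp extensions of a given spin$^c$-structure on $X_0$ have equal $SW$) together with the observation that $L\otimes\mathfrak{s}$ is itself sharp under the orientation convention $\langle c(\mathfrak{s}),[S_i]\rangle\ge 0$, to identify $SW(X_0,\mathfrak{s}_1)$ with $SW(X,L\otimes\mathfrak{s})$; ``extending across the disk bundles'' is not automatically the sharp extension. You flag at the end that the equivariant gluing step is the obstacle, and that is an accurate self-assessment: as written, the proposal is a correct outline of the paper's strategy but is missing the binomial identity that actually carries the argument.
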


Consider now the case of a cyclic branched cover $\pi : \widetilde{X} \to X$ of degree $p$ a prime and with branch locus $S_1, \dots, S_r$ a collection of spheres. Assume that $b_1(\widetilde{X}) = 0$. We introduce the notion of {\em sharp} spin$^c$-structures on $X$, namely those which satisfy $| \langle c(\mathfrak{s}) , [S_i] \rangle | + [S_i]^2 \le 0$. It turns out that the Seiberg--Witten invariants for the sharp spin$^c$-structures completely determines the Seiberg--Witten invariants for all spin$^c$-structures. Moreover when $X$ has simple type only the sharp spin$^c$-structures can have non-vanishing Seiberg--Witten invariants. In Section \ref{sec:spinc} we show how to any $\mathbb{Z}_p$-invariant sharp spin$^c$-structure $\widetilde{\mathfrak{s}}$ on $\widetilde{X}$ we can associate a collection $\{ \mathfrak{s}_j \}_{j=0}^{p-1}$ of sharp spin$^c$-structures on $X$ and we prove an equality of the form
\[
SW(\widetilde{X} , \widetilde{\mathfrak{s}} ) = \sum_{j=0}^{p-1} t_j SW(X , \mathfrak{s}_j) \; ({\rm mod} \; p)
\]
for certain coefficients $t_j \in \mathbb{Z}_p$. The coefficients $t_j$ are given by an explicit, but complicated formula which involves the topology of $X$ and $S_1, \dots , S_r$.

\subsection{Potential constructions of non-simple type $4$-manifolds}

As a consequence of our study of branched coverers, we have discovered a number of potential constructions of $4$-manifolds of non-simple type starting from a $4$-manifold of simple type and a certain configuration of embedded surfaces. Although we have not been successful in finding such a configuration of surfaces, we have also not found any compelling argument to rule out their existence.

\begin{theorem}
Let $X$ be a compact, oriented, smooth $4$-manifold with $H_1(X ; \mathbb{Z}_2) = 0$ and $b_+(X) > 1$. Suppose $X$ has simple type and that $SW(X , \mathfrak{s}) \neq 0 \; ({\rm mod} \; 2)$ for some spin$^c$-structure $\mathfrak{s}$. Suppose $S_1, \dots , S_r \subset X$ are disjoint smoothly embedded spheres whose Poincar\'e dual classes satisfy $[S_1] + \cdots + [S_r] = 0 \in H^2(X ; \mathbb{Z}_2)$ but any $r-1$ of them are linearly independent in $H^2(X ; \mathbb{Z}_2)$. If $\sum_i | \langle c(\mathfrak{s}) , [S_i] \rangle | = 2r-4$ and $\sum_i [S_i]^2 = 8-4r$, then the branched double cover $\widetilde{X} \to X$ of $X$ branched over $S_1, \dots , S_r$ does not have simple type.
\end{theorem}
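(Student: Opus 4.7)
The plan is to apply the preceding branched double cover theorem directly: the numerical hypotheses are engineered precisely so that the associated spin$^c$-structure $\widetilde{\mathfrak{s}}$ on $\widetilde{X}$ lands in case~(2) of that theorem, where $d(\widetilde{X},\widetilde{\mathfrak{s}}) = 2$ but $SW(\widetilde{X},\widetilde{\mathfrak{s}}) \equiv SW(X,\mathfrak{s}) \pmod 2$ is forced to be non-zero. A spin$^c$-structure with non-trivial mod-$2$ Seiberg--Witten invariant in positive expected dimension is precisely the obstruction to simple type, which gives the conclusion.

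Concretely, I would orient the spheres so that each $\langle c(\mathfrak{s}), [S_i] \rangle \ge 0$, let $\widetilde{\mathfrak{s}}$ be the spin$^c$-structure on $\widetilde{X}$ with $c(\widetilde{\mathfrak{s}}) = \pi^{*}(\mathfrak{s}) + \sum_i [\widetilde{S}_i]$ produced by the previous theorem, and substitute the two equalities $\sum_i |\langle c(\mathfrak{s}),[S_i]\rangle| = 2r-4$ and $\sum_i [S_i]^2 = 8-4r$ into the dimension formula
\[
d(\widetilde{X},\widetilde{\mathfrak{s}}) \;=\; r + \frac{1}{2}\Bigl( \sum_i |\langle c(\mathfrak{s}),[S_i]\rangle| + [S_i]^2 \Bigr) \;=\; r + \tfrac{1}{2}(4-2r) \;=\; 2.
\]
Case~(2) of that theorem then reads $SW(\widetilde{X},\widetilde{\mathfrak{s}}) \equiv SW(X,\mathfrak{s}) \pmod 2$, which is non-zero by hypothesis, so $\widetilde{X}$ carries a basic class of positive expected dimension and hence is not of simple type.

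The one hypothesis of the branched cover theorem which is not immediately visible in the statement here is $b_1(\widetilde{X}) = 0$. Since $H_1(X;\mathbb{Z}_2) = 0$, the group $H_1(X;\mathbb{Z})$ has no free part and no $2$-torsion, so in particular $b_1(X) = 0$; combined with the fact that each $S_i$ is simply connected, a standard covering and Mayer--Vietoris calculation for the branched double cover propagates this to $b_1(\widetilde{X}) = 0$. The other ancillary assumption $d(X,\mathfrak{s}) = 0$ is automatic from simple type plus $SW(X,\mathfrak{s}) \neq 0 \pmod 2$. The real mathematical content is already absorbed inside the preceding branched-cover theorem; the only genuine obstacle in the present argument is this $b_1$ bookkeeping and the orientation choice, after which the theorem is an arithmetic consequence of the dimension formula together with the case-(2) identity.
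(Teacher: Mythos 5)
Your proposal is correct and follows the same route as the paper: the theorem is exactly a restatement of Theorem~\ref{thm:p=2}(3), and your arithmetic — orient so $\langle c(\mathfrak{s}),[S_i]\rangle\ge 0$, compute $d(\widetilde X,\widetilde{\mathfrak s}) = r + \tfrac12(4-2r) = 2$, and apply the mod-$2$ identity $SW(\widetilde X,\widetilde{\mathfrak s}) = SW(X,\mathfrak s) \neq 0 \pmod 2$ — is precisely how the paper derives non-simple type in that case. The one place your sketch is thinner than the paper's is the verification that $b_1(\widetilde X)=0$: this does not follow from $b_1(X)=0$ and simply-connectedness of the $S_i$ alone, but requires the hypothesis that any $r-1$ of the classes $[S_i]$ are independent in $H^2(X;\mathbb{Z}_2)$, which via Lemmas~\ref{lem:1p} and~\ref{lem:b10} forces $H_1(X_0;\mathbb{Z}_2)\cong\mathbb{Z}_2$ and hence $b_1(\widetilde X_0)=b_1(\widetilde X)=0$; without this independence the complement $X_0$ could have much larger $H_1$ and the cover need not have vanishing first Betti number.
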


It should be noted that the conditions in the above theorem can only be met if $r \ge 4$. So we need at least four spheres to obtain a branched double cover of non-simple type. On the other hand the situation for real projective planes is much simpler:

\begin{theorem}
Let $X$ be a compact, oriented, smooth $4$-manifold with $H_1(X ; \mathbb{Z}_2) = 0$ and $b_+(X) > 1$. Suppose $X$ has simple type and that $SW(X , \mathfrak{s}) \neq 0 \; ({\rm mod} \; 2)$ for some spin$^c$-structure $\mathfrak{s}$. 
\begin{itemize}
\item[(1)]{Suppose there exists an embedded real projective plane $S \subset X$ with $[S] = 0 \in H^2(X ; \mathbb{Z}_2)$ and $e(S) = 6$. Then the $4$-manifold $X'$ obtained from $X$ by removing a tubular neighbourhood of $S$ and replacing it with a copy of the $D_8$-plumbing does not have simple type.}
\item[(2)]{Suppose there exists disjoint embedded real projective planed $S_1,S_2 \subset X$ with $[S_1] = [S_2] \in H^2(X ; \mathbb{Z}_2)$, $[S_i] \neq 0$ for $i=1,2$ with $e(S_1) + e(S_2) = 4$ and $e(S_i) \ge -2$. Then the $4$-manifold $\widehat{X}$ obtained from $X$ by taking the double cover of $X$ branched over $S_1,S_2$, removing tubular neighbourhoods of their pre-images and replacing them with suitably chosen negative definite plumbings does not have simple type.
}
\end{itemize}

\end{theorem}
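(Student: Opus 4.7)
The plan is to contradict simple type of $X'$ (resp.\ $\widehat{X}$) by producing a spin$^c$-structure on it with non-zero mod-$2$ Seiberg--Witten invariant and strictly positive expected dimension. In both parts the boundary of the removed tubular neighbourhood is a spherical space form of positive scalar curvature matching the boundary of the plumbing that replaces it, so the relative Seiberg--Witten gluing formula referenced in the abstract applies directly.

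For part $(1)$, write $P$ for the $D_8$-plumbing and $Y = \partial N(S) = \partial P$. The plan is to extend the restriction $\mathfrak{s}|_{X\setminus N(S)}$ across $P$ to a spin$^c$-structure $\mathfrak{s}'$ on $X'$, yielding
\[
SW(X',\mathfrak{s}') \equiv SW(X,\mathfrak{s}) \cdot SW_{\mathrm{rel}}(P,\mathfrak{t}_P) \pmod 2
\]
by PSC-gluing, where $SW_{\mathrm{rel}}(P,\mathfrak{t}_P)$ is identified with the count of reducibles on the negative-definite PSC-bounded plumbing. A direct bookkeeping count gives $\chi(X') = \chi(X)+8$ and $\sigma(X')=\sigma(X)-8$ (since $N(S)$ retracts to $\mathbb{RP}^2$ with $\chi=1$, and $P$ retracts to a wedge of $8$ spheres with negative-definite form $-D_8$); moreover $H^2(N(S);\mathbb{Z})$ is torsion, so the only change in $c(\mathfrak{s})^2$ comes from $P$, giving
\[
d(X',\mathfrak{s}') = d(X,\mathfrak{s}) + \frac{c_P^2+8}{4} = \frac{c_P^2+8}{4}.
\]
Since $-D_8$ is even, the trivial extension $c_P=0$ is characteristic and yields $d(X',\mathfrak{s}')=2>0$. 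If necessary, $\mathfrak{s}$ is first adjusted by a line bundle supported near $S$ so that its boundary restriction matches the chosen extension over $P$.

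For part $(2)$, the plan is to first pass to the branched double cover $\pi:\widetilde{X}\to X$ over $S_1\sqcup S_2$, which exists because $[S_1]+[S_2]=2[S_1]=0\in H^2(X;\mathbb{Z}_2)$. The preimages $\pi^{-1}(S_i)$ are disjoint real projective planes with Euler numbers $e(S_i)/2$ summing to $2$. The real-projective-plane analog of the branched-cover $SW$ formula for spheres shown in the excerpt then expresses $SW(\widetilde{X},\widetilde{\mathfrak{s}})\pmod 2$ as a combination of $SW$ invariants of $X$, which is non-zero by hypothesis. Replacing the two tubular neighbourhoods of $\pi^{-1}(S_i)$ by suitable negative-definite plumbings and performing the same $\chi/\sigma/c^2$ bookkeeping as in part $(1)$ on each shifts the expected dimension strictly positive while preserving non-vanishing mod $2$; the hypotheses $e(S_1)+e(S_2)=4$ and $e(S_i)\ge -2$ are the borderline data making the resulting surplus in $d$ strictly positive.

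The main obstacle is the delicate spin$^c$ bookkeeping across the gluing: identifying the boundary $3$-manifolds as specific spherical space forms and verifying that a characteristic element of $-D_8$ (or of the plumbings in part $(2)$) with the needed self-intersection actually restricts to the spin$^c$-structure inherited from $(X,\mathfrak{s})$ on the boundary of $N(S)$; and, for part $(2)$, formulating and applying the $\mathbb{RP}^2$-branch-locus analog of the spherical branched-cover formula. The hypotheses $e(S)=6$ and $e(S_1)+e(S_2)=4$ are no coincidence: they are exactly the borderline cases of the adjunction inequality of Theorem \ref{thm:1rp2}, which is precisely where the author's branched-cover machinery can be forced to push the expected dimension into the forbidden range.
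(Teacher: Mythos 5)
Your proposal has a genuine gap in both parts, centred on the same misconception about the spin$^c$ bookkeeping at the boundary $Y(6)=\partial P(D_8)=\partial R(6)$.

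In part (1) you want to extend $\mathfrak{s}|_{X\setminus N(S)}$ across the $D_8$-plumbing $P$ with trivial characteristic $c_P=0$, to get $d(X',\mathfrak{s}')=(0+8)/4=2$. This cannot be done: as the paper computes in the prism-manifold section, $Y(n)$ has four spin$^c$-structures $\mathfrak{s}_{u,v}$, and only $\mathfrak{s}_{0,1},\mathfrak{s}_{1,1}$ extend over the disc bundle $R(n)\subset X$. So $\mathfrak{s}|_{Y(6)}$ is some $\mathfrak{s}_{u,1}$, whereas the $c_P=0$ spin$^c$-structure on $P$ restricts to $\mathfrak{s}_{0,0}$. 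These cannot be glued. Your proposed fix, twisting $\mathfrak{s}$ ``by a line bundle supported near $S$,'' does nothing: such a line bundle restricts trivially to $X_0$ and hence to $Y$, so it leaves $\mathfrak{s}|_Y$ unchanged. If instead you take the sharp extension of $\mathfrak{s}_{u,1}$ over $P$, it has $c_P^2=-8$ and $d(X',\mathfrak{s}')=0$, which does not contradict simple type.

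The correct argument (in the paper's Theorem in \S\ref{sec:rp2}) passes to the twisted spin$^c$-structure $A\otimes\mathfrak{s}_0$ on $X_0$, where $A$ is the order-$2$ flat class of the double cover $\widetilde{X}_0\to X_0$; this does restrict to $\mathfrak{s}_{u,0}$ on $Y$ and thus admits a sharp extension over $P$ with positive index shift, so $d(X',\mathfrak{s}')=d(X_0,A\otimes\mathfrak{s}_0)>0$. But then one needs $SW(X_0,A\otimes\mathfrak{s}_0)\neq 0\pmod 2$, which is \emph{not} implied by $SW(X,\mathfrak{s})\neq 0$ and is the genuine content of the theorem; the paper proves it through Lemma \ref{lem:swrp}, i.e.\ the equivariant/double-cover formula. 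Your proposed product formula $SW(X')\equiv SW(X)\cdot SW_{\mathrm{rel}}(P)$ is not the form of the PSC gluing theorem (Theorem \ref{thm:pscglue}), which reads $SW_{X',\mathfrak{s}'}(x^m)=SW_{X_0,\mathfrak{s}'|_{X_0}}(x^{m-\mathrm{ind}_{APS}(P)})$, and in particular cannot substitute for establishing the nonvanishing of $SW(X_0,A\otimes\mathfrak{s}_0)$.

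In part (2) you propose to form the closed branched double cover $\widetilde{X}$ and pull $\mathfrak{s}$ back to a spin$^c$-structure $\widetilde{\mathfrak{s}}$ on it. This fails at the same place, and the paper explicitly flags it: the pullback $\pi^*(\mathfrak{s}_0)$ on $\widetilde{X}_0$ restricts to $\mathfrak{s}_{u,0}$ on each $Y(n_i/2)$ by Lemma \ref{lem:pullbackyn}, which does not extend over $R(n_i/2)$, so $\pi^*(\mathfrak{s}_0)$ does \emph{not} extend to the branched cover $\widetilde{X}$. This is precisely why $\widehat{X}$ is built from $\widetilde{X}_0$ by attaching $D_{n_i/2+2}$-plumbings (or their degenerate replacements for $n_i\in\{-2,0\}$) rather than disc bundles. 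So there is no ``$\mathbb{RP}^2$-analog of the branched-cover formula for $\widetilde{X}$'' to apply; one must work with $\widehat{X}$ and the formula of Lemma \ref{lem:swrp}, together with the mod-$2$ nonvanishing of $SW(X_0,A\otimes\mathfrak{s}_0)$ established through the same mechanism.
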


For details on choosing the negative definite plumbings, see Section \ref{sec:rp2}.

Lastly, we consider embedded spheres with a cusp. By this we mean a sphere $C$ and a smooth injective map $C \to X$ which is an embedding except at one point, the cusp, where $C$ is locally given as a cone over the right-handed torus knot $T_{2,3}$ for a right-handed cusp, or as a cone over $-T_{2,3}$ for a left-handed cusp.

\begin{theorem}
Let $X$ be a compact, oriented, smooth $4$-manifold with $b_1(X) = 0$, $b_+(X) > 1$ and suppose $X$ has simple type. Let $C \subseteq X$ be an embedded sphere with a left-handed cusp. Suppose that $SW(X , \mathfrak{s}) \neq 0$ for some spin$^c$-structure $\mathfrak{s}$.
\begin{itemize}
\item[(1)]{If $| \langle c(\mathfrak{s}) , [C] \rangle | + [C]^2 = 0$ and $-7 \le [C]^2 \le -1$, then there exists a $4$-manifold $X'$ which is not of simple type, obtained by removing a tubular neighbouhood of $C$ and replacing it with a suitably chosen negative definite plumbing.}
\item[(2)]{If $[C]^2 = 0$, then there exists a $4$-manifold $X'$ which is not of simple type.}
\end{itemize}
\end{theorem}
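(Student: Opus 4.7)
The plan is to realise $X'$ as $W \cup_Y Q$, where $W = X \setminus \operatorname{int}(N(C))$, $Y = \partial N(C)$, and $Q$ is a carefully chosen $4$-manifold with $\partial Q = Y$. Since $SW(X,\mathfrak{s}) \neq 0$ and $X = W \cup_Y N(C)$, the gluing formula for relative Seiberg--Witten invariants with positive scalar curvature boundary (the key tool advertised in the abstract and used elsewhere in this paper) implies that a relative invariant of $W$ is non-zero. To show that $X'$ is not of simple type, I would then produce a spin$^c$-structure $\mathfrak{s}'$ on $X'$ with $d(X',\mathfrak{s}') > 0$ and $SW(X',\mathfrak{s}') \neq 0$, obtained by gluing the non-trivial relative invariant of $W$ against a contribution from $Q$.

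For part (1), the boundary $Y$ of a tubular neighbourhood of a cusp sphere with self-intersection $n = [C]^2 \in \{-1,\dots,-7\}$ is a small Seifert fibred rational homology sphere (a specific Dehn surgery on the right-handed trefoil), and for precisely this range of $n$ it bounds a negative definite plumbing $P$ admitting a positive scalar curvature metric. The borderline assumption $|\langle c(\mathfrak{s}),[C]\rangle| + [C]^2 = 0$ is exactly what is needed for the restriction of $c(\mathfrak{s})$ to $W$ to extend, upon regluing with $P$ in place of $N(C)$, to a spin$^c$-structure $\mathfrak{s}'$ on $X'$ whose expected dimension has strictly increased. The PSC gluing formula then detects the non-zero relative invariant on $W$ in this higher-dimensional moduli space, contradicting simple type.

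For part (2), when $[C]^2 = 0$, smoothing the cusp of $C$ produces an embedded torus $T \subset X$ of self-intersection $0$, while $Y$ is the torus bundle given by $0$-surgery on the trefoil, and $N(C)$ is a cusp neighbourhood in the sense of elliptic fibrations. Replacing $N(C)$ with a suitable torus-surgery region (for instance a log transform on $T$, or a region implementing a monodromy that shifts the lifted spin$^c$-structures) produces $X'$ whose Seiberg--Witten invariants are computable via the same gluing formula from $SW(X,\mathfrak{s})$; the freedom introduced by the torus of square zero ensures that some of the resulting basic classes sit in a moduli space of strictly positive dimension, ruling out simple type.

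The main obstacle I anticipate is in part (1): identifying the correct negative definite plumbing $P$ that bounds the appropriate Seifert fibred $Y$ for each $n \in \{-1,\dots,-7\}$ and verifying that $P$ carries a positive scalar curvature metric making the gluing formula applicable. The upper cut-off $n \le -1$ reflects the need for $P$ to be negative definite, while the lower cut-off $n \ge -7$ should correspond precisely to the last value of $n$ for which a suitable PSC plumbing exists; confirming this dichotomy is the quantitatively delicate part of the argument.
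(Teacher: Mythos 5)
Your approach for part (1) matches the paper's: remove a tubular neighbourhood $X_{-p}(-T_{2,3})$ of $C$ (where $p=-[C]^2$) and glue in a negative definite plumbing $W_p$ along the boundary $Y = S_{-p}(-T_{2,3})$, using the psc gluing formula (Theorem~\ref{thm:pscglue}) to carry the non-vanishing of $SW(X,\mathfrak{s})$ over to $X'$. However, you do not pin down the mechanism that actually forces $d(X',\mathfrak{s}')>0$. The paper's Proposition~\ref{prop:E8-p}~(2) does this via the Fr\o yshov/delta invariant: one computes $\delta(X_{-p}(-T_{2,3}),\mathfrak{s}) = -(p-1)/8$ and shows, by identifying $\mathfrak{s}|_Y$ with the spin$^c$-structure on $Y$ induced by the unique spin structure on $W_p$, that $\delta(Y,\mathfrak{s}|_Y) = (9-p)/8$; hence $ind_{APS}(X_{-p}(-T_{2,3}),\mathfrak{s}) < 0$, i.e.\ $\mathfrak{s}$ is \emph{not} sharp on the cusp neighbourhood. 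Since $W_p$ is an almost rational plumbing, a sharp extension $\mathfrak{s}'$ exists on $X'$, and the gap in $ind_{APS}$ is exactly what makes $d(X',\mathfrak{s}')>d(X,\mathfrak{s})=0$. Without this delta-invariant comparison your ``expected dimension has strictly increased'' is unjustified. Also two small errors: the link of a \emph{left-handed} cusp is the left-handed trefoil $-T_{2,3}$ (you wrote right-handed), and the psc hypothesis in the gluing formula is on the boundary $Y$, not on the plumbing $P$ itself.

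Your proposal for part (2) is not correct. Smoothing the cusp and performing a log transform or torus surgery on the resulting square-zero torus will not produce a manifold of non-simple type; log transforms on elliptic surfaces, for example, preserve simple type, and nothing in your sketch explains why a positive-dimensional moduli space with non-vanishing invariant would appear. The paper's argument is different and much simpler: blow up once, replacing $X$ by $X\#\overline{\mathbb{CP}^2}$ and $C$ by the internal connected sum $C\# E$ with the exceptional sphere. Then $C\# E$ is a left-handed cusp of self-intersection $-1$, the blow-up formula provides a spin$^c$-structure $\mathfrak{s}'$ with $SW\neq 0$ and $|\langle c(\mathfrak{s}'),[C\# E]\rangle| + [C\# E]^2 = 0$, and part (1) applied with $p=1$ (attaching the $E_8$ plumbing) produces the non-simple-type $X'$.
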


In particular, part (2) of this theorem says that an embedded sphere or self-intersection zero with left-handed cusp in a $4$-manifold of simple type and non-zero Seiberg--Witten invariants would give rise to a $4$-manifold of non-simple type. What makes this striking is that spheres of self-intersection zero with a {\em right-handed} cusp exist in simple type $4$-manifolds in abundance, for example a cusp fibre of an elliptic surface. There is a close relationship between spheres with cusps and real projective planes. If $C \subset X$ is an embedded sphere with a cusp, then by removing a neighbourhood of the cusp and attaching a M\"obius band, we obtain an embedded real projective plane $S \subset X$ such that $[S] = [C] \; ({\rm mod} \; 2)$ and $e(S) = 6 + [C]^2$ for a left-handed cusp, $e(S) = -6 + [C]^2$ for a right-handed cusp. In particular, if $C$ has self-intersection zero and a left-handed cusp, then $e(S) = 6$.

\subsection{Method of calculation}

We outline our strategy to compute the Seiberg--Witten invariants of branched covers. The results on configurations of embedded surfaces also follows from variations of this  strategy. Suppose that $\pi : \widetilde{X} \to X$ is a branched cover with branch locus $S_1, \dots , S_r$ a union of embedded spheres with negative self-intersection. Let $X_0$ be the $4$-manifold with boundary obtained from $X$ by removing tubular neighbourhoods of $S_1, \dots , S_r$. Similarly, define $\widetilde{X}_0$ so that we have a commutative diagram
\[
\xymatrix{
\widetilde{X}_0 \ar[d]^-\pi \ar[r] & \widetilde{X} \ar[d]^-\pi \\
X_0 \ar[r] & X
}
\]
Moreover, $\widetilde{X}_0 \to X_0$ is an {\em unbranched} cover of $4$-manifolds with boundary. Now since $X_0$ and $\widetilde{X}_0$ are compact $4$-manifolds with boundary, it is possible to define {\em relative Seiberg--Witten invariants} for them using Floer homological methods. In fact, since the boundaries of $X_0$ and $\widetilde{X}_0$ are unions of rational homology $3$-spheres with positive scalar curvature, it turns out that the relative Seiberg--Witten invariants take a particularly simple form. This is studied in Section \ref{sec:swpsc}. Manolescu's gluing theorem for relative Bauer--Furuta invariants \cite{man2} can be applied to relate the Seiberg--Witten invariants of $X$ and $X_0$. Similarly the Seiberg--Witten invariants of $\widetilde{X}$ and $\widetilde{X}_0$ are related by the gluing formula. In recent work \cite{bar}, we showed how equivariant Seiberg--Witten theory can be used to compute the Seiberg--Witten invariants of unbranched coverings of closed $4$-manifolds. The proof only used properties of abstract Bauer--Furuta maps and so extends easily to the case of $4$-manifolds with positive scalar curvature boundary. Thus we obtain a formula for the Seiberg--Witten invariants of $\widetilde{X}_0$ is terms those of $X_0$. Together with the gluing formula this gives us a relation between the Seiberg--Witten invariants of $X$ and $\widetilde{X}$.

\subsection{Structure of paper}

A brief outline of the paper is as follows. In Section \ref{sec:spherical} we review some properties of spherical $3$-manifolds, in particular the computation of eta and delta invariants of such spaces. In Section \ref{sec:swpsc} we introduce and study the Seiberg--Witten invariants for $4$-manifolds with positive scalar curvature boundary and prove a gluing formula. We also introduce equivariant Seiberg--Witten invariants for such $4$-manifolds as this is needed for our covering space formula for Sieberg--Witten invariants of covering spaces. In Section \ref{sec:branch} we study branched covers. We find sufficient conditions for a branched cover $\widetilde{X} \to X$ with $b_1(\widetilde{X}) = 0$ to exist and we study the relationship between spin$^c$-structures on $X$ and $\widetilde{X}$. Then in Section \ref{sec:sw} we obtain a formula for the Seiberg--Witten invariants of branched covers. We also obtain a formula for the equivariant Seiberg--Witten invariants of branched covers. In Section \ref{sec:rp2} we consider branched double covers where the branch locus consists of embedded real projective planes and finally in Section \ref{sec:cusp} we study embedded spheres with a cusp singularity.

\noindent{\bf Acknowledgments.} The author was financially supported by an Australian Research Council Future Fellowship, FT230100092.

\section{Spherical $3$-manifolds}\label{sec:spherical}

Let $Y = S^3/G$ where $G$ is a finite subgroup of $SO(4)$ acting freely on $S^3$. Recall that $SO(4)$ double covers $SO(3) \times SO(3)$. Projecting to the two factors we have two homomorphisms $p_1,p_2 : SO(4) \to SO(3)$. Let $H_i = p_i(G)$. Then $H_1,H_2$ are finite subgroups of $SO(3)$. Furthermore it is known that at least one of $H_1,H_2$ is cyclic \cite[\textsection 6.1, Lemma 4]{or}. After possibly reversing the orientation of $Y$, we can assume that $H_1$ is cyclic. Then $G$ is conjugate to a subgroup of $S^1 \times_{\mathbb{Z}_2} SU(2) = U(2)$. Therefore we may assume without loss of generality that $G$ is a subgroup of $U(2)$. Let $W$ denote the closed unit ball in $\mathbb{C}^2$. Then $G$ being a subgroup of $U(2)$ acts on $W$ biholomorphically and isometrically. Furthermore, each $g \in G$ different from the identity acts freely on $S^3$, hence the eigenvalues $\lambda_1(g),\lambda_2(g)$ of $g$ are not equal to $1$. It follows that the origin $0 \in W$ is the only fixed point of $g$ acting on $W$. We will use the $G$-action on $W$ to compute eta invariants of $Y$ via the equivariant APS index theorem \cite{don}.  Since $H^2(W ; \mathbb{R}) = 0$, the equivariant APS index theorem for the signature yields
\[
\eta_{sig}(Y) = \frac{1}{|G|} \sum_{g \in G, \, g \neq 1} \frac{(1 + \lambda_1(g)^{-1})(1+\lambda_2(g)^{-1})}{(1-\lambda_1(g)^{-1})(1-\lambda_2(g)^{-1})}.
\]

Next, we consider the eta invariant for the spin$^c$-Dirac operator. Since $W$ is a complex manifold it has a canonical spin$^c$-structure $\mathfrak{s}_{can}$. The action of $G \subset U(2)$ on $W$ is holomorphic, so has a natural lift to $\mathfrak{s}_{can}$, which makes $\mathfrak{s}_{can}$ into a $G$-equivariant spin$^c$-structure. The K\"ahler structure on $W$ also yields a canonical choice of spin$^c$-connection, which we use to define the spin$^c$-Dirac operator. By restriction, $\mathfrak{s}_{can}$ defines a $G$-equivariant spin$^c$-structure on $S^3$, which descends to a spin$^c$-structure on $Y$ which we will continue to denote by $\mathfrak{s}_{can}$. Any other spin$^c$-structure on $Y$ differs from $\mathfrak{s}_{can}$ by tensoring by a line bundle. Since $b_1(Y) = 0$, all such line bundles are flat and correspond to homomorphisms $\phi : G \to S^1$ via their holonomy. To such a homomorphism $\phi$ we get an equivariant line bundle $\mathbb{C}_\phi$ on $W$, namely $\mathbb{C}_\phi$ is the trivial line bundle $W \times \mathbb{C}$ equipped with the $G$-action $g(w,v) = (gw , \phi(g)v)$. Given a homomorphism $\phi : G \to S^1$ we will denote by $\mathfrak{s}_\phi = \mathbb{C}_\phi \otimes \mathfrak{s}_{can}$ the $G$-equivariant spin$^c$-structure on $W$ obtained by tensoring $\mathfrak{s}_{can}$ by $\mathbb{C}_\phi$. By restriction, $\mathfrak{s}_\phi$ defines a $G$-equivariant spin$^c$-structure on $S^3$ which then descends to a spin$^c$-structure on $Y$ which we continue to denote by $\mathfrak{s}_\phi$. Every spin$^c$-structure on $Y$ arises in this manner from a uniquely determined homomorphism $\phi$. Since $W$ has a $G$-invariant positive scalar curvature metric which restricts to the round metric on $S^3$ (consider $W$ as a hemisphere in $S^4$), the equivariant APS index theorem for the spin$^c$-Dirac operator and a Weitzenbock vanishing argument gives
\[
\eta_{dir}(Y , \mathfrak{s}_\phi) = -\frac{2}{|G|} \sum_{g \in G, \, g \neq 1} \frac{ \phi(g) }{(1 - \lambda_1(g)^{-1})(1-\lambda_2(g)^{-1})},
\]
where $\eta_{dir}(Y , \mathfrak{s}_\phi)$ denotes the eta invariant of the spin$^c$ Dirac operator for $\mathfrak{s}_\phi$ with respect to the metric on $Y$ induced by the round metric on $S^3$.

For spherical $3$-manifolds (or more generally rational homology $3$-spheres with positive scalar curvature), we have (see Section \ref{sec:swf}):
\[
\delta( Y , \mathfrak{s} ) = -n( Y , \mathfrak{s}) = -\frac{1}{2} \eta_{dir}(Y , \mathfrak{s}) + \frac{1}{8} \eta_{sig}(Y).
\]
Using this formula we can in principle compute the delta invariants of any spherical $3$-manifold. 
\[
\delta(Y , \mathfrak{s}_\phi) = \frac{1}{|G|} \sum_{g \in G \, g \neq 1} \frac{ \phi(g) + det(1+g^{-1})/8}{det(1-g^{-1})}.
\]

For computing eta invariants, it will be useful to evaluate the following sums:
\[
\alpha( u , n ) = \sum_{j=1}^{n-1} \frac{ \omega^{ju} }{1-\omega^{-j}}, \quad \beta( u , n ) = \sum_{j=1}^{n-1} \frac{ \omega^{ju} }{(1-\omega^{-j})^2},
\]
where $\omega = e^{2\pi i/n}$.

\begin{lemma}\label{lem:alphabeta}
Let $0 \le u \le n-1$. Then
\[
\alpha(u,n) = \frac{n-1}{2} - u, \quad \beta(u,n) = -\frac{1}{12}(n-1)(n-5) + \frac{u}{2}(n-u-2).
\]
\end{lemma}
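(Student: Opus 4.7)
The plan is to establish both formulas by first computing the $u=0$ case using the logarithmic derivative of $P(x) = 1 + x + \cdots + x^{n-1} = \prod_{j=1}^{n-1}(x-\omega^j)$, and then extending to general $u$ via a telescoping recursion. Note first that the change of index $j \mapsto n-j$ shows that $\alpha(0,n)$ and $\beta(0,n)$ would give the same values if one replaced $\omega^{-j}$ by $\omega^j$ throughout, so I can work with $\sum 1/(1-\omega^j)$ and $\sum 1/(1-\omega^j)^2$ interchangeably.

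First I would compute $\alpha(0,n) = \sum_{j=1}^{n-1} 1/(1-\omega^j)$ by evaluating $P'(x)/P(x) = \sum_j 1/(x-\omega^j)$ at $x=1$, using $P(1) = n$ and $P'(1) = n(n-1)/2$, which yields $\alpha(0,n) = (n-1)/2$. Differentiating once more, the identity $-\sum_j 1/(x-\omega^j)^2 = (P''P - (P')^2)/P^2$ evaluated at $x=1$, together with $P''(1) = \sum_{k=2}^{n-1} k(k-1) = n(n-1)(n-2)/3$, gives
\[
\beta(0,n) = \frac{(n-1)(n-2)}{3} - \frac{(n-1)^2}{4} = -\frac{(n-1)(n-5)}{12}.
\]

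Next I would set up the recursion in $u$. The key algebraic observation is the identity $(\omega^j - 1)/(1 - \omega^{-j}) = \omega^j$, obtained by multiplying numerator and denominator by $\omega^j$. This yields
\[
\alpha(u+1,n) - \alpha(u,n) = \sum_{j=1}^{n-1} \omega^{j(u+1)}, \qquad \beta(u+1,n) - \beta(u,n) = \alpha(u+1,n).
\]
For $0 \le u \le n-2$ we have $n \nmid (u+1)$, so the geometric sum equals $-1$. Iterating from the base case gives $\alpha(u,n) = (n-1)/2 - u$ for $0 \le u \le n-1$, and then
\[
\beta(u,n) = \beta(0,n) + \sum_{v=1}^{u}\left(\tfrac{n-1}{2} - v\right) = -\frac{(n-1)(n-5)}{12} + \frac{u(n-u-2)}{2},
\]
as required.

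There is no real obstacle here: the argument reduces to the standard logarithmic-derivative computation for the cyclotomic polynomial $P(x)$ at $x = 1$, plus an elementary recursion driven by the identity $(\omega^j-1)/(1-\omega^{-j}) = \omega^j$. The only mild bookkeeping is verifying $P''(1) = n(n-1)(n-2)/3$, which is a routine binomial identity.
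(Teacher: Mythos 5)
Your proof is correct in its essentials and takes a genuinely different route to the base-case constants. For the recursion in $u$ you do exactly what the paper does: the identity $(\omega^j-1)/(1-\omega^{-j})=\omega^j$ yields $\alpha(u+1,n)-\alpha(u,n)=\sum_j\omega^{j(u+1)}$ and $\beta(u+1,n)-\beta(u,n)=\alpha(u+1,n)$, which pins down everything once $\alpha(0,n)$ and $\beta(0,n)$ are known. The difference is in how you determine those two constants. The paper iterates the telescoping trick one level further: it introduces $\gamma(u,n)=\sum_j\omega^{ju}/(1-\omega^{-j})^3$ with $\beta(u,n)=\gamma(u,n)-\gamma(u-1,n)$, then uses periodicity in $u$ to conclude $\sum_{u=0}^{n-1}\alpha(u,n)=0$ and $\sum_{u=0}^{n-1}\beta(u,n)=0$, and solves these linear conditions for $\alpha(0,n)$, $\beta(0,n)$. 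You instead compute $\alpha(0,n)=\sum_j 1/(1-\omega^j)$ and $\beta(0,n)=\sum_j 1/(1-\omega^j)^2$ directly as values of $P'/P$ and $(P'/P)'$ at $x=1$ for $P(x)=(x^n-1)/(x-1)$, together with the $j\mapsto n-j$ symmetry. Your route is arguably more direct and makes the constants appear with less sleight of hand; the paper's route stays entirely within the telescoping framework and avoids having to recall $P''(1)$.

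One small slip to fix: from $-\sum_j 1/(x-\omega^j)^2=(P''P-(P')^2)/P^2$ you should get
\[
\beta(0,n)=\left(\frac{P'(1)}{P(1)}\right)^2-\frac{P''(1)}{P(1)}=\frac{(n-1)^2}{4}-\frac{(n-1)(n-2)}{3},
\]
not $\frac{(n-1)(n-2)}{3}-\frac{(n-1)^2}{4}$ as written; the expression you wrote equals $+\frac{(n-1)(n-5)}{12}$ (check $n=3$: it gives $-\tfrac13$, whereas $\beta(0,3)=\tfrac13$). With the two terms in the correct order the value is $-\frac{(n-1)(n-5)}{12}$, matching the final answer you state.
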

\begin{proof}
We have
\[
\alpha(u,n) - \alpha(u-1,n) =  \sum_{j=1}^{n-1} \frac{ \omega^{ju}(1-\omega^{-j})}{1 - \omega^{-j}} = \sum_{j=1}^{n-1} \omega^{ju} = \begin{cases} n-1 & u=0 \; ({\rm mod} \; n), \\ -1 & u \neq 0 \; ({\rm mod} \; n). \end{cases}
\]
So $\alpha(u,n) = \alpha(0,n) - u$ for $0 \le u \le n-1$. Similarly, we find
\[
\beta(u,n) - \beta(u-1,n) = \alpha(u,n)
\]
for all $u$. Hence $\sum_{u=0}^{n-1} \alpha(u,n) = 0$. Substituting $\alpha(u,n) = \alpha(0,n) - u$, we see that $\alpha(0,n) = (n-1)/2$ and so $\alpha(u,n) = (n-1)/2 - u$ for $0 \le u \le n-1$. Then we have
\[
\beta(u,n) - \beta(u-1,n) = \alpha(u,n) = \frac{n-1}{2} - u
\]
for $0 \le u \le n$. Hence
\[
\beta(u,n) = \beta(0,n) + \sum_{k=0}^{u} \left( \frac{n-1}{2} - k \right) = \beta(0,n) + \frac{u}{2}(n-u-2)
\]
for $0 \le u \le n$. If we set $\gamma(u,n) = \sum_{j=1}^{n-1} \omega^{ju}/(1-\omega^{-j})^3$, then $\beta(u,n) = \gamma(u,n) - \gamma(u-1,n)$. This shows that $\sum_{u=0}^{n-1} \beta(u,n) = 0$, hence
\[
n \beta(0,n) + \sum_{u=1}^{n-1} \frac{u}{2}(n-u-2) = 0.
\]
Recall that $\sum_{u=1}^{n-1} u^2 = n(n-1)(2n-1)/6$. It follows that $\beta(0,n) = -\frac{1}{12}(n-1)(n-5)$ and hence $\beta(u,n) = -(n-1)(n-5)/12 + u(n-u-2)/2$.
\end{proof}

\subsection{Lens spaces}

For any $p \ge 1$ and any $q \in \mathbb{Z}$ coprime to $p$, we obtain a finite subgroup $G = \mathbb{Z}_p = \langle \sigma \rangle \subset SO(4)$, where $\sigma( z_1 , z_2 ) = ( \omega z_1 , \omega^q z_2)$, $\omega = e^{2 \pi i/p}$. The resulting spherical $3$-manifold $Y = S^3/G$ is the lens space $L(p,q)$. The eta invariant for the signature operator is then
\[
\eta_{sig}(Y) = \frac{1}{p} \sum_{j=1}^{p-1} \frac{(1+\omega^{-j})(1+\omega^{-qj})}{(1-\omega^{-j})(1-\omega^{-qj})} = -4 s(q,p),
\]
where $s(q,p)$ is the Dedekind sum
\[
s(q,p) = -\frac{1}{4p} \sum_{j=1}^{p-1} \frac{(1+\omega^{-j})(1+\omega^{-qj})}{(1-\omega^{-j})(1-\omega^{-qj})} = -\frac{1}{p} \sum_{j=1}^{p-1} \frac{1}{(1-\omega^{-j})(1-\omega^{-qj})} + \frac{1}{4} - \frac{1}{4p}.
\]
For each $u \in \mathbb{Z}_p$, let $\phi_u : G \to S^1$ be the homomorphism $\phi_u( \sigma ) = \omega^u$. Then we obtain an identification between $\mathbb{Z}_p$ and spin$^c$-structures on $L(p,q)$ where $u \in \mathbb{Z}_p$ corresponds to $\mathfrak{s}_{\phi_u}$. For simplicity, we write $\mathfrak{s}_u$ for $\mathfrak{s}_{\phi_u}$. We have
\[
\eta_{dir}( L(p,q) , \mathfrak{s}_u ) = -\frac{2}{p} \sum_{j=1}^{p-1} \frac{ \omega^{ju}}{(1-\omega^{-j})(1-\omega^{-qj})}
\]
and hence
\[
\delta( L(p,q) , \mathfrak{s}_u ) = \frac{1}{p} \sum_{j=1}^{p-1} \frac{ \omega^{ju} }{(1-\omega^{-j})(1-\omega^{-qj})} - \frac{1}{2} s(q,p).
\]
If $q=1$ and $0 \le u \le p-1$, then one finds
\[
\delta( L(p,1) , \mathfrak{s}_u) = - \frac{ (2u+2-p)^2 }{8p} + \frac{1}{8}.
\]

\subsection{Prism manifolds}\label{sec:prism}

We consider specifically the prism manifolds of the form $Y(n) = S^3/G$, where $G = D^*_n = \langle x , y \; | \;  x^2 = (xy)^2 = y^n \rangle$ is the binary Dihedral group of order $4n$. Here $x,y \in SU(2)$ are given by $x(z_1,z_2) = (-z_2 , z_1)$, $y(z_1,z_2) = (\omega z_1 , \omega^{-1} z_2)$, $\omega = e^{\pi i/n}$. Elements of the form $y^j$, $0 \le j \le 2n-1$ have eigenvalues $\omega^j , \omega^{-j}$, where $\omega = e^{\pi i /n}$. Elements of the form $x y^j$, $0 \le j \le 2n-1$ have eigenvalues $i,-i$. Hence the eta invariant for the signature operator is
\begin{align*}
\eta_{sig}( Y(n) ) &= \frac{1}{4n}\left( \sum_{j=1}^{2n-1} \frac{(1+\omega^{-j})(1+\omega^j)}{(1-\omega^{-j})(1-\omega^j)} + 2n \frac{(1+i)(1-i)}{(1-i)(1+i)} \right) \\
&= 2s(1,2n) + \frac{1}{2} \\
&= \frac{(2n-1)(n-1)}{6n} + \frac{1}{2} \\
&= \frac{1}{6n}(2n^2 +1).
\end{align*}

To determine the spin$^c$-structures we need to consider homomorphisms $\phi : D^*_n \to S^1$. Since $xyx^{-1} = y^{-1}$, we have $\phi(y) = \pm 1$. We write $\phi(y) = (-1)^v$ where $v \in \{ 0,1\}$. Since $x^2 = y^n$, we have $\phi(x)^2 = (-1)^{nv}$. Then we can write $\phi(x) = i^{ev} (-1)^u$ where $u \in \{0,1\}$ and $e = 0$ or $1$ according to whether $n$ is even or odd. So there are four spin$^c$-structures $\mathfrak{s}_{u,v} = \mathfrak{s}_\phi$ determined by the possible values of $u,v \in \{0,1\}$. The eta invariant for the spin$^c$-Dirac operator is
\begin{align*}
\eta_{dir}( Y(n) , \mathfrak{s}_{u,v} ) &= -\frac{2}{4n}\left( \sum_{j=1}^{2n-1} \frac{ (-1)^{jv} }{(1-\omega^{-j})(1-\omega^j)} + n \frac{ i^{ve}( (-1)^u + (-1)^{u+v} )}{(1-i)(1+i)} \right) \\
&= \frac{1}{2n} \sum_{j=1}^{2n-1} \frac{ \omega^{(nv-1)j} }{(1-\omega^{-j})^2} - \frac{1}{4} i^{ve}( (-1)^u + (-1)^{u+v} ). 
\end{align*}
For $v=0$, we get
\begin{align*}
\eta_{dir}(Y(n) , \mathfrak{s}_{u,0}) &= \frac{1}{2n} \beta(2n-1,2n) - \frac{1}{2}(-1)^u \\
&= \frac{1}{2n}\left( -\frac{(2n-1)(2n-5)}{12} - \frac{2n-1}{2} \right) - \frac{1}{2}(-1)^u \\
&= \frac{1}{24n}\left( -4n^2 + 1 \right) - \frac{1}{2}(-1)^u.
\end{align*}

For $v = 1$, we get
\begin{align*}
\eta_{dir}(Y(n) , \mathfrak{s}_{u,v}) &= \frac{1}{2n} \beta(n-1,2n) \\
&= \frac{1}{2n} \left( \frac{-(2n-1)(2n-5)}{12} + \frac{(n-1)}{2}( n - 1) \right) \\
&= \frac{1}{24n} \left( 2n^2 + 1 \right). 
\end{align*}

Hence
\begin{align*}
\delta( Y(n) , \mathfrak{s}_{u,0} ) &= -\frac{1}{48n}( -4n^2 + 1) + \frac{1}{4}(-1)^u + \frac{1}{48n}(2n^2 + 1) \\
&= \frac{n}{8} + \frac{1}{4}(-1)^u = \frac{ (n+ 2(-1)^u)}{8}
\end{align*}
for $v=0$ and
\begin{align*}
\delta( Y(n) , \mathfrak{s}_{u,1}) &= -\frac{1}{48n}( 2n^2 + 1 ) + \frac{1}{48n}(2n^2 + 1) \\
&=0
\end{align*}
for $v=1$.

If $d$ divides $n$ then there is a covering $\pi : Y(n/d) \to Y(n)$ corresponding to the subgroup of $\pi_1(Y(n)) = D_n^*$ generated by $x$ and $y^d$.

\begin{lemma}\label{lem:pullbackyn}
We have that $\pi^*( \mathfrak{s}_{u,v} ) = \mathfrak{s}_{u,0}$ if $n$ and $d$ are both even and $\pi^*(\mathfrak{s}_{u,v}) = \mathfrak{s}_{u,v}$ otherwise.
\end{lemma}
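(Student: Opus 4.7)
The plan is to reduce $\pi^{*}\mathfrak{s}_{u,v}$ to a restriction of characters on $D_n^*$. The canonical spin$^{c}$-structure $\mathfrak{s}_{can}$ on $Y(n)$ is the quotient of the $D_n^*$-equivariant canonical spin$^{c}$-structure on $W$; this equivariance restricts to any subgroup, so $\pi^{*}\mathfrak{s}_{can}$ is identified with the canonical spin$^{c}$-structure on $Y(n/d)$. Similarly, $\pi^{*}\mathbb{C}_\phi$ is the flat line bundle corresponding to the restriction of $\phi$ along the inclusion $\pi_1(Y(n/d)) \hookrightarrow \pi_1(Y(n)) = D_n^*$. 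So the problem is reduced to computing this restricted character and re-expressing it in the $(u',v')$ parametrization on $Y(n/d)$.

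Concretely, I would realize $\pi_1(Y(n/d)) \cong D_{n/d}^*$ inside $D_n^*$ using the generators $x' := x$ and $y' := y^{d}$, checking that $(y')^{n/d} = y^{n} = x^{2} = (x')^{2}$ so that the defining presentation of $D_{n/d}^*$ is satisfied. If the character $\phi$ corresponding to $\mathfrak{s}_{u,v}$ satisfies $\phi(y) = (-1)^{v}$ and $\phi(x) = i^{ev}(-1)^{u}$ (with $e$ the parity indicator for $n$), then the restricted character is determined by $\phi(y') = (-1)^{vd}$ and $\phi(x') = i^{ev}(-1)^{u}$. Matching these against the normal form $(-1)^{v'}$ and $i^{e'v'}(-1)^{u'}$ on $Y(n/d)$, where $e'$ is the parity indicator for $n/d$, gives the equations that pin down $(u',v')$.

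The conclusion then follows from a short parity analysis. When $d$ is odd, $d \mid n$ forces $n$ and $n/d$ to have the same parity, so $e = e'$, and $(-1)^{vd} = (-1)^{v}$; this yields $v' = v$ and, after cancelling the matching $i^{ev}$ factors, $u' = u$, which is the ``otherwise'' case. When $d$ is even, $n$ is automatically even so $e = 0$ and $(-1)^{vd} = 1$; these force $v' = 0$ regardless of $v$, whence the $x'$-equation reduces to $(-1)^{u} = (-1)^{u'}$ and gives $u' = u$, producing $\mathfrak{s}_{u,0}$ as claimed. I do not foresee any conceptual obstacle; the only mild subtlety, and the reason the two cases in the statement look different, is that passage to the even-index subgroup kills the character value $\phi(y^{d})$ and hence collapses both choices $v \in \{0,1\}$ on $Y(n)$ to the single value $v' = 0$ on $Y(n/d)$.
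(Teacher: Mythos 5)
Your proof is correct and follows essentially the same route as the paper's: reduce $\pi^*$ to restriction of characters along $D_{n/d}^* \hookrightarrow D_n^*$, observe that $\mathfrak{s}_{\mathrm{can}}$ pulls back to $\mathfrak{s}_{\mathrm{can}}$, then compare $\phi(y^d)$ and $\phi(x)$ against the $(u',v')$ normal form and do the parity check. You spell out a couple of small steps the paper leaves implicit (that $d$ odd forces $n$ and $n/d$ to have the same parity, and the role of $e'$), but the argument is the same.
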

\begin{proof}
Recall that $Y(n) = S^3/G$ where $G = D_n^*$ and the covering $Y(n/d) \to Y(n)$ corresponds to the subgroup $D_{n/d}^*$ generated by $x$ and $y^d$. Recall that $\mathfrak{s}_{can} = \mathfrak{s}_{0,0}$ is the spin$^c$-structure on $Y(n)$ obtained by descending the canonical spin$^c$-structure on the $4$-ball $W$ using the natural lift of $G$ to the canonical spin$^c$-structure. It follows that $\mathfrak{s}_{can}$ on $Y(n)$ pulls back to $\mathfrak{s}_{can}$ on $Y(n/d)$. Then more generally, for a homomorphism $\phi : D_n^* \to S^1$, we have $\pi^*(\mathfrak{s}_{\phi}) = \mathfrak{s}_{\phi'}$, where $\phi'$ is the restriction of $\phi$ to $D^*_{n/d}$. If $\mathfrak{s}_{\phi} = \mathfrak{s}_{u,v}$ then $\phi$ is given by $\phi(y) = (-1)^v$ and $\phi(x) = i^{ev} (-1)^u$, where $e = 0$ if $n$ is even and $1$ if $n$ is odd. Then $\phi'$ is given by $\phi'(y^d) = (-1)^{vd}$, $\phi'(x) = i^{ev} (-1)^{u}$. If $d$ is odd, then $(-1)^{vd} = (-1)^v$ and it follows easily that $\pi^*( \mathfrak{s}_{u,v}) = \mathfrak{s}_{u,v}$. If $d$ is even, then so is $n$ and we have $\phi'(y^d) = 1$, $\phi'(x) = (-1)^u$. Hence $\pi^*(\mathfrak{s}_{u,v}) = \mathfrak{s}_{u,0}$.
\end{proof}

\subsection{Plumbing on $ADE$ diagrams}

Consider a Dynkin diagram $L$ of type $A,D$ or $E$. Associated to such a diagram is a negative definite plumbing $P(L)$ whose plumbing graph is the Dynkin diagram $L$ and where every node has weight $-2$. The boundary of $P(L)$ is a Seifert $3$-manifold which is a spherical $3$-manifold.

For the diagram $A_{n-1}$ ($n \ge 2$), the boundary is the lens space $L(n,n-1) = -L(n,1)$. For the diagram $D_{n+2}$ ($n \ge 1$), the boundary is the Prism manifold $Y(n) = M(1 ; (2,1) , (2,1) , (n,1))$, where $M(e ; (a_1,b_1) , \dots , (a_n,b_n))$ denotes the Seifert fibration over $S^2$ with Euler class $e$ and $n$ singular fibres of types $(a_1,b_1), \dots , (a_n,b_n)$. To see that the orientation on $Y(n)$ as the boundary of $P(D_{n+2})$ agees with the orientation used in Section \ref{sec:prism}, we compare delta invariants. As shown in Section \ref{sec:prism}, the delta invariants of $Y(n)$ are $(n+2)/8, (n-2)/8, 0 , 0$. On the other hand, the spin structure on $P(D_{n+2})$ with zero characteristic induces a spin$^c$-structure on its boundary $\partial P(D_{n+2})$ whose delta invariant equals $-\sigma(P)/8 = (n+2)/8$. Since $(n+2)/8$ occurs as a delta invariant of $Y(n)$, but not as a delta invariant of $-Y(n)$, we have $\partial P(D_{n+2}) = Y(n)$, as claimed.

For the diagrams $E_6,E_7,E_8$, we get the spherical $3$-manifolds $S^3/G$ where $G = T^*, O^*$ or $I^*$ is one of the binary tetrahedral, binary octahedral or binary icosahedral groups. In terms of Seifert structures, we have $\partial P(E_6) = M(1 ; (2,1) , (3,1) , (3,1))$, $\partial P(E_7) = M(1 ; (2,1) , (3,1) , (4,1))$ and $\partial P(E_8) = M(1 ; (2,1) , (3,1) , (5,1) )$, the Poincar\'e homology sphere. To see that the two orientations agree (one as a quotient of $S^3$, the other as the boundary of a plumbing) is a simple matter of computing delta invariants. We omit the details.

\subsection{Circle bundles over $\mathbb{RP}^2$}

For each $n \in \mathbb{Z}$, let $R(n)$ be the unique closed disc bundle over $\mathbb{RP}^2$ whose fibre orientation class equals $w_1(\mathbb{RP}^2)$ (so that the total space of $R(n)$ is oriented) and with Euler class equal to $n$ under the isomorphism $H^2( \mathbb{RP}^2 ; \mathbb{Z}_{w_1} ) \cong \mathbb{Z}$, where $\mathbb{Z}_{w_1}$ denotes the locally constant sheaf with structure group $\mathbb{Z}$ and with monodromy equal to $w_1(\mathbb{RP}^2) \in H^1( \mathbb{RP}^2 ; \mathbb{Z}_2) = Hom( \pi_1(\mathbb{RP}^2) , \mathbb{Z}_2)$. Alternatively, $n$ is the self-intersection number of the zero section of $R(n)$. The boundary of $R(n)$ is a circle bundle over $\mathbb{RP}^2$.

For $n > 0$, we claim that the boundary of $R(n)$ is the prism manifold $Y(n)$. Ignoring orientations, it is clear that there is a homeomorphism between $\partial R(n)$ and $Y(n)$ by comparing fundamental groups. To see that the orientations agree, we observe that $R(n)$ is double covered by $X(2n)$, the closed disc bundle over $S^2$ with Euler class $2n$. Hence $\partial R(n)$ is double covered by $\partial X(2n) = -L(2n,1)$. On the other hand the description of $Y(n)$ in Section \ref{sec:prism} makes it clear that $Y(n)$ is also double covered by $-L(2n,1)$. Hence $\partial R(n) = Y(n)$. Recall that $Y(n)$ has four spin$^c$-structures $\mathfrak{s}_{u,v}$ and the ones with $v=1$ have delta invariant $0$. $R(n)$ has two spin$^c$-structures and their restrictions to $Y(n)$ are $\mathfrak{s}_{0,1}, \mathfrak{s}_{1,1}$.

Since $Y(n) = \partial R(n)$ for $n > 0$, we will extend the definition of $Y(n)$ to all $n \in \mathbb{Z}$ by declaring that $Y(n) = \partial R(n)$ for all $n$. This gives $Y(-n) = -Y(n)$. The case $Y(0)$ is particularly interesting. In this case $Y(0)$ is $\mathbb{RP}^3 \# \mathbb{RP}^3 = L(2,1) \# L(2,1)$. Since $\mathbb{RP}^3$ has two spin$^c$-structures $\mathfrak{s}^{\pm}$ with delta invariants $\delta( \mathbb{RP}^3 , \mathfrak{s}^{\pm} ) = \pm 1/8$, we see that $Y(0)$ has four spin$^c$-structures $\mathfrak{s}^{\pm} \# \mathfrak{s}^{\pm}$ whose delta invariants are $1/4, -1/4, 0 , 0$. Notice that this fits nicely with the calculation in Section \ref{sec:prism} where we showed that for $n>0$, the delta invariants for $Y(n)$ are $(n+2)/8, (n-2)/8 , 0 , 0$. Setting $n=0$ in these expressions gives $1/4, -1/4 , 0 , 0$ (of course we can not simply set $n=0$ without justification because $Y(0)$ is not a spherical manifold. It has infinite fundamental group).

We have established above that $Y(n)$ for $n>0$ bounds a negative definite plumbing (the $D_{n+2}$ plumbing). We also have that $Y(0) = \mathbb{RP}^3 \# \mathbb{RP}^3$ bounds a negative definite $4$-manifold, the boundary connected sum of two copies of $X(-2)$. Lastly, since $D_3 = A_3$, it follows that $Y(1) = -L(4,1)$ and hence $Y(-1) = L(4,1)$ is the boundary of the negative definite plumbing $X(-4)$ (in fact we also have that $L(4,1)$ bounds a rational homology $4$-ball).

\subsection{Seiberg--Witten Floer homotopy type}\label{sec:swf}

Let $Y$ be a rational homology $3$-sphere and let $\mathfrak{s}$ be a spin$^c$-structure on $Y$. Manolescu constructed an $S^1$-equivariant stable homotopy type $SWF(Y,\mathfrak{s})$ using Conley index theory \cite{man1}. $SWF(Y , \mathfrak{s})$ belongs to a suitably defined equivariant Spanier--Whitehead category. In the case that $Y$ admits a metric $g$ of positive scalar curvature, the construction vastly simplifies. Let $V$ denote the global Coulomb slice \cite[\textsection 3]{man1} for $(Y,\mathfrak{s},g)$ and let $V^\mu_\lambda$ denote its finite-dimensional approximation, as in \cite[\textsection 4]{man1}. Using the same argument given in \cite[\textsection 10]{man1}, we see that in the presence of a positive scalar curvature metric $g$, the Conley index can be taken to be $I = (V^0_\lambda)$ and then $SWF(Y,\mathfrak{s},g) = \Sigma^{-V^0_\lambda} I \cong S^0$. Recall that the metric indendent Seiberg--Witten Floer homotopy type $SW(Y,\mathfrak{s})$ is given by a fractional de-suspension of $SWF(Y , \mathfrak{s} , g)$:
\[
SWF(Y , \mathfrak{s}) = \Sigma^{-n(Y,\mathfrak{s},g)\mathbb{C}} SWF(Y , \mathfrak{s} , g ),
\]
where $n(Y , \mathfrak{s},g) \in \mathbb{Q}$ is defined as
\[
n(Y , \mathfrak{s},g) = \frac{1}{2}\eta_{dir}(Y , \mathfrak{s},g) - \frac{1}{2}k(Y,\mathfrak{s},g) - \frac{1}{8}\eta_{sig}(Y,g)
\]
where $k(Y , \mathfrak{s},g)$ is the complex dimension of the kernel of the spin$^c$-Dirac operator for $(Y,\mathfrak{s},g)$. In the positive scalar curvature case this reduces to
\[
n(Y , \mathfrak{s},g) = \frac{1}{2}\eta_{dir}(Y , \mathfrak{s},g) - \frac{1}{8}\eta_{sig}(Y,g).
\]
Note that since $SW(Y , \mathfrak{g}) = \Sigma^{-n(Y,\mathfrak{s},g)\mathbb{C}} SW(Y , \mathfrak{s} , g ) \cong \Sigma^{-n(Y,\mathfrak{s},g)\mathbb{C}} S^0$, the Fr\o yshov invariant $\delta(Y , \mathfrak{s})$ (equal to one half of the Ozsv\'ath--Szab\'o $d$-invariant $d(Y,\mathfrak{s})$) is given by
\[
\delta(Y, \mathfrak{s}) = -n(Y , \mathfrak{s} , g ) =  -\frac{1}{2}\eta_{dir}(Y , \mathfrak{s},g) + \frac{1}{8}\eta_{sig}(Y,g)
\]
for any choice of positive scalar curvature metric $g$.

Suppose that $X$ is a compact, oriented, smooth $4$-manifold with boundary $Y = \partial X$ a rational homology $3$-sphere with positive scalar curvature. Choose a metric $g$ on $X$ which is cylindrical near the boundary and whose restriction to $Y$ has positive scalar curvature. Let $\mathfrak{s}$ be a spin$^c$-structure on $X$ and choose a spin$^c$-connection $A$ such that the curvature $F_A$ vanishes in a neighbourhood of $Y$. Then following \cite[\textsection 6]{man1}, the $APS$ index $ind_{APS}(X , \mathfrak{s} , A ,  g)$ of the spin$^c$-Dirac operator determined by $(X,\mathfrak{s},A,g)$ is given by:
\begin{align*}
ind_{APS}(X , \mathfrak{s} , A , g) &= \delta(X , \mathfrak{s}) + n(Y , \mathfrak{s} , g) \\
&= \delta(X , \mathfrak{s}) - \delta(Y , \mathfrak{s}),
\end{align*}
where we set
\[
\delta(X , \mathfrak{s}) = \frac{ c(\mathfrak{s})^2 - \sigma(X) }{8}.
\]
Interestingly, this shows that under the stated conditions on $A,g$, $ind_{APS}(X,\mathfrak{s},A,g)$ depends only $X,\mathfrak{s}$ and not on $A$ or $g$. As such we will often denote the APS index as $ind_{APS}(X,\mathfrak{s})$.

If $X$ has multiple boundary components, each of which is a rational homology $3$-sphere with positive scalar curvature then a similar result holds.

\section{Seiberg--Witten invariants for $4$-manifolds with psc boundary}\label{sec:swpsc}

Let $X$ be a compact, oriented, smooth $4$-manifold with boundary a disjoint union of rational homology spheres which admit metrics of positive scalar curvature. In such a case we will simply say that $X$ has psc boundary. For simplicity we will also assume $b_1(X) = 0$. We will always use metrics which are cylindrical near the boundary and whose restriction to each boundary component has positive scalar curvature. Let $\mathfrak{s}$ be a spin$^c$-structure on $X$. Using the results of Section \ref{sec:swf}, it follows that the relative Bauer--Furuta map of $(X , \mathfrak{s})$ in the sense of \cite{man1} can be represented by an $S^1$-equivariant map
\[
f : V^+ \wedge U^+ \to (V')^+ \wedge (U')^+,
\]
where $V,V'$ are complex vector spaces of ranks $a,a'$, $U,U'$ are real vector spaces of ranks $b,b'$, the superscript $+$ denotes one-point compactification. The $S^1$-action is trivial on $U,U'$ and acts by scalar multiplication on $V,V'$. The ranks $a,a',b,b'$ are such that $b'-b = b_+(X)$, $a-a' = ind_{APS}(X , \mathfrak{s})$. Furthermore, the restriction of $f$ to the $S^1$-fixed point set can be assumed to be given by a linear inclusion of vector spaces $U \to U'$. Thus $f$ is an abstract monopole map in the sense of \cite[Definition 2.1]{bar}. We define the expected dimension of the Seiberg--Witten moduli space to be
\begin{align*}
d(X , \mathfrak{s}) &= 2ind_{APS}(X,\mathfrak{s}) - b_+(X) - 1 \\
& =2 \delta(X , \mathfrak{s}) + 2 n(Y , \mathfrak{s}|_Y) - b_+(X) - 1 \\
& = 2 \delta(X , \mathfrak{s}) - 2 \delta(Y , \mathfrak{s}|_Y) - b_+(X) - 1
\end{align*}
where $n(Y, \mathfrak{s}|_Y)$ means that we sum up $n(Y_i , \mathfrak{s}|_{Y_i})$ over all boundary components (weighting by $-1$ for ingoing components) and $\delta(Y , \mathfrak{s}|_Y)$ is defined similarly. We are using that the boundary has positive scalar curvature to deduce that $n(Y , \mathfrak{s}|_Y) = -\delta(Y , \mathfrak{s}|_Y)$. The meaning of $d(X,\mathfrak{s})$ is that it is the dimension of $f^{-1}(\varphi)/S^1$ if $\varphi$ is an $S^1$-invariant regular value of $f$.

Assume $b_+(X)>1$. We then define the Seiberg--Witten invariant of $(X, \mathfrak{s})$ to be the abstract Seiberg--Witten invariant of its relative Bauer--Furuta map in the sense of \cite[Definition 2.3]{bar}. This takes the form of a map
\[
SW_{X , \mathfrak{s}} : H^*_{S^1}(pt ; \mathbb{Z}) \to \mathbb{Z}.
\]
Recall that $H^*_{S^1}(pt ; \mathbb{Z}) \cong \mathbb{Z}[x]$ where $x$ has degree $2$. Moreover, $SW_{X , \mathfrak{s}}( x^m ) = 0$ unless $d(X , \mathfrak{s}) = 2m$. We also define $SW(X,\mathfrak{s}) = SW_{X , \mathfrak{s}}( x^m)$ if $d(X,\mathfrak{s}) = 2m$ is positive and even and $SW(X ,\mathfrak{s}) = 0$ otherwise. We will also refer to $SW(X , \mathfrak{s})$ as the Seiberg--Witten invariant of $(X,\mathfrak{s})$. If $b_+(X) = 1$, then we again have Seiberg--Witten invariants, but now depending on a chamber. If $X$ has no boundary, then the relative Bauer--Furuta invariant of $(X , \mathfrak{s})$ coincides with the ordinary (non-relative) Bauer--Furuta invariant. Hence in such cases $SW(X,\mathfrak{s})$ coincides with the usual Seiberg--Witten invariant of $(X,\mathfrak{s})$ (by \cite[Theorem 2.24]{bako}).

The Seiberg--Witten invariants of $X$ do not depend on the choice of positive scalar curvature metric on the boundary. This follows because the space of such metrics on $\partial X$ is path-connected (in fact contractible) \cite[Theorem 1.1]{bakl}.

Suppose that $X_1,X_2$ are two connected $4$-manifolds with psc boundaries and that we glue $X_1$ and $X_2$ along a common boundary component, say $X = X_1 \cup_{Y} X_2$, where $Y$ is an outgoing component of $X_1$ and an ingoing component of $X_2$. The resulting $4$-manifold $X$ might depend on the choice of diffeomorphism used to identify the boundary components, but the gluing formula below implies that the Seiberg--Witten invariants of $X$ will not depend on this choice. Let $\mathfrak{s}_1, \mathfrak{s}_2$ be spin$^c$-structures on $X_1,X_2$ whose restrictions to $Y$ agree. Then (since $H^1(Y ; \mathbb{Z}) = 0$) there is a unique way to glue $\mathfrak{s}_1$ and $\mathfrak{s}_2$ together to form a spin$^c$-structure $\mathfrak{s} = \mathfrak{s}_1 \cup_Y \mathfrak{s}_2$ on $X$. Manolescu's gluing formula \cite{man2} implies that $f = f_1 \wedge f_2$, where $f_1,f_2$ are the relative Bauer--Furuta invariants for $(X_1 , \mathfrak{s}_1)$, $(X_2 , \mathfrak{s}_2)$ and $f$ is the relative Bauer--Furuta invariant of $(X , \mathfrak{s})$.

The abstract gluing formula for monopole maps \cite[Theorem 8.2]{bar} implies the following: 
\begin{theorem}\label{thm:pscglue}
Let $X = X_1 \cup_Y X_2$ be two $4$-manifolds with psc boundaries glued together along a common component. Assume that $b_1(X) = 0$ and $b_+(X) > 1$. If $b_+(X_1)$ and $b_+(X_2)$ are both positive, then $SW(X , \mathfrak{s}) = 0$. If $b_+(X_1) > 1$ and $b_+(X_2) = 0$, then
\[
SW_{X,\mathfrak{s}}( x^m ) = SW_{X_1 , \mathfrak{s}_1}( x^{m - ind_{APS}( X_2 , \mathfrak{s_2})} ).
\]
\end{theorem}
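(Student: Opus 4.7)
The plan is to combine Manolescu's gluing theorem for relative Bauer--Furuta invariants \cite{man2} with the abstract gluing formula for monopole maps \cite[Theorem 8.2]{bar}, since the entire machinery of Section \ref{sec:swpsc} has been set up precisely so that these two inputs apply in the present psc setting.

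First I would apply Manolescu's gluing theorem to conclude that the relative Bauer--Furuta invariant $f$ of $(X,\mathfrak{s})$ is $S^1$-stably homotopic to the smash product $f_1 \wedge f_2$, where $f_i : V_i^+ \wedge U_i^+ \to (V_i')^+ \wedge (U_i')^+$ is the relative Bauer--Furuta invariant of $(X_i,\mathfrak{s}_i)$. The hypothesis that $Y$ is a rational homology sphere of positive scalar curvature is essential: by Section \ref{sec:swf} the Seiberg--Witten Floer homotopy type $SWF(Y,\mathfrak{s}|_Y)$ is a fractional desuspension of $S^0$, which reduces the gluing along $Y$ to an ordinary smash product of maps between one-point compactifications of finite-dimensional representations, rather than a more delicate Spanier--Whitehead product over a nontrivial Floer spectrum. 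The assumption $b_1(X)=0$ removes any ambiguity in gluing the spin$^c$-structures $\mathfrak{s}_1$ and $\mathfrak{s}_2$ across $Y$.

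Next I would line up the rank data on both sides of this identification. From Section \ref{sec:swpsc} one has $b_+(X_i) = b_i' - b_i$ and $ind_{APS}(X_i,\mathfrak{s}_i) = a_i - a_i'$, and under the smash product both quantities add, recovering the additivity $b_+(X) = b_+(X_1) + b_+(X_2)$ and $ind_{APS}(X,\mathfrak{s}) = ind_{APS}(X_1,\mathfrak{s}_1) + ind_{APS}(X_2,\mathfrak{s}_2)$. In particular, when $b_+(X_2) = 0$ one obtains $d(X,\mathfrak{s}) = d(X_1,\mathfrak{s}_1) + 2\,ind_{APS}(X_2,\mathfrak{s}_2)$, which is exactly the shift of $x$-power appearing in the claimed formula. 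The hypothesis $b_+(X_1) > 1$ then ensures that $SW_{X_1,\mathfrak{s}_1}$ is well defined without chamber dependence, so that the right-hand side of the formula makes sense.

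Finally I would feed $f = f_1 \wedge f_2$ into the abstract gluing statement \cite[Theorem 8.2]{bar}. When both $b_+(X_i)$ are positive, each factor contributes a nontrivial excess in the $U$-direction and the smashed map cannot concentrate enough $S^1$-equivariant degree on a generic invariant fibre, forcing $SW_{X,\mathfrak{s}} = 0$. When $b_+(X_2)=0$ one has $U_2 \cong U_2'$, so $f_2$ is an $S^1$-equivariant map between the same real sphere factors that shifts only the complex coordinate by $a_2 - a_2' = ind_{APS}(X_2,\mathfrak{s}_2)$; the abstract formula then identifies $SW_{X,\mathfrak{s}}(x^m)$ with $SW_{X_1,\mathfrak{s}_1}$ evaluated on the correspondingly shifted power of $x$. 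The main obstacle I anticipate is purely bookkeeping: one needs to verify that the identifications produced by Manolescu's gluing theorem match the hypotheses of \cite[Theorem 8.2]{bar} on the nose, so that the fractional desuspensions used in the Floer homotopy formulation cancel correctly and no additional integer shift intrudes between $ind_{APS}$ and the auxiliary ranks $a_i,a_i'$.
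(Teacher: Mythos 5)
Your approach matches the paper's exactly: Theorem \ref{thm:pscglue} is deduced by combining Manolescu's gluing theorem (which gives the stable identification $f \simeq f_1 \wedge f_2$ of relative Bauer--Furuta invariants, simplified here because the Floer spectrum of a psc rational homology sphere is a fractional desuspension of $S^0$) with the abstract gluing formula for monopole maps \cite[Theorem 8.2]{bar}, together with the bookkeeping identities $b_+(X) = b_+(X_1) + b_+(X_2)$ and $ind_{APS}(X,\mathfrak{s}) = ind_{APS}(X_1,\mathfrak{s}_1) + ind_{APS}(X_2,\mathfrak{s}_2)$. One minor slip worth flagging: the uniqueness of the glued spin$^c$-structure $\mathfrak{s}_1 \cup_Y \mathfrak{s}_2$ comes from $H^1(Y;\mathbb{Z}) = 0$ (i.e.\ $Y$ being a rational homology sphere), not from $b_1(X) = 0$, though this does not affect the argument.
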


Note also that
\[
d(X , \mathfrak{s})  = d(X_1 , \mathfrak{s}_1) + 2 ind_{APS}(X_2 , \mathfrak{s}_2), \quad d(X,\mathfrak{s}) = d(X_1 , \mathfrak{s}_1) \; ({\rm mod} \; 2).
\]
Therefore, we deduce from Theorem \ref{thm:pscglue} that:
\[
SW(X , \mathfrak{s}) = \begin{cases} SW(X_1 , \mathfrak{s}_1) & \text{if } d(X,\mathfrak{s}) \ge 0, \\ 0 & \text{otherwise}. \end{cases}
\]
By repeatedly applying Theorem \ref{thm:pscglue}, we obtain a similar formula for $SW(X,\mathfrak{s})$ when $X$ is obtained from $X_1$ by attaching negative definite $4$-manifolds to some subset of the boundary components. 

\begin{definition}\label{def:sharp}
Let $W$ be a compact, oriented, smooth, negative definite $4$-manifold with psc boundary $Y$. Let $\mathfrak{s}$ be a spin$^c$-structure on $W$. We will say $\mathfrak{s}$ is {\em sharp} if $\delta(Y , \mathfrak{s}|_Y) = \delta(W , \mathfrak{s})$ (note that the Fr\o yshov inequality gives $\delta(Y , \mathfrak{s}|_Y) \ge \delta(W , \mathfrak{s})$ for any spin$^c$-structure on $W$).
\end{definition}

\begin{proposition}\label{prop:indle}
Let $W$ be a compact, oriented, smooth, negative definite $4$-manifold with psc boundary $Y$. Let $\mathfrak{s}$ be a spin$^c$-structure on $W$. Then $ind_{APS}( W , \mathfrak{s} ) \le 0$ with equality if and only if $\mathfrak{s}$ is sharp.
\end{proposition}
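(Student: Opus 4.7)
The plan is to reduce the claim to the Fr\o yshov inequality for negative-definite fillings via the APS-index identity already established in Section \ref{sec:swf}. Specifically, for a compact oriented $4$-manifold with psc boundary $Y$ and cylindrical-end metric, that section gives the formula
\[
ind_{APS}(W, \mathfrak{s}) \;=\; \delta(W, \mathfrak{s}) \;-\; \delta(Y, \mathfrak{s}|_Y),
\]
where $\delta(W, \mathfrak{s}) = (c(\mathfrak{s})^2 - \sigma(W))/8$. This identity is independent of the choice of spin$^c$-connection and metric (subject to the stated psc boundary condition).

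The next step is to invoke the Fr\o yshov inequality $\delta(Y, \mathfrak{s}|_Y) \ge \delta(W, \mathfrak{s})$, which is precisely the bound cited parenthetically in the definition of sharpness just above the proposition and which holds for every spin$^c$-structure on a negative-definite $4$-manifold bounding a psc rational homology sphere. Substituting into the APS-index identity yields $ind_{APS}(W, \mathfrak{s}) \le 0$ immediately. The equality case $ind_{APS}(W, \mathfrak{s}) = 0$ translates verbatim into $\delta(Y, \mathfrak{s}|_Y) = \delta(W, \mathfrak{s})$, which is the definition of $\mathfrak{s}$ being sharp.

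There is essentially no obstacle here beyond a sign-bookkeeping check: in the psc setting $n(Y, \mathfrak{s}|_Y, g) = -\delta(Y, \mathfrak{s}|_Y)$, which is what converts the index formula $ind_{APS} = \delta(W) + n(Y)$ into the form stated above; this is already recorded at the end of Section \ref{sec:swf}. Once the two earlier ingredients (the APS identity and Fr\o yshov's bound) are in hand, the proposition is a one-line rearrangement, so the emphasis of the proof should be simply to name the two inputs rather than to redo any computation.
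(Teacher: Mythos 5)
Your proof is correct and is precisely the argument given in the paper: both rely on the identity $ind_{APS}(W,\mathfrak{s}) = \delta(W,\mathfrak{s}) - \delta(Y,\mathfrak{s}|_Y)$ established in Section \ref{sec:swf} together with the Fr\o yshov inequality $\delta(Y,\mathfrak{s}|_Y) \ge \delta(W,\mathfrak{s})$, and the equality case is a verbatim restatement of the definition of sharpness.
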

\begin{proof}
Immediate from $ind_{APS}(W , \mathfrak{s}) = \delta(W , \mathfrak{s}) - \delta(Y , \mathfrak{s}|_Y)$ and the Fr\o yshov inequality $\delta(Y , \mathfrak{s}|_Y) \ge \delta(W , \mathfrak{s})$.
\end{proof}

\begin{proposition}
Let $W$ be a compact, oriented, smooth $4$-manifold with psc boundary and with $b_2(W) = 0$. Then every spin$^c$-structure on $W$ is sharp. Furthermore, $\delta(Y , \mathfrak{s}) = 0$ for every spin$^c$-structure on $Y$ that extends to a spin$^c$-structure on $W$.
\end{proposition}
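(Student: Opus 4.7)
The plan is to reduce everything to the equality case of the Fr\o yshov inequality, using that $b_2(W)=0$ forces $\delta(W,\mathfrak{s})=0$, and then to promote the resulting one-sided inequality to an equality by running the same argument on the orientation-reversed manifold $-W$.

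First I would unpack $\delta(W,\mathfrak{s})$. Since $b_2(W)=0$, the intersection form on $H^2(W;\mathbb{Z})/\text{tors}$ is zero, so both $\sigma(W)=0$ and $c(\mathfrak{s})^2=0$ for any spin$^c$-structure $\mathfrak{s}$ on $W$. By the definition of $\delta(W,\mathfrak{s})$ recalled in Section \ref{sec:swf}, this gives $\delta(W,\mathfrak{s})=0$. In particular $W$ is (vacuously) negative definite, so Proposition \ref{prop:indle} applies: $ind_{APS}(W,\mathfrak{s})\le 0$, with equality iff $\mathfrak{s}$ is sharp, equivalently iff $\delta(Y,\mathfrak{s}|_Y)=\delta(W,\mathfrak{s})=0$. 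The Fr\o yshov inequality immediately gives one direction, $\delta(Y,\mathfrak{s}|_Y)\ge\delta(W,\mathfrak{s})=0$.

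The main step (and the only nontrivial move) is the reverse inequality. For this I would consider the orientation reversal $-W$. Its boundary is $-Y$, which inherits a positive scalar curvature metric from that of $Y$ (psc is a property of the $3$-dimensional metric, unaffected by reversing the orientation of the bounding $4$-manifold), so $-W$ is still a $4$-manifold with psc boundary and with $b_2(-W)=0$. The same computation as above applied to $(-W,\bar{\mathfrak{s}})$ yields $\delta(-W,\bar{\mathfrak{s}})=0$ and hence, by Fr\o yshov again,
\[
\delta(-Y,\bar{\mathfrak{s}}|_{-Y})\ge 0.
\]
Using the standard symmetry $\delta(-Y,\bar{\mathfrak{s}})=-\delta(Y,\mathfrak{s})$ of the Fr\o yshov invariant, this becomes $\delta(Y,\mathfrak{s}|_Y)\le 0$. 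Combined with the first inequality, $\delta(Y,\mathfrak{s}|_Y)=0=\delta(W,\mathfrak{s})$, so $\mathfrak{s}$ is sharp. This simultaneously proves both assertions, since any spin$^c$-structure on $Y$ that extends to $W$ is of the form $\mathfrak{s}|_Y$ for some $\mathfrak{s}$ on $W$.

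The only point that might need a brief justification beyond what is stated in the excerpt is the conjugation symmetry $\delta(-Y,\bar{\mathfrak{s}})=-\delta(Y,\mathfrak{s})$, which I would either cite as a standard property of the Fr\o yshov/Ozsv\'ath--Szab\'o $d$-invariant or derive directly from the metric formula $\delta(Y,\mathfrak{s})=-\tfrac12\eta_{dir}+\tfrac18\eta_{sig}$ in Section \ref{sec:swf}, since both $\eta$-invariants change sign under orientation reversal. No genuine obstacle arises; the argument is essentially a two-line application of Fr\o yshov once the vanishing $\delta(W,\mathfrak{s})=0$ is observed.
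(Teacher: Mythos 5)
Your proposal is correct and is essentially the paper's own argument: the paper applies Proposition \ref{prop:indle} to both orientations of $W$ to force $ind_{APS}(W,\mathfrak{s})=0$, which is exactly your two-sided Fr\o yshov comparison phrased in terms of the APS index (Proposition \ref{prop:indle} is the Fr\o yshov inequality repackaged), and then uses $b_2(W)=0$ to conclude $\delta(W,\mathfrak{s})=0$ and hence $\delta(Y,\mathfrak{s}|_Y)=0$.
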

\begin{proof}
Let $\mathfrak{s}$ be a spin$^c$-structure on $W$. Applying Proposition \ref{prop:indle} to both orientations on $W$ gives $ind_{APS}(W , \mathfrak{s}) = 0$. Furthermore, $0 = ind_{APS}(W , \mathfrak{s}) = \delta(W , \mathfrak{s}) - \delta(Y , \mathfrak{s}|_Y)$. But $b_2(W) = 0$ implies that $\delta(W , \mathfrak{s}) = 0$ and hence $\delta(Y , \mathfrak{s}|_Y) = 0$.
\end{proof}

\begin{proposition}
Suppose that $W$ is a negative definite plumbing on an almost rational graph (\cite[Definition 8.1]{nem}) and suppose that the boundary $Y$ of $W$ is a rational homology $3$-sphere which admits a positive scalar curvature metric. Then every spin$^c$-structure on $Y$ admits an extension to a sharp spin$^c$-structure on $W$.
\end{proposition}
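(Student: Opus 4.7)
The plan is to reduce the proposition to the combinatorial Ozsv\'ath--Szab\'o/N\'emethi formula for the $d$-invariants of rational homology 3-spheres that bound negative-definite plumbings on almost rational graphs. Two preliminary facts are used. First, since $W$ is a plumbing on a tree it is simply connected, so the long exact sequence of the pair $(W,Y)$ together with Poincar\'e--Lefschetz duality shows that the restriction $H^2(W;\mathbb{Z}) \to H^2(Y;\mathbb{Z})$ is surjective; hence every spin$^c$-structure $\mathfrak{t}$ on $Y$ admits at least one extension $\mathfrak{s}$ to $W$, and as $\mathfrak{s}$ varies over all such extensions the class $c_1(\mathfrak{s})$ traces out an affine coset $\mathcal{K}_\mathfrak{t}$ of $2H^2(W;\mathbb{Z})$ inside the set of characteristic vectors of the negative-definite lattice $H^2(W;\mathbb{Z})$. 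Second, because this lattice is negative definite, the quadratic form $K \mapsto K^2$ on $\mathcal{K}_\mathfrak{t}$ is bounded above and has finite level sets, so its maximum is attained at some $K_0 \in \mathcal{K}_\mathfrak{t}$.

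The key input is the theorem of Ozsv\'ath--Szab\'o (for plumbings with at most one bad vertex) extended by N\'emethi to all almost rational graphs, which asserts
\[
d(Y,\mathfrak{t}) \;=\; \max_{K \in \mathcal{K}_\mathfrak{t}} \frac{K^2 + |V|}{4},
\]
where $|V|$ denotes the number of vertices of the plumbing graph (so that $\sigma(W) = -|V|$). The positive scalar curvature hypothesis on $Y$ is fully compatible with this formula: for an AR plumbing, $Y$ is automatically an L-space, matching the fact that a psc rational homology sphere has $SWF(Y,\mathfrak{s}) \simeq S^0$, as used in Section \ref{sec:swf}. Using the normalisation $\delta = d/2$ recalled in Section \ref{sec:swf}, the formula reads
\[
\delta(Y,\mathfrak{t}) \;=\; \max_{\mathfrak{s}|_Y = \mathfrak{t}} \frac{c(\mathfrak{s})^2 - \sigma(W)}{8} \;=\; \max_{\mathfrak{s}|_Y = \mathfrak{t}} \delta(W,\mathfrak{s}).
\]

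Taking $\mathfrak{s}$ to be any spin$^c$-structure on $W$ with $c_1(\mathfrak{s}) = K_0$ produces an extension of $\mathfrak{t}$ that realises this maximum and is therefore sharp, proving the proposition. The only substantive input is the Ozsv\'ath--Szab\'o/N\'emethi formula; if one insisted on a self-contained argument, the real obstacle would be to reprove this formula, which is the heart of N\'emethi's lattice cohomology theory for almost rational graphs. The only minor bookkeeping point is matching conventions, namely the identification of the Fr\o yshov invariant $\delta$ used throughout the paper with one half of the Ozsv\'ath--Szab\'o $d$-invariant, which is standard and already noted in Section \ref{sec:swf}.
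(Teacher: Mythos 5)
Your proof is correct and takes the same route as the paper, which simply declares the proposition ``immediate from \cite[Theorem 8.3]{nem} and the definition of sharpness''; you have spelled out the reduction explicitly (extendability of spin$^c$-structures via simple connectivity, the N\'emethi/Ozsv\'ath--Szab\'o formula $d(Y,\mathfrak{t}) = \max_K (K^2 + |V|)/4$, the normalisation $\delta = d/2$, and attainment of the maximum by negative definiteness), which is exactly the content the paper leaves implicit. One small inaccuracy: your parenthetical claim that ``for an AR plumbing, $Y$ is automatically an L-space'' is false --- the Brieskorn sphere $\Sigma(2,3,7)$ bounds an almost rational (one-bad-vertex) negative definite plumbing but is not an L-space. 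What is true, and what you actually need, is the reverse implication: the positive scalar curvature hypothesis forces $SWF(Y,\mathfrak{s}) \simeq S^0$, hence $Y$ is an L-space, and in that situation the grading of the bottom of the $U$-tower in N\'emethi's lattice-cohomology description is precisely $\max_K (K^2+|V|)/4$. Since you invoke psc anyway, the logic goes through unchanged.
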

\begin{proof}
Immediate from \cite[Theorem 8.3]{nem} and the definition of sharpness.
\end{proof}

\subsection{Equivariant and reduced Seiberg--Witten invariants}\label{sec:eqred}

Let $p$ be a prime and $G = \mathbb{Z}_p = \langle \sigma \rangle$ a cyclic group of prime order. Suppose that $X$ is a closed, oriented smooth $4$-manifold with $b_1(X) = 0$. Suppose that $G$ acts on $X$ by orientation preserving diffeomorphism and that $dim( H^+(X)^G ) > 1$. Let $\mathfrak{s}$ be a $G$-invariant spin$^c$-structure. Then the {\em $G$-equivariant Seiberg--Witten invariant} of $(X , \mathfrak{s})$ takes the form of a map of $H^*_{\mathbb{Z}_p}(pt)$-modules
\[
SW_{G , X , \mathfrak{s}} : H^*_{G_{\mathfrak{s}}}(pt) \to H^{* - d(X,\mathfrak{s})}_{\mathbb{Z}_p}(pt)
\]
where $G_{\mathfrak{s}}$ is a certain $S^1$-central extension of $G$ and where the cohomology groups will be taken with $\mathbb{Z}_p$-coefficients. In the case $G = \mathbb{Z}_p$, the central extension $G_{\mathfrak{s}}$ is trivial and a choice of splitting $G_{\mathfrak{s}} \cong S^1 \times \mathbb{Z}_p$ is equivalent to a choice of lift of the $G$-action to $\mathfrak{s}$. Fixing such a choice we have a corresponding isomorphism $H^*_{G_{\mathfrak{s}}}(pt) \cong H^*_{\mathbb{Z}_p}(pt)[x]$, where $deg(x) = 2$ and then $SW_{G , X , \mathfrak{s}}$ is equivalent to giving a series of cohomology classes
\[
SW_{G , X , \mathfrak{s}}(x^m) \in H^{2m-d(X,\mathfrak{s})}_{\mathbb{Z}_p}(pt).
\]
Under the forgetful map $H^*_{\mathbb{Z}_p} \to H^*(pt) = \mathbb{Z}_p$, $SW_{G , X , \mathfrak{s}}(x^m)$ reduces to $SW_{X , \mathfrak{s}}(x^m)$.

In addition to the equivariant Seiberg--Witten invariants, we have another series of invariants which in \cite{bar} are referred to as the {\em reduced Seiberg--Witten invariants} $\overline{SW}^s_{G , X,\mathfrak{s}}$. Let $(X , \mathfrak{s})$ be as above and let $s : G \to G_{\mathfrak{s}}$ be a choice of splitting of the extension $G_{\mathfrak{s}}$. Then the {\em reduced Seiberg--Witten invariant} of $(X , \mathfrak{s})$ with respect to the splitting $s$ takes the form of a map
\[
\overline{SW}^s_{G , X , \mathfrak{s}} : H^*_{S^1}(pt) \to \mathbb{Z}.
\]
Let $d(X , \mathfrak{s})^{sG}$ denote the expected dimension of the moduli space of $sG$-invariant solutions to the Seiberg--Witten equations, namely
\[
d(X , \mathfrak{s})^{sG} = 2 ind(X , \mathfrak{s})^G - b_+(X)^G - 1
\]
where $ind(X , \mathfrak{s})^{sG}, b_+(X)^G$ denote the dimensions of the $G$-invariant parts of $ind(X,\mathfrak{s})$ (the index of the spin$^c$-Dirac operator and where $G$ acts via the splitting $s$) and $H^+(X)$. Then $\overline{SW}^s_{G , X , \mathfrak{s}}(x^m) = 0$ except when $d(X , \mathfrak{s})^{sG} = 2m$, where $m \ge 0$. Similar to the ordinary Seiberg--Witten invariant we define $\overline{SW}^s_G(X , \mathfrak{s}) = \overline{SW}^s_{G , X , \mathfrak{s}}(x^m)$ if $d(X,\mathfrak{s})^{sG} = 2m$ is even and non-negative and $\overline{SW}^s_G(X , \mathfrak{s}) = 0$ otherwise.

Supppose now that $X$ is a compact, oriented, smooth $4$-manifold with $b_1(X) = 0$ and psc boundary. Suppose that $G = \mathbb{Z}_p$ acts smoothly and orientation preservingly on $X$. Assume that $G$ sends each component of $\partial X$ to itself. Assume further that the $G$-action on each component of $\partial X$ preserves a positive scalar curvature metric. Then it is possible to choose a $G$-invariant cylindrical end metric on $X$ which has positive scalar curvature on the boundary (choose a cylindrical end metric which near the boundary is $G$-invariant and has positive scalar curvature and then average the metric over $G$). Then we can construct the $G$-equivariant relative Bauer--Furuta invariant of $(X , \mathfrak{s})$ as done in \cite[\textsection 4]{bh}. Since this is an abstract $G$-monopole map in the sense of \cite[Definition 2.1]{bar}, we can define equivariant and reduced Seiberg--Witten invariants of $(X , \mathfrak{s})$ in exactly the same way as in the case where $X$ is closed. The main difference is that the index $ind(X , \mathfrak{s})$ of the spin$^c$-Dirac operator must now be replaced with the corresponding $APS$ index $ind_{APS}(X , \mathfrak{s})$.

Let $X$ be a compact, oriented, smooth $4$-manifold with psc boundary and $b_1(X) = 0$. Let $G = \mathbb{Z}_p$ act on $X$ by orientation preserving diffeomorphisms which send each component of $\partial X$ to itself and which preserves a psc metric on the boundary. Assume that $b_+(X)^G > 1$. Let $\mathfrak{s}$ be a $G$-invariant spin$^c$-structure. Choose a splitting $s : G \to G_{\mathfrak{s}}$. Then for each $j \in \mathbb{Z}_p$, we obtain a new splittting $s_j : G \to G_{\mathfrak{s}}$ by setting $s_j(\sigma) = \omega^{-j} s(\sigma)$, where $\omega = e^{2\pi i/p}$. Any splitting $G \to G_{\mathfrak{s}}$ is given by $s_j$ for some $j$. The splitting $s = s_0$ determines a lift of $G$ to the spinor bundles and hence makes $ind_{APS}(X , \mathfrak{s}_0)$ into a virtual representation of $G$. Define $d_0, \dots , d_{p-1} \in \mathbb{Z}$ by $ind_{APS}(X , \mathfrak{s}_0) = \bigoplus_{j=0}^{p-1} \mathbb{C}_j^{d_j}$, where $\mathbb{C}_j$ is the $1$-dimensional representation of $G$ for which $\sigma$ acts as multiplication by $\omega^j$. Let us also set $b_0 = b_+(X)^G$.

For odd $p$ we have $H^*_{\mathbb{Z}_p} \cong \mathbb{Z}_p[u,v]/(u^2)$, where $deg(u) = 1$, $deg(v)=2$. For $p=2$ we have $H^*_{\mathbb{Z}_2} \cong \mathbb{Z}_2[u]$, where $deg(u) = 1$. In this case we set $v = u^2$ and sometimes write $u = v^{1/2}$. Given integers $n,n_0, \dots , n_{p-1}$ and $j \in \mathbb{Z}_p$, define $c_j(n ; n_0 , \dots , n_{p-1}) \in H^*_{\mathbb{Z}_p}$ by
\[
c_j(n ; n_0 , \dots , n_{p-1}) = \left( \sum_{k_i} \prod_{i | i \neq j} \binom{n_i}{k_i} (i-j)^{n_i-k_i} \right) v^{\sum_i n_i - n}
\]
where the sum is over non-negative integers $k_0 , \dots , \hat{k}_j , \dots , k_{p-1}$ such that $k_0 + \cdots + k_{p-1} = n-n_j$. We extend the definition of $c_j$ to non-integer values of $n$ by taking $c_j(n ; n_0 , \dots , n_{p-1}) = 0$ if $n$ is non-integral.

Let $(X,\mathfrak{s})$ be as above. The following result results relate the equivariant and reduced Seiberg--Witten invariants of $(X, \mathfrak{s})$:

\begin{theorem}\label{thm:eqre}
For any $m_0, \dots , m_p \ge 0$, we have
\begin{align*}
& SW_{G , X , \mathfrak{s}}( x^{m_0} (x+v)^{m_1} \cdots (x+(p-1)v)^{m_{p-1}}) \\
\quad \quad &= e_{G}( H^+(X)/H^+(X)^G ) \sum_{j=0}^{p-1} c_j\left( -\frac{ (b_0+1) }{2} \, ; m_0-d_0 , \dots , m_{p-1} - d_{p-1} \right) \overline{SW}^{s_j}_{G}(X , \mathfrak{s})
\end{align*}
where $e_{G}( H^+(X)/H^+(X)^G) \in H_{G}^{b_+(X) - b_0}(pt)$ is the $G$-equivariant Euler class of $H^+(X)/H^+(X)^G$.
\end{theorem}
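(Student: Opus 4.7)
The plan is to derive this theorem from the abstract equivariant/reduced comparison for $G$-monopole maps established in \cite{bar} for closed $4$-manifolds, after verifying that the relative Bauer--Furuta invariant of $(X, \mathfrak{s})$ in the psc-boundary setting supplies such an abstract $G$-monopole map. Choosing a $G$-invariant cylindrical-end metric on $X$ whose restriction to $\partial X$ has positive scalar curvature (obtained by $G$-averaging), the construction of \cite{bh} recalled in Section \ref{sec:swpsc} produces a $G \times S^1$-equivariant map $f : V^+ \wedge U^+ \to (V')^+ \wedge (U')^+$. With respect to the splitting $s_0$, the virtual complex $G$-representation $V \ominus V'$ equals $ind_{APS}(X, \mathfrak{s}_0) = \bigoplus_{j=0}^{p-1} \mathbb{C}_j^{d_j}$ by the equivariant APS index theorem, and the real virtual $G$-representation $U' \ominus U$ has $G$-fixed part of real dimension $b_0 = b_+(X)^G$ with complementary equivariant Euler class $e_G(H^+(X)/H^+(X)^G)$. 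Thus $f$ is an abstract $G$-monopole map in the sense of \cite[Definition 2.1]{bar} and the abstract theorem applies, with $ind(X,\mathfrak{s})$ in the closed case replaced throughout by $ind_{APS}(X,\mathfrak{s})$.

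The content of the abstract comparison is $G$-equivariant localization. The reduced invariant $\overline{SW}^{s_j}_G(X, \mathfrak{s})$ is extracted from the restriction of $f$ to the $s_j G$-fixed subspace, for which $V^{s_j G} = \mathbb{C}_j^{d_j}$ in the original decomposition. Localizing $SW_{G, X, \mathfrak{s}}$ to this fixed locus produces the equivariant Euler class of the complementary $V$-summands, namely a product of factors of the form $(i-j)v$ (with multiplicity $d_i$, for $i \neq j$), together with the factor $e_G(H^+(X)/H^+(X)^G)$ from the real normal directions. Restricting the test class $\prod_i (x + iv)^{m_i}$ to the $s_j$-fixed component is effected by the variable change $x \mapsto x - jv$, which turns it into $\prod_i (x + (i-j)v)^{m_i}$; expanding in multinomial fashion and collecting the coefficient of the unique $x$-power paired nontrivially by $\overline{SW}^{s_j}_G$ yields precisely the combinatorial coefficient $c_j(-(b_0+1)/2;\, m_0 - d_0, \dots, m_{p-1} - d_{p-1})$ of the statement. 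The entry $-(b_0+1)/2$ encodes the degree shift determined by $d(X,\mathfrak{s})^{s_j G} = 2 d_j - b_0 - 1$, and the condition that $c_j$ vanishes on non-integer input records the obvious fact that $\overline{SW}^{s_j}_G$ vanishes unless this virtual dimension is a non-negative even integer.

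The main obstacle, such as it is, is bookkeeping rather than genuine Seiberg--Witten input. One must confirm that the psc-boundary Bauer--Furuta invariant fits the hypotheses of the abstract theorem: the metric choices are available by $G$-averaging, and $ind_{APS}(X, \mathfrak{s}_0)$ defines a well-defined virtual $G$-representation because the APS index on a cylindrical end is independent of the auxiliary connection and metric (Section \ref{sec:swf}) and the $G$-action on kernel and cokernel of the Dirac operator makes it into a canonical element of the representation ring of $G$. Once these identifications are in place, the algebraic manipulation producing the $c_j$ coefficients is formally identical to the closed-manifold argument in \cite{bar} and introduces no new analytic ingredient.
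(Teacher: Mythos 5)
Your proposal takes the same route as the paper: reduce to the abstract comparison theorem for $\mathbb{Z}_p$-monopole maps in \cite[Theorem 6.12]{bar} and observe that the psc-boundary relative Bauer--Furuta map (constructed via $G$-averaged cylindrical-end metrics as in Section \ref{sec:swpsc}) is an abstract $G$-monopole map to which that theorem applies, with $ind(X,\mathfrak{s})$ replaced by $ind_{APS}(X,\mathfrak{s})$. The paper's proof is exactly this two-line reduction; your additional sketch of the underlying localization argument and the origin of the $c_j$ coefficients is a correct description of what \cite{bar} does internally but is not needed here, since that theorem is being cited as a black box for abstract monopole maps.
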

\begin{proof}
In the case where $X$ is closed, this is \cite[Theorem 6.12]{bar}. The proof in \cite{bar} works more generally for any abstract $\mathbb{Z}_p$-monopole map. Hence the result carries over without difficulty to the case where $X$ has psc boundary.
\end{proof}

Note that Theorem \ref{thm:eqre} is an equality in the equivariant cohomology ring, $H^*_{\mathbb{Z}_p}$, hence it can be regarded as an equality mod $p$. Theorem \ref{thm:eqre} does not imply an equality of integer-valued Seiberg--Witten invariants.

When $p=2$, Theorem \ref{thm:eqre} simplifies to
\[
SW_{G , X , \mathfrak{s}}( x^{m_0}(x+v)^{m_1}) = \left( \binom{m_1 - d_1}{\delta_0 - m_0} \overline{SW}^{s_0}_{G}(X , \mathfrak{s}) + \binom{m_0-d_0}{\delta_1 - m_1} \overline{SW}^{s_1}_G(X , \mathfrak{s}) \right) v^{m_0 + m_1 - \delta}
\]
where $\delta = d(X , \mathfrak{s})/2 = (d_0 + d_1) - b_+(X)-1$, $\delta_i = d(X , \mathfrak{s})^{s_i}/2 = d_i - b_+(X)^G - 1$.

\begin{proposition}\label{prop:ubcov}
Let $X$ be a compact, oriented, smooth $4$-manifold with psc boundary and $b_1(X) = 0$. Let $\pi : \widetilde{X} \to X$ be an unbranched covering of degree $d$. Let $\mathfrak{s}$ be a spin$^c$-structure on $X$ and let $\widetilde{\mathfrak{s}} = \pi^*(\mathfrak{s})$. Assume that $b_1(\widetilde{X}) = 0$. Let $Y = \partial X$ and $\widetilde{Y} = \partial \widetilde{X}$, oriented as ingoing boundaries. Then
\begin{align*}
\sigma(\widetilde{X}) &= d \sigma(X) + \rho(Y) \\
b_+(\widetilde{X}) &= d b_+(X) + (d-1) + \frac{1}{2}( b_0(\widetilde{Y}) - d b_0(Y) ) + \frac{1}{2}\rho(Y) \\
ind_{APS}(\widetilde{X} , \widetilde{\mathfrak{s}}) &= d \, ind_{APS}(X,\mathfrak{s}) + \delta(\widetilde{Y},\widetilde{\mathfrak{s}}) - d \delta(Y ,\mathfrak{s}) - \frac{1}{8}\rho(Y) \\
d(\widetilde{X} , \widetilde{\mathfrak{s}}) &= d \, d(X,\mathfrak{s}) - \frac{1}{2}(b_0(\widetilde{Y}) - d b_0(Y)) + 2(\delta(\widetilde{Y},\widetilde{\mathfrak{s}}) - d \delta(Y,\mathfrak{s})) - \frac{3}{4}\rho(Y)
\end{align*}
where $\rho(Y) = \eta_{sig}( \widetilde{Y} , \pi^*(g) ) - d \eta_{sig}(Y , g)$ denotes the rho invariant of the cover $\widetilde{Y} \to Y$.
\end{proposition}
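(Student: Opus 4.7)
The four formulas form a tight chain: (1)--(3) use standard index-theoretic input, while (4) is an algebraic consequence. I would tackle them in order.

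For (1), I apply the APS signature theorem $\sigma(X) = \int_X L(X) - \eta_{sig}(\partial X)$, with the caveat that the proposition uses the ingoing boundary orientation, under which eta invariants flip sign. Since $\pi$ is unbranched we have $L(\widetilde{X}) = \pi^* L(X)$ and $\int_{\widetilde{X}} \pi^*\alpha = d \int_X \alpha$, so $\int_{\widetilde{X}} L(\widetilde{X}) = d \int_X L(X)$; combining these with the definition of $\rho(Y)$ gives the formula directly.

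For (2), I first observe that since $\partial X$ is a union of rational homology spheres, the long exact sequence of $(X,\partial X)$ forces $H^2(X,\partial X;\mathbb{Q}) \cong H^2(X;\mathbb{Q})$, so the intersection form on $H^2(X;\mathbb{Q})$ is nondegenerate and $b_2(X) = b_+(X) + b_-(X)$ (and similarly for $\widetilde{X}$), giving $b_+ = (b_2 + \sigma)/2$. To compute $b_2(\widetilde{X}) - d\, b_2(X)$, I use Euler characteristic multiplicativity $\chi(\widetilde{X}) = d \chi(X)$. The long exact sequence of $(X, \partial X)$ combined with $b_1(X) = 0$ and Lefschetz duality $H^3(X) \cong H_1(X, \partial X)$ yields $b_3(X) = b_0(Y) - 1$ and hence $\chi(X) = 2 + b_2(X) - b_0(Y)$; likewise for $\widetilde{X}$. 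Equating $\chi(\widetilde{X}) = d\chi(X)$ gives $b_2(\widetilde{X}) - d\, b_2(X) = 2(d-1) + b_0(\widetilde{Y}) - d\, b_0(Y)$, and combining with (1) via $b_+ = (b_2 + \sigma)/2$ yields the stated formula.

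For (3), I use the identity $ind_{APS}(X, \mathfrak{s}) = \delta(X, \mathfrak{s}) + \delta(Y, \mathfrak{s})$ from Section \ref{sec:swf} (adapted to the ingoing sign convention), where $\delta(X, \mathfrak{s}) = (c(\mathfrak{s})^2 - \sigma(X))/8$. Since $c(\widetilde{\mathfrak{s}}) = \pi^* c(\mathfrak{s})$, we have $c(\widetilde{\mathfrak{s}})^2 = d \, c(\mathfrak{s})^2$, and combined with (1) this gives $\delta(\widetilde{X}, \widetilde{\mathfrak{s}}) = d\,\delta(X, \mathfrak{s}) - \rho(Y)/8$. Substituting this, together with $\delta(X, \mathfrak{s}) = ind_{APS}(X, \mathfrak{s}) - \delta(Y, \mathfrak{s})$, into $ind_{APS}(\widetilde{X}, \widetilde{\mathfrak{s}}) = \delta(\widetilde{X}, \widetilde{\mathfrak{s}}) + \delta(\widetilde{Y}, \widetilde{\mathfrak{s}})$ and rearranging gives (3). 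Finally, (4) follows by substituting (1), (2), and (3) into the definition $d(X, \mathfrak{s}) = 2\, ind_{APS}(X, \mathfrak{s}) - b_+(X) - 1$ and performing a short arithmetic simplification.

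The main potential pitfall is not mathematical depth but orientation bookkeeping. The identity $ind_{APS} = \delta(X) - \delta(Y)$ in Section \ref{sec:swf} is stated with the outgoing boundary orientation, whereas the proposition specifies ingoing orientations on $Y$ and $\widetilde{Y}$; under orientation reversal both $\delta(Y)$ and $\rho(Y)$ flip sign. Keeping these consistent throughout is the only real care required -- once the conventions are fixed, each step reduces to a routine application of standard tools.
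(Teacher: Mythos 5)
Your proposal is correct and follows essentially the same route as the paper: APS signature theorem for $\sigma(\widetilde{X})$, Euler characteristic multiplicativity plus $b_1 = 0$ and Poincar\'e--Lefschetz duality to get $b_3(X) = b_0(Y)-1$ and hence $b_+$, the identity $ind_{APS} = \delta(X,\mathfrak{s}) + \delta(Y,\mathfrak{s}|_Y)$ (ingoing convention) together with $c(\widetilde{\mathfrak{s}})^2 = d\,c(\mathfrak{s})^2$, and then substitution into $d(X,\mathfrak{s}) = 2\,ind_{APS} - b_+ - 1$. Your extra remarks---the nondegeneracy of the intersection form justifying $b_+ = (b_2+\sigma)/2$, and the explicit caution about ingoing versus outgoing orientation conventions---are sound and only make the argument more careful than the paper's, which states the same steps more tersely.
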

\begin{proof}
Fix a metric $g$ on $X$ which is a product near the boundary and has positive scalar curvature on $Y$. The APS index theorem for the signature gives
\[
\sigma(X) = \frac{1}{3} \int_X p_1(X,g) + \eta_{sig}(Y,g)
\]
where $p_1(X,g)$ denotes the first Pontryagin form on $X$ with respect to $g$. Similarly
\[
\sigma(\widetilde{X}) = \frac{1}{3} \int_{\widetilde{X}} p_1(\widetilde{X}, \pi^*(g)) + \eta_{sig}(\widetilde{Y},\pi^*(g)).
\]
Since $p_1(\widetilde{X}) = \pi^*(p_1(X))$, we have $\int_{\widetilde{X}} p_1(\widetilde{X}) = d \int_X p_1(X)$ and hence $\sigma(\widetilde{X}) = d \sigma(X) + \rho(Y)$.

Let $r = b_0(Y)$, $\widetilde{r} = b_0(\widetilde{Y})$. Since $b_1(X) = 0$, $H^3(X ; \mathbb{R}) \cong H_1(X , Y ; \mathbb{R})$. The long exact sequence for $(X,Y)$ then implies that $b_3(X) = r-1$ if $r > 0$ or $b_3(X) = 0$ or $r=0$. Hence $\chi(X) = 2 + b_2(X) - r$. Similarly $b_1(\widetilde{X}) = 0$ implies that $\chi(\widetilde{X}) = 2 + b_2(\widetilde{X}) - \widetilde{r}$. But since $\widetilde{X} \to X$ is an unbranched covering, $\chi(\widetilde{X}) = d \chi(X)$ and this gives $b_2(\widetilde{X}) = d b_2(X) + 2(d-1) + \widetilde{r} - dr$. Then
\begin{align*}
b_+(\widetilde{X}) &= \frac{ b_2(\widetilde{X}) + \sigma(\widetilde{X}) }{2} \\
&= \frac{ d b_2(X) + 2(d-1) + \widetilde{r} - dr + d \sigma(X) + \rho(Y) }{2} \\
&= d b_+(X) + (d-1) + \frac{1}{2}(\widetilde{r} - dr) + \frac{1}{2}\rho(Y).
\end{align*}
Next, we have
\begin{align*}
\delta( \widetilde{X} , \widetilde{\mathfrak{s}}) &= \frac{ c(\widetilde{\mathfrak{s}})^2 - \sigma(\widetilde{X}) }{8} \\
&= \frac{ d c(\mathfrak{s})^2 - d \sigma(X) - \rho(Y) }{8} \\
&= d \, \delta(X , \mathfrak{s}) - \frac{1}{8}\rho(Y).
\end{align*}
Hence
\begin{align*}
ind_{APS}( \widetilde{X} , \widetilde{\mathfrak{s}}) &= \delta(\widetilde{X} , \widetilde{\mathfrak{s}}) + \delta(\widetilde{Y} , \widetilde{\mathfrak{s}}) \\
&= d \, \delta(X , \mathfrak{s}) - \frac{1}{8}\rho(Y) + \delta(\widetilde{Y} , \widetilde{\mathfrak{s}}) \\
&= d \, ind_{APS}(X , \mathfrak{s}) - \frac{1}{8}\rho(Y) + \delta(\widetilde{Y} , \widetilde{\mathfrak{s}}) - d \, \delta(Y , \mathfrak{s}).
\end{align*}
Lastly, we have
\begin{align*}
d(\widetilde{X} , \widetilde{\mathfrak{s}}) &= 2 ind_{APS}(\widetilde{X} , \widetilde{\mathfrak{s}}) - b_+(\widetilde{X}) - 1 \\
&= 2d \, ind_{APS}(X , \mathfrak{s}) - \frac{1}{4}\rho(Y) + 2(\delta(\widetilde{Y} , \widetilde{\mathfrak{s}}) - d \, \delta(Y , \mathfrak{s})) - d b_+(X) - d -\frac{1}{2}( \widetilde{r} - dr) - \frac{1}{2}\rho(Y) \\
&= 2d \, ind_{APS}(X , \mathfrak{s}) - \frac{3}{4}\rho(Y) + 2(\delta(\widetilde{Y} , \widetilde{\mathfrak{s}}) - d \, \delta(Y , \mathfrak{s})) - d b_+(X) - d -\frac{1}{2}( \widetilde{r} - dr) \\
&= d \, d(X , \mathfrak{s}) + 2(\delta(\widetilde{Y} , \widetilde{\mathfrak{s}}) - d \, \delta(Y , \mathfrak{s})) -\frac{1}{2}( \widetilde{r} - dr) - \frac{3}{4}\rho(Y).
\end{align*}

\end{proof}

The following result relates the reduced Seiberg--Witten invariants for a free action to the ordinary Seiberg--Witten invariants of the quotient space.
\begin{proposition}\label{prop:free}
Let $\widetilde{X}$ be a compact, oriented, smooth $4$-manifold with psc boundary and $b_1(\widetilde{X}) = 0$. Let $G = \mathbb{Z}_p$ act freely on $\widetilde{X}$ by orientation preserving diffeomorphisms which send each component of $\partial \widetilde{X}$ to itself and which preserves a psc metric on the boundary. Hence the quotient $X = \widetilde{X}/G$ also has psc boundary. Assume that $b_+(\widetilde{X})^G > 1$. Let $\widetilde{\mathfrak{s}}$ be a $G$-invariant spin$^c$-structure on $\widetilde{X}$ and let $s : G \to G_{\widetilde{\mathfrak{s}}}$ be a splitting. The splitting makes $\widetilde{\mathfrak{s}}$ into a $G$-equivariant spin$^c$-structure which descends to a spin$^c$-structure $\mathfrak{s}$ on $X$. Then we have an equality
\[
SW(X , \mathfrak{s}) = \overline{SW}_{\mathbb{Z}_p}^s( \widetilde{X} , \widetilde{\mathfrak{s}}).
\]
\end{proposition}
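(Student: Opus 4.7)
The plan is to show that the ordinary relative Bauer--Furuta invariant of $(X,\mathfrak{s})$ is naturally identified with the $sG$-fixed part of the $G$-equivariant relative Bauer--Furuta invariant of $(\widetilde{X},\widetilde{\mathfrak{s}})$; once this is done, the definition of the reduced invariant $\overline{SW}^s_G$ as the invariant associated to the fixed-point restriction gives the claimed equality.

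First, I would fix a $G$-invariant Riemannian metric $\widetilde{g}$ on $\widetilde{X}$ which is cylindrical and psc near the boundary; since $G$ acts freely and preserves boundary components, $\widetilde{g}$ descends to a metric $g$ on $X$ with the same properties, and $\pi : (\widetilde{X},\widetilde{g}) \to (X,g)$ is a local isometry. Using the splitting $s$ to view $\widetilde{\mathfrak{s}}$ as a $G$-equivariant spin$^c$-structure, pullback gives an identification between spin$^c$ data on $(X,\mathfrak{s})$ and $sG$-invariant spin$^c$ data on $(\widetilde{X},\widetilde{\mathfrak{s}})$: sections of the spinor and $i\mathfrak{u}(1)$ bundles, connections, and the Seiberg--Witten map itself. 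This identifies the Coulomb slices and the global Coulomb gauge-fixed monopole map of $X$ with the $sG$-fixed part of that of $\widetilde{X}$, both in the $b_+$ direction (via $H^+(X;\mathbb{R}) \cong H^+(\widetilde{X};\mathbb{R})^G$ under a free action) and in the spin$^c$-Dirac direction.

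Next I would check the numerical invariants. By transfer for free actions, $b_+(X) = b_+(\widetilde{X})^G = b_0$, so the standing hypothesis $b_+(\widetilde{X})^G > 1$ is exactly $b_+(X) > 1$ and the invariant $SW(X,\mathfrak{s})$ is defined. Using that $\pi$ is a local isometry and the APS index theorem, $ind_{APS}(X,\mathfrak{s})$ equals the $sG$-fixed summand of $ind_{APS}(\widetilde{X},\widetilde{\mathfrak{s}})$, i.e.\ $d_0$ in the notation of Section \ref{sec:eqred}; consequently
\[
d(X,\mathfrak{s}) = 2 d_0 - b_0 - 1 = d(\widetilde{X},\widetilde{\mathfrak{s}})^{sG}.
\]
This matches the dimensions that appear in the definitions of the two sides of the claimed identity.

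Now pass to finite-dimensional approximations. The $G$-equivariant approximating vector spaces $V^\mu_\lambda, (V')^\mu_\lambda, U^\mu_\lambda, (U')^\mu_\lambda$ used to build the $G$-equivariant relative Bauer--Furuta map $\widetilde{f}$ of $(\widetilde{X},\widetilde{\mathfrak{s}})$ split as direct sums of $G$-isotypical components, and the identification above shows that the $sG$-fixed summands are precisely the analogous approximating spaces for $(X,\mathfrak{s})$, with the restriction $\widetilde{f}^{sG}$ equal to the relative Bauer--Furuta map $f$ of $(X,\mathfrak{s})$ as abstract monopole maps. By the definition of $\overline{SW}^s_G$ as the abstract Seiberg--Witten invariant of the fixed-point map $\widetilde{f}^{sG}$ from \cite[Definition 2.3]{bar} (transported to the psc-boundary setting as in Section \ref{sec:eqred}), together with the fact that the Seiberg--Witten invariants of $(X,\mathfrak{s})$ are by definition the abstract Seiberg--Witten invariants of $f$, this yields
\[
SW(X,\mathfrak{s}) = \overline{SW}^s_{\mathbb{Z}_p}(\widetilde{X},\widetilde{\mathfrak{s}}).
\]
The main point of care is verifying that all the identifications are truly $G$-equivariant and that the APS boundary conditions on $\widetilde{X}$ descend correctly to $X$; once the $G$-invariant metric and connection are chosen, this follows from naturality, but one must keep track of the splitting $s$ in identifying $\mathfrak{s}$ and in comparing $ind_{APS}$ with its $sG$-fixed summand.
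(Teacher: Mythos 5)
Your proof is correct and takes essentially the same approach as the paper, which simply cites \cite[Proposition 6.1]{bar} and notes the argument carries over to the psc-boundary setting: the key identifications you spell out — the pullback identification of the relative Bauer--Furuta map of $(X,\mathfrak{s})$ with the $sG$-fixed part of the $G$-equivariant relative Bauer--Furuta map of $(\widetilde{X},\widetilde{\mathfrak{s}})$, together with $b_+(X) = b_+(\widetilde{X})^G$ by transfer and $ind_{APS}(X,\mathfrak{s}) = d_0$ — are precisely what the referenced Proposition 6.1 does in the closed case, and the definition of $\overline{SW}^s_G$ as the abstract invariant of the fixed-point map does the rest.
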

\begin{proof}
The case where $X$ has no boundary is given by \cite[Proposition 6.1]{bar} and the same argument easily carries over to the case of psc boundary.
\end{proof}

\subsection{An adjunction-type inequality}

We show that a simple application of the gluing formula \ref{thm:pscglue} yields an adjunction-type inequality. Suppose that $X$ is a compact, oriented, smooth $4$-manifold with $b_1(X) = 0$ and $b_+(X) > 1$. Suppose that $X = N \cup_Y X_0$ where $N$ is negative definite and $Y$ is a rational homology $3$-sphere which admits a metric of positive scalar curvature. 

\begin{proposition}\label{prop:switch}
Let $\mathfrak{s}$ be any spin$^c$-structure on $X$. Then $SW(X,\mathfrak{s}) = SW(X , x \otimes \mathfrak{s})$ for any $x \in H_2(N) \cong H^2(N,\partial N)$, provided $d(X,\mathfrak{s}), d(X , x \otimes \mathfrak{s}) \ge 0$.
\end{proposition}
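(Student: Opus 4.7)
My strategy is to apply the gluing formula of Theorem \ref{thm:pscglue} to the decomposition $X = X_0 \cup_Y N$ separately for $\mathfrak{s}$ and for $x \otimes \mathfrak{s}$, and to observe that both computations reduce to the same Seiberg--Witten invariant on $X_0$.

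I will first verify that Theorem \ref{thm:pscglue} applies with $X_1 = X_0$ and $X_2 = N$. Since $Y$ is a rational homology $3$-sphere, $H^1(Y;\mathbb{Q}) = H^2(Y;\mathbb{Q}) = 0$, and the Mayer--Vietoris sequence with rational coefficients for $X = X_0 \cup_Y N$ gives an orthogonal decomposition of the rational intersection form $H^2(X;\mathbb{Q}) \cong H^2(X_0;\mathbb{Q}) \oplus H^2(N;\mathbb{Q})$. Combined with $N$ being negative definite, this yields $b_+(N) = 0$, $b_+(X_0) = b_+(X) > 1$, and $b_1(X_0) = 0$, so the hypotheses of the gluing theorem are in force.

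Next I will track the restriction of $x \otimes \mathfrak{s}$ to $X_0$. Via excision together with Poincar\'e--Lefschetz duality, $H_2(N) \cong H^2(N, \partial N) \cong H^2(X, X_0)$, so $x$ lifts to a class $\tilde{x} \in H^2(X)$ whose restriction to $X_0$ vanishes by the long exact sequence of the pair $(X, X_0)$. Consequently $(x \otimes \mathfrak{s})|_{X_0} = \mathfrak{s}|_{X_0}$ as spin$^c$-structures, and in particular the two spin$^c$-structures restrict to the same spin$^c$-structure on $Y$ (so the gluing data on the $X_0$--side is identical in both cases).

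Finally, I will apply Theorem \ref{thm:pscglue} twice. Under $d(X, \mathfrak{s}) \geq 0$, the gluing formula yields $SW(X, \mathfrak{s}) = SW(X_0, \mathfrak{s}|_{X_0})$, and analogously $SW(X, x \otimes \mathfrak{s}) = SW(X_0, (x \otimes \mathfrak{s})|_{X_0}) = SW(X_0, \mathfrak{s}|_{X_0})$ whenever $d(X, x \otimes \mathfrak{s}) \geq 0$. Comparing these equalities gives the claimed identity. There is no serious obstacle here: the whole argument reduces to the observation that a cohomology class supported in the negative definite cap $N$ is invisible to $X_0$, and $X_0$ is the only piece that contributes under the gluing formula when attaching a negative definite psc cap.
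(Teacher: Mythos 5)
Your proof is correct and takes essentially the same route as the paper: apply the gluing formula (Theorem \ref{thm:pscglue}) to the decomposition $X = X_0 \cup_Y N$ for both $\mathfrak{s}$ and $x \otimes \mathfrak{s}$, and observe that $(x\otimes\mathfrak{s})|_{X_0} \cong \mathfrak{s}|_{X_0}$ because $x$ is supported in the negative definite cap $N$. The only difference is that you spell out the verification of the gluing theorem's hypotheses ($b_+(N)=0$, $b_+(X_0)=b_+(X)$, $b_1(X_0)=0$), which the paper leaves implicit.
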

\begin{proof}
For any spin$^c$-structure $\mathfrak{s}$ on $X$, Theorem \ref{thm:pscglue} yields $SW(X,\mathfrak{s}) = SW(X_0 , \mathfrak{s}|_{X_0})$, provided $d(X , \mathfrak{s} ) \ge 0$. Moreover, $d(X,\mathfrak{s}) \ge d(X_0 , \mathfrak{s}_0)$. Now the result follows by simply noting that $( x \otimes \mathfrak{s} )|_{X_0} \cong \mathfrak{s}|_{X_0}$.
\end{proof}

\begin{corollary}
Let $X$ be as above and suppose also that $X$ has simple type. If $SW(X , \mathfrak{s}) \neq 0$, then
\[
|\langle c(\mathfrak{s}) , x \rangle | + x^2 \le 0 \text{ for all } x \in H^2(N,\partial N). 
\]
\end{corollary}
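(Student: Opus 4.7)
The plan is to combine Proposition \ref{prop:switch} with the simple type hypothesis, reducing the corollary to a short dimension comparison. First, since $X$ has simple type and $SW(X, \mathfrak{s}) \neq 0$, the expected dimension must satisfy $d(X, \mathfrak{s}) = 0$ (Seiberg--Witten invariants vanish under simple type whenever $d>0$, and automatically when $d<0$). This is the only place the simple type hypothesis is used.

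Next, for an arbitrary class $x \in H^2(N, \partial N)$, viewed as a class on $X$ through the natural map $H^2(N, \partial N) \to H^2(X)$, I would compute how twisting $\mathfrak{s}$ by $x$ affects the expected dimension. Using $c(x \otimes \mathfrak{s}) = c(\mathfrak{s}) + 2x$ together with $d(X, \mathfrak{s}') = 2\,\mathrm{ind}_{APS}(X, \mathfrak{s}') - b_+(X) - 1$ and $\delta(X, \mathfrak{s}') = (c(\mathfrak{s}')^2 - \sigma(X))/8$ (noting that $x$ restricts to zero on $\partial X$ so $\delta$-contributions from the boundary are unchanged), one obtains
\[
d(X, x \otimes \mathfrak{s}) - d(X, \mathfrak{s}) = \langle c(\mathfrak{s}), x\rangle + x^2.
\]
Here $x^2$ is the self-cup in $H^4(N, \partial N) \cong \mathbb{Z}$, which agrees with the self-intersection of the pushed-forward class on $X$ since $x$ has support in $N$.

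The main argument is then by contradiction. Suppose $\langle c(\mathfrak{s}), x\rangle + x^2 > 0$. Then $d(X, x \otimes \mathfrak{s}) > 0$, and simple type forces $SW(X, x \otimes \mathfrak{s}) = 0$. On the other hand, $d(X, \mathfrak{s}) = 0$ and $d(X, x \otimes \mathfrak{s}) > 0$ are both non-negative, so Proposition \ref{prop:switch} applies and yields $SW(X, \mathfrak{s}) = SW(X, x \otimes \mathfrak{s}) = 0$, contradicting $SW(X, \mathfrak{s}) \neq 0$. Hence $\langle c(\mathfrak{s}), x\rangle + x^2 \leq 0$ for every $x \in H^2(N, \partial N)$. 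Replacing $x$ with $-x$ gives $-\langle c(\mathfrak{s}), x\rangle + x^2 \leq 0$, and the two together produce the desired $|\langle c(\mathfrak{s}), x\rangle| + x^2 \leq 0$.

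There is no real obstacle: Proposition \ref{prop:switch} (which itself rests on the gluing formula Theorem \ref{thm:pscglue}) has done all the analytic work, and the corollary is essentially a bookkeeping consequence of combining it with simple type. The only point requiring a moment of care is confirming that the map $H^2(N, \partial N) \to H^2(X)$ makes $x \otimes \mathfrak{s}$ and $x^2$ unambiguous, and that $x|_{X_0} = 0$ so that the restrictions of $\mathfrak{s}$ and $x\otimes\mathfrak{s}$ to $X_0$ coincide (this is exactly what makes Proposition \ref{prop:switch} work).
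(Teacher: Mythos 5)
Your proposal is correct and follows essentially the same route as the paper: deduce $d(X,\mathfrak{s})=0$ from simple type and nonvanishing, compute the dimension shift $d(X,x\otimes\mathfrak{s})-d(X,\mathfrak{s})=\langle c(\mathfrak{s}),x\rangle+x^2$, and invoke Proposition~\ref{prop:switch} together with simple type to rule out a positive shift, then swap $x\mapsto -x$. You spell out the contradiction a bit more explicitly than the paper does, but the logic is identical; the brief aside about boundary $\delta$-contributions is superfluous since $X$ itself is closed.
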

\begin{proof}
Since $SW(X , \mathfrak{s}) \neq 0$ and $X$ has simple type, we must have $d(X , \mathfrak{s}) = 0$. Then it follows that
\begin{align*}
d(X , x \otimes \mathfrak{s}) &= d(X , x \otimes \mathfrak{s}) - d(X,\mathfrak{s}) \\
&= \langle c(\mathfrak{s}) , x \rangle + x^2.
\end{align*}
But since $X$ has simply type, Proposition \ref{prop:switch} implies that $\langle c(\mathfrak{s}) , x \rangle + x^2 \le 0$. Replacing $x$ by $-x$, we also get $-\langle c(\mathfrak{s}) , x \rangle + x^2 \le 0$, hence $|\langle c(\mathfrak{s}) , x \rangle | + x^2 \le 0$.
\end{proof}

\section{Branched covers}\label{sec:branch}

\subsection{Existence}\label{sec:ex}

We consider cyclic branched covers of degree $p$, where $p$ is a prime and the branch locus is a union of spheres of negative self-intersection. 

Let $X$ be a closed, oriented, smooth $4$-manifold with $b_1(X) = 0$. Let $S_1, \dots , S_r$ be a disjoint collection of embedded spheres in $X$, where each sphere $S_i$ has negative self-intersection $[S_i]^2 = -n_i$. We assume throughout that $r \ge 1$. Let $X_0$ be the complement of an open tubular neighbourhood $\nu S$ of $S = S_i \cup \cdots \cup S_r$. Then $X$ has psc boundary $Y = Y_1 \cup \cdots \cup Y_r$, $Y_i = L(n_i,1)$ (we orient the boundary of $X$ so that it is ingoing). Then $X$ is obtained from $X_0$ by attaching a copy of $X(-n_i)$ to $Y_i$, where we recall that $X(-n_i)$ denotes the unit disc bundle over $S^2$ with Euler class $-n_i$. 

Suppose that $\pi : \widetilde{X} \to X$ is a cyclic branched cover of degree $p$, where $p$ is prime and with branch locus $S$. Let $\widetilde{S} = \widetilde{S}_1 \cup \cdots \cup \widetilde{S}_r$ be the preimage of $S$ in $\widetilde{X}$. Then $\widetilde{S}_i$ is a sphere with self-intersection $-n_i/p$. Define $\widetilde{X}_0$ to be the complement of $\pi^{-1}(\nu S)$. Then $\pi : \widetilde{X}_0 \to X_0$ is an unbranched cover, $\widetilde{X}_0$ has psc boundary $\widetilde{Y} = \widetilde{Y}_1 \cup \cdots \cup \widetilde{Y}_r$, $\widetilde{Y}_i = \pi^{-1}(Y_i) = L(n_i/p , 1)$ and $\widetilde{X}$ is obtained from $\widetilde{X}_0$ by attaching a copy of $X(-n_i/p)$ to $\widetilde{Y}_i$. Let $G = \mathbb{Z}_p = \langle \sigma \rangle$ be the group over covering transformations of the branched cover. So $\sigma$ acts on $\widetilde{X}$ by orientation preserving diffeomorphism and with fixed point set $\widetilde{S}$. Then $\sigma$ must act on the normal bundle of $\widetilde{S}_i$ as a rotation by some angle $\theta_i \in 2\pi \mathbb{R}/\mathbb{Z}$, which can be written in the form $\theta_i = 2\pi \phi_i/p$, for some $\phi_i \in \mathbb{Z}_p^* = \mathbb{Z}_p \setminus \{0\}$. We call $\{ \phi_i \}$ the {\em normal weights} of the branched covering.

Suppose we are given a $4$-manifold $X$, a collection of disjoint embedded spheres $S_i$ with negative self-intersection and a collection of weights $\phi_i \in \mathbb{Z}_p^*$. We seek conditions under which a cyclic branched cover $\widetilde{X} \to X$ of degree $p$ exists with branch set $\{S_i\}$ and normal weights $\{ \phi_i \}$. Additionally, we seek conditions under which $b_1(\widetilde{X}) = 0$.

By excision and Poincar\'e--Lefschetz, we have
\[
H_j(X,X_0 ; \mathbb{Z}) \cong \bigoplus_i H_j( X(-n_i) , \partial X(-n_i) ; \mathbb{Z}) \cong \bigoplus_i H^{4-j}( X(-n_i) ; \mathbb{Z}) \cong \bigoplus_i H^{4-j}(S_i ; \mathbb{Z}).
\]
Using this, the long exact sequence for the pair $(X , X_0)$ gives
\[
\cdots \to H_2(X ; \mathbb{Z}) \buildrel \varphi \over \longrightarrow \bigoplus_i \mathbb{Z} \buildrel \partial \over \longrightarrow H_1(X_0 ; \mathbb{Z}) \to H_1(X ; \mathbb{Z})  \to 0.
\]
Let $x_1,\dots , x_r$ denote a basis for $\bigoplus_i \mathbb{Z}$. Then the map $\partial$ sends $x_i$ to $\mu_i$, the meridian of $S_i$. Furthermore, the map $\varphi$ is given by $\varphi(x) = \sum_i \langle x , [S_i] \rangle x_i$. A similar exact sequence exists with other coefficient groups in place of $\mathbb{Z}$.

\begin{lemma}\label{lem:bre}
In order for a degree $p$ cyclic branched cover $\widetilde{X} \to X$ to exist with branch set $\{S_i\}$ and normal weights $\{ \phi_i \}$ it is necessary and sufficient that $\sum_i \phi_i [S_i] = 0 \in H^2(X ; \mathbb{Z}_p)$.
\end{lemma}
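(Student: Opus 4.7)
The plan is to translate the existence of the branched cover into a purely algebraic condition on $H_1(X_0;\mathbb{Z}_p)$ and then read off the answer from the exact sequence already displayed in the excerpt.

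First, I would recall the standard classification of cyclic branched covers. A connected $p$-fold (unbranched) cyclic cover of $X_0$ is the same data as a surjective homomorphism $\rho:\pi_1(X_0)\to \mathbb{Z}_p$, and since $\mathbb{Z}_p$ is abelian this factors through the abelianisation to give a homomorphism $\rho:H_1(X_0;\mathbb{Z})\to \mathbb{Z}_p$, equivalently a nonzero class in $H^1(X_0;\mathbb{Z}_p)$. Such an unbranched cover extends to a branched cover of $X$ with branch locus $S$; the extension is forced locally and is governed entirely by the image of the meridian $\mu_i$. A short local model computation (near $S_i$ the cover looks like the standard quotient $\mathbb{C}\to\mathbb{C}$, $z\mapsto z^p$, pulled back to the disc bundle $X(-n_i)$) identifies the normal weight $\phi_i$ with $\rho(\mu_i)\in\mathbb{Z}_p$, after fixing once and for all a generator of the deck group $\mathbb{Z}_p$. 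Thus the branched cover with data $(\{S_i\},\{\phi_i\})$ exists if and only if there is a homomorphism $\rho:H_1(X_0;\mathbb{Z}_p)\to\mathbb{Z}_p$ with $\rho(\mu_i)=\phi_i$ for every $i$.

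Next, I would feed this into the long exact sequence of the pair $(X,X_0)$ with $\mathbb{Z}_p$ coefficients, which has exactly the form already recorded in the excerpt since the computation of $H_\ast(X,X_0)$ as $\bigoplus_i H^{4-\ast}(S_i)$ is via Poincar\'e--Lefschetz and excision and so works verbatim over any coefficient ring. The sequence
\[
H_2(X;\mathbb{Z}_p)\xrightarrow{\varphi}\bigoplus_{i=1}^{r}\mathbb{Z}_p\xrightarrow{\partial}H_1(X_0;\mathbb{Z}_p)\to H_1(X;\mathbb{Z}_p)\to 0,
\]
with $\partial(x_i)=\mu_i$ and $\varphi(x)=\sum_i\langle x,[S_i]\rangle x_i$, tells me that prescribing $\rho(\mu_i)=\phi_i$ on the subgroup generated by the meridians extends to a homomorphism on all of $H_1(X_0;\mathbb{Z}_p)$ if and only if the linear functional $(\phi_1,\dots,\phi_r):\bigoplus_i\mathbb{Z}_p\to\mathbb{Z}_p$ vanishes on $\ker\partial=\operatorname{im}\varphi$.

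Finally, that vanishing condition unpacks to
\[
\sum_i\phi_i\langle x,[S_i]\rangle=0\quad\text{for all }x\in H_2(X;\mathbb{Z}_p),
\]
which by nondegeneracy of the evaluation pairing between $H_2(X;\mathbb{Z}_p)$ and $H^2(X;\mathbb{Z}_p)$ is exactly the stated condition $\sum_i\phi_i[S_i]=0\in H^2(X;\mathbb{Z}_p)$. Conversely, any $\rho$ with these meridian values is automatically nonzero (since each $\phi_i\in\mathbb{Z}_p^\ast$), so its image is all of $\mathbb{Z}_p$, and the filling in of the disc bundle neighbourhoods of the $S_i$ using the prescribed local models produces the desired branched cover. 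The main (minor) obstacle is really just the bookkeeping in the first step: pinning down the sign/identification convention so that ``normal weight $\phi_i$'' really corresponds to $\rho(\mu_i)=\phi_i$ rather than some other function of $\phi_i$; this is a one-parameter local computation and carries no global content.
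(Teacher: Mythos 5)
Your proof is correct and follows essentially the same route as the paper: both reduce the existence question to the existence of a homomorphism $\rho: H_1(X_0;\mathbb{Z}_p)\to\mathbb{Z}_p$ sending $\mu_i\mapsto\phi_i$, and both use the long exact sequence of the pair $(X,X_0)$ with $\mathbb{Z}_p$-coefficients together with $\ker\partial=\operatorname{im}\varphi$ and the evaluation pairing. The one point of genuine difference is the sufficiency direction: you run the long exact sequence argument in reverse, observing that $\mathbb{Z}_p$-coefficients turn everything into vector spaces so the linear functional on $\operatorname{im}\partial$ automatically extends to $H_1(X_0;\mathbb{Z}_p)$; the paper instead argues constructively, producing a flat line bundle $A\to X_0$ of order $p$ from an integral class $\alpha$ with $p\alpha=\sum_i\widehat\phi_i[S_i]$ and building $\widetilde X_0$ as $\{a\in A: a^p=\tau\}$ before attaching disc bundles. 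Your approach is a bit cleaner algebraically; the paper's construction has the side benefit of producing the flat line bundle $A$ explicitly, which is reused in later arguments (e.g.\ the spin$^c$-structure identifications in Section~\ref{sec:spinc}).
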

\begin{proof}
The existence of $\widetilde{X} \to X$ is equivalent to the existence of a homomorphism $\psi : H_1(X_0 ; \mathbb{Z}_p) \to \mathbb{Z}_p$ such that $\psi( \mu_i ) = \phi_i$ for each $i$. Consider the long exact sequence associated to the pair $(X , X_0)$ with $\mathbb{Z}_p$-coefficients:
\[
\cdots \to H_2(X ; \mathbb{Z}_p) \buildrel \varphi \over \longrightarrow \bigoplus_i \mathbb{Z}_p \buildrel \partial \over \longrightarrow H_1(X_0 ; \mathbb{Z}_p) \to H_1(X ; \mathbb{Z}_p)  \to 0.
\]
For each $x \in H_2(X ; \mathbb{Z}_p)$, we have $\varphi(x) = \sum_i \langle x , [S]_i \rangle x_i$ and hence $0 = \partial \varphi(x) = \sum_i \langle x , [S_i] \rangle \mu_i$. Applying $\psi$ gives $0 = \langle x , [S_i] \rangle \psi(\mu_i) = \langle x , \sum_i \phi_i [S_i] \rangle$. Since this holds for all $x \in H_2(X ; \mathbb{Z}_p)$, we must have $\sum_i \phi_i [S_i] = 0 \in H_2(X ; \mathbb{Z}_p)$. Conversely if $\sum_i \phi_i [S_i] = 0 \in H^2(X ; \mathbb{Z}_p)$, then choosing integer lifts $\widehat{\phi}_i$ of $\phi_i$, we have that $\sum_i \widehat{\phi}_i [S_i] \in H^2(X ; \mathbb{Z})$ is divisible by $p$. Choose $\alpha \in H^2(X ; \mathbb{Z})$ with $p\alpha = \sum_i \widehat{\phi}_i [S_i]$. Then $p \alpha |_{X_0} = 0$. Hence $\alpha|_{X_0}$ is $p$-torsion. The class $\alpha|_{X_0}$ corresponds to a line bundle $A \to X_0$ with $A^p$ trivial. Choose a non-vanishing section $\tau$ of $A^p$. Then we can define a branched cover by taking $\widetilde{X}_0 = \{ a \in A \; | \; a^p = \tau \}$ and attaching a copy of $X(-n_i/p)$ to each boundary component.
\end{proof}

\begin{lemma}\label{lem:b10}
Suppose that the branched cover $\widetilde{X} \to X$ exists. If $H_1(X_0 ; \mathbb{Z})$ is finite and $H_1(X_0 ; \mathbb{Z}_p) \cong \mathbb{Z}_p$, then $b_1(\widetilde{X}) = 0$.
\end{lemma}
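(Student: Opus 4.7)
My plan is to first reduce via Mayer--Vietoris to showing $b_1(\widetilde{X}_0)=0$. Write $\widetilde{X} = \widetilde{X}_0 \cup \widetilde{N}$, where $\widetilde{N} = \bigsqcup_i X(-n_i/p)$ is a disjoint union of disc bundles over $S^2$ and $\widetilde{X}_0 \cap \widetilde{N} = \bigsqcup_i L(n_i/p,1)$. Since $H_1(\widetilde{N};\mathbb{Q})=0$ and lens spaces have finite $H_1$, the rational Mayer--Vietoris sequence gives $b_1(\widetilde{X}) = b_1(\widetilde{X}_0)$ immediately.

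For $p=2$ the argument is particularly clean. The Gysin sequence of the double cover $\widetilde{X}_0 \to X_0$ with classifying class $\psi\in H^1(X_0;\mathbb{F}_2)$ reads
\[
\cdots\to H^{i-1}(X_0;\mathbb{F}_2) \xrightarrow{\cup\psi} H^i(X_0;\mathbb{F}_2) \xrightarrow{\pi^*} H^i(\widetilde{X}_0;\mathbb{F}_2) \xrightarrow{\pi_!} H^i(X_0;\mathbb{F}_2) \xrightarrow{\cup\psi} H^{i+1}(X_0;\mathbb{F}_2)\to\cdots.
\]
Since $H^1(X_0;\mathbb{F}_2)=\mathbb{F}_2\cdot\psi$, cup with $\psi$ is an isomorphism $H^0\to H^1$; moreover $\psi^2=Sq^1\psi$ is nonzero in $H^2$ because $H^1(X_0;\mathbb{Z})=0$ (the mod-$2$ reduction $H^1(X_0;\mathbb{Z})\to H^1(X_0;\mathbb{F}_2)$ is zero, so $\psi$ is not a reduction of an integral class and has nonzero Bockstein). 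Thus $\cup\psi:H^1\to H^2$ is also injective, and exactness of the sequence forces $H^1(\widetilde{X}_0;\mathbb{F}_2)=0$. Since $H^1(\widetilde{X}_0;\mathbb{Z})$ is torsion-free, this gives $b_1(\widetilde{X}_0)=0$.

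For general prime $p$, the same framework applies but is more intricate. I would use Shapiro's lemma to identify $H_1(\widetilde{X}_0;\mathbb{Q})\cong H_1(X_0;\mathbb{Q})\oplus H_1(X_0;L)$ via the $\mathbb{Q}[\mathbb{Z}_p]$-decomposition $\mathbb{Q}[\mathbb{Z}_p]\cong\mathbb{Q}\oplus\mathbb{Q}(\zeta_p)$, where $L$ is the rank-one $\mathbb{Q}(\zeta_p)$-local system with monodromy $\pi_1(X_0)\xrightarrow{\psi}\mathbb{Z}_p\xrightarrow{\zeta_p^{(\cdot)}}\mathbb{Q}(\zeta_p)^\times$. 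The first summand vanishes by finiteness of $H_1(X_0;\mathbb{Z})$. Since $\phi_i\neq 0$, the restriction of $L$ to each boundary lens space has nontr
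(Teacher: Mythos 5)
Your reduction of $b_1(\widetilde X)=0$ to $b_1(\widetilde X_0)=0$ is fine and matches the paper's first step. The problem is in the $p=2$ Gysin argument. You claim that $\psi^2 = Sq^1\psi \neq 0$ because $H^1(X_0;\mathbb{Z})=0$, but this inference is not valid: $H^1(X_0;\mathbb{Z})=0$ only tells you the \emph{integral} Bockstein $\beta(\psi)\in H^2(X_0;\mathbb{Z})$ is nonzero, whereas $Sq^1\psi$ is the mod-$2$ reduction of $\beta(\psi)$, which can vanish if $\beta(\psi)$ is divisible by $2$. Concretely, if the $2$-primary part of $H_1(X_0;\mathbb{Z})$ is $\mathbb{Z}_{2^m}$ with $m\ge 2$ (which the hypotheses allow — they only fix $H_1(X_0;\mathbb{Z}_2)\cong\mathbb{Z}_2$), then $\psi$ lifts to $H^1(X_0;\mathbb{Z}_4)$ and $Sq^1\psi=0$; compare $L(4,1)$, where the generator of $H^1(\,\cdot\,;\mathbb{F}_2)$ squares to zero. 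In that regime your Gysin argument gives $H^1(\widetilde X_0;\mathbb{F}_2)\neq 0$, which is in fact the correct answer — the desired statement is only $b_1(\widetilde X_0)=0$, not vanishing of mod-$2$ cohomology, and your method aims at a stronger and generally false conclusion. The general-$p$ argument via $\mathbb{Q}[\mathbb{Z}_p]\cong\mathbb{Q}\oplus\mathbb{Q}(\zeta_p)$ and Shapiro is a reasonable start (it correctly reduces to showing $H_1(X_0;L)=0$ for the rank-one $\mathbb{Q}(\zeta_p)$-local system), but the proposal breaks off before giving any argument for that vanishing, and nontriviality of $L$ on the boundary lens spaces alone is not enough.

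The paper's proof avoids all of this by staying integral: from $H_1(X_0;\mathbb{Z})\cong A\oplus\mathbb{Z}_{p^m}$ with $|A|$ prime to $p$, it passes to the full $p^m$-fold cyclic cover $X_0'\to X_0$ (of which $\widetilde X_0$ is an intermediate quotient, so $b_1(\widetilde X_0)\le b_1(X_0')$), uses the low-degree Lyndon--Hochschild--Serre exact sequence to identify the coinvariants $H_1(X_0';\mathbb{Z})/(1-\tau)$ with $A$, and then invokes Rokhlin's lemma (\cite[Lemma~3.3]{rok}): a finitely generated $\mathbb{Z}[\mathbb{Z}_{p^m}]$-module whose coinvariants are finite of order coprime to $p$ is itself finite. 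This is exactly the integral finiteness statement your rational Shapiro decomposition would ultimately need as input, and it also handles the failure of your $Sq^1$ claim when $m\ge 2$. To repair your approach you would either need to prove the twisted vanishing $H_1(X_0;L)=0$ by a separate argument, or use a result like Rokhlin's lemma as the paper does.
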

\begin{proof}
Our proof is based on \cite[\textsection 3]{rok}. First note that since the boundary of $\widetilde{X}_0$ is a union of rational homology spheres, we have that $b_1(\widetilde{X}) = b_1(\widetilde{X}_0) = 0$. So it suffices to show that $b_1(\widetilde{X}_0) = 0$. Since $H_1(X_0 ; \mathbb{Z}_p) \cong \mathbb{Z}_p$, we have that $H_1(X_0 ; \mathbb{Z}) \cong A \oplus \mathbb{Z}_{p^m}$, where $A$ is finite of order coprime to $p$ and $m \ge 1$. The projection to $\mathbb{Z}_{p^m}$ defines a homomorphism $H_1(X_0 ; \mathbb{Z}) \to \mathbb{Z}_{p^m}$ and hence a degree $p^m$ cyclic cover $X'_0 \to X_0$. Since $\widetilde{X}_0$ is a quotient of $X'_0$, it suffices to show that $b_1(X'_0) = 0$.

Let $\pi = \pi_1(X_0)$, $\pi' = \pi_1(X'_0)$. Then we have a short exact sequence
\[
1 \to \pi' \to \pi \to \mathbb{Z}_{p^m} \to 0.
\]
From the Lyndon--Hochschild--Serre spectral sequence we get an exact sequence
\[
0 \to H_0( \mathbb{Z}_{p^m} ; H_1(X'_0 ; \mathbb{Z})) \to H_1(X_0 ; \mathbb{Z}) \to \mathbb{Z}_{p^m} \to 0.
\]
Since the kernel of $H_1(X_0 ; \mathbb{Z}) \to \mathbb{Z}_{p^m}$ is $A$, we have that $H_0( \mathbb{Z}_{p^m} ; H_1(X'_0 ; \mathbb{Z})) \cong A$ is finite with order coprime to $p$. Let $M = H_1(X'_0 ; \mathbb{Z})$. Then $H_0( \mathbb{Z}_{p^m} ; H_1(X'_0 ; \mathbb{Z})) = H_0( \mathbb{Z}_{p^m} ; M ) \cong M/(1-\tau)$, where $\tau : M \to M$ generates the action of $\mathbb{Z}_{p^m}$ on $M$. Since $M/(1-\tau)$ is finite of order coprime to $p$, \cite[Lemma 3.3]{rok} implies that $M$ is finite. Hence $b_1(X'_0) = 0$.
\end{proof}

\begin{lemma}\label{lem:1p}
Suppose that $H_1(X ; \mathbb{Z})$ is finite of order coprime to $p$. Suppose that any $r-1$ of $[S_1], \dots , [S_r]$ are linearly indepenent in $H^2(X ; \mathbb{Z}_p)$ but that $[S_1], \dots , [S_r]$ are linearly dependent in $H^2(X ; \mathbb{Z}_p)$. Then $H_1(X_0 ; \mathbb{Z})$ is finite and $H_1(X_0 ; \mathbb{Z}_p) \cong \mathbb{Z}_p$.
\end{lemma}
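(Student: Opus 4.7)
The plan is to read off both conclusions from the long exact sequence
\[
H_2(X;R) \buildrel \varphi \over \longrightarrow \bigoplus_{i=1}^r R \buildrel \partial \over \longrightarrow H_1(X_0;R) \longrightarrow H_1(X;R) \longrightarrow 0
\]
(which is recorded just before Lemma \ref{lem:bre}), applied with $R = \mathbb{Z}_p$ and with $R = \mathbb{Q}$. Recall $\varphi(x) = \sum_i \langle x,[S_i]\rangle\, x_i$. Since $H_1(X;\mathbb{Z})$ is finite of order coprime to $p$, the universal coefficient theorem gives $H_1(X;\mathbb{Z}_p) = 0$ and $H_1(X;\mathbb{Q}) = 0$, so in both cases the sequence reduces to $H_1(X_0;R) \cong R^r/\mathrm{im}(\varphi)$.

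For the mod $p$ computation, I would observe that over the field $\mathbb{Z}_p$ the map $\varphi : H_2(X;\mathbb{Z}_p) \to \mathbb{Z}_p^r$ has rank equal to the $\mathbb{Z}_p$-dimension of the span of $[S_1],\ldots,[S_r]$ in $H^2(X;\mathbb{Z}_p)$: indeed, under the Poincar\'e duality isomorphism $H^2(X;\mathbb{Z}_p) \cong \mathrm{Hom}(H_2(X;\mathbb{Z}_p),\mathbb{Z}_p)$, the transpose of $\varphi$ is exactly the map $\mathbb{Z}_p^r \to H^2(X;\mathbb{Z}_p)$ sending $x_i \mapsto [S_i]$. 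The hypothesis that any $r-1$ of the $[S_i]$ are $\mathbb{Z}_p$-linearly independent but all $r$ are dependent forces this span to have dimension exactly $r-1$, giving $H_1(X_0;\mathbb{Z}_p) \cong \mathbb{Z}_p^r/\mathbb{Z}_p^{r-1} \cong \mathbb{Z}_p$.

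For finiteness of $H_1(X_0;\mathbb{Z})$, since $X_0$ is a compact manifold, $H_1(X_0;\mathbb{Z})$ is finitely generated and it suffices to show $H_1(X_0;\mathbb{Q}) = 0$, i.e.\ that $\varphi$ is $\mathbb{Q}$-surjective. The key observation is that the classes $[S_1],\ldots,[S_r]$ are automatically linearly independent over $\mathbb{Q}$: the spheres are disjoint and have self-intersections $[S_i]^2 = -n_i \neq 0$, so their intersection matrix is the non-singular diagonal matrix $\mathrm{diag}(-n_1,\ldots,-n_r)$. Indeed, from $\sum c_j[S_j] = 0$ in $H^2(X;\mathbb{Q})$, pairing with $[S_i]$ yields $-n_i c_i = 0$, hence $c_i = 0$. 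By the same transpose argument as above (now over $\mathbb{Q}$), $\varphi$ has rank $r$ over $\mathbb{Q}$ and is therefore surjective, so $H_1(X_0;\mathbb{Q}) = 0$ and $H_1(X_0;\mathbb{Z})$ is finite.

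The only subtlety worth flagging is the contrast between the two coefficient rings: the hypothesis only controls the $\mathbb{Z}_p$-rank of the $[S_i]$'s (which could a priori drop relative to the $\mathbb{Q}$-rank), so the finiteness statement would fail without some independent input. Fortunately that input is supplied for free by the assumption that the $S_i$ are disjoint with non-zero self-intersection, which is the geometric hypothesis that makes the intersection-form trick work. No deeper obstacle is expected.
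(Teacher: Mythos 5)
Your proof is correct. For the mod-$p$ computation you follow the paper's argument exactly (the long exact sequence of $(X,X_0)$ with $\mathbb{Z}_p$ coefficients together with the observation that $\varphi$ has image of dimension $r-1$), though you justify the rank claim more carefully via the transpose identification, which the paper asserts without comment. The more substantive point is that the paper's proof as written only verifies $H_1(X_0;\mathbb{Z}_p)\cong\mathbb{Z}_p$ and does not explicitly establish the finiteness of $H_1(X_0;\mathbb{Z})$; your argument supplies that piece, and the ``subtlety'' you flag --- that $\mathbb{Z}_p$-dependence of the $[S_i]$ says nothing about their $\mathbb{Q}$-rank, so one must use disjointness and the nonzero self-intersections separately --- is exactly the right thing to notice. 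One could also run the finiteness step integrally: evaluating $\varphi$ on the Poincar\'e dual of $[S_j]$ gives $-n_j x_j$ in the image, so $\mathbb{Z}^r/\operatorname{im}(\varphi)$ is a quotient of $\prod_j \mathbb{Z}/n_j\mathbb{Z}$ and hence finite, and then $H_1(X_0;\mathbb{Z})$ is an extension of the finite $H_1(X;\mathbb{Z})$ by a finite group. That version is equivalent to your rational-rank argument; either way the reasoning is sound.
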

\begin{proof}
Recall the exact sequence
\[
\cdots \to H_2(X ; \mathbb{Z}_p) \buildrel \varphi \over \longrightarrow \mathbb{Z}^r_p \buildrel \partial \over \longrightarrow H_1(X_0 ; \mathbb{Z}_p) \to H_1(X ; \mathbb{Z}_p)  \to 0
\]
The assumptions on $[S_1], \dots , [S_r]$ implies that the image of $\varphi$ in $\mathbb{Z}_p^r$ has dimension $r-1$, hence we have a short exact sequence
\[
0 \to \mathbb{Z}_p \to H_1(X_0 ; \mathbb{Z}_p) \to H_1(X ; \mathbb{Z}_p)  \to 0.
\]
Our assumption on $H_1(X ; \mathbb{Z})$ implies that $H_1(X ; \mathbb{Z}_p) = 0$, hence $H_1(X_0 ; \mathbb{Z}_p) = \mathbb{Z}_p$. 
\end{proof}

Putting together Lemmas \ref{lem:bre}, \ref{lem:b10} and \ref{lem:1p}, we have shown:

\begin{proposition}
Let $X$ be a compact, oriented, smooth $4$-manifold. Let $S_1, \dots , S_r$ be disjoint embedded spheres of negative self-intersection. Let $p$ be a prime and suppose the following conditions hold:
\begin{itemize}
\item[(1)]{$H_1(X ; \mathbb{Z})$ is finite with order coprime to $p$.}
\item[(2)]{Any $r-1$ of $[S_1] , \dots , [S_r]$ are linearly independent in $H^2(X ; \mathbb{Z}_p)$.}
\item[(3)]{$\sum_i \phi_i [S_i] = 0 \in H^2(X ; \mathbb{Z}_p)$, for some $\phi_i \in \mathbb{Z}_p^*$.}
\end{itemize}
Then there exists a cyclic degree $p$ branched cover $\widetilde{X} \to X$ with branch set $\{ S_i \}$ and normal weights $\{ \phi_i \}$. Furthermore, $b_1(\widetilde{X} ) = 0$.
\end{proposition}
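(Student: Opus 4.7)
The plan is to observe that this proposition is essentially a concatenation of the three preceding lemmas, so the proof reduces to verifying that the hypotheses of each lemma are satisfied in turn.

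First I would apply Lemma \ref{lem:bre}: condition (3), namely $\sum_i \phi_i [S_i] = 0 \in H^2(X;\mathbb{Z}_p)$ with $\phi_i \in \mathbb{Z}_p^*$, is stated as the necessary and sufficient condition for the existence of a degree $p$ cyclic branched cover $\widetilde{X} \to X$ with branch set $\{S_i\}$ and prescribed normal weights $\{\phi_i\}$. So the existence half of the conclusion is immediate.

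Next I would verify the hypotheses of Lemma \ref{lem:1p}. Condition (1) says $H_1(X;\mathbb{Z})$ is finite of order coprime to $p$, which is exactly what that lemma requires. Condition (2) gives that any $r-1$ of the classes $[S_i]$ are linearly independent in $H^2(X;\mathbb{Z}_p)$. The remaining hypothesis of Lemma \ref{lem:1p} is that $[S_1],\ldots,[S_r]$ themselves are linearly dependent in $H^2(X;\mathbb{Z}_p)$; this follows from (3), since the $\phi_i$ are nonzero mod $p$ and hence $\sum_i \phi_i [S_i] = 0$ is a nontrivial linear relation. Thus Lemma \ref{lem:1p} applies and yields that $H_1(X_0;\mathbb{Z})$ is finite and $H_1(X_0;\mathbb{Z}_p) \cong \mathbb{Z}_p$.

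Finally I would invoke Lemma \ref{lem:b10}, whose hypotheses are exactly the output of Lemma \ref{lem:1p} together with the existence of the branched cover supplied by Lemma \ref{lem:bre}. The conclusion $b_1(\widetilde{X}) = 0$ follows. Since nothing beyond the three already-established lemmas is needed, there is no real obstacle in the proof; the only thing to be careful about is confirming that condition (3) genuinely forces the linear dependence required by Lemma \ref{lem:1p}, which as noted is immediate from $\phi_i \in \mathbb{Z}_p^*$.
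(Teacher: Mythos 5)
Your proof matches the paper exactly: the proposition is stated there as an immediate corollary of Lemmas \ref{lem:bre}, \ref{lem:1p}, and \ref{lem:b10}, combined in precisely the order you describe. Your observation that condition (3) supplies the linear dependence required by Lemma \ref{lem:1p} (because the $\phi_i$ are nonzero in $\mathbb{Z}_p$) is the one small verification worth spelling out, and you have done so correctly.
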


\subsection{Spin$^c$-structures on branched covers}\label{sec:spinc}

Let $\pi : \widetilde{X} \to X$ be a branched cover as in Section \ref{sec:ex} and define $\widetilde{X}_0, X_0$ as before. We study the relationship between spin$^c$-structures on $X$ and $\mathbb{Z}_p$-invariant spin$^c$-structures on $\widetilde{X}$.

\begin{lemma}\label{lem:equiv}
Let $\mathfrak{s}, \mathfrak{s}'$ be spin$^c$-structures on $X$. Then $\mathfrak{s}|_{X_0} \cong \mathfrak{s}'|_{X_0}$ if and only if $\mathfrak{s}' = \sum_i m_i [S]_i + \mathfrak{s}$ for some integers $m_i$.
\end{lemma}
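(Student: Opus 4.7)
The plan is to identify the difference class of two spin$^c$-structures and then compute the kernel of restriction to $X_0$ in cohomology.

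First I would recall that the set of spin$^c$-structures on any $4$-manifold is a torsor over $H^2(-;\mathbb{Z})$, and that restriction of spin$^c$-structures to $X_0$ is equivariant with respect to the restriction homomorphism $r : H^2(X;\mathbb{Z}) \to H^2(X_0;\mathbb{Z})$. Thus, writing $\mathfrak{s}' = \alpha + \mathfrak{s}$ for a unique $\alpha \in H^2(X;\mathbb{Z})$, the condition $\mathfrak{s}'|_{X_0} \cong \mathfrak{s}|_{X_0}$ is equivalent to $\alpha \in \ker(r)$.

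Next, I would compute $\ker(r)$ using the long exact sequence of the pair $(X, X_0)$:
\[
H^2(X,X_0;\mathbb{Z}) \xrightarrow{j} H^2(X;\mathbb{Z}) \xrightarrow{r} H^2(X_0;\mathbb{Z}).
\]
By excision and the Thom isomorphism applied to each disc bundle summand of the tubular neighborhood $\nu S$, one has
\[
H^2(X,X_0;\mathbb{Z}) \cong \bigoplus_i H^2(\nu S_i,\partial \nu S_i;\mathbb{Z}) \cong \bigoplus_i \mathbb{Z},
\]
where a generator of the $i$th summand is the Thom class $\tau_i$ of the normal bundle of $S_i$. Under $j$, the Thom class $\tau_i$ maps to the Poincar\'e dual $[S_i] \in H^2(X;\mathbb{Z})$ of $S_i$. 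Therefore $\ker(r) = \operatorname{im}(j)$ is precisely the subgroup $\{\sum_i m_i [S_i] : m_i \in \mathbb{Z}\}$.

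Combining the two steps, $\mathfrak{s}'|_{X_0} \cong \mathfrak{s}|_{X_0}$ if and only if $\mathfrak{s}' - \mathfrak{s} = \sum_i m_i [S_i]$ for some integers $m_i$, which is exactly the claim. There is no real obstacle here; the only small point to keep straight is the identification of $j(\tau_i)$ with $[S_i]$, which follows from the standard fact that the Thom class of the normal bundle of an oriented submanifold restricts to its Poincar\'e dual under the map $H^*(X,X_0) \to H^*(X)$.
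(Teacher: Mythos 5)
Your proof is correct and follows essentially the same route as the paper: write $\mathfrak{s}' - \mathfrak{s} = \alpha \in H^2(X;\mathbb{Z})$, reduce to $\alpha \in \ker(r)$, and identify $\ker(r) = \operatorname{im}(H^2(X,X_0;\mathbb{Z}) \to H^2(X;\mathbb{Z}))$ with the span of the $[S_i]$ via excision. Your use of the Thom isomorphism to pin down the generators is a slightly more explicit version of the paper's implicit Poincar\'e--Lefschetz argument, but the content is identical.
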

\begin{proof}
We have that $\mathfrak{s}' = L + \mathfrak{s}$ for some $L \in H^2(X ; \mathbb{Z})$. Then $\mathfrak{s}|_{X_0} \cong \mathfrak{s}'|_{X_0}$ if and only if $L|_{X_0} = 0$. Hence $L$ lies in the image of $H^2(X , X_0 ; \mathbb{Z}) \to H^2(X ; \mathbb{Z})$. By excision $H^2(X , X_0 ; \mathbb{Z}) \cong \bigoplus_i H^2( X(-n_i) , \partial X(-n_i) ; \mathbb{Z})$ and the image of $H^2(X(-n_i) , \partial X(-n_i) ; \mathbb{Z})$ in $H^2(X ; \mathbb{Z})$ is generated by $[S_i]$.
\end{proof}

Let $Spin^c(X)$ denote the set of isomorphisms classes of spin$^c$-structures on $X$ and similarly define $Spin^c(X_0)$. Let $Spin^c_{sharp}(X) \subseteq Spin^c(X)$ be the set of sharp spin$^c$-structures on $X$, that is, $| \langle c(\mathfrak{s}) , [S_i] \rangle | + [S_i]^2 \le 0$ for all $i$. Define an equivalence relation $\sim$ on $Spin^c(X)$ by setting $\mathfrak{s}_1 \sim \mathfrak{s}_2$ if $\mathfrak{s}_1 |_{X_0} \cong \mathfrak{s}_2 |_{X_0}$. By Lemma \ref{lem:equiv} this is equivalent to $\mathfrak{s}_2 = \sum_i m_i [S]_i + \mathfrak{s}_1$ for some integers $m_i$.

\begin{proposition}\label{prop:spincext}
The restriction map $Spin^c(X) \to Spin^c(X_0)$, $\mathfrak{s} \mapsto \mathfrak{s}|_{X_0}$ induces a bijection between $Spin^c(X_0)$ and $Spin^c(X)/\! \! \sim$. Moreover, each $\mathfrak{s}_0 \in Spin^c(X_0)$ extends to a sharp spin$^c$-structure on $X$. If $\mathfrak{s}$ is a sharp extension of $\mathfrak{s}_0$, then any other sharp extension of $\mathfrak{s}_0$ has the form $\mathfrak{s}' = \sum_i \epsilon_i[S_i] + \mathfrak{s}$, where $\epsilon_i \in \{0,1\}$ if $\langle c(\mathfrak{s}) , [S_i] \rangle = n_i$, $\epsilon_i \in \{0, -1\}$ if $\langle c(\mathfrak{s}) , [S_i] \rangle = -n_i$ and $\epsilon_i = 0$ otherwise. If $\mathfrak{s}, \mathfrak{s}'$ are sharp extensions of $\mathfrak{s}_0$, then $SW(X , \mathfrak{s}) = SW(X , \mathfrak{s}')$.
\end{proposition}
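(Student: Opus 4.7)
My plan is to handle the four assertions in the order they appear. The first three---the bijection, existence of a sharp extension, and the characterization---are essentially formal and rely on Lemma \ref{lem:equiv}. The final $SW$ equality will use Theorem \ref{thm:pscglue}.

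For the bijection $Spin^c(X_0) \cong Spin^c(X)/\!\sim$, injectivity is immediate from Lemma \ref{lem:equiv}. For surjectivity, note that $X$ is recovered from $X_0$ by attaching the disk bundles $X(-n_i)$ along the boundary components $L(n_i,1)$. The long exact sequence of the pair $(X(-n_i), L(n_i,1))$ shows $H^2(X(-n_i); \mathbb{Z}) \to H^2(L(n_i,1); \mathbb{Z})$ is surjective, so spin$^c$-structures on the boundary extend over each disk bundle, and hence any $\mathfrak{s}_0$ on $X_0$ extends to $X$.

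For existence of a sharp extension, I take any extension $\mathfrak{s}$ and modify by $\sum_i m_i[S_i]$. Disjointness of the $S_i$'s gives
\[
\langle c(\mathfrak{s} + \textstyle\sum_i m_i[S_i]), [S_j]\rangle = \langle c(\mathfrak{s}), [S_j]\rangle - 2 n_j m_j,
\]
so each $m_j$ may be chosen independently to place $\langle c(\mathfrak{s}'), [S_j]\rangle$ in $[-n_j, n_j]$ (possible because its residue modulo $2n_j$ is fixed by $\mathfrak{s}_0$ and is congruent to $n_j$ modulo $2$). The characterization then follows easily: if $\mathfrak{s}, \mathfrak{s}'$ are both sharp and $\mathfrak{s}' = \mathfrak{s} + \sum_i \epsilon_i[S_i]$, the identity above combined with $|\langle c(\mathfrak{s}), [S_i]\rangle|, |\langle c(\mathfrak{s}'), [S_i]\rangle| \le n_i$ forces $|\epsilon_i| \le 1$, with $\epsilon_i = \pm 1$ occurring precisely when $\langle c(\mathfrak{s}), [S_i]\rangle = \pm n_i$ with matching signs.

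The substantive content is the $SW$ equality. The decisive observation is that, under the constraints on $\epsilon_i$ just derived,
\[
c(\mathfrak{s}')^2 - c(\mathfrak{s})^2 = 4\sum_i \epsilon_i\bigl(\langle c(\mathfrak{s}), [S_i]\rangle - \epsilon_i n_i\bigr) = 0,
\]
since $\epsilon_i \neq 0$ forces $\langle c(\mathfrak{s}), [S_i]\rangle = \epsilon_i n_i$ termwise. Hence $d(X, \mathfrak{s}) = d(X, \mathfrak{s}')$. Writing $X = X_0 \cup_Y W$ with $W = \bigsqcup_i X(-n_i)$ negative definite with psc boundary, Theorem \ref{thm:pscglue} yields
\[
SW(X, \mathfrak{s}) = SW_{X_0, \mathfrak{s}_0}\bigl(x^{m - ind_{APS}(W, \mathfrak{s}|_W)}\bigr), \qquad m = d(X, \mathfrak{s})/2,
\]
and the identity $d(X, \mathfrak{s}) = d(X_0, \mathfrak{s}_0) + 2\,ind_{APS}(W, \mathfrak{s}|_W)$ simplifies the exponent to $d(X_0, \mathfrak{s}_0)/2$, which depends only on $\mathfrak{s}_0$. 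The same formula applies verbatim to $\mathfrak{s}'$, giving the equality (with both sides automatically zero in the degenerate case $d(X, \mathfrak{s}) < 0$). The main obstacle is spotting the cancellation $c(\mathfrak{s}')^2 = c(\mathfrak{s})^2$; once that is in hand, the gluing formula from Section \ref{sec:swpsc} delivers the conclusion directly.
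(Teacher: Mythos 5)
Your proof is correct and aligns with the paper's intent; the paper's own proof is just the one-line ``Follows easily from Lemma~\ref{lem:equiv}'', so you have fleshed out exactly what that one-liner suppresses. The first three assertions are, as you say, bookkeeping with Lemma~\ref{lem:equiv} (surjectivity of the restriction map uses that $H^2(X(-n_i)) \to H^2(L(n_i,1))$ is onto, and the normalization $\langle c(\mathfrak{s}+\sum m_i[S_i]),[S_j]\rangle = \langle c(\mathfrak{s}),[S_j]\rangle - 2n_jm_j$ gives both existence and the characterization of sharp extensions). For the $SW$ equality your route is sound: the cancellation $c(\mathfrak{s}')^2=c(\mathfrak{s})^2$ holds termwise because $\epsilon_i\ne 0$ forces $\langle c(\mathfrak{s}),[S_i]\rangle=\epsilon_i n_i$, and then Theorem~\ref{thm:pscglue} expresses both $SW(X,\mathfrak{s})$ and $SW(X,\mathfrak{s}')$ as $SW_{X_0,\mathfrak{s}_0}(x^{d(X_0,\mathfrak{s}_0)/2})$, which depends only on $\mathfrak{s}_0$. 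A marginally shorter finish would be to cite Proposition~\ref{prop:switch} directly (with $x=\sum_i\epsilon_i[S_i]\in H^2(N,\partial N)$ and equal dimensions), or to observe that a sharp extension has $ind_{APS}(X(-n_i),\mathfrak{s})=0$ for each $i$, so $SW(X,\mathfrak{s})=SW(X_0,\mathfrak{s}_0)$ immediately; but these are the same calculation repackaged, not a different argument.
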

\begin{proof}
Follows easily from Lemma \ref{lem:equiv}.
\end{proof}

Let $Spin^c( \widetilde{X} )^G \subseteq Spin^c(\widetilde{X})$ denote the isomorphism classes of $G$-invariant spin$^c$-structures on $\widetilde{X}$ and similarly define $Spin^c(\widetilde{X}_0)^G$. Then since $G$ preserves the classes $[\widetilde{S}_i] \in H^2( \widetilde{X} ; \mathbb{Z})$, Lemma \ref{lem:equiv} applied to $\widetilde{X}$ implies that if $\widetilde{\mathfrak{s}}_1, \widetilde{\mathfrak{s}}_2 \in Spin^c(\widetilde{X})^G$, then $\widetilde{\mathfrak{s}}_1 |_{\widetilde{X}_0} \cong \widetilde{\mathfrak{s}}_2 |_{\widetilde{X}_0}$ if and only if $\widetilde{\mathfrak{s}}_2 = \sum_i m_i [\widetilde{S}_i] + \widetilde{\mathfrak{s}}_1$ for some integers $m_i$. Likewise, Proposition \ref{prop:spincext} applied to $\widetilde{X}$ shows that every $\widetilde{\mathfrak{s}}_0 \in Spin^c( \widetilde{X}_0)^G$ has an extension to a $G$-invariant sharp spin$^c$-structure $\widetilde{\mathfrak{s}}$ on $\widetilde{X}$ and any other sharp extension has the form $\sum_i \epsilon_i[\widetilde{S}_i] + \widetilde{\mathfrak{s}}$, where $\epsilon_i \in \{ 0, 1\}$ if $\langle c(\widetilde{\mathfrak{s}}) , [\widetilde{S}_i] \rangle = n_i/p$, $\epsilon_i \in \{ 0, -1\}$ if $\langle c(\widetilde{\mathfrak{s}}) , [\widetilde{S}_i] \rangle = -n_i/p$ and $\epsilon_i = 0$ otherwise.

Let $Spin^c_G(\widetilde{X})$ denote the isomorphism classes of $G$-equivariant spin$^c$-structures on $\widetilde{X}$ and similarly define $Spin^c_G(\widetilde{X}_0)$. An element of $Spin^c_G(\widetilde{X})$ can be thought of as a pair $(\widetilde{\mathfrak{s}} , s)$, where $\widetilde{\mathfrak{s}} \in Spin^c(\widetilde{X})^G$ and $s : G \to G_{\widetilde{\mathfrak{s}}}$ is a splitting of $G_{\widetilde{\mathfrak{s}}} \to G$. In particular, the forgetful map $Spin^c_G(\widetilde{X}) \to Spin^c(\widetilde{X})^G$ is surjective and there are $p$ elements of $Spin^c_G(\widetilde{X})$ which map to any given $\widetilde{\mathfrak{s}} \in Spin^c(\widetilde{X})^G$. A similar statement holds for the forgetful map $Spin^c_G(\widetilde{X}_0) \to Spin^c(\widetilde{X}_0)^G$.

Since the $G$-action is free on $\widetilde{X}_0$ with quotient $X_0$, any $G$-equivariant spin$^c$-structure on $\widetilde{X}_0$ descends to $X_0$. Conversely, any spin$^c$-structure on $X_0$ pulls back to a $G$-equivariant spin$^c$-structure on $\widetilde{X}_0$. Hence we have a natural bijection $Spin^c_G(\widetilde{X}_0) \cong Spin^c(X_0)$. Since $\widetilde{X}_0 \to X_0$ is a non-trivial unbranched cyclic cover of degree $p$, there is a corresponding flat line bundle $A \to X_0$ of order $p$.

\begin{lemma}
The pullback map $\pi^* : Spin^c(X_0) \to Spin^c(\widetilde{X}_0)^G$ is surjective. Moreover $\pi^*(\mathfrak{s}_1) \cong \pi^*(\mathfrak{s}_2)$, if and only if $\mathfrak{s}_2 = A^{-j} \otimes \mathfrak{s}_1$ for $j \in \mathbb{Z}_p$. Furthermore, for any $\mathfrak{s}_0 \in Spin^c(X_0)$, the spin$^c$-structures $\{ A^{-j} \otimes \mathfrak{s} \}_{j=0}^{p-1}$ are distinct.
\end{lemma}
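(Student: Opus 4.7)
The plan is to reduce all three claims to the natural bijection $Spin^c_G(\widetilde{X}_0) \cong Spin^c(X_0)$ recorded in the paragraph preceding the lemma, which is given by descent along the free $G$-action on $\widetilde{X}_0$.

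First, surjectivity of $\pi^*$ is immediate: given $\widetilde{\mathfrak{s}} \in Spin^c(\widetilde{X}_0)^G$, choose any $G$-equivariant lift $(\widetilde{\mathfrak{s}},s) \in Spin^c_G(\widetilde{X}_0)$ and let $\mathfrak{s} \in Spin^c(X_0)$ be the corresponding descended spin$^c$-structure; then $\pi^* \mathfrak{s} \cong \widetilde{\mathfrak{s}}$ by construction.

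For the kernel characterization I would factor $\pi^*$ as the composition
\[
Spin^c(X_0) \cong Spin^c_G(\widetilde{X}_0) \longrightarrow Spin^c(\widetilde{X}_0)^G,
\]
where the first arrow is the descent bijection and the second is the forgetful map. The fibers of the forgetful map are $\mathrm{Hom}(G,U(1))$-torsors of cardinality $p$ (they parametrize the choice of splitting $s$), so the fibers of $\pi^*$ admit the same description. The key point is to identify the action of the character $\omega^{-j} \in \mathrm{Hom}(G,U(1))$ on the $Spin^c(X_0)$ side. Twisting a $G$-equivariant spin$^c$-structure on $\widetilde{X}_0$ by $\omega^{-j}$ alters the descended spin$^c$-structure on $X_0$ by the line bundle obtained from descending the trivial bundle on $\widetilde{X}_0$ along this character, which is by definition $A^{-j}$. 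Thus the fiber of $\pi^*$ over any $\widetilde{\mathfrak{s}}$ has the form $\{A^{-j} \otimes \mathfrak{s}_0\}_{j=0}^{p-1}$ for any fixed preimage $\mathfrak{s}_0$, which proves both directions of the stated equivalence.

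Finally, distinctness of $\{A^{-j} \otimes \mathfrak{s}_0\}_{j=0}^{p-1}$ amounts to showing that $A$ has order exactly $p$ in the group of complex line bundles on $X_0$. Since the covering $\widetilde{X}_0 \to X_0$ is non-trivial and of prime degree $p$, its classifying homomorphism $\pi_1(X_0) \to \mathbb{Z}_p$ is surjective, so the holonomy character defining $A$ has image of order $p$ in $U(1)$, and hence $A^j$ is non-trivial for each $1 \le j \le p-1$. The main content of the lemma is really the identification of the character action on splittings with tensoring by $A^{-j}$ on the base; once that is unpacked from the associated-bundle definition of $A$, the remaining statements are formal consequences of what was already established before the lemma.
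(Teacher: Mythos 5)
Your proof is correct and follows the same route as the paper: identify $\pi^*$ with the forgetful map $Spin^c_G(\widetilde{X}_0) \to Spin^c(\widetilde{X}_0)^G$ via the descent bijection (free action), note the fibers are $\mathrm{Hom}(G,U(1))$-torsors of size $p$, identify the character action with tensoring by powers of $A$, and deduce distinctness from $[A]$ having order $p$. You unpack the middle identification a bit more explicitly than the paper does, but the structure of the argument is the same.
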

\begin{proof}
Under the natural isomorphism $Spin^c_G(\widetilde{X}_0) \cong Spin^c(X_0)$, the pullback map $\pi^*$ becomes identified with the forgetful map $Spin^c_G(\widetilde{X}_0) \to Spin^c(\widetilde{X}_0)^G$. This map is surjective and any two equivariant spin$^c$-structures on $\widetilde{X}_0$ map to the same underlying spin$^c$-structure if and only if the corresponding spin$^c$-structures on $X_0$ differ by a power of $A$. The distinctness of the spin$^c$-structures $\{ A^{-j} \otimes \mathfrak{s} \}_{j=0}^{p-1}$ follows from the fact that $Spin^c(X_0)$ is a torsor over $H^2(X_0 ; \mathbb{Z})$ and the fact that $[A]$ has order $p$ ($[A]$ is non-trivial since $\widetilde{X}_0 \to X_0$ is a non-trivial covering).
\end{proof}

\subsection{Seiberg--Witten invariants}\label{sec:sw}

The relationship between spin$^c$-structures on $X$ and $G$-invariant spin$^c$-structures on $\widetilde{X}$ is as follows. Since the Seiberg--Witten invariants of non-sharp spin$^c$-structures are determined by the Seiberg--Witten invariants of the sharp spin$^c$-structures, it suffices to restrict attention to these. Let $\mathfrak{s}$ be a sharp spin$^c$-structure on $X$. Let $\mathfrak{s}_0 = \mathfrak{s}|_{X_0}$ be its restriction to $X_0$. Let $\widetilde{\mathfrak{s}}_0 = \pi^*(\mathfrak{s}_0)$ be its lift to $\widetilde{X}_0$ and let $\widetilde{\mathfrak{s}}$ be a sharp extension of $\widetilde{\mathfrak{s}}_0$ to $\widetilde{X}_0$. Since all sharp extensions have the same Seiberg--Witten invariants, it will not matter which choice of sharp extension we take.

The gluing theorem (Theorem \ref{thm:pscglue}) says that $SW(\widetilde{X} , \widetilde{\mathfrak{s}}) = SW(\widetilde{X}_0 , \widetilde{\mathfrak{s}}_0)$ and $SW(X , \mathfrak{s}) = SW(X_0 , \mathfrak{s}_0)$, since $\widetilde{\mathfrak{s}}$ and $\mathfrak{s}$ are sharp. Hence to obtain a relation between $SW(\widetilde{X} , \widetilde{\mathfrak{s}})$ and $SW(X , \mathfrak{s})$, it will suffice to relate $SW(\widetilde{X}_0 , \widetilde{\mathfrak{s}}_0)$ and $SW(X_0 , \mathfrak{s}_0)$.

For $j \in \mathbb{Z}_p$, set $\mathfrak{s}_j = A^{-j} \otimes \mathfrak{s}_0 \in Spin^c(X_0)$. Then $\{ \mathfrak{s}_j \}$ are precisely the spin$^c$-structures which pull back to $\widetilde{\mathfrak{s}}_0$. Let $\{ \phi_i \}$ be the normal weights of $\sigma$. Choose integer lifts $\widehat{\phi}_i$ of each $\phi_i$. We assume that $H_1(X ; \mathbb{Z})$ is finite and has order coprime to $p$. Then there is a unique class $\alpha \in H^2(X ; \mathbb{Z})$ such that $p \alpha = \sum_i \widehat{\phi_i} [S_i] \in H^2(X ; \mathbb{Z})$. Then $\alpha|_{X_0} = [A]$ and hence $(-j \alpha + \mathfrak{s})|_{X_0} = \mathfrak{s}_j$. In general, $-j\alpha + \mathfrak{s}$ will not be sharp, but we can choose integers $\{ m_i^j\}$ such that $\alpha_j + \mathfrak{s}$ is sharp, where $\alpha_j = -j \alpha + \sum_i m_i^j [S_i]$. The gluing theorem gives 
\begin{equation}\label{equ:x0x}
SW(X_0 , \mathfrak{s}_{j}) = SW(X , \alpha_j + \mathfrak{s}).
\end{equation}
Furthermore, Proposition \ref{prop:free} gives
\begin{equation}\label{equ:swbarx0}
\overline{SW}_{\mathbb{Z}_p}^{s_j}( \widetilde{X}_0 , \widetilde{\mathfrak{s}}_0) = SW(X_0 , \mathfrak{s}_j),
\end{equation}
where $s_j$ is the splitting of $G_{\widetilde{\mathfrak{s}}_0} \to G$ corresponding to $\mathfrak{s}_j$. More precisely, since $\widetilde{\mathfrak{s}}_0 = \pi^*(\mathfrak{s}_0)$ is defined as a pullback of $\mathfrak{s}_0$, it naturally has the structure of a $G$-equivariant spin$^c$-structure. Let $s_0$ be the splitting corresponding to this equivariant structure. Then $s_j : G \to G_{\widetilde{\mathfrak{s}}_0}$ is given by $s_j(\sigma) = \omega^{-j} s_0(\sigma)$, $\omega = e^{2\pi i/p}$.

Using this and Theorem \ref{thm:eqre}, the equivariant Seiberg--Witten invariants of $(\widetilde{X}_0 , \widetilde{\mathfrak{s}}_0)$ can be expressed in terms of Seiberg--Witten invariants of $X_0$ (or of $X$ using Equation \ref{equ:x0x}).

\begin{theorem}\label{thm:sweqcov}
For any $m_0, \dots , m_{p-1} \ge 0$ we have
\begin{align*}
& SW_{G , \widetilde{X}_0 , \widetilde{\mathfrak{s}}_0}( x^{m_0} (x+v)^{m_1} \cdots (x+(p-1)v)^{m_{p-1}}) \\
\quad \quad &= e_{G}( H^+( \widetilde{X}_0)/H^+(X_0) ) \sum_{j=0}^{p-1} c_j\left( -\frac{ (b_0+1) }{2} \, ; m_0-d_0 , \dots , m_{p-1} - d_{p-1} \right) SW(X_0 , \mathfrak{s}_j)
\end{align*}
where $b_0 = b_+(X)$ and for each $j \in \mathbb{Z}_p$, $d_j = ind_{APS}( X_0 , \mathfrak{s}_j)$.
\end{theorem}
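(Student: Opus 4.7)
The plan is to apply Theorem \ref{thm:eqre} directly to the pair $(\widetilde{X}_0, \widetilde{\mathfrak{s}}_0)$ with the free $\mathbb{Z}_p$-action generated by the covering transformation $\sigma$, and then translate every term appearing on the right-hand side via the identifications established in Section \ref{sec:spinc} together with Equation \ref{equ:swbarx0}.

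First I would check that $(\widetilde{X}_0, \widetilde{\mathfrak{s}}_0)$ satisfies the hypotheses of Theorem \ref{thm:eqre}. The boundary $\widetilde{Y} = \coprod_i L(n_i/p,1)$ admits a $G$-invariant psc metric (e.g., pull back the round metric on each $Y_i$); we have $b_1(\widetilde{X}_0) = 0$ from Section \ref{sec:ex}; and $G$ acts freely and orientation-preservingly. Moreover, because the action is free, $\pi^* : H^+(X_0) \to H^+(\widetilde{X}_0)^G$ is an isomorphism, so $b_+(\widetilde{X}_0)^G = b_+(X_0)$. A Mayer--Vietoris argument for $X = X_0 \cup_Y \coprod_i X(-n_i)$, using that $Y$ is a union of rational homology $3$-spheres and each $X(-n_i)$ is negative definite, gives $b_+(X_0) = b_+(X) = b_0$; in particular $b_+(\widetilde{X}_0)^G = b_0 > 1$.

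Next I would identify the three pieces in the conclusion of Theorem \ref{thm:eqre}. The equivariant Euler class: the isomorphism $\pi^* : H^+(X_0) \to H^+(\widetilde{X}_0)^G$ combined with the excision $H^+(X_0) = H^+(X)$ identifies $e_G(H^+(\widetilde{X}_0)/H^+(\widetilde{X}_0)^G)$ with the desired $e_G(H^+(\widetilde{X}_0)/H^+(X_0))$. The index dimensions: under the splitting $s_0$ arising from the natural $G$-equivariant structure on $\widetilde{\mathfrak{s}}_0 = \pi^*(\mathfrak{s}_0)$, sections of the pulled-back spinor bundle decompose into $G$-isotypic components, where the $\omega^j$-component descends to sections of the spinor bundle for $A^{-j} \otimes \mathfrak{s}_0 = \mathfrak{s}_j$ on $X_0$. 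The APS boundary conditions and the Dirac operators are all pulled back from $X_0$, so the eigenspace decomposition yields $ind_{APS}(\widetilde{X}_0, \widetilde{\mathfrak{s}}_0) = \bigoplus_{j=0}^{p-1} \mathbb{C}_j^{d_j}$ as a virtual $G$-representation with $d_j = ind_{APS}(X_0, \mathfrak{s}_j)$. Finally, since $s_j(\sigma) = \omega^{-j} s_0(\sigma)$ corresponds precisely to the equivariant structure descending to $\mathfrak{s}_j$, Equation \ref{equ:swbarx0} gives $\overline{SW}^{s_j}_{G}(\widetilde{X}_0, \widetilde{\mathfrak{s}}_0) = SW(X_0, \mathfrak{s}_j)$.

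Substituting these three identifications into the conclusion of Theorem \ref{thm:eqre} yields the claimed formula. The only genuinely non-formal point is the index decomposition: one must verify that twisting the splitting by $\omega^{-j}$ at the level of the equivariant spin$^c$-structure on $\widetilde{X}_0$ corresponds exactly to tensoring the descended spin$^c$-structure on $X_0$ by $A^{-j}$. This is a direct character computation using that $A$ is the flat line bundle on $X_0$ associated to the degree-$p$ covering $\widetilde{X}_0 \to X_0$, whose holonomy generator acts as $\omega$ on fibers; everything else is bookkeeping within the framework already set up in Section \ref{sec:eqred} and Section \ref{sec:spinc}.
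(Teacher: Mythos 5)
Your proposal is correct and follows essentially the same route as the paper: the paper's proof is a one-line appeal to Theorem \ref{thm:eqre} and Equation (\ref{equ:swbarx0}), together with the observation that $d_j = ind_{APS}(\widetilde{X}_0,\widetilde{\mathfrak{s}}_0)^{s_jG} = ind_{APS}(X_0,\mathfrak{s}_j)$ because the $G$-action is free. You fill in the same identifications in greater detail (hypothesis check, $b_+(\widetilde{X}_0)^G = b_+(X_0) = b_+(X)$, identification of the Euler class via $\pi^*$, isotypic decomposition of the equivariant APS index, and the translation of the reduced invariants), which is a faithful elaboration rather than a different argument.
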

\begin{proof}
This follows from Theorem \ref{thm:eqre} and Equation (\ref{equ:swbarx0}). Note also that $d_j = ind_{APS}( \widetilde{X}_0 , \widetilde{\mathfrak{s}}_0)^{s_j G} = ind_{APS}(X_0 , \mathfrak{s}_0)$ because the $G$-action is free on $\widetilde{X}_0$.
\end{proof}

Since the equivariant Seiberg--Witten invariants map to the ordinary Seiberg--Witten invariants (mod $p$) under the forgetful map $H^*_G(pt) \to H^*(pt) \cong \mathbb{Z}_p$, Theorem \ref{thm:sweqcov} implies a formula for $SW(\widetilde{X}_0 , \widetilde{\mathfrak{s}}_0)$. Define $e \in \mathbb{Z}_p^*$ by 
\[
e_G( H^+(\widetilde{X}_0)/H^+(X_0) ) = e v^{(b_+(\widetilde{X}_0) - b_+(X_0) )/2}.
\]
Further, define $\mu_j( n ; n_0 , \dots , n_{p-1}) \in \mathbb{Z}_p$ by
\[
c_j( n ; n_0 , \dots , n_{p-1}) = \mu_j(n ; n_0 , \dots , n_{p-1}) v^{\sum_i n_i - n}.
\]
Comparing with the definition of $c_j$, it follows that
\[
\mu_j(n ; n_0 , \dots , n_{p-1}) = \sum_{k_i} \prod_{i | i \neq j} \binom{n_i}{k_i} (i-j)^{n_i-k_i}
\]
where the sum is over non-negative integers $k_0 , \dots , \hat{k}_j , \dots , k_{p-1}$ such that $k_0 + \cdots + k_{p-1} = n-n_j$.

\begin{theorem}\label{thm:swcov}
Suppose that $d(\widetilde{X}_0 , \widetilde{\mathfrak{s}}_0) = 2m \ge 0$ is even and non-negative. Then
\[
SW(\widetilde{X}_0 , \widetilde{\mathfrak{s}}_0) = e \sum_{j=0}^{p-1} \mu_j\left( -\frac{ (b_0+1) }{2} \, ; m_0-d_0 , \dots , m_{p-1} - d_{p-1} \right) SW(X_0 , \mathfrak{s}_j) \; ({\rm mod} \; p),
\]
where $b_0 = b_+(X)$, $d_j = ind_{APS}( X_0 , \mathfrak{s}_j)$ and $m_0, \dots , m_{p-1}$ are any non-negative integers such that $m_0 + \cdots + m_{p-1} = m$.
\end{theorem}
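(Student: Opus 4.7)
The plan is to derive Theorem \ref{thm:swcov} as the degree-zero specialization of the equivariant identity in Theorem \ref{thm:sweqcov}. The key observation is that when we apply the equivariant Seiberg--Witten invariant to a polynomial in $x$ and $v$ of degree equal to $d(\widetilde{X}_0,\widetilde{\mathfrak{s}}_0)$, only the leading $x^m$ piece contributes in positive cohomological degree, and under the forgetful map $H^*_{\mathbb{Z}_p}(pt) \to H^*(pt) = \mathbb{Z}_p$ the class $SW_{G,\widetilde{X}_0,\widetilde{\mathfrak{s}}_0}(x^m)$ reduces to $SW(\widetilde{X}_0,\widetilde{\mathfrak{s}}_0) \pmod p$.

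Fix non-negative integers $m_0, \ldots, m_{p-1}$ with $m_0 + \cdots + m_{p-1} = m$, and write $P := x^{m_0}(x+v)^{m_1}\cdots(x+(p-1)v)^{m_{p-1}} = x^m + v\,Q(x,v)$ for some polynomial $Q$. Since $SW_{G, \widetilde{X}_0, \widetilde{\mathfrak{s}}_0}$ is an $H^*_{\mathbb{Z}_p}$-module map landing in $H^{* - d(\widetilde{X}_0,\widetilde{\mathfrak{s}}_0)}_{\mathbb{Z}_p}(pt) = H^{*-2m}_{\mathbb{Z}_p}(pt)$, for any monomial $x^i v^j$ with $i + j = m$ and $i < m$ we have $v^j \cdot SW_{G,\widetilde{X}_0,\widetilde{\mathfrak{s}}_0}(x^i) \in v^j \cdot H^{2i-2m}_{\mathbb{Z}_p}(pt) = 0$ since cohomology vanishes in negative degree. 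Hence only the $i=m$ term survives, giving
\[
SW_{G, \widetilde{X}_0, \widetilde{\mathfrak{s}}_0}(P) = SW_{G, \widetilde{X}_0, \widetilde{\mathfrak{s}}_0}(x^m) \in H^0_{\mathbb{Z}_p}(pt) \cong \mathbb{Z}_p,
\]
and this element equals $SW(\widetilde{X}_0, \widetilde{\mathfrak{s}}_0) \pmod p$ via the forgetful map.

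On the right-hand side of Theorem \ref{thm:sweqcov}, I would substitute $e_G(H^+(\widetilde{X}_0)/H^+(X_0)) = e \cdot v^{(b_+(\widetilde{X}_0)-b_0)/2}$ and $c_j(n; n_0, \ldots, n_{p-1}) = \mu_j \cdot v^{\sum_i n_i - n}$ for $n = -(b_0+1)/2$ and $n_i = m_i - d_i$. The combined exponent of $v$ becomes
\[
\frac{b_+(\widetilde{X}_0) - b_0}{2} + m - \sum_i d_i + \frac{b_0+1}{2} = \frac{1}{2}\Bigl(b_+(\widetilde{X}_0) + 1 + 2m - 2\sum_i d_i\Bigr),
\]
which vanishes because $d(\widetilde{X}_0, \widetilde{\mathfrak{s}}_0) = 2\sum_i d_i - b_+(\widetilde{X}_0) - 1 = 2m$. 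Thus each $e_G \cdot c_j$ lies in $H^0_{\mathbb{Z}_p}(pt)$ and equals $e\mu_j$ as an element of $\mathbb{Z}_p$. Equating the two sides yields the desired formula.

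The argument is essentially bookkeeping once Theorem \ref{thm:sweqcov} is in hand, with no new analytic input required. The one point worth highlighting is that the degree match works precisely because of the hypothesis $d(\widetilde{X}_0, \widetilde{\mathfrak{s}}_0) = 2m$; the apparent freedom to choose any partition $m_0 + \cdots + m_{p-1} = m$ reflects the fact that the left-hand side is independent of this choice, while Theorem \ref{thm:sweqcov} provides a one-parameter family of equivariant identities, each of which encodes the same information after reduction to degree zero.
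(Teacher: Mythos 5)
Your argument is correct and is essentially the same as the paper's (the paper proves this in two lines by asserting $SW(\widetilde{X}_0,\widetilde{\mathfrak{s}}_0) = SW_{G,\widetilde{X}_0,\widetilde{\mathfrak{s}}_0}(x^{m_0}(x+v)^{m_1}\cdots(x+(p-1)v)^{m_{p-1}}) \in H^0_{\mathbb{Z}_p}(pt)$ and then invoking Theorem \ref{thm:sweqcov}). You have simply filled in the two implicit verifications: that only the leading $x^m$ term survives because $SW_{G}$ is an $H^*_{\mathbb{Z}_p}$-module map whose target vanishes in negative degree, and that the powers of $v$ on the right-hand side cancel because $d(\widetilde{X}_0,\widetilde{\mathfrak{s}}_0) = 2\sum_i d_i - b_+(\widetilde{X}_0) - 1 = 2m$.
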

\begin{proof}
If $m_0 + \cdots + m_{p-1} = m$, then 
\[
SW( \widetilde{X}_0 , \widetilde{\mathfrak{s}}_0) = SW_{G , \widetilde{X}_0 , \widetilde{\mathfrak{s}}_0}( x^{m_0}(x+v)^{m_1} \cdots (x+(p-1)v)^{m_{p-1}}) \in H^0_{\mathbb{Z}_p}(pt) = \mathbb{Z}_p.
\]
The result now follows from Theorem \ref{thm:sweqcov}.
\end{proof}

If $\widetilde{\mathfrak{s}}$ is a sharp extension of $\widetilde{\mathfrak{s}}_0$, then Theorem \ref{thm:swcov} gives
\[
SW(\widetilde{X} , \widetilde{\mathfrak{s}}) = e \sum_{j=0}^{p-1} \mu_j\left( -\frac{ (b_0+1) }{2} \, ; m_0-d_0 , \dots , m_{p-1} - d_{p-1} \right) SW(X , \alpha_j + \mathfrak{s}) \; ({\rm mod} \; p).
\]
We have thus achieved our goal of expressing the Seiberg--Witten invariants of $\widetilde{X}$ (for $G$-invariant spin$^c$-structures) in terms of the Seiberg--Witten invariants of $X$.

An important feature of Theorem \ref{thm:swcov} is that the formula must hold for all choices of non-negative integers $m_0, \dots , m_{p-1}$ such that $m_0 + \cdots + m_{p-1} = d(\widetilde{X}_0 , \widetilde{\mathfrak{s}}_0)/2$. We will see that this imposes strong restrictions on how large $d(\widetilde{X}_0 , \widetilde{\mathfrak{s}}_0)$ can be. In fact, a similar restriction holds whether or not $d(\widetilde{X}_0 , \widetilde{\mathfrak{s}}_0)$ is even.

\begin{proposition}\label{prop:indep}
Suppose that $d(\widetilde{X}_0 , \widetilde{\mathfrak{s}}_0) \ge 0$ and set $m = \lceil d(\widetilde{X}_0 , \widetilde{\mathfrak{s}}_0)/2 \rceil$. Let $m_0, \dots , m_{p-1}$ be non-negative integers such that $m_0 + \cdots + m_{p-1} = m$. The value of
\[
\sum_{j=0}^{p-1} \mu_j\left( -\frac{ (b_0+1) }{2} \, ; m_0-d_0 , \dots , m_{p-1} - d_{p-1} \right) SW(X_0 , \mathfrak{s}_j) \; ({\rm mod} \; p)
\]
does not depend on the choice of $m_0, \dots , m_{p-1}$.
\end{proposition}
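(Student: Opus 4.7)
The plan is to apply Theorem \ref{thm:sweqcov} to the polynomial
\[
P_{m_\bullet}(x, v) \;:=\; x^{m_0}(x+v)^{m_1}\cdots(x+(p-1)v)^{m_{p-1}}
\]
and observe that $SW_{G, \widetilde{X}_0, \widetilde{\mathfrak{s}}_0}(P_{m_\bullet})$ is manifestly independent of the partition $(m_0, \dots, m_{p-1})$ with $\sum_i m_i = m$. The key degree-vanishing observation is that for $m = \lceil d(\widetilde{X}_0, \widetilde{\mathfrak{s}}_0)/2 \rceil$, the equivariant invariant $SW_G(x^k)$ vanishes for $k < m$, since it lies in $H^{2k-d}_{\mathbb{Z}_p}(pt)$, where $d = d(\widetilde{X}_0, \widetilde{\mathfrak{s}}_0)$, and this cohomology group is zero when $2k - d < 0$; for $k \leq m-1$ one has $2k - d \leq 2m - 2 - d \leq -1$.

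I would then expand $P_{m_\bullet}(x, v) = x^m + \sum_{k \geq 1} a_k(m_\bullet)\, x^{m-k} v^k$, which is possible because each factor $x + iv$ is monic of degree $1$ in $x$, so $P_{m_\bullet}$ is monic of degree $m$. By the $H^*_{\mathbb{Z}_p}$-linearity of $SW_{G, \widetilde{X}_0, \widetilde{\mathfrak{s}}_0}$ and the vanishing above, $SW_G(P_{m_\bullet}) = SW_G(x^m)$, manifestly independent of $(m_\bullet)$.

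On the other hand, applying Theorem \ref{thm:sweqcov} and substituting $c_j(n; n_\bullet) = \mu_j(n; n_\bullet)\, v^{\sum n_i - n}$ and $e_G(H^+(\widetilde{X}_0)/H^+(X_0)) = e\, v^{(b_+(\widetilde{X}_0)-b_0)/2}$ for some $e \in \mathbb{Z}_p^*$, and noting that $\sum_i(m_i - d_i) = m - \sum_i d_i$ is the same for every partition $(m_\bullet)$, the total exponent of $v$ in the entire expression is a fixed integer $N$. Consequently we obtain
\[
SW_G(x^m) \;=\; e\, v^N \sum_{j=0}^{p-1} \mu_j\!\left(-\tfrac{b_0+1}{2};\, m_\bullet - d_\bullet\right) SW(X_0, \mathfrak{s}_j).
\]

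Finally, since $e \in \mathbb{Z}_p^*$ is a unit and $v$ is a non-zero-divisor in $H^*_{\mathbb{Z}_p}(pt)$ (equal to $\mathbb{Z}_p[u,v]/(u^2)$ for odd $p$ and to $\mathbb{Z}_2[u]$ with $v = u^2$ for $p=2$), multiplication by $e v^N$ is injective, so the independence of the left-hand side forces the scalar $\sum_j \mu_j(n; m_\bullet - d_\bullet)\, SW(X_0, \mathfrak{s}_j) \pmod p$ to be independent of $(m_\bullet)$. The only case needing separate comment is when $n = -(b_0+1)/2$ is non-integral, where $c_j = \mu_j = 0$ by convention and the statement is vacuous. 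The main conceptual point is the cohomological-grading vanishing of $SW_G(x^k)$ for $k < m$; this is the essential reason the argument, which would fail at the level of ordinary (non-equivariant) Seiberg--Witten invariants when $d$ is odd, nevertheless succeeds equivariantly.
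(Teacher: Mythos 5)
Your proof is correct and follows essentially the same approach as the paper's: both rest on the degree-vanishing $SW_{G,\widetilde{X}_0,\widetilde{\mathfrak{s}}_0}(x^k)=0$ for $k<m$ so that only the leading term $x^m$ of the polynomial contributes, combined with Theorem \ref{thm:sweqcov} and the invertibility of $e$. Your version simply treats the even and odd parities of $d(\widetilde{X}_0,\widetilde{\mathfrak{s}}_0)$ uniformly (the paper splits into two cases, with the even case deferred to Theorem \ref{thm:swcov}, whose proof already relies on the same vanishing) and makes explicit the step that multiplication by $ev^N$ is injective.
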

\begin{proof}
In the case that $d(\widetilde{X}_0 , \widetilde{\mathfrak{s}}_0) = 2m$ is even, this follows from Theorem \ref{thm:swcov} and the fact that $e \neq 0 \; ({\rm mod} \; p)$. Now suppose that $d(\widetilde{X}_0 , \widetilde{\mathfrak{s}}_0) = 2m-1 > 0$ is odd and positive. Then $SW_{G , \widetilde{X}_0 , \widetilde{\mathfrak{s}}_0}( x^k ) \in H^{2k-(2m-1)}_{\mathbb{Z}_p}(pt) = 0$ for $k < m$. Now if $m_0 + \cdots + m_{p-1} = m$, then it follows that the value of
\[
SW_{G , \widetilde{X}_0 , \widetilde{\mathfrak{s}}_0}( x^{m_0} (x+v)^{m_1} \cdots (x+(p-1)v)^{m_{p-1}})  \in H^1_{\mathbb{Z}_p}(pt)
\]
does not depend on the choice of $m_0, \dots , m_{p-1}$. To see this, expand $x^{m_0} (x+v)^{m_1} \cdots (x+(p-1)v)^{m_{p-1}}$ in terms of powers of $x$. Only the leading term $x^m$ contributes. Now the result follows from Theorem \ref{thm:sweqcov}.
\end{proof}

\begin{proposition}\label{prop:dimbound}
Suppose that $X$ has simple type. For any spin$^c$-structure $\mathfrak{s}$ on $X$, let $k$ be the number of $j \in \mathbb{Z}_p$ such that $SW(X , \alpha_j + \mathfrak{s}) \neq 0 \; ({\rm mod} \; p)$. Then
\begin{itemize}
\item[(1)]{If $k > 0$, then $d(\widetilde{X}_0 , \widetilde{\mathfrak{s}}_0) \le 2k-2$.}
\item[(2)]{If $d(\widetilde{X}_0 , \widetilde{\mathfrak{s}}_0) = 2k-2$ and $k > 1$, then $\widetilde{X}_0$ does not have simple type.}
\end{itemize}

\end{proposition}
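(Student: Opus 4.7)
The plan is to use Proposition \ref{prop:indep} together with a Vandermonde-style analysis. Write $d := d(\widetilde{X}_0,\widetilde{\mathfrak{s}}_0)$, $m := \lceil d/2 \rceil$, $a_j := SW(X,\alpha_j+\mathfrak{s})\pmod{p}$, and $J := \{j : a_j\ne 0\}$, so $|J|=k$. Proposition \ref{prop:indep} gives that
\[
\mathcal{S}(\vec m) := \sum_{j\in J}\mu_j\!\bigl({-}\tfrac{b_0+1}{2};\,m_0-d_0,\ldots,m_{p-1}-d_{p-1}\bigr)\,a_j \pmod{p}
\]
is independent of the non-negative partition $\vec m$ of $m$. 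Since $X$ has simple type and each $j\in J$ satisfies $SW(X,\alpha_j+\mathfrak{s})\ne 0$, the gluing formula and sharpness force $d(X,\alpha_j+\mathfrak{s})=0$, so $d_j = d_0 = (b_0+1)/2 = -n$; in particular the $d_j$ coincide. Unpacking the combinatorial formula from Section \ref{sec:sw}, one obtains $\mu_j(n;\vec n') = [z^n]\prod_i(z+i-j)^{n'_i}$, read as a formal Laurent expansion at $z=0$ when some $n'_i$ are negative.

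For part (1), suppose $k>0$ and, for contradiction, $d\ge 2k-1$, so $m\ge k$. I first evaluate $\mathcal{S}$ at the concentrated partitions $\vec m^{(j_0)} := m\,e_{j_0}$, $j_0\in\{0,\ldots,p-1\}$. Using $d_0=-n$, a direct Laurent-expansion computation yields $\mu_{j_0}^{(j_0)}=0$ for $m\ge 1$, and
\[
\mu_j^{(j_0)} = (j_0-j)^{m-1+n}\prod_{\substack{i\ne j_0\\ i\ne j}}(i-j)^n\quad (j\ne j_0),
\]
which lies in $\mathbb{Z}_p^{\times}$ because each factor is a difference of distinct residues mod $p$. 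The plan is then to combine the constancy of $\mathcal{S}$ over these $p$ partitions (and, when $m>1$, over additional non-concentrated partitions of $m$) with a Vandermonde invertibility argument at the $k\le p$ distinct residues $\{-j:j\in J\}$ to produce a $k$-dimensional family of $\mathbb{Z}_p$-valued linear functionals that annihilates $(a_j)_{j\in J}$. This forces $a_j=0$ for every $j\in J$, contradicting $|J|=k$, and yields $d\le 2k-2$.

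For part (2), take $d=2k-2>0$, so $m=k-1\ge 1$. Evaluating $\mathcal{S}$ at $\vec m^{(j_0)}$ for some $j_0\in J$ gives
\[
\mathcal{S}(\vec m^{(j_0)}) = \sum_{j\in J\setminus\{j_0\}}\mu_j^{(j_0)} a_j,
\]
a $\mathbb{Z}_p$-linear combination of $k-1\ge 1$ terms whose coefficients lie in $\mathbb{Z}_p^{\times}$. Varying $j_0$ across $J$ and then over $J^c$ and exploiting constancy of $\mathcal{S}$, the same Vandermonde identity shows that this common value must be nonzero mod $p$; otherwise $(a_j)_{j\in J}$ would lie in the kernel of an invertible matrix, contradicting $a_j\ne 0$ for $j\in J$. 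Hence Theorem \ref{thm:swcov} gives $SW(\widetilde{X}_0,\widetilde{\mathfrak{s}}_0)\not\equiv 0\pmod{p}$, and since $d>0$, $\widetilde{X}_0$ is not of simple type.

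The main obstacle is the Vandermonde step: rigorously justifying that, for $m\ge k$ in part (1), varying $\vec m$ on the simplex produces enough linearly independent evaluation tuples $(\mu_j(\vec m-\vec d))_{j\in J}\in\mathbb{Z}_p^J$ to span the whole space, and that, for $m=k-1$ in part (2), the common value $\mathcal{S}$ is forced to be nonzero. This requires moving beyond the concentrated partitions when $m>1$, carefully handling the formal Laurent interpretation of $\mu_j$ when some $n'_i$ are negative, and invoking the classical fact that the Vandermonde determinant at $k\le p$ distinct residues in $\mathbb{Z}_p$ is a unit modulo $p$.
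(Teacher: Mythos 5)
Your proposal starts from the same anchor as the paper — Proposition~\ref{prop:indep}, the constancy of the quantity $\mathcal{S}(\vec m)$ over all non-negative partitions of $m=\lceil d/2\rceil$ — but then diverges in a way that leaves a real gap. The paper exploits two elementary facts about $\mu_j(n;n_0,\dots,n_{p-1})$: it vanishes if $n_j>n$, and it equals the unit $\prod_{i\neq j}(i-j)^{n_i}$ if $n_j=n$. Using the identity $d_j=(b_0+1)/2$ for $j\in K$, it then picks \emph{spread} partitions: one with $m_j>0$ for every $j\in K$ (so every term of $\mathcal{S}$ dies and $\mathcal{S}=0$), and, for a fixed $j\in K$, another with $m_j=0$ and $m_i>0$ for all $i\in K\setminus\{j\}$ (so $\mathcal{S}$ collapses to the single nonzero term $\mu_j\,SW(X_0,\mathfrak{s}_j)$). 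These two evaluations are incompatible, giving part~(1) immediately, and the single-term evaluation gives part~(2). No linear algebra beyond this is needed, and the argument places no restriction on the size of $m$.

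Your route instead fixes the \emph{concentrated} partitions $\vec m^{(j_0)}=m\,e_{j_0}$ and tries to run a Vandermonde argument over the resulting values of $\mathcal{S}$. Three issues arise. First, you assert ``the $d_j$ coincide'' and take $d_j=-n$ for all $j$; the paper's argument only establishes $d_j=(b_0+1)/2$ for $j\in K$, and for $j\notin K$ there is no reason for the APS index $d_j=ind_{APS}(X_0,\mathfrak{s}_j)$ to equal this value. (This does not wreck the qualitative conclusion that $\mu_j^{(j_0)}$ is a unit for $j\in J$, $j\neq j_0$, since $\prod_{i\neq j}(i-j)^{-d_i}$ is a unit regardless of the $d_i$; but the explicit formula you write, and the Vandermonde structure you plan to read off from it, relies on the unjustified assumption.) Second, even granting that assumption, your formula $\mu_j^{(j_0)}=(j_0-j)^{m-1+n}\prod_{i\ne j_0,\,i\ne j}(i-j)^n$ has an off-by-one error: property~(a) gives $\mu_j^{(j_0)}=(j_0-j)^{n'_{j_0}}\prod_{i\ne j_0,\,i\ne j}(i-j)^{n'_i}$ with $n'_{j_0}=m-d_{j_0}=m+n$, not $m-1+n$. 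Third, and most seriously, the Vandermonde step itself — the heart of both parts of your argument — is acknowledged as ``the main obstacle'' and is not carried out. If one does try to push it through for part~(1), one finds that after expanding $(t-j)^m$ in powers of $t$, the constancy of $\mathcal{S}(t)$ gives the Vandermonde system $\sum_{j\in J}(-j)^s\lambda_j a_j=0$, $0\le s\le m-1$, only when the binomial coefficients $\binom{m}{l}$ are all nonzero mod $p$, i.e.\ only when $m<p$. Under the contradiction hypothesis $m\ge k$ you do not a priori have $m<p$, so the argument needs an extra case analysis or a different family of partitions — precisely the material you flag as missing. The paper's choice of spread partitions sidesteps all of this.
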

\begin{proof}
First note the following two properties of $\mu_j( n ; n_0 , \dots , n_{p-1})$:
\begin{itemize}
\item[(a)]{If $n_j = n$, then the only solution to $k_0 + \cdots + \widehat{k}_j + \cdots + k_{p-1} = n-n_j = 0$ is $k_0 = \cdots = 0$. Hence
\[
\mu_j( n ; n_0 , \dots , n_{p-1}) = \prod_{i | i \neq j } (i-j)^{n_i} \neq 0 \; ({\rm mod} \; p).
\]
}
\item[(b)]{If $n_j > n$, then there are no solutions to $k_0 + \cdots + \widehat{k}_j + \cdots + k_{p-1} = n-n_j$ with $k_i \ge 0$, hence $\mu_j( n \; n_0 , \dots , n_{p-1}) = 0$.}
\end{itemize}

Let $m = \lceil d(\widetilde{X}_0 , \widetilde{\mathfrak{s}}_0)/2 \rceil$ and set
\[
S(m_0, \dots , m_{p-1}) = \sum_{j=0}^{p-1} \mu_j\left( -\frac{ (b_0+1) }{2} \, ; m_0-d_0 , \dots , m_{p-1} - d_{p-1} \right) SW(X_0 , \mathfrak{s}_j) \; ({\rm mod} \; p)
\]
where $m_i \ge 0$, $m_0 + \cdots + m_{p-1} = m$. Then by Proposition \ref{prop:indep}, $S$ does not depend on the choice of $m_0, \dots , m_{p-1}$. Assume $k > 0$, otherwise there is nothing to show. Let $K = \{ j \in \mathbb{Z}_p \; | \; SW(X , \alpha_j + \mathfrak{s}) \neq 0 \; ({\rm mod} \; p) \}$. Then
\[
S(m_0, \dots , m_{p-1}) = \sum_{j \in K} \mu_j\left( -\frac{ (b_0+1) }{2} \, ; m_0-d_0 , \dots , m_{p-1} - d_{p-1} \right) SW(X_0 , \mathfrak{s}_j) \; ({\rm mod} \; p).
\]
If $j \in K$ then the simple type assumption implies that $\alpha_j + \mathfrak{s}$ is sharp and that $d(X , \alpha_j + \mathfrak{s}) = d(X_0 , \mathfrak{s}_j) = 0$. But $d(X_0 , \mathfrak{s}_j) = 2d_j - b_+(X) - 1$, hence $d_j = (b_+(X)+1)/2 = (b_0 + 1)/2$. 

Suppose now that $d(\widetilde{X}_0 , \widetilde{\mathfrak{s}}_0) \ge 2k-1$. Then $m \ge k$. So we can choose $m_0, \dots , m_{p-1}$ with $m_0 + \cdots + m_{p-1} = m$ and $m_j > 0$ for all $j \in K$. But for $j \in K$, $m_j -d_j > -(b_0+1)/2$ and hence $\mu_j( -(b_0+1)/2 ; m_0-d_0 , \dots , m_{p-1} - d_{p-1}) = 0$, by property (b). Hence $S(m_0 , \dots , m_{p-1} ) = 0$. On the other hand, for any given $j \in K$, we can choose $\{ m_i \}$ such that $m_0 + \cdots + m_{p-1} = m$, $m_i > 0$ for all $i \in K \setminus \{j\}$ and $m_j = 0$. Then properties (a) and (b) give:
\[
S(m_0 , \dots , m_{p-1}) = \mu_j \left( -\frac{b_0+1}{2} \, ; m_0-d_0 , \dots , m_j - d_j \right) SW(X_0 , \mathfrak{s}_j) \neq 0 \; ({\rm mod}\; p ),
\]
which is a contradiction. So $d(\widetilde{X}_0 , \widetilde{\mathfrak{s}}_0) \le 2k-2$.

Now suppose $d(\widetilde{X}_0 , \widetilde{\mathfrak{s}}_0) = 2m = 2k-2$ and $k > 1$. Then $d(\widetilde{X}_0 , \widetilde{\mathfrak{s}}_0) > 0$ and $SW(\widetilde{X}_0 , \widetilde{\mathfrak{s}}_0) = e S(m_0 , \dots , m_{p-1}) \; ({\rm mod} \; p)$ by Theorem \ref{thm:swcov}. Fix some $j \in K$. Since $m = k-1$, we can choose $m_i$ to be $m_i = 1$ if $i \in K \setminus \{j\}$ and $m_i = 0$ otherwise. The same argument as above gives $S(m_0 , \dots , m_{p-1}) \neq 0 \; ({\rm mod} \; p)$ and hence also $SW(\widetilde{X}_0 , \widetilde{\mathfrak{s}}_0) \neq 0 \; ({\rm mod} \; p)$. Thus $\widetilde{X}_0$ does not have simple type.
\end{proof}

We seek a formula for $d(\widetilde{X} , \widetilde{\mathfrak{s}})$ in terms of the branching data. Assume that $\mathfrak{s}, \widetilde{\mathfrak{s}}$ are sharp extensions of $\mathfrak{s}_0$ and $\widetilde{\mathfrak{s}}_0$. The quantities $c_i = \langle c(\mathfrak{s}) , [S_i] \rangle$, $\widetilde{c}_i = \langle c(\widetilde{\mathfrak{s}}_0) , [\widetilde{S}_i] \rangle$ will be of importance. Note that sharpness of $\mathfrak{s}, \widetilde{\mathfrak{s}}$ is equivalent to $| c_i | \le n_i$, $| \widetilde{c}_i | \le n_i/p$ for all $i$. It follow from Lemma \ref{lem:equiv} that $c_i, \widetilde{c}_i$ are uniquely determined by $\mathfrak{s}_0, \widetilde{\mathfrak{s}}_0$ except that in the case $|c_i | = n_i$, $|\widetilde{c}_i | = n_i/p$, the signs of $c_i, \widetilde{c}_i$ are undetermined. We can remove the ambiguity by imposing the conditions $-n_i < c_i \le n_i$, $-n_i/p < \widetilde{c}_i \le n_i/p$. Then $c_i, \widetilde{c}_i$ are uniquely determined by $\mathfrak{s}_0, \widetilde{\mathfrak{s}}_0$. 

\begin{lemma}\label{lem:deltaY}
We have
\[
\delta( Y_i , \mathfrak{s}_0|_{Y_i}) = -\frac{c_i^2}{8n_i} + \frac{1}{8}, \quad \delta( \widetilde{Y}_i , \widetilde{\mathfrak{s}}_0 |_{\widetilde{Y}_i}) = -\frac{ \widetilde{c}_i^2}{8(n_i/p)} + \frac{1}{8}.
\]
Moreover $\widetilde{c}_i$ is uniquely determined from $c_i,n_i,p$ by the requirements that 
\begin{itemize}
\item[(1)]{$-n_i/p < \widetilde{c}_i \le n_i/p$, and}
\item[(2)]{$ \dfrac{(c_i+n_i)}{2(n_i/p)} - \dfrac{(\widetilde{c}_i + (n_i/p))}{2(n_i/p)} \in \mathbb{Z}.$ }
\end{itemize}
\end{lemma}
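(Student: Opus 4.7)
The first two formulas would follow from plugging into the explicit lens space delta computation given earlier in the paper. Since $Y_i$ is the boundary of the disc bundle $X(-n_i)$, it is the lens space $L(n_i,1)$ (with orientation checked by the standard delta comparison), and similarly $\widetilde{Y}_i = L(n_i/p,1)$. Every spin$^c$-structure on $L(n_i,1)$ is of the form $\mathfrak{s}_u$ for a unique $u \in \mathbb{Z}_{n_i}$, and for any sharp extension $\mathfrak{s}$ to $X(-n_i)$ the Chern number satisfies $c_i^2 = (2u+2-n_i)^2$ (one checks this by tracking $c_1$ of the canonical spin$^c$-structure on $X(-n_i)$, which is $(2-n_i)e$, and then twisting by multiples of $[S_i]$). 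Substituting into the formula $\delta(L(n_i,1),\mathfrak{s}_u)=-(2u+2-n_i)^2/(8n_i)+1/8$ established earlier yields the first equality, and the identical argument on the cover gives the second.

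For the third part, I would exploit the fact that $\widetilde{\mathfrak{s}}_0 = \pi^*(\mathfrak{s}_0)$, so $\widetilde{\mathfrak{s}}_0|_{\widetilde{Y}_i}$ is the pullback of $\mathfrak{s}_0|_{Y_i}$ under the unbranched degree-$p$ cover $\widetilde{Y}_i \to Y_i$. Realising $L(n_i,1) = S^3/\langle \sigma\rangle$ with $\sigma(z_1,z_2)=(\omega z_1,\omega z_2)$, $\omega=e^{2\pi i/n_i}$, the index-$p$ subgroup $\langle \sigma^p\rangle \cong \mathbb{Z}_{n_i/p}$ has quotient $L(n_i/p,1)$ and produces exactly this cover. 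The spin$^c$-structure $\mathfrak{s}_u$ corresponds to the character $\phi_u(\sigma)=\omega^u$; restricting $\phi_u$ to $\langle\sigma^p\rangle$ gives $\phi_u(\sigma^p)=e^{2\pi i u/(n_i/p)}$, so $\pi^*(\mathfrak{s}_u)=\mathfrak{s}_{\widetilde{u}}$ with $\widetilde{u}\equiv u \pmod{n_i/p}$.

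Finally, I would translate this congruence into the form of conditions (1) and (2). With the sharp normalisations $-n_i < c_i \le n_i$ and $-n_i/p < \widetilde{c}_i \le n_i/p$, one writes $u=(n_i-2-c_i)/2$ and $\widetilde{u}=(n_i/p-2-\widetilde{c}_i)/2$ in their respective fundamental domains. A direct manipulation then shows that
\[
\widetilde{u}-u = \frac{(c_i+n_i)-(\widetilde{c}_i+n_i/p)}{2} - \frac{n_i-n_i/p}{2} + \text{(offset)},
\]
so $\widetilde{u}\equiv u \pmod{n_i/p}$ is equivalent to condition (2). Condition (1) is simply the sharpness constraint picking the fundamental-domain representative on the cover, so (1) and (2) together uniquely determine $\widetilde{c}_i$ from $c_i$, $n_i$ and $p$.

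The main obstacle is purely bookkeeping: keeping track of the sign and offset conventions when translating between the parameter $u \in \mathbb{Z}_{n_i}$ of lens space spin$^c$-structures and the Chern number $c_i$ of the sharp extension, especially at the boundary values where the sign of $c_i$ is fixed by the chosen normalisation. The geometric content is immediate from $\widetilde{\mathfrak{s}}_0 = \pi^*(\mathfrak{s}_0)$ and the lens space delta formula.
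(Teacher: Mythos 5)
Your argument is correct and, once unwound, amounts to the same computation as the paper, but it routes through a slightly different parameterization. For the first two delta formulas, the paper argues more directly: sharpness gives $\delta(Y_i,\mathfrak{s}_0|_{Y_i}) = \delta(X(-n_i),\mathfrak{s}|_{X(-n_i)})$ outright, and the right-hand side is just $(c(\mathfrak{s})^2 - \sigma)/8 = -c_i^2/(8n_i)+1/8$ by definition, with no need to go back to the lens-space character parameterization and the formula $\delta(L(n,1),\mathfrak{s}_u) = -(2u+2-n)^2/(8n)+1/8$. Your detour through $\mathfrak{s}_u$ is fine but requires the extra bookkeeping you acknowledge, and at the boundary case $c_i = n_i$ your expression $u = (n_i-2-c_i)/2 = -1$ lands outside the normalised range $0\le u\le n_i-1$; this is harmless for the delta computation (only $(2u+2-n_i)^2 = c_i^2$ enters), but it is worth being aware that the matching of $u$-ranges and $c_i$-ranges is not literal at the endpoint.

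For the ``moreover'' part, the two arguments are really the same in different clothing: the paper writes $\mathfrak{s}|_{X(-n_i)} = \mathcal{O}(l_i)\otimes\mathfrak{s}_{can}$ with $l_i = (c_i+n_i)/2 - 1$ and tracks the twist by $[S_i]$ needed to make the pullback extend sharply, deriving $\widetilde{c}_i = c_i + n_i - 2(n_i/p)u_i - n_i/p$ directly; you track the holonomy character $\phi_u$ restricted to the index-$p$ subgroup and get $\widetilde u \equiv u \pmod{n_i/p}$. These are equivalent (your congruence and the paper's condition (2) differ by the integer $p-1$, as a short check confirms), so both pin down $\widetilde c_i$ uniquely once the sharpness window (1) is imposed. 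What the paper's phrasing buys is that it never leaves the disc-bundle picture and so avoids the sign-convention matching between $u$ and $l_i$; your version is perhaps more transparent about why the condition is a mod-$(n_i/p)$ congruence.
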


\begin{proof}
By assumption, $\mathfrak{s}$ is sharp and hence
\[
\delta(Y_i , \mathfrak{s}_0 |_{Y_i}) = \delta(Y_i , \mathfrak{s}|_{Y_i}) = \delta( X(-n_i) , \mathfrak{s}|_{X(-n_i)}) = -\frac{c_i^2}{8n_i} + \frac{1}{8}.
\]
Similarly, since $\widetilde{\mathfrak{s}}$ is sharp, $\delta( \widetilde{Y}_i , \widetilde{\mathfrak{s}}_0 |_{\widetilde{Y}_i}) = -\widetilde{c}_i^2/(8(n_i/p)) +1/8$. 

Let $\mathfrak{s}_{can}$ denote the canonical spin$^c$-structure on $X(-n_i)$. Then $\langle c(\mathfrak{s}_{can}) , [S_i] \rangle = 2-n_i$. Hence $\mathfrak{s}|_{X(-n_i)} = \mathcal{O}(l_i) \otimes \mathfrak{s}_{can}$, where $c_i = 2l_i + 2 - n_i$ (so $l_i = (c_i + n_i)/2 - 1$). The pullback of $\mathfrak{s}_{can}$ under the covering $\widetilde{Y}_i \to Y_i$ is the canonical spin$^c$-structure on $\widetilde{Y}_i$, hence $\mathcal{O}(l_i) \otimes \mathfrak{s}_{can} |_{Y_i}$ pulls back to $\mathcal{O}(l_i) \otimes \mathfrak{s}_{can} |_{\widetilde{Y}_i}$. This extends over $X(-n_i/p)$ as $\mathcal{O}(l_i) \otimes \mathfrak{s}_{can}$, however this may not be a sharp extension. From Lemma \ref{lem:equiv}, we must have $\widetilde{\mathfrak{s}}|_{\widetilde{Y}_i} = \mathcal{O}(l_i - (n_i/p)u_i) \otimes \mathfrak{s}_{can}$ for some $u_i \in \mathbb{Z}$ (because $[S]|_{X(-n_i/p)} \cong \mathcal{O}(n_i/p)$). Therefore,
\begin{align*}
\widetilde{c}_i &= 2l_i - 2 (n_i/p) u_i + 2 - (n_i/p) \\
&= c_i + n_i - 2 (n_i/p) u_i - (n_i/p).
\end{align*}
Hence $ \dfrac{(c_i+n_i)}{2(n_i/p)} - \dfrac{(\widetilde{c}_i + (n_i/p))}{2(n_i/p)} = u_i \in \mathbb{Z}$, as claimed. The requirement that $-n_i/p < \widetilde{c}_i \le n_i/p$ uniquely determines $u_i$ and hence uniquely determined $\widetilde{c}_i$ from $c_i,n_i,p$.
\end{proof}

\begin{lemma}\label{lem:dimnu}
Assume that $\mathfrak{s}, \widetilde{\mathfrak{s}}$ are sharp. Then
\[
d(\widetilde{X}_0 , \widetilde{\mathfrak{s}}_0) = p \, d(X_0 , \mathfrak{s}_0) + \sum_i \nu_i
\]
where
\begin{equation}\label{equ:nu}
\nu_i = (p-1) + \frac{1}{4(n_i/p)}\left( (c_i^2 - n_i^2) - ( \widetilde{c}_i^2 - (n_i/p)^2 ) \right).
\end{equation}
\end{lemma}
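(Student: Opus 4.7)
The plan is to apply Proposition \ref{prop:ubcov} directly to the unbranched cover $\pi : \widetilde{X}_0 \to X_0$ of degree $p$. With $Y = \partial X_0 = \bigsqcup_i Y_i$ and $\widetilde{Y} = \partial \widetilde{X}_0 = \bigsqcup_i \widetilde{Y}_i$, where $Y_i = L(n_i,1)$ and $\widetilde{Y}_i = L(n_i/p, 1)$, the proposition yields
\[
d(\widetilde{X}_0 , \widetilde{\mathfrak{s}}_0) = p \, d(X_0 , \mathfrak{s}_0) - \tfrac{1}{2}\bigl( b_0(\widetilde{Y}) - p b_0(Y) \bigr) + 2\bigl( \delta(\widetilde{Y} , \widetilde{\mathfrak{s}}_0) - p \delta(Y , \mathfrak{s}_0) \bigr) - \tfrac{3}{4}\rho(Y).
\]
Each lens space $\widetilde{Y}_i$ is connected, so $b_0(\widetilde{Y}_i) - p b_0(Y_i) = 1-p$, and every term on the right decomposes as a sum over $i$. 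Thus it suffices to verify, component by component, that the total contribution from $Y_i$ equals $\nu_i$.

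Next I would feed in the delta invariants supplied by Lemma \ref{lem:deltaY}: the sharpness of $\mathfrak{s}$ and $\widetilde{\mathfrak{s}}$ gives $\delta(Y_i , \mathfrak{s}_0|_{Y_i}) = -c_i^2/(8n_i) + 1/8$ and $\delta(\widetilde{Y}_i , \widetilde{\mathfrak{s}}_0|_{\widetilde{Y}_i}) = -\widetilde{c}_i^2/(8(n_i/p)) + 1/8$. Substituting and collecting, the non-$\rho$ part of the per-component correction becomes
\[
\tfrac{p-1}{4} + \tfrac{c_i^2 - \widetilde{c}_i^2}{4(n_i/p)}.
\]

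The remaining input is the signature rho invariant $\rho(Y_i) = \eta_{sig}(L(n_i/p,1)) - p\, \eta_{sig}(L(n_i,1))$. Using the identity $\eta_{sig}(L(n,1)) = -4 s(1,n)$ from the lens space subsection together with the Dedekind sum evaluation $s(1,n) = (n-1)(n-2)/(12n)$ (which follows from the sawtooth definition, or equivalently from Lemma \ref{lem:alphabeta} with $u=0$), a short telescoping computation with $m = n_i/p$ gives
\[
\rho(Y_i) = \tfrac{(p^2-1)n_i}{3p} - (p-1).
\]

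Finally I would plug this into $-\tfrac{3}{4}\rho(Y_i)$ and add to the previous correction. Expanding $\nu_i$ as
\[
\nu_i = (p-1) + \tfrac{c_i^2 - \widetilde{c}_i^2}{4(n_i/p)} - \tfrac{p n_i}{4} + \tfrac{n_i}{4p},
\]
the equality reduces to the elementary identity $\tfrac{p-1}{4} - \tfrac{3}{4}\rho(Y_i) = (p-1) - \tfrac{p n_i}{4} + \tfrac{n_i}{4p}$, which is immediate from the formula for $\rho(Y_i)$. The only real computational step is the rho invariant evaluation; everything else is direct substitution of previously established formulas, so I do not anticipate any serious obstacle beyond careful bookkeeping.
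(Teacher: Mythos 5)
Your proposal is correct and follows the same route as the paper: both apply Proposition \ref{prop:ubcov} to the free $p$-fold cover $\widetilde{X}_0 \to X_0$, decompose the correction terms over boundary components, feed in the delta invariants from Lemma \ref{lem:deltaY}, and evaluate $\rho(Y_i)$ from the Dedekind sum formula $s(1,n)=(n-1)(n-2)/(12n)$ for lens spaces. The intermediate expressions you write out ($\tfrac{p-1}{4}+\tfrac{c_i^2-\widetilde c_i^2}{4(n_i/p)}$, the expansion of $\nu_i$, and the elementary identity in $\rho(Y_i)$) all check out, so there is no gap.
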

\begin{proof}
From Proposition \ref{prop:ubcov}, we have 
\[
d(\widetilde{X}_0 , \widetilde{\mathfrak{s}}_0) = p \, d(X_0 , \mathfrak{s}_0) + \sum_i \nu_i,
\]
where
\[
\nu_i = \frac{(p-1)}{2} + 2( \delta(\widetilde{Y}_i , \widetilde{\mathfrak{s}}|_{\widetilde{Y}_i}) - p \delta( Y_i , \mathfrak{s}|_{Y_i}) ) - \frac{3}{4}\rho(Y_i).
\]
Since $Y_i = L(n_i,1)$, $\widetilde{Y}_i = L(n_i/p,1)$, we have
\begin{align*}
\rho(Y_i) &= \eta_{sig}(\widetilde{Y}_i) - p \eta_{sig}(Y_i) \\
&= -4 s(1, n_i/p) + 4p s(1,n_i) \\
&= -\frac{(n_i/p-1)(n_i/p-2)}{3(n_i/p)} + \frac{(n_i-1)(n_i-2)}{3(n_i/p)} \\
&= \frac{(p^2-1)n_i}{3p} - (p-1)
\end{align*}
where the eta invariants are calculated using the quotient of the round metric on $S^3$.

From Lemma \ref{lem:deltaY}, we have $\delta( Y_i , \mathfrak{s}|_{Y_i}) = -c_i^2/(8n_i) + 1/8$, $\delta( \widetilde{Y}_i , \widetilde{\mathfrak{s}} |_{\widetilde{Y}_i}) = -\widetilde{c}_i^2/(8(n_i/p)) + 1/8$. Then a straightforward calculation gives:
\[
\nu_i = (p-1) + \frac{1}{4(n_i/p)}\left( (c_i^2 - n_i^2) - ( \widetilde{c}_i^2 - (n_i/p)^2 ) \right).
\]
\end{proof}

\begin{lemma}\label{lem:nu1}
Let $\nu_i$ be defined as in Equation (\ref{equ:nu}). Then
\begin{itemize}
\item[(1)]{$\nu_i = p-1$ for $c_i = n_i$, $\widetilde{c_i} = n_i/p$.}
\item[(2)]{$\nu_i = 0$ if and only if $c_i = \pm( n_i - 2)$, $\widetilde{c_i} = \pm ( n_i/p - 2)$.}
\item[(3)]{$\nu_i < 0$ otherwise.}
\end{itemize}

\end{lemma}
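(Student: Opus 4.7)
The plan is to reparametrize $(c_i, \widetilde{c}_i)$ so that the formula (\ref{equ:nu}) collapses to something elementary, and then to finish by a short case-split on the resulting discrete parameters.

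First step: using the parity constraints (forced by $c_i \equiv n_i$ and $\widetilde{c}_i \equiv n_i/p$ mod $2$) together with the ranges $-n_i < c_i \le n_i$, $-n_i/p < \widetilde{c}_i \le n_i/p$, write $c_i = n_i - 2k_i$, $\widetilde{c}_i = N - 2\widetilde{k}_i$ with $N = n_i/p$, $k_i \in \{0, \dots, n_i - 1\}$ and $\widetilde{k}_i \in \{0, \dots, N-1\}$. The integrality condition in Lemma \ref{lem:deltaY} then translates cleanly to $k_i = \widetilde{k}_i + Nj$ for a unique $j \in \{0, 1, \dots, p-1\}$.

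Second step: substitute into (\ref{equ:nu}) and simplify (using $n_i = pN$). The routine algebra should collapse to
\begin{equation*}
\nu_i = (p-1) - \widetilde{k}_i(p - 1 - 2j) - Nj(p - j),
\end{equation*}
which is affine in $\widetilde{k}_i$ and quadratic in $j$. This identity is the crux of the lemma; once in hand, (1)--(3) become visible.

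Third step: verify the three assertions by splitting on $j$. The boundary values $j = 0$ and $j = p-1$ give $\nu_i = (p-1)(1 - \widetilde{k}_i)$ and $\nu_i = (p-1)(\widetilde{k}_i - (N-1))$ respectively, from which Case (1) (at $\widetilde{k}_i = j = 0$) and the two Case (2) configurations fall out directly. For $1 \le j \le p-2$ (which requires $p \ge 3$), affineness in $\widetilde{k}_i$ reduces the check to $\widetilde{k}_i \in \{0, N-1\}$; at $\widetilde{k}_i = 0$ the bound $j(p-j) \ge p-1$ immediately yields $\nu_i \le -(N-1)(p-1) < 0$ for $N \ge 2$, and at $\widetilde{k}_i = N-1$ a short rearrangement to $(p-1)(2-N) + j[N(j - p + 2) - 2]$ combined with $j - p + 2 \le 0$, $j \ge 1$ gives $\nu_i \le -2$. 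The only remaining possibility is $N = 1$ (forcing $\widetilde{k}_i = 0$), where $\nu_i = (p-1) - j(p-j)$ vanishes exactly at $j \in \{1, p-1\}$ (both Case (2) configurations) and is strictly negative in between.

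The main obstacle is not the inequality analysis but the bookkeeping in the ``if and only if'' direction: matching the $(\widetilde{k}_i, j)$ pairs producing $\nu_i = 0$ back to the description $c_i = \pm(n_i-2)$, $\widetilde{c}_i = \pm(n_i/p-2)$ in Case (2). The signs are not independent — they are correlated through the integrality condition of Lemma \ref{lem:deltaY} — and for small $N$ one has to be careful: when $N = 1$ the value $n_i/p - 2 = -1$ falls outside the admissible range $(-N, N]$ and must be represented by $-(n_i/p - 2) = 1$, so both boundary $j = 1$ and $j = p-1$ sub-cases correspond to genuine Case (2) configurations; when $N = 2$ the two ``$\pm$'' options for $\widetilde{c}_i$ coincide (both give $\widetilde{c}_i = 0$). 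Once these identifications are made, the lemma is complete.
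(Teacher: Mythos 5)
Your proposal is correct and mirrors the paper's argument: your $(j,\widetilde{k}_i)$ is the paper's $(v,y)$ under $y = \widetilde{k}_i/N$, $v = j$, and the affine-in-$\widetilde{k}_i$, quadratic-in-$j$ identity you derive, $\nu_i = (p-1) - \widetilde{k}_i(p-1-2j) - Nj(p-j)$, is exactly the paper's reduced formula $\nu = p-1 + \tfrac{n}{p}\bigl((2v-(p-1))y + v(v-p)\bigr)$ rewritten in integer coordinates. The only real difference is in the final case split: the paper splits on $v=0$ versus $v\neq 0$ and asserts that equality in the $v\neq 0$ branch forces $(v,y)=(p-1,\,1-1/N)$, which silently drops the $N=1$, $v=1$, $y=0$ solution (it still gives $c=n-2$, $\widetilde{c}=1=-(N-2)$, so the statement of the lemma is unaffected, but the displayed $(v,y)$-characterization is incomplete for $p\ge 3$, $N=1$). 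Your split into $j\in\{0,p-1\}$, the interior range $1\le j\le p-2$ with $N\ge 2$, and the separate $N=1$ branch catches this; your remarks on the sign bookkeeping (the $N=1$ out-of-range shift and the degenerate $N=2$ coincidence) are exactly the care the ``if and only if'' direction requires. So: same reparametrization, same key identity, slightly different and in fact more careful endgame.
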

\begin{proof}
To simplify notation, we will fix the value of $i$ and omit subscripts. Then 
\[
\nu = p-1 + \frac{1}{4(n/p)}\left(  (c^2 - n^2) - (\widetilde{c}^2 - (n/p)^2 ) \right).
\]
Define $x,y \in \mathbb{Q}$ by
\[
x = \frac{n - c}{2(n/p)}, \quad y = \frac{ n/p - \widetilde{c} }{2(n/p)}.
\]
Note that $c = n \; ({\rm mod} \; 2)$, $\widetilde{c} = n/2 \; ({\rm mod} \; 2)$, so $x,y \in \frac{1}{n/p} \mathbb{Z}$. 

Since $-n < c \le n$, $-n/p < \widetilde{c} \le n/p$, we have $0 \le x < p$ and $0 \le y < 1$. Furthermore, $x-y \in \mathbb{Z}$ by Lemma \ref{lem:deltaY} (2). Write $x = v + y$. Then $v \in \mathbb{Z}$ and $y$ is the fractional part of $x$. We also have $0 \le v \le p-1$. In terms of $x,y$, we have
\begin{align*}
\nu &= p-1 + \frac{n}{p}\left( x(x-p) - y(y-1) \right) \\
&= p-1 + \frac{n}{p}\left( (2v-(p-1))y + v(v-p) \right).
\end{align*}
If $v=0$, then
\[
\nu = (p-1)\left( 1 - \frac{n}{p}y \right).
\]
This equals $p-1$ for $y=0$, $0$ for $y = 1/(n/p)$ and is negative otherwise.

Now suppose that $v \neq 0$. Then $1 \le v \le p-1$ and hence $v(v-p) \le -(p-1)$ with equality if and only if $v=1$ or $p-1$. Using this and $v \le p-1$, we get
\[
\nu \le p-1 + \frac{n}{p}\left( (p-1)y - (p-1) \right).
\]
Furthermore, $y < 1$ and $(n/p)y \in \mathbb{Z}$, so $y \le 1 - 1/(n/p)$, giving
\[
\nu \le 0.
\]
Equality occurs if and only if $y = 1 - 1/(n/p)$ and $v = p-1$. So we have shown that $\nu = p-1$ if and only if $v=y=0$ (in which case $c=n$, $\widetilde{c} = n/p$), $\nu = 0$ if and only if $(v,y) = (0, 1/(n/p))$ or $(v,y) = (p-1 , 1-1/(n/p))$ (in which case $c = \pm(n-2)$, $\widetilde{c} = \pm (n/p-2)$) and $\nu < 0$ otherwise. 

\end{proof}

\begin{lemma}\label{lem:nu2}
Let $\nu_i$ be defined as in Equation (\ref{equ:nu}). Then
\[
\nu_i \ge (p-1) + \frac{(p-1)}{2}( c_i - n_i ).
\]
\end{lemma}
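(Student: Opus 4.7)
The plan is to reuse the change of variables from the proof of Lemma \ref{lem:nu1}. Setting $x = (n_i - c_i)/(2(n_i/p))$ and $y = (n_i/p - \widetilde{c}_i)/(2(n_i/p))$, the constraints $-n_i < c_i \le n_i$ and $-n_i/p < \widetilde{c}_i \le n_i/p$ together with the congruence of Lemma \ref{lem:deltaY}(2) show that $0 \le x < p$, $0 \le y < 1$ and $v := x - y$ is an integer with $0 \le v \le p-1$. In these coordinates, the computation already carried out in the proof of Lemma \ref{lem:nu1} gives
\[
\nu_i = (p-1) + \frac{n_i}{p}\bigl( (2v - (p-1))y + v(v-p) \bigr).
\]

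The right-hand side of the desired inequality can be expressed in the same coordinates: since $c_i - n_i = -2(n_i/p)\,x = -2(n_i/p)(v+y)$, we have
\[
(p-1) + \frac{(p-1)}{2}(c_i - n_i) = (p-1) - (p-1)\frac{n_i}{p}(v+y).
\]
Subtracting this from the formula for $\nu_i$, dropping the common term $p-1$ and dividing by the positive quantity $n_i/p$, the inequality reduces to
\[
(2v - (p-1))y + v(v-p) + (p-1)(v+y) \ge 0.
\]
A direct expansion collapses the left-hand side to $v^2 + 2vy - v = v(v - 1 + 2y)$, so the whole claim reduces to showing $v(v-1+2y) \ge 0$.

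This last inequality is immediate: if $v = 0$ it is trivial, and if $1 \le v \le p-1$ then $v - 1 + 2y \ge 0$ since $y \ge 0$. There is no substantive obstacle in the argument; the only care required is in performing the bookkeeping that converts the quadratic expression defining $\nu_i$ into the coordinates $(v,y)$ and in verifying that the linear lower bound on $\nu_i$ corresponds, after this substitution, to precisely the factorisation $v(v-1+2y)$. Everything else is algebra that was already set up in Lemma \ref{lem:nu1}, so the proof is essentially a one-line reduction to a product of two obviously nonnegative factors.
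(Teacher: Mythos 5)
Your proof is correct. It reuses the coordinates $v = \lfloor x\rfloor$, $y = x - v$ introduced in the proof of Lemma \ref{lem:nu1}, in which the paper had already derived $\nu_i = (p-1) + \tfrac{n_i}{p}\bigl((2v-(p-1))y + v(v-p)\bigr)$, and after a short manipulation reduces the desired inequality to the factorised statement $v(v-1+2y)\ge 0$, which is immediate from $0\le v$ and $0\le y$. The paper instead proves Lemma \ref{lem:nu2} by a fresh direct computation: it rewrites $\nu_i - (p-1) - \tfrac{p-1}{2}(c_i-n_i)$ as $\tfrac{1}{4(n_i/p)}\bigl((n_i-c_i)(n_i-c_i-2n_i/p) - \widetilde{c}_i^2 + (n_i/p)^2\bigr)$ and then argues by cases according to whether $c_i \le n_i - 2n_i/p$ or $n_i - 2n_i/p \le c_i \le n_i$. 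Both proofs are elementary algebra of about the same length, but your version buys two small advantages: it avoids the case split entirely by producing a single nonnegative product, and it makes transparent that the inequality is tight exactly when $v=0$ or $y=0$, which recovers the equality cases discussed in Lemma \ref{lem:nu1}. The only thing worth noting is that you should confirm $n_i/p > 0$ before dividing by it, but this holds by hypothesis since $[S_i]^2 = -n_i < 0$.
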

\begin{proof}
As in the proof of Lemma \ref{lem:nu1}, we omit subscripts. We have
\begin{align*}
\nu - (p-1) - \frac{(p-1)}{2}(c-n) &= \frac{1}{4(n/p)}\left( c^2 - n^2 - \widetilde{c}^2 + (n/p)^2 \right) - \frac{(p-1)}{2}(c-n) \\
&= \frac{1}{4(n/p)}\left( c^2 - n^2 - \widetilde{c}^2 + (n/p)^2 - 2(n/p)(p-1)(c-n) \right) \\
&= \frac{1}{4(n/p)}\left( c^2 - n^2 - \widetilde{c}^2 + (n/p)^2 - 2n(c-n) + 2(n/p)(c-n) \right) \\
&= \frac{1}{4(n/p)}\left( (c-n)^2 + 2(n/p)(c-n) - \widetilde{c}^2 + (n/p)^2 \right) \\
&= \frac{1}{4(n/p)}\left( (c-n)(c-n + 2n/p) - \widetilde{c}^2 + (n/p)^2 \right) \\
&= \frac{1}{4(n/p)}\left( (n-c)(n-c - 2n/p) - \widetilde{c}^2 + (n/p)^2 \right).
\end{align*}
Now if $c \le n - 2n/p$, then $(n-c)(n-c - 2n/p) \ge 0$. Also $ - \widetilde{c}^2 + (n/p)^2 \ge 0$ because $|\widetilde{c}| \le n/p$, hence in this case we get $\nu - (p-1) - \frac{(p-1)}{2}(c-n) \ge 0$. Now suppose $n - 2n/p \le c \le n$. Then it follows that $\widetilde{c} = c - (p-1)n/p$ and a direct calculation shows that $\nu - (p-1) - \frac{(p-1)}{2}(c-n) = 0$. So in either case we have $\nu_i \ge (p-1) + \frac{(p-1)}{2}(c-n)$.
\end{proof}

\begin{theorem}\label{thm:1sad}
Let $X$ be a compact, oriented, smooth $4$-manifold with $b_1(X) = 0$, $b_+(X) > 1$ of simple type. Assume that $SW(X , \mathfrak{s}) \neq 0 \; ({\rm mod} \; p)$ for some prime $p$. Suppose that $H_1(X ; \mathbb{Z}_p) = 0$. Let $S \subset X$ be an embedded surface with $[S] \in H^2(X ; \mathbb{Z})$ non-torsion and divisible by $p$. Then $| \langle c(\mathfrak{s}) , [S] \rangle | + [S]^2 < 0$.
\end{theorem}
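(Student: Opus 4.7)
The strategy is to argue by contradiction using the cyclic branched $p$-fold cover of $X$ along $S$, combined with the dimension bound of Proposition \ref{prop:dimbound}. After replacing $\mathfrak{s}$ by its conjugate if needed, I may assume $c := \langle c(\mathfrak{s}), [S]\rangle \ge 0$. Adding a handle to $S$ to represent $[S]$ by a torus and applying the $g=1$ adjunction gives $c + [S]^2 \leq 0$, and since $[S]$ is non-torsion, $[S]^2 = -n < 0$. It therefore suffices to rule out the borderline case $c = n$, which I assume from now on.

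The divisibility of $[S]$ by $p$ means $[S] = 0 \in H^2(X; \mathbb{Z}_p)$, so Lemma \ref{lem:bre} produces a degree $p$ cyclic branched cover $\pi : \widetilde{X} \to X$ branched along $S$ with normal weight $\phi = 1$. The hypothesis $H_1(X; \mathbb{Z}_p) = 0$ combined with Lemmas \ref{lem:1p} and \ref{lem:b10} gives $b_1(\widetilde{X}) = 0$. Set $\mathfrak{s}_0 = \mathfrak{s}|_{X_0}$, $\widetilde{\mathfrak{s}}_0 = \pi^*(\mathfrak{s}_0)$, and choose a sharp extension $\widetilde{\mathfrak{s}}$ on $\widetilde{X}$. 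Simple type together with $SW(X, \mathfrak{s}) \neq 0$ forces $d(X, \mathfrak{s}) = 0$, and sharpness together with Proposition \ref{prop:indle} and the gluing dimension identity gives $d(X_0, \mathfrak{s}_0) = 0$. The borderline values $c = n$ and $\widetilde{c} = n/p$ put us in case (1) of Lemma \ref{lem:nu1}, yielding $\nu = p - 1$; Lemma \ref{lem:dimnu} then produces
\[
d(\widetilde{X}_0, \widetilde{\mathfrak{s}}_0) = p - 1 > 0.
\]

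It remains to count the $j \in \mathbb{Z}_p$ with $SW(X, \alpha_j + \mathfrak{s}) \neq 0 \pmod{p}$ and invoke Proposition \ref{prop:dimbound}. Let $\alpha \in H^2(X;\mathbb{Z})$ satisfy $p\alpha = [S]$; then each $\alpha_j$ is an integer multiple $q_j \alpha$ with $q_j = -j + p m_j^1$, and sharpness of $\alpha_j + \mathfrak{s}$ confines $q_j$ to $\{-p, \ldots, 0\}$. Using $c(\mathfrak{s}) \cdot \alpha = n/p$ and $\alpha^2 = -n/p^2$, a short computation gives
\[
d(X, \alpha_j + \mathfrak{s}) - d(X, \mathfrak{s}) = \frac{n q_j (p - q_j)}{p^2} \leq 0,
\]
with equality precisely when $q_j = 0$, i.e.\ $j = 0$. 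For $j \in \{1, \ldots, p-1\}$ the moduli dimension is then strictly negative and $SW(X, \alpha_j + \mathfrak{s}) = 0$, while $\alpha_0 + \mathfrak{s} = \mathfrak{s}$ contributes by hypothesis, so $k = 1$. Proposition \ref{prop:dimbound}(1) now gives $d(\widetilde{X}_0, \widetilde{\mathfrak{s}}_0) \leq 2k - 2 = 0$, contradicting $d(\widetilde{X}_0, \widetilde{\mathfrak{s}}_0) = p - 1 \geq 1$. The main subtlety is the final counting step, which hinges on the borderline equality $c = n$ producing a quadratic $q_j(p - q_j)$ that is strictly negative away from $j = 0$ throughout the sharpness window.
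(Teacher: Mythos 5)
Your proof is correct in outline and follows essentially the same route as the paper: assume the borderline case $c=n$, pass to the branched $p$-fold cover, use Lemma \ref{lem:dimnu}/\ref{lem:nu1} to get $d(\widetilde{X}_0, \widetilde{\mathfrak{s}}_0) = p-1 > 0$, show by a dimension count that only $j=0$ contributes ($k=1$), and contradict Proposition \ref{prop:dimbound}. One small slip: since $\langle\alpha,[S]\rangle = [S]^2/p = -n/p$, sharpness of $q\alpha+\mathfrak{s}$ forces $q \in \{0,\dots,p\}$ (not $\{-p,\dots,0\}$), and correspondingly $d(X,q\alpha+\mathfrak{s}) - d(X,\mathfrak{s}) = nq(p-q)/p^2 \ge 0$ with equality iff $q \in \{0,p\}$, both of which give $j \equiv 0 \pmod p$. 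The sign flip is harmless — all you need is that the dimension is nonzero for $j\neq 0$, so simple type still forces $SW(X,\alpha_j+\mathfrak{s})=0$ there — but the stated window and inequality are reversed from what the computation actually yields.
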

\begin{proof}
The conditions on $X$ and $S$ ensure that a degree $p$ cyclic branched cover $\widetilde{X} \to X$ exists with branch locus $S$. Since the branch locus consists of a single sphere we can take the normal weight to be $\phi = 1$. Since $[S]$ is non-torsion, the adjunction inequality implies that $[S]^2 < 0$ (see Section \ref{sec:config}). Let $n = -[S]^2$. Since $X$ has simple type and $SW(X , \mathfrak{s}) \neq 0$, then $d(X , \mathfrak{s}) = 0$ and $\mathfrak{s}$ is sharp. After possibly changing the orientation of $S$, we can assume that $c \ge 0$, where $c = \langle c(\mathfrak{s}) , [S] \rangle$. Let $\mathfrak{s}_0 = \mathfrak{s}|_{X_0}$, $\widetilde{\mathfrak{s}}_0 = \pi^*( \mathfrak{s}_0)$ and let $\widetilde{\mathfrak{s}}$ be a sharp extension of $\widetilde{\mathfrak{s}}_0$. Let $\widetilde{c} = \langle c(\widetilde{\mathfrak{s}}) , [\widetilde{S}] \rangle$. Then $| \widetilde{c} | \le n/p$ and we may choose $\widetilde{\mathfrak{s}}$ so that $\widetilde{c} \neq -n/p$.

Sharpness of $\mathfrak{s}$ says that $|\langle c(\mathfrak{s}) , [S] \rangle| + [S]^2 \le 0$. Suppose that $\langle c(\mathfrak{s}) , [S] \rangle | + [S]^2 = 0$. Equivalently, $c=n$. Therefore $d(\widetilde{X} , \widetilde{\mathfrak{s}}) = p-1$ by Lemma \ref{lem:dimnu}. In particular, $d(\widetilde{X} , \widetilde{\mathfrak{s}}) > 0$. Since $c = n$, we can take $\alpha_{-j} = j\alpha = (j/p)[S]$. If $SW(X , \alpha_{-j} + \mathfrak{s}) \neq 0 \; ({\rm mod} \; p)$, then by the simple type assumption on $X$, we must have $d(X , \alpha_{-j} + \mathfrak{s}) = d(X_0 , \mathfrak{s}_{-j}) = 0$. Since
\[
d(X_0 , \mathfrak{s}_{-j}) = 2 ind_{APS}(X_0 , \mathfrak{s}_{-j}) - b_+(X_0) - 1,
\]
it follows that
\begin{align*}
d(X_0 , \mathfrak{s}_{-j}) - d(X_0 , \mathfrak{s}_0) &= 2( ind_{APS}(X_0 , \mathfrak{s}_{-j}) - ind_{APS}(X_0 , \mathfrak{s}_0) ) \\
&= 2( \delta( Y , \mathfrak{s}_{-j} |_Y ) - \delta( Y , \mathfrak{s}_0 |_Y ) ) \\
&= \frac{1}{4n}\left( c^2 - (c')^2  \right) \\
&= \frac{1}{4n}\left( n^2 - (c')^2 \right)
\end{align*}
where $c' = \langle c(\mathfrak{s}_{-j}) , [S] \rangle = \langle 2 (j/p)[S] + c(\mathfrak{s}) , [S] \rangle = c - 2(j/p)n = n - 2(j/p)n$. Hence $d(X_0 , \mathfrak{s}_{-j}) = 0$ if and only if $c' = \pm n$. But since $c' = n - 2(j/p)n$, this can only happen if $j = 0 \; ({\rm mod} \; p)$. So of the spin$^c$-structures $\{ \mathfrak{s}_j \}_{j=0}^{p-1}$, the only one with $SW(X , \mathfrak{s}_j ) \neq 0 \; ({\rm mod} \; p)$ is $\mathfrak{s}_0$. Proposition \ref{prop:dimbound} then gives $d(\widetilde{X}_0 , \mathfrak{s}_0) \le 0$. This is a contradiction, since $d(\widetilde{X}_0 , \mathfrak{s}_0) = d(\widetilde{X} , \widetilde{\mathfrak{s}}) = p-1 > 0$. So we must have that $\langle c(\mathfrak{s}) , [S] \rangle | + [S]^2 < 0$.

\end{proof}

We now consider the case of multiple spheres. We obtain a result similar to Theorem \ref{thm:1sad}.

\begin{theorem}
Let $X$ be a compact, oriented, smooth $4$-manifold with $b_1(X) = 0$, $b_+(X) > 1$ of simple type. Assume that $SW(X , \mathfrak{s}) \neq 0 \; ({\rm mod} \; p)$ for some prime $p$. Suppose that $H_1(X ; \mathbb{Z}_p) = 0$. Suppose that $S_1, \dots , S_r \subset X$ are disjoint embedded spheres with negative self-intersection whose Poincar\'e dual classes are linearly depenent in $H^2(X ; \mathbb{Z}_p)$, but any $r-1$ of them are linearly independent in $H^2(X ; \mathbb{Z}_p)$. Then 
\[
\sum_i | \langle c(\mathfrak{s}) , [S_i] \rangle | + [S_i]^2 \le 4 - 2r.
\]
\end{theorem}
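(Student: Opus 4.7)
The plan is to proceed in parallel with Theorem \ref{thm:1sad}: construct a cyclic degree-$p$ branched cover $\pi\colon \widetilde{X}\to X$ branched along $S_1\cup\cdots\cup S_r$, then play the large index growth from the sphere contributions against the abstract dimension bound of Proposition \ref{prop:dimbound}.

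First, orient each $S_i$ so that $c_i:=\langle c(\mathfrak{s}),[S_i]\rangle\ge 0$, and set $n_i=-[S_i]^2>0$. Since $c(\mathfrak{s})$ is characteristic, $c_i\equiv n_i\pmod 2$, so $a_i:=(n_i-c_i)/2$ is an integer, and it is non-negative by the single-sphere adjunction inequality applied to each $S_i$. The desired bound is equivalent to $\sum_i a_i\ge r-2$; I assume for contradiction that $\sum_i a_i\le r-3$. The hypothesis $H_1(X;\mathbb{Z}_p)=0$ is equivalent, by universal coefficients, to $H_1(X;\mathbb{Z})$ being finite of order coprime to $p$, so together with the $(r-1)$-independence of the $[S_i]$ and the nontrivial linear dependence $\sum_i\phi_i[S_i]=0\in H^2(X;\mathbb{Z}_p)$ (with every $\phi_i\in\mathbb{Z}_p^\ast$), the proposition at the end of Section \ref{sec:ex} yields a cyclic degree-$p$ branched cover $\pi\colon\widetilde X\to X$ with branch set $\{S_i\}$, normal weights $\{\phi_i\}$, and $b_1(\widetilde X)=0$.

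Set $\mathfrak{s}_0=\mathfrak{s}|_{X_0}$, $\widetilde{\mathfrak{s}}_0=\pi^\ast\mathfrak{s}_0$, and pick a sharp extension $\widetilde{\mathfrak{s}}$ on $\widetilde X$ via Proposition \ref{prop:spincext}. Since $X$ has simple type with $SW(X,\mathfrak{s})\ne 0$, we have $d(X,\mathfrak{s})=0$; sharpness of $\mathfrak{s}$ (automatic) together with Theorem \ref{thm:pscglue} then gives $d(X_0,\mathfrak{s}_0)=0$, so Lemma \ref{lem:dimnu} reduces to $d(\widetilde X_0,\widetilde{\mathfrak{s}}_0)=\sum_i\nu_i$. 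Lemma \ref{lem:nu2} gives $\nu_i\ge(p-1)(1-a_i)$, whence
\[
d(\widetilde X_0,\widetilde{\mathfrak{s}}_0)\ \ge\ (p-1)\Bigl(r-\sum_i a_i\Bigr)\ \ge\ 3(p-1).
\]
On the other hand, let $k$ be the number of $j\in\mathbb{Z}_p$ with $SW(X,\alpha_j+\mathfrak{s})\ne 0\pmod p$. Taking $j=0$ (so that $\alpha_0+\mathfrak{s}\simeq\mathfrak{s}$) shows $k\ge 1$, while $k\le p$ trivially. Proposition \ref{prop:dimbound}(1) then yields $d(\widetilde X_0,\widetilde{\mathfrak{s}}_0)\le 2k-2\le 2p-2$. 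Chaining the two bounds gives $3(p-1)\le 2p-2$, i.e.\ $p\le 1$, contradicting the primality of $p$.

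The main obstacle is thus not analytic but bookkeeping: I need to verify at each step that the sharp-extension/gluing machinery of Sections \ref{sec:swpsc}--\ref{sec:spinc} applies with the right conventions, so that Lemmas \ref{lem:dimnu}, \ref{lem:nu2}, and Proposition \ref{prop:dimbound} are genuinely available. Once those ingredients are lined up, the argument reduces to the elementary squeeze $3(p-1)\le 2p-2$, which fails for every prime $p\ge 2$.
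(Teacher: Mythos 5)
Your proof is correct and follows essentially the same approach as the paper: construct the degree-$p$ branched cover, compute $d(\widetilde X_0,\widetilde{\mathfrak{s}}_0)=\sum_i\nu_i$ via Lemma \ref{lem:dimnu}, bound the $\nu_i$ from below with Lemma \ref{lem:nu2}, and bound the total dimension from above with Proposition \ref{prop:dimbound} (noting $k\le p$). The only cosmetic difference is that you phrase the final squeeze as a proof by contradiction (assuming $\sum_i a_i\le r-3$), whereas the paper divides the chained inequality $r(p-1)+\tfrac{p-1}{2}\sum_i(c_i-n_i)\le d(\widetilde X,\widetilde{\mathfrak{s}})\le 2(p-1)$ by $p-1$ directly; these are equivalent.
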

\begin{proof}
The conditions on $X$ and $S_1, \dots , S_r$ ensure that a degree $p$ cyclic branched cover $\widetilde{X} \to X$ exists with branch locus $S_1, \dots , S_r$. Furthermore

Since $X$ has simple type and $SW(X , \mathfrak{s}) \neq 0$, then $d(X , \mathfrak{s}) = 0$ and $\mathfrak{s}$ is sharp. After possibly changing the orientation of each $S_i$, we can assume that $c_i \ge 0$, where $c_i = \langle c(\mathfrak{s}) , [S_i] \rangle$. Let $\mathfrak{s}_0 = \mathfrak{s}|_{X_0}$, $\widetilde{\mathfrak{s}}_0 = \pi^*( \mathfrak{s}_0)$ and let $\widetilde{\mathfrak{s}}$ be a sharp extension of $\widetilde{\mathfrak{s}}_0$. Let $\widetilde{c}_i = \langle c(\widetilde{\mathfrak{s}}) , [\widetilde{S}_i] \rangle$. Then $| \widetilde{c}_i | \le n_i/p$ and we may choose $\widetilde{\mathfrak{s}}$ so that $\widetilde{c}_i \neq -n_i/p$.

Since $\mathfrak{s}, \widetilde{\mathfrak{s}}$ are sharp, Lemma \ref{lem:dimnu} gives $d(\widetilde{X} , \widetilde{\mathfrak{s}}) = \sum_i \nu_i$, where
\[
\nu_i = (p-1) + \frac{1}{4(n_i/p)}\left( (c_i^2 - n_i^2) - ( \widetilde{c}_i^2 - (n_i/p)^2 ) \right).
\]
Then, Lemma \ref{lem:nu2} gives
\begin{align*}
d(\widetilde{X} , \widetilde{\mathfrak{s}}) &= \sum_i \nu_i \\
& \ge r(p-1) + \frac{(p-1)}{2} \sum_i (c_i - n_i ) \\
&= r(p-1) + \frac{(p-1)}{2}\left( \sum_i \langle c(\mathfrak{s}) , [S_i] \rangle + [S_i]^2 \right).
\end{align*}
On the other hand, Proposition \ref{prop:dimbound} gives $d(\widetilde{X} , \widetilde{\mathfrak{s}}) \le 2(p-1)$, hence
\[
2(p-1) \ge d(\widetilde{X} , \widetilde{\mathfrak{s}}) \ge  r(p-1) + \frac{(p-1)}{2}\left( \sum_i \langle c(\mathfrak{s}) , [S_i] \rangle + [S_i]^2 \right),
\]
which gives $\sum_i \langle c(\mathfrak{s}) , [S_i] \rangle + [S_i]^2 \le 4-2r$.
\end{proof}

\subsection{Double covers}

In the case of double covers ($p=2$), the general results simplify considerably and allows us to obtain more precise results.

\begin{theorem}\label{thm:p=2}
Let $X$ be a compact, oriented, smooth $4$-manifold with $b_+(X) > 1$ and $H_1(X ; \mathbb{Z}_2) = 0$. Suppose $X$ has simple type. Suppose $S_1, \dots , S_r \subset X$ are disjoint smoothly embedded spheres with $[S_i]^2 < 0$ for all $i$. Suppose that $[S] = [S_1] + \cdots + [S_r]$ is divisible by $2$ and that any $r-1$ of $[S_1], \dots , [S_r]$ are linearly independent in $H^2(X ; \mathbb{Z})$. Suppose $\mathfrak{s}$ is a spin$^c$-structure on $X$ with $SW(X , \mathfrak{s}) = 1 \; ({\rm mod} \; 2)$ and orient $S_1, \dots , S_r$ such that $\langle c(\mathfrak{s}) , [S_i] \rangle \ge 0$ for all $i$. Then the following holds:
\begin{itemize}
\item[(1)]{$ \langle c(\mathfrak{s}) , [S] \rangle + [S]^2 \le 4 - 2r.$}
\item[(2)]{If $\langle c(\mathfrak{s}) , [S] \rangle + [S]^2 = -2r$, then $d(\widetilde{X} , \widetilde{\mathfrak{s}}) = 0$ and
\[
SW(\widetilde{X} , \widetilde{\mathfrak{s}}) = SW(X , \mathfrak{s}) + SW(X , [S]/2 \otimes \mathfrak{s}) \; ({\rm mod} \; 2).
\]
}
\item[(3)]{If $\langle c(\mathfrak{s}) , [S] \rangle + [S]^2 = 4-2r$, then $r \ge 4$, $[S]^2 = 8-4r$, $\langle c(\mathfrak{s}) , [S] \rangle = 2r-4$, $d(\widetilde{X} , \widetilde{\mathfrak{s}}) = 2$ and
\[
SW(\widetilde{X} , \widetilde{\mathfrak{s}}) = SW(X , \mathfrak{s}) = SW(X , [S]/2 \otimes \mathfrak{s}) = 1 \; ({\rm mod} \; 2).
\]
In particular, $\widetilde{X}$ is not simple type.
}
\item[(4)]{If $\langle c(\mathfrak{s}) , [S] \rangle + [S]^2 < -2r$, then $SW(\widetilde{X} , \widetilde{\mathfrak{s}}) = 0$.}
\end{itemize}
\end{theorem}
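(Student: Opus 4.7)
The plan is to combine the dimension formula of Lemma \ref{lem:dimnu}, the cap from Proposition \ref{prop:dimbound}, and the explicit $p=2$ specialization of Theorem \ref{thm:swcov}. First I invoke Lemmas \ref{lem:bre}, \ref{lem:b10}, \ref{lem:1p} to produce the branched double cover $\pi:\widetilde{X}\to X$ with branch locus $\{S_i\}$ and $b_1(\widetilde{X})=0$; the construction also forces each $n_i=-[S_i]^2$ to be divisible by $p=2$, so $n_i\ge 2$ throughout. Set $\mathfrak{s}_0=\mathfrak{s}|_{X_0}$, $\widetilde{\mathfrak{s}}_0=\pi^*\mathfrak{s}_0$, and fix a sharp lift $\widetilde{\mathfrak{s}}$. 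Since $\mathfrak{s}$ is itself sharp (simple type plus $SW(X,\mathfrak{s})\ne 0$ gives $d(X,\mathfrak{s})=0$), the orientation convention $c_i\ge 0$ together with Lemma \ref{lem:deltaY}(2) forces $\widetilde{c}_i = c_i - n_i/2$ for $c_i>0$ and $\widetilde{c}_i=n_i/2$ for $c_i=0$. Substituting into Equation (\ref{equ:nu}) gives the uniform formula $\nu_i=1+(c_i-n_i)/2$, so by Lemma \ref{lem:dimnu}
\[
d(\widetilde{X},\widetilde{\mathfrak{s}}) \;=\; r + \tfrac{1}{2}\bigl(\langle c(\mathfrak{s}),[S]\rangle + [S]^2\bigr).
\]
Part (1) is immediate from Proposition \ref{prop:dimbound} (since $k\le p=2$, $d\le 2$). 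Part (4) follows because $\langle c(\mathfrak{s}),[S]\rangle+[S]^2$ is even by the standard spin$^c$ parity, so strict inequality $<-2r$ forces $d\le -2$ and $SW(\widetilde{X},\widetilde{\mathfrak{s}})=0$.

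Parts (2) and (3) come from Theorem \ref{thm:swcov} specialized to $p=2$, where $\mu_0(n;n_0,n_1)=\binom{n_1}{n-n_0}$ and $\mu_1(n;n_0,n_1)=\binom{n_0}{n-n_1}\pmod 2$. Simple type gives $d_0=(b_+(X)+1)/2$, and similarly $d_1=d_0$ when $SW(X,L\otimes\mathfrak{s})\ne 0\pmod 2$ (otherwise that term drops from the formula regardless of $d_1$). For part (2) ($d=0$) take $(m_0,m_1)=(0,0)$: both coefficients collapse to $\binom{-(b_0+1)/2}{0}=1$ and $e\equiv 1\pmod 2$, yielding $SW(\widetilde{X},\widetilde{\mathfrak{s}})\equiv SW(X,\mathfrak{s})+SW(X,L\otimes\mathfrak{s})\pmod 2$. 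For part (3) ($d=2$), Proposition \ref{prop:indep} forces the two admissible choices $(m_0,m_1)=(1,0)$ and $(0,1)$ to give the same value. A direct evaluation shows that in each choice one of the two binomial coefficients reduces to $\binom{\cdot}{-1}=0$ and the other to $\binom{\cdot}{0}=1$, so $(1,0)$ gives $e\cdot SW(X,L\otimes\mathfrak{s})$ and $(0,1)$ gives $e\cdot SW(X,\mathfrak{s})$. Equating these and using $SW(X,\mathfrak{s})\equiv 1$ forces $SW(X,L\otimes\mathfrak{s})\equiv SW(\widetilde{X},\widetilde{\mathfrak{s}})\equiv 1\pmod 2$; since $d=2>0$, $\widetilde{X}$ is not of simple type.

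For the remaining assertions in (3), I re-apply the dimension formula with $L\otimes\mathfrak{s}$ playing the role of $\mathfrak{s}$ (now legitimate, since it has been shown to be a basic class). Orienting so that $c'_i=n_i-c_i\ge 0$, the identical derivation yields $d(\widetilde{X},\widetilde{\mathfrak{s}}) = r + \tfrac{1}{2}\bigl(-\textstyle\sum_i c_i + [S]^2\bigr) = 2$, which together with $\sum_i c_i+[S]^2=4-2r$ separates into $\sum_i c_i = 2r-4$ and $[S]^2=8-4r$. Then $\sum_i n_i = 4r-8\ge 2r$ (each $n_i\ge 2$) forces $r\ge 4$. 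The main technical point is purely combinatorial: verifying that $\nu_i=1+(c_i-n_i)/2$ holds uniformly across the two cases of Lemma \ref{lem:deltaY}, and that the $\mu_j$ coefficients collapse mod $2$ as claimed at the specific values of $(m_0,m_1)$; nothing beyond Theorems \ref{thm:swcov} and \ref{thm:eqre} is needed.
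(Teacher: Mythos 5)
Your proposal is correct and essentially reproduces the paper's argument. Both proofs compute $\nu_i = 1 + (c_i - n_i)/2$ from Lemma \ref{lem:dimnu}, sum to get $d(\widetilde{X},\widetilde{\mathfrak{s}}) = r + \tfrac{1}{2}(\langle c(\mathfrak{s}),[S]\rangle + [S]^2)$, apply Proposition \ref{prop:dimbound} to cap $d \le 2$, and then evaluate the $p=2$ specialization of the covering formula. Your explicit handling of the $c_i=0$ boundary case for $\widetilde{c}_i$ is a bit more careful than the paper's; your parity remark in part (4) is harmless but unnecessary since $d<0$ already forces $SW=0$ by definition. For part (3), your use of Proposition \ref{prop:indep} to force agreement of the $(1,0)$ and $(0,1)$ evaluations is the same mechanism the paper uses via Theorem \ref{thm:swcov}.

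One minor error worth flagging: when you rederive the remaining conclusions of (3) by reapplying the dimension formula with $L\otimes\mathfrak{s}$ as base, the formula you wrote, $d(\widetilde{X},\widetilde{\mathfrak{s}}) = r + \tfrac{1}{2}(-\sum_i c_i + [S]^2)$, has a spurious $+[S]^2$; it should be $d = r - \tfrac{1}{2}\sum_i c_i$ (since $\sum_i |c_i'| = \sum_i (n_i - c_i) = -[S]^2 - \sum_i c_i$, and adding $[S]^2$ cancels). If one took your displayed formula at face value together with $\sum_i c_i + [S]^2 = 4-2r$, one would deduce $\sum_i c_i = 0$, not $2r-4$. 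The corrected formula does give $\sum_i c_i = 2r-4$ and $[S]^2 = 8-4r$ as you claim. For comparison, the paper avoids this rederivation entirely by using simple type to force $d(X,L\otimes\mathfrak{s}) = d(X,\mathfrak{s}) = 0$, which yields $2\langle c(\mathfrak{s}),[S]\rangle + [S]^2 = 0$ directly; this is a slightly cleaner path to the same linear system.
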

\begin{proof}
By Lemma \ref{lem:dimnu}, $d(\widetilde{X}_0 , \widetilde{\mathfrak{s}}_0) = \sum_i \nu_i$, where
\[
\nu_i = 1 + \frac{1}{2n_i}\left( (c_i^2 - n_i^2) - ( \widetilde{c}_i^2 - (n_i/2)^2 ) \right).
\]
Since $0 \le c_i \le n_i$ and since $\widetilde{c}_i = c_i + (n_i/2) + 2n_i u_i$ for some integer $u_i$ (by Lemma \ref{lem:deltaY}), it can be assumed that $\widetilde{c}_i = c_i - n_i/2$. Indeed, this satisfies $-n_i/2 < c_i - n_i/2 \le n_i/2$. It follows that
\[
\nu_i = 1 + \frac{c_i - n_i}{2}.
\]
Summing over $i$, we get
\[
d(\widetilde{X} , \widetilde{\mathfrak{s}}) = r + \frac{1}{2}\left( \langle c(\mathfrak{s}) , [S] \rangle + [S]^2 \right).
\]
Since $SW(X , \mathfrak{s}) \neq 0 \; ({\rm mod} \; p)$, Proposition \ref{prop:dimbound} gives $d(\widetilde{X} , \widetilde{\mathfrak{s}}) \le 2$ and hence $\langle c(\mathfrak{s}) , [S] \rangle + [S]^2 \le 4-2r$, which proves (1). If $\langle c(\mathfrak{s}) , [S] \rangle + [S]^2 = -2r$, then $d(\widetilde{X} , \widetilde{\mathfrak{s}}) = 0$ and Theorem \ref{thm:swcov} gives
\[
SW(\widetilde{X} , \widetilde{\mathfrak{s}} ) = SW(X , \mathfrak{s}) + SW(X , [S]/2 + \mathfrak{s}) \; ({\rm mod} \; 2),
\]
which proves (2).

If $\langle c(\mathfrak{s}) , [S] \rangle + [S]^2 = 4-2r$, then $d(\widetilde{X} , \widetilde{\mathfrak{s}}) = 2$. Proposition \ref{prop:dimbound} (1) implies that $SW(X , [S]/2 + \mathfrak{s}) \neq 0 \; ({\rm mod} \; 2)$ and the Theorem \ref{thm:swcov} implies that
\[
SW(\widetilde{X} , \widetilde{\mathfrak{s}}) = SW(X , \mathfrak{s}) = SW(X , [S]/2 + \mathfrak{s}) = 1 \; ({\rm mod} \; 2).
\]
Hence $\widetilde{X}$ is not of simple type. Furthermore, since $SW(X , [S]/2 + \mathfrak{s}) \neq 0$ and $X$ is of simple type, $d(X , [S]/2 + \mathfrak{s}) = d(X , \mathfrak{s}) = 0$. But
\begin{align*}
d(X , [S]/2 + \mathfrak{s}) - d(X , \mathfrak{s}) &= \frac{ ( [S] + c(\mathfrak{s}) )^2 - c(\mathfrak{s})^2 }{4} \\
&= \frac{ 2\langle c(\mathfrak{s}) , [S] \rangle + [S]^2 }{4}.
\end{align*}
Hence $2 \langle c(\mathfrak{s}) , [S] \rangle + [S]^2 = 0$. But we also have $\langle c(\mathfrak{s}) , [S] \rangle + [S]^2 = 4-2r$. Hence $\langle c(\mathfrak{s}) , [S] \rangle = 2r-4$, $[S]^2 = 8-4r$. Now since $n_i$ is even for all $i$, $[S_i]^2 \le -2$ and hence $8-4r = [S]^2 \le -2r$, giving $r \ge 4$. This proves (3). Lastly, if $\langle c(\mathfrak{s}) , [S] \rangle + [S]^2 < -2r$, then $d(\widetilde{X} , \widetilde{\mathfrak{s}}) < 0$, hence $SW(\widetilde{X} , \widetilde{\mathfrak{s}}) = 0$, which proves (4).
\end{proof}

We state as a corollary the $r=1$ case of Theorem \ref{thm:p=2} in the most interesting case where $\langle c(\mathfrak{s}) , [S] \rangle + [S]^2 = -2$.

\begin{corollary}
Let $X$ be a compact, oriented, smooth $4$-manifold with $b_+(X) > 1$ and $H_1(X ; \mathbb{Z}_2) = 0$. Suppose $X$ has simple type. Let $S \subset X$ be an embedded sphere and suppose $[S]$ is divisible by $2$. Let $\mathfrak{s}$ be a spin$^c$-structure on $X$ and suppose that $\langle c(\mathfrak{s}) , [S] \rangle + [S]^2 = -2$.
\begin{itemize}
\item[(1)]{If $[S]^2 = -4$, then $SW(\widetilde{X} , \widetilde{\mathfrak{s}}) = SW(X , \mathfrak{s}) + SW(X , [S]/2 \otimes \mathfrak{s}) \; ({\rm mod} \; 2)$.}
\item[(2)]{If $[S]^2 < -4$, then $SW(\widetilde{X} , \widetilde{\mathfrak{s}}) = SW(X , \mathfrak{s}) \; ({\rm mod} \; 2)$.}
\end{itemize}
\end{corollary}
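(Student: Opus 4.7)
The plan is to recognize this corollary as the $r=1$ specialization of Theorem \ref{thm:p=2}(2), with one additional dimension-shift argument needed to handle case (2). First I would verify that the hypotheses of Theorem \ref{thm:p=2} apply in the $r=1$ setting: the linear independence condition on $r-1=0$ classes is vacuous; divisibility of $[S]$ by $2$ is given; and the borderline equation $\langle c(\mathfrak{s}), [S] \rangle + [S]^2 = -2r$ becomes the assumed equality when $r=1$. Since $[S]^2<0$ and the equation forces $\langle c(\mathfrak{s}), [S] \rangle = -2 - [S]^2 > 0$, the orientation convention of Theorem \ref{thm:p=2} is automatically satisfied. Applying Theorem \ref{thm:p=2}(2) directly yields $d(\widetilde{X},\widetilde{\mathfrak{s}}) = 0$ and
\[
SW(\widetilde{X},\widetilde{\mathfrak{s}}) \equiv SW(X,\mathfrak{s}) + SW(X, [S]/2 \otimes \mathfrak{s}) \pmod{2}.
\]
When $[S]^2 = -4$ this is precisely statement (1), and nothing more is needed.

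For statement (2), where $[S]^2 < -4$, I need to eliminate the second term on the right by showing $SW(X, [S]/2 \otimes \mathfrak{s}) \equiv 0 \pmod{2}$. The standard dimension-shift formula gives
\[
d(X, [S]/2 \otimes \mathfrak{s}) - d(X, \mathfrak{s}) = \tfrac{1}{4}\bigl((c(\mathfrak{s}) + [S])^2 - c(\mathfrak{s})^2\bigr) = \tfrac{1}{4}\bigl(2\langle c(\mathfrak{s}), [S] \rangle + [S]^2\bigr),
\]
and substituting $\langle c(\mathfrak{s}), [S] \rangle = -2 - [S]^2$ reduces this to $-(4+[S]^2)/4$, which is strictly positive when $[S]^2<-4$. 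Under the implicit hypothesis $SW(X,\mathfrak{s})\not\equiv 0 \pmod 2$ inherited from Theorem \ref{thm:p=2}, simple type of $X$ forces $d(X,\mathfrak{s}) = 0$; consequently $d(X, [S]/2 \otimes \mathfrak{s}) > 0$, and simple type forces $SW(X, [S]/2 \otimes \mathfrak{s}) \equiv 0 \pmod 2$. Substituting this into the formula above yields (2).

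No serious obstacle arises: once the situation is identified as the $r=1$ instance of Theorem \ref{thm:p=2}(2), the corollary follows from the trivial dimension computation above. The only potential pitfall is a subtle identification issue, namely verifying that $[S]/2\otimes \mathfrak{s}$ really is the spin$^c$-structure that appears as $\mathfrak{s}_1$ in the branched-cover formalism of Section \ref{sec:sw}. This is resolved in the proof of Theorem \ref{thm:p=2}(2), where the sharp representative of $\alpha_1$ is shown to coincide with $[S]/2\otimes\mathfrak{s}$ precisely under the borderline condition $\langle c(\mathfrak{s}),[S]\rangle + [S]^2 = -2$.
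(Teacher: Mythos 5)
Your proposal is correct and takes the approach the paper intends: the corollary is stated without proof as the $r=1$ specialization of Theorem \ref{thm:p=2}(2), so part (1) is immediate, and part (2) requires exactly the dimension-shift observation you supply to kill the term $SW(X,[S]/2\otimes\mathfrak{s})$ when $[S]^2<-4$. You also rightly flag that the hypothesis $SW(X,\mathfrak{s})\neq 0\pmod 2$ is carried over implicitly from Theorem \ref{thm:p=2}; it is needed both to apply that theorem and to conclude $d(X,\mathfrak{s})=0$ in your dimension count.
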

\begin{proof}
(1) follows from the proof of Theorem \ref{thm:p=2}, or alternatively from Theorem \ref{thm:swcov}. For (2), the same calculation given in the proof of Theorem \ref{thm:p=2}, gives
\[
d(X , [S]/2 + \mathfrak{s}) - d(X , \mathfrak{s}) =  \frac{ 2\langle c(\mathfrak{s}) , [S] \rangle + [S]^2 }{4} = \frac{-4 - [S]^2}{4},
\]
where we used that $\langle c(\mathfrak{s}) , [S] \rangle + [S]^2 = -2$. Note that that assumption also gives $d(X , \mathfrak{s}) = 0$, so $d(X , [S]/2 + \mathfrak{s}) = 0$ if and only if $[S]^2 = -4$. But we assumed that $X$ is simple type, so if $[S]^2 < -4$, then $d(X , [S]/2 + \mathfrak{s}) \neq 0$, so that $SW(X , [S]/2 + \mathfrak{s}) = 0$ which proves (2).
\end{proof}

\section{Double covers and embedded projective planes}\label{sec:rp2}

Let $X$ be a compact, oriented, smooth $4$-manifold with $b_+(X) > 1$ and $H_1(X ; \mathbb{Z}_2) = 0$. Suppose $S_1, \dots , S_r \subset X$ are disjoint smoothly embedded projective planes with Euler numbers $n_i = e(S_i)$. Let $X_0$ be $X$ with a tubular neighbourhood of $S = S_1 \cup \cdots \cup S_r$ removed. Then $X_0$ has ingoing boundary $Y = Y_1 \cup \cdots \cup Y_r$, $Y_i = Y(n_i)$.

\begin{lemma}\label{lem:rpcov}
Suppose that any $r-1$ of $[S_1] , \dots , [S_r]$ are linearly independent in $H_2(X ; \mathbb{Z}_2)$ and that $[S_1] + \cdots + [S_r] = 0 \in H_2(X ; \mathbb{Z}_2)$. Then $H_1(X_0 ; \mathbb{Z}) \cong \mathbb{Z}_2$. Hence there is a uniquely determined non-trivial double cover $\pi : \widetilde{X}_0 \to X_0$. Furthermore, $b_1(\widetilde{X}_0) = 0$ and writing the ingoing boundary of $\widetilde{X}_0$ as $\widetilde{Y} = \widetilde{Y}_1 \cup \cdots \cup \widetilde{Y}_r$, we have that $\widetilde{Y}_i \cong Y(n_i/2)$ and $\widetilde{Y}_i \to Y_i$ is the non-trivial double cover $Y(n_i/2) \to Y(n_i)$.
\end{lemma}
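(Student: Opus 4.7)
The approach parallels the sphere case of Lemmas~\ref{lem:bre}--\ref{lem:1p}, with the extra wrinkle that the normal bundle of each $S_i$ is non-orientable. First I would compute $H_2(X, X_0; \mathbb{Z}_2) \cong \bigoplus_i H_0(\mathbb{RP}^2; \mathbb{Z}_2) = \mathbb{Z}_2^r$ via excision and the mod-$2$ Thom isomorphism (no twisting needed), with generators $x_i$ realised by fiber disks of $\nu S_i$. The long exact sequence of the pair with $\mathbb{Z}_2$ coefficients together with $H_1(X; \mathbb{Z}_2) = 0$ then gives $H_2(X; \mathbb{Z}_2) \xrightarrow{\varphi} \mathbb{Z}_2^r \to H_1(X_0; \mathbb{Z}_2) \to 0$, where $\varphi(x) = (x \cdot [S_i])_i$. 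Since $H_1(X; \mathbb{Z}_2) = 0$ makes the mod-$2$ intersection form non-degenerate (by Poincar\'e duality and universal coefficients), the image of $\varphi$ equals the orthogonal complement of the subspace of $\mathbb{Z}_2$-linear relations among $[S_1], \dots, [S_r]$, which by hypothesis is the line spanned by $(1, \dots, 1)$. Hence $H_1(X_0; \mathbb{Z}_2) \cong \mathbb{Z}_2$.

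For the integer statement, $H_1(X; \mathbb{Z}_2) = 0$ forces $H_1(X; \mathbb{Z})$ to be finite of odd order, and the integer Thom isomorphism with the orientation local system yields $H_2(X, X_0; \mathbb{Z}) \cong \bigoplus_i H_0(\mathbb{RP}^2; \mathbb{Z}_{w_1}) = \mathbb{Z}_2^r$. The integer long exact sequence then produces $0 \to \mathbb{Z}_2 \to H_1(X_0; \mathbb{Z}) \to H_1(X; \mathbb{Z}) \to 0$, which splits by coprimality of orders, so the $2$-primary part of $H_1(X_0; \mathbb{Z})$ is $\mathbb{Z}_2$ and a unique non-trivial double cover $\pi: \widetilde{X}_0 \to X_0$ exists. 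The vanishing $b_1(\widetilde{X}_0) = 0$ then follows from the Lyndon--Hochschild--Serre argument in the proof of Lemma~\ref{lem:b10} run with $p=2$, since it requires only the finiteness of $H_1(X_0; \mathbb{Z})$ and $H_1(X_0; \mathbb{Z}_2) \cong \mathbb{Z}_2$.

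For the identification $\widetilde{Y}_i \cong Y(n_i/2)$, first pairing the relation $\sum_i [S_i] = 0$ with $[S_j]$ in the mod-$2$ intersection form and using disjointness of the $S_i$ gives $0 = [S_j]^2 \equiv n_j \pmod 2$, so each $n_i$ is even and $Y(n_i/2)$ is meaningful. I would then construct a global double cover of $X$ branched along $S = \bigcup_i S_i$: evenness of each $n_i$ kills $w_2(\nu S_i)$, so each Thom class $\tau_i \in H^2(\nu S_i, Y_i; \mathbb{Z}_2)$ lifts to $H^1(Y_i; \mathbb{Z}_2)$, and the hypothesis $\sum_i [S_i] = 0$ lets us extend these lifts to a global class $\alpha \in H^1(X_0; \mathbb{Z}_2)$. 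The associated unbranched cover extends to a branched double cover of $X$ which near each $S_i$ is modelled on the fiberwise squaring map, so its pre-image $\widetilde{S}_i$ is diffeomorphic to $\mathbb{RP}^2$ with twisted normal Euler number $n_i/2$. By uniqueness of the non-trivial double cover of $X_0$, this coincides with $\widetilde{X}_0 \to X_0$, and its boundary cover near $S_i$ is $\partial R(n_i/2) = Y(n_i/2) \to Y(n_i)$. The main obstacle is this last step, since a purely cohomological analysis of $\alpha|_{Y_i}$ determines it only up to the image of $H^1(\nu S_i; \mathbb{Z}_2) \to H^1(Y_i; \mathbb{Z}_2)$ and does not by itself distinguish the cover $Y(n_i/2) \to Y(n_i)$ from the other non-trivial double covers of $Y(n_i)$; the explicit local branched-cover model is what singles out the correct one.
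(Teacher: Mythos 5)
Your strategy matches the paper's: excision and Poincar\'e--Lefschetz give $H_2(X,X_0;\mathbb{Z}_2) \cong \mathbb{Z}_2^r$, the long exact sequence of the pair together with non-degeneracy of the mod-$2$ intersection pairing (which follows from $H_1(X;\mathbb{Z}_2)=0$) yields $H_1(X_0;\mathbb{Z}_2)\cong\mathbb{Z}_2$, and the remaining claims are reduced to the arguments of Lemmas \ref{lem:bre}--\ref{lem:1p}. The paper's own proof is extremely brief and delegates everything beyond the $\mathbb{Z}_2$-computation to those earlier lemmas, so your write-up is, if anything, more careful. In particular, you correctly observe that the standing hypothesis $H_1(X;\mathbb{Z}_2)=0$ only makes $H_1(X;\mathbb{Z})$ finite of odd order, so the honest integral conclusion is $H_1(X_0;\mathbb{Z}) \cong \mathbb{Z}_2 \oplus H_1(X;\mathbb{Z})$; the lemma's stated $H_1(X_0;\mathbb{Z})\cong\mathbb{Z}_2$ is tacitly assuming $H_1(X;\mathbb{Z})=0$. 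Only the $2$-primary statement is used, so this is harmless, and your phrasing is the correct one.

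Regarding the caveat you flag at the end: the worry is legitimate but resolves more cleanly than you suggest, and your own Thom-class argument already does most of the work. Since each $n_i$ is even, $\pi_1(Y(n_i))^{\mathrm{ab}}\otimes\mathbb{Z}_2\cong\mathbb{Z}_2^2$ and $Y(n_i)$ has exactly three connected double covers: the lens-space cover $-L(2n_i,1)\to Y(n_i)$ corresponding to $\langle y\rangle$, and the two fibrewise covers corresponding to $\langle x,y^2\rangle$ and $\langle xy,y^2\rangle$, both with total space $Y(n_i/2)$. Your identification $\tau_i=\delta(\beta_i)$ forces $\beta_i$ to pair non-trivially with the circle fibre, which rules out the lens-space cover; hence $\widetilde Y_i\cong Y(n_i/2)$ is already pinned down by your argument, with no further input needed. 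The residual ambiguity between $\langle x,y^2\rangle$ and $\langle xy,y^2\rangle$ is exchanged by an isometry of $Y(n_i)$ (induced by an element of $U(2)$ normalising $D^*_{n_i}$ and realising the outer automorphism $x\mapsto xy$, $y\mapsto y$), which simultaneously swaps the two spin$^c$-structures $\mathfrak{s}_{0,1},\mathfrak{s}_{1,1}$ that extend over $R(n_i)$. So the pullback formula of Lemma \ref{lem:pullbackyn} and the downstream delta-invariant computations are insensitive to which of the two subgroups one picks; it is a consistent choice of convention, not something that needs to be extracted from the local model. Your proposal is correct, and this observation closes the one gap you acknowledged.
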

\begin{proof}
By Excision and Poincar\'e--Lefschetz, we have
\[
H_2(X,X_0 ; \mathbb{Z}_2) \cong \bigoplus_i H_2(R(n_i), Y(n_i) ; \mathbb{Z}_2) \cong \bigoplus_i H^2(R(n_i) ; \mathbb{Z}_2) \cong \bigoplus_i \mathbb{Z}_2.
\]
Then the long exact sequence for $(X , X_0)$ gives
\[
\cdots \to H_2(X ; \mathbb{Z}_2) \buildrel \varphi \over \longrightarrow \bigoplus_i \mathbb{Z}_2 \buildrel \partial \over \longrightarrow H_1(X_0 ; \mathbb{Z}_2) \to H_1(X ; \mathbb{Z}_2) \to 0.
\]
The existence of the double cover $\widetilde{X}_0 \to X_0$ which restricts to $Y(n_i/2) \to Y(n_i)$ on each boundary component now follows from the same argument used in Lemma \ref{lem:bre}. That $b_1(\widetilde{X}_0) = 0$ follows from the proofs of Lemmas \ref{lem:b10} and \ref{lem:1p}.
\end{proof}

Suppose $S_1, \dots , S_r$ are as in Lemma \ref{lem:rpcov}. By attaching a copy of $R(n_i/2)$ to each boundary component we can obtain a smooth closed $4$-manifold $\widetilde{X}$ which is a branched double cover of $X$, branched over $S_1, \dots, S_r$. However it turns out that $\widetilde{X}$ does not behave well with respect to spin$^c$-structures. The reason for this is as follows. Suppose $\mathfrak{s}$ is a spin$^c$-structure on $X$ and let $\mathfrak{s}_0 = \mathfrak{s}|_{X_0}$ be its restriction to $X_0$. Let $\widetilde{\mathfrak{s}}_0 = \pi^*(\mathfrak{s}_0)$ be the pullback of $\mathfrak{s}_0$ to $\widetilde{X}_0$. Then $\widetilde{\mathfrak{s}}$ does not extend to a spin$^c$-structure on $\widetilde{X}$. This is because any spin$^c$-structure on $Y(n/2)$ which is a pullback under the cover $Y(n/2) \to Y(n)$ does not extend to $R(n/2)$ (which follows from Lemma \ref{lem:pullbackyn}). Because of this we are not able to obtain a relationship between the Seiberg--Witten invariants of $X$ and $\widetilde{X}$. However, if we assume that $n_i \ge -2$ for all $i$, then we can obtain a closed $4$-manifold $\widehat{X}$ as follows. If $n_i > 0$, then we attach to the $i$-th boundary component of $\widetilde{X}_0$ the $D_{n_i/2+2}$ plumbing $P(D_{n_i/2+2})$. If $n_i = -2$, then we attach a copy of $X(-4)$ (which has boundary $L(4,1) = Y(-1)$) and if $n_i = 0$, then we attach the boundary connected sum of two copies of $X(-2)$ (which has boundary $L(2,1) \# L(2,1) = Y(0)$). Every spin$^c$-structure $\widetilde{\mathfrak{s}}_0$ on $\widetilde{X}_0$ has an extension $\widehat{\mathfrak{s}}$ to $\widehat{X}$ such that $d(\widehat{X} , \widehat{\mathfrak{s}}) = d(\widetilde{X}_0 , \widetilde{\mathfrak{s}}_0)$. Call an extension $\widehat{\mathfrak{s}}$ of $\widetilde{\mathfrak{s}}_0$ {\em sharp} if $d(\widehat{X} , \widehat{\mathfrak{s}}) = d(\widetilde{X}_0 , \widetilde{\mathfrak{s}}_0)$.

If $n_i \ge 0$ for all $i$, then we can also form a closed $4$-manifold $X'$ from $X_0$ in a similar manner. If $n_i > 0$, we attach a copy of $P(D_{n_i+2})$. If $n_i = 0$, we attach the boundary connected sum of two copies of $X(-2)$. Every spin$^c$-structure on $X_0$ has a sharp extension to $X'$.

Let $\mathfrak{s}$ be a spin$^c$-structure on $X$, $\mathfrak{s}_0$ its restriction to $X_0$. Let $A \in H^2(X ; \mathbb{Z}_2)$ be the class of the flat line bundle defined by the double cover $\widetilde{X}_0 \to X_0$. Let $\widetilde{\mathfrak{s}}_0 = \pi^*(\mathfrak{s}_0)$.

\begin{lemma}\label{lem:rpcov}
Let $n = n_1 + \cdots + n_r$. There exists $\varepsilon \in \mathbb{Z}$, $| \varepsilon | \le r$, $\varepsilon = n/2 \; ({\rm mod} \; 4)$ such that the following hold:
\begin{align*}
\sigma(\widetilde{X}_0) &= 2 \sigma(X_0) - \frac{n}{2} \\
b_+(\widetilde{X}_0) &= 2b_+(X_0) + 1 -\frac{r}{2} - \frac{n}{4} \\
ind_{APS}(\widetilde{X}_0 , \widetilde{\mathfrak{s}}_0) &= ind_{APS}(X_0 , \mathfrak{s}_0) + \frac{n}{8} - \frac{\varepsilon}{4} \\
d(\widetilde{X}_0 , \widetilde{\mathfrak{s}}_0) &= 2d(X_0 , \mathfrak{s}_0) + \frac{1}{2}( r+n-\varepsilon) \\
d(X_0 , A \otimes \mathfrak{s}_0) &= d(X_0 , \mathfrak{s}_0) + \frac{n}{4} - \frac{\varepsilon}{2}.
\end{align*}

\end{lemma}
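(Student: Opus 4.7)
The proof will follow from applying Proposition \ref{prop:ubcov} to the unbranched degree $2$ cover $\pi : \widetilde{X}_0 \to X_0$, combined with the explicit eta and delta invariants of the prism manifolds computed in Section \ref{sec:prism}. First I compute the rho invariant $\rho(Y) = \sum_i \rho(Y_i)$ with $\rho(Y_i) = \eta_{sig}(Y(n_i/2)) - 2\eta_{sig}(Y(n_i))$. Plugging in $\eta_{sig}(Y(n)) = (2n^2+1)/(6n)$ and simplifying gives $\rho(Y_i) = -n_i/2$ and hence $\rho(Y) = -n/2$. The signature and $b_+$ identities of the lemma then follow directly from Proposition \ref{prop:ubcov}, using $b_0(Y) = b_0(\widetilde{Y}) = r$.

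Next I determine the restrictions of the spin$^c$-structure to the boundary components. Since $\mathfrak{s}_0 = \mathfrak{s}|_{X_0}$ extends over $R(n_i)$, its restriction to $Y_i$ is forced (see Section \ref{sec:spherical}) to be one of the two spin$^c$-structures with $v=1$, namely $\mathfrak{s}_{u_i,1}$ for some $u_i \in \{0,1\}$, and in particular $\delta(Y_i, \mathfrak{s}_0|_{Y_i}) = 0$. Since both $n_i$ and the degree $d=2$ are even, Lemma \ref{lem:pullbackyn} gives $\widetilde{\mathfrak{s}}_0|_{\widetilde{Y}_i} = \mathfrak{s}_{u_i,0}$ on $Y(n_i/2)$, whose delta invariant is $(n_i/2 + 2\eta_i)/8$, where $\eta_i := (-1)^{u_i} \in \{\pm 1\}$. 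Setting $\varepsilon := -\sum_i \eta_i$ gives $|\varepsilon| \le r$, and substituting into the $ind_{APS}$ and $d$ formulas in Proposition \ref{prop:ubcov} produces the claimed expressions for $ind_{APS}(\widetilde{X}_0, \widetilde{\mathfrak{s}}_0)$ and $d(\widetilde{X}_0, \widetilde{\mathfrak{s}}_0)$.

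For the last identity, observe that the flat line bundle $A$ satisfies $2c_1(A) = 0$ rationally, so $\delta(X_0, A\otimes \mathfrak{s}_0) = \delta(X_0, \mathfrak{s}_0)$, whence the relation $ind_{APS}(Z) = \delta(Z) - \delta(\partial Z)$ gives $d(X_0, A\otimes \mathfrak{s}_0) - d(X_0, \mathfrak{s}_0) = -2\bigl[\delta(Y,(A\otimes \mathfrak{s}_0)|_Y) - \delta(Y, \mathfrak{s}_0|_Y)\bigr]$. The restriction $A|_{Y_i}$ is the flat line bundle corresponding to the character $\phi_A : D_{n_i}^* \to \{\pm 1\}$ with kernel $\pi_1(\widetilde{Y}_i) = \langle x, y^2\rangle$, so $\phi_A(y) = -1$ and $\phi_A(x) = 1$. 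Tensoring by this character flips the $v$-index from $1$ to $0$ and preserves the $u$-index (since $n_i$ is even, the $e=0$ case of the classification in Section \ref{sec:prism}), giving $(A \otimes \mathfrak{s}_0)|_{Y_i} = \mathfrak{s}_{u_i,0}$ with delta $(n_i + 2\eta_i)/8$. Summing over $i$ yields the stated formula.

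The main technical subtlety is the congruence $\varepsilon \equiv n/2 \pmod 4$. By construction $\varepsilon \equiv r \pmod 2$, and bare integrality of $d(X_0, A\otimes \mathfrak{s}_0)$ only supplies the weaker $\varepsilon \equiv n/2 \pmod 2$. The stronger mod $4$ refinement will presumably come from exploiting the integrality of $ind_{APS}(\widetilde{X}_0, \widetilde{\mathfrak{s}}_0)$, $ind_{APS}(X_0, \mathfrak{s}_0)$, and $ind_{APS}(X_0, A\otimes \mathfrak{s}_0)$ simultaneously, together with the parity constraints on the $u_i$ forced by the extension of $\mathfrak{s}_0$ across each $R(n_i)$ (equivalently, by the mod $2$ value of $c(\mathfrak{s})^2$ on the relevant generator of $H_2$). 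This is the step I expect to require the most care.
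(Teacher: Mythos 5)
Your proposal follows the same route as the paper: compute $\rho(Y(m)) = -m/2$, identify the boundary spin$^c$-structures as $\mathfrak{s}_{u_i,1}$ and their pullbacks/tensors with $A$ via Lemma~\ref{lem:pullbackyn}, and feed the resulting delta invariants into Proposition~\ref{prop:ubcov}. However, a few points need attention.

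First, the formula $\eta_{sig}(Y(m)) = (2m^2+1)/(6m)$ that you plug in is only valid for $m \ne 0$ (it is derived for the spherical quotient $S^3/D^*_m$). The paper handles $Y(0) = \mathbb{RP}^3 \# \mathbb{RP}^3$ separately, by exhibiting an orientation-reversing isometry of $Y(0)$ (induced by the antipodal map on the $S^2$-factor) that is compatible with the double cover, forcing $\rho(Y(0)) = 0$, which agrees with the limiting value $-0/2$. Since the lemma as stated allows $n_i = 0$, this case cannot be skipped.

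Second, there is a sign inconsistency in your treatment of the last identity. You write $ind_{APS}(Z) = \delta(Z) - \delta(\partial Z)$ and then $d(X_0, A\otimes \mathfrak{s}_0) - d(X_0, \mathfrak{s}_0) = -2\bigl[\delta(Y,(A\otimes \mathfrak{s}_0)|_Y) - \delta(Y, \mathfrak{s}_0|_Y)\bigr]$, and then compute $\delta(Y_i,(A\otimes\mathfrak{s}_0)|_{Y_i}) = (n_i + 2\eta_i)/8$ with $\varepsilon = -\sum_i\eta_i$. That gives $-(n-2\varepsilon)/4 = -n/4 + \varepsilon/2$, which has the wrong sign relative to the lemma. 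The paper's convention (consistent with the proof of Proposition~\ref{prop:ubcov}) is $ind_{APS}(X_0) = \delta(X_0) + \delta(Y)$ for the boundary $Y$ as oriented in the lemma, which yields $+2[\ldots] = n/4 - \varepsilon/2$. You should reconcile your convention with the one used in Proposition~\ref{prop:ubcov}, since otherwise the final formula comes out with the opposite sign.

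Third, your hesitation about the mod-$4$ congruence is unnecessary: it follows directly from the integrality of $ind_{APS}(\widetilde{X}_0,\widetilde{\mathfrak{s}}_0)$. Plugging the computed deltas and $\rho$ into Proposition~\ref{prop:ubcov} gives $ind_{APS}(\widetilde{X}_0,\widetilde{\mathfrak{s}}_0) = 2\,ind_{APS}(X_0,\mathfrak{s}_0) + n/8 - \varepsilon/4$, so $n/8 - \varepsilon/4 \in \mathbb{Z}$, i.e.\ $n \equiv 2\varepsilon \pmod 8$, i.e.\ $\varepsilon \equiv n/2 \pmod 4$ (using $n$ even). No further input, such as $ind_{APS}(X_0, A\otimes\mathfrak{s}_0)$, is needed. (Incidentally, this same computation shows that the coefficient of $ind_{APS}(X_0,\mathfrak{s}_0)$ in the third line of the lemma should be $2$ rather than $1$, which is a typo in the statement; it does not affect the $d$ formulas, which are consistent.)
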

\begin{proof}
Let $m$ be even. We compute the $\rho$-invariant for the covering $Y(m/2) \to Y(m)$. If $m \neq 0$, then $Y(m)$ is spherical and we use the psc metric induced from the round metric on $S^3$. From Section \ref{sec:prism}, we have $\eta_{sig}(Y(m)) = \frac{1}{6m}(2m^2+1)$ for $m>0$. The same formula holds for $m<0$ because $\eta_{sig}(Y(m)) = -\eta_{sig}(Y(-m)) = \frac{1}{6m}(2m^2+1)$. Then
\begin{align*}
\rho(Y(m)) &= \eta_{sig}(Y(m/2)) - 2\eta_{sig}(Y(m)) \\
&= \frac{1}{3m}(\frac{1}{2}m^2 + 1) - \frac{1}{3m}(2m^2+1) \\
&= -\frac{m}{2}.
\end{align*}

For $m=0$, first note that $Y(0) = (S^1 \times S^2) / \sigma$ where $\sigma$ acts as a reflection on $S^1$ and as the antipodal map on $S^2$. Give $S^1 \times S^2$ the product of the round metrics on $S^1$ and $S^2$. This metric is preserved by $\sigma$ and we give $Y(0)$ the induced quotient metric $g$. The double covering $\pi : Y(0) \to Y(0)$ is the quotient under $\sigma$ of the double covering $S^1 \times S^2 \to S^1 \times S^2$ given by $(x,y) \mapsto (x^2 , y)$, where we view $S^1$ as the unit circle in $\mathbb{C}$. The antipodal map on $S^2$ induces an orientation reversing diffeomorphism of $Y(0)$ which preserves both $g$ and $\pi^*(g)$. Hence $\eta_{sig}(Y(0) , g) = \eta_{sig}(Y(0) , \pi^*(g)) = 0$ and hence $\rho(Y(0)) = 0$. This shows that $\rho(Y(m)) = -m/2$ for all $m$.

Let $Y_i = Y(n_i)$ be the $i$-th boundary component. Since $\mathfrak{s}|_{Y_i}$ extends over $R(n_i)$, we have that $\mathfrak{s}|_{Y_i} = \mathfrak{s}_{0,1}$ or $\mathfrak{s}_{1,1}$. We let $\varepsilon_i = 1$ if $\mathfrak{s}|_{Y_i} = \mathfrak{s}_{1,1}$ and $\varepsilon_i = -1$ if $\mathfrak{s}|_{Y_i} = \mathfrak{s}_{0,1}$. From Lemma \ref{lem:pullbackyn}, we have that $\widetilde{\mathfrak{s}}_0 |_{\widetilde{Y}_i}$ is $\mathfrak{s}_{1,0}$ if $\varepsilon_i = 1$ and is $\mathfrak{s}_{0,0}$ if $\varepsilon_i = -1$. Hence from the computation of the delta invariants in Section \ref{sec:prism}, we have $\delta( \widetilde{Y}_i , \widetilde{\mathfrak{s}}_0 |_{\widetilde{Y}_i}) = n/16 - \varepsilon_i/4$. Hence
\[
\delta( \widetilde{Y}_i , \widetilde{\mathfrak{s}}_0 |_{\widetilde{Y}_i}) - 2\delta(Y_i , \mathfrak{s}|_{Y_i}) = \frac{n}{16} - \frac{\varepsilon_i}{4}.
\]
Set $\varepsilon = \varepsilon_1 + \cdots + \varepsilon_r$. So $|\varepsilon| \le r$. Then from Proposition \ref{prop:ubcov}, we have
\begin{align*}
\sigma(\widetilde{X}_0) &= 2 \sigma(X_0) - \frac{n}{2} \\
b_+(\widetilde{X}_0) &= 2b_+(X_0) + 1 -\frac{r}{2} - \frac{n}{4} \\
ind_{APS}(\widetilde{X}_0 , \widetilde{\mathfrak{s}}_0) &= 2 ind_{APS}(X_0 , \mathfrak{s}_0) + \frac{n}{8} - \frac{\varepsilon}{4} \\
d(\widetilde{X}_0 , \widetilde{\mathfrak{s}}_0) &= 2d(X_0 , \mathfrak{s}_0) + \frac{1}{2}( r+n-\varepsilon).
\end{align*}
Integrality of $ind_{APS}(\widetilde{X}_0 , \widetilde{\mathfrak{s}}_0)$ gives $\varepsilon = n/2 \; ({\rm mod} \; 4)$.

Lastly, since $d(X_0 , \mathfrak{s}_{0}) = 2 ind_{APS}(X_0 , \mathfrak{s}_{0}) - b_+(X_0) - 1$, $d(X_0 , A \otimes \mathfrak{s}_0) = 2ind_{APS}(X_0 , A \otimes \mathfrak{s}_0) - b_+(X_0) - 1$, we have
\[
d(X_0 , A \otimes \mathfrak{s}_0) - d(X_0 , \mathfrak{s}_0) = 2( \delta( Y , A \otimes \mathfrak{s}_0 |_Y ) - \delta( Y , \mathfrak{s}_0 |_Y)) = 2 \delta(Y , A \otimes \mathfrak{s}_0 |_Y).
\]
Hence $d(X_0 , A \otimes \mathfrak{s}_0) = d(X_0 , \mathfrak{s}_0) + 2 \delta(Y , A\otimes \mathfrak{s}_0 |_Y)$. Now if $\mathfrak{s}_0|_{Y_i} = \mathfrak{s}_{u,1}$, then $A \otimes \mathfrak{s}_0 |_{Y_i} = \mathfrak{s}_{u,0}$. It follows that $\delta(Y , A \otimes \mathfrak{s}_0|_Y) = (n - 2 \varepsilon)/8$, hence $d(X_0 , A \otimes \mathfrak{s}_0) = d(X_0 , \mathfrak{s}_0) + n/4 - \varepsilon/2$.
\end{proof}

\begin{lemma}\label{lem:swrp}
Let $n,\varepsilon$ be as in Lemma \ref{lem:rpcov}. Then:
\begin{itemize}
\item[(1)]{If $n+2r = 0 \; ({\rm mod} \; 8)$ and $d(\widetilde{X}_0 , \widetilde{\mathfrak{s}}_0) \ge 0$, then
\[
SW(\widetilde{X}_0 , \widetilde{\mathfrak{s}}_0) = \binom{m_1-d_1}{\delta_0-m_0}SW(X , \mathfrak{s}) + \binom{m_0-d_0}{\delta_1 - m_1}SW(X_0 , A \otimes \mathfrak{s}_0) \; ({\rm mod} \; 2),
\]
where 
\[
\delta_0 = d(X , \mathfrak{s})/2, \quad \delta_1 = \delta_0 + (n - 2\varepsilon)/8,
\]
$d_i = \delta_i - (b_+(X_0)+1)/2$ and $m_0,m_1 \ge 0$ are any non-negative integers such that 
\[
m_0 + m_1 = \frac{d(\widetilde{X}_0 , \widetilde{\mathfrak{s}}_0)}{2} = 2\delta_0 + \frac{(r+n-\varepsilon)}{4}.
\]
}
\item[(2)]{If $n+2r = 4 \; ({\rm mod} \; 8)$ and $d(\widetilde{X}_0 , \widetilde{\mathfrak{s}}_0) > 0$, then
\[
SW_{\mathbb{Z}_2 , \widetilde{X}_0 , \widetilde{\mathfrak{s}}_0}( x^{m_0+m_1} ) = \left( \binom{m_1-d_1}{\delta_0-m_0}SW(X , \mathfrak{s}) + \binom{m_0-d_0}{\delta_1 - m_1}SW(X_0 , A \otimes \mathfrak{s}_0) \right)u
\]
where $u$ is the generator of $H^1_{\mathbb{Z}_2}(pt) \cong \mathbb{Z}_2$ and $m_0,m_1 \ge 0$ are any non-negative integers such that
\[
m_0 + m_1 = \frac{d(\widetilde{X}_0 , \widetilde{\mathfrak{s}}_0)+1}{2} = 2\delta_0 + \frac{(r+n+2-\varepsilon)}{4}.
\]
}
\item[(3)]{
If $n_i \ge -2$ for all $i$, then
\[
SW(\widetilde{X}_0 , \widetilde{\mathfrak{s}}_0) = SW(\widehat{X} , \widehat{\mathfrak{s}})
\]
for any sharp extension $\widehat{\mathfrak{s}}$ of $\widetilde{\mathfrak{s}}_0$.
}
\item[(4)]{
If $n_i \ge 0$ for all $i$, then
\[
SW(X_0 , A \otimes \mathfrak{s}_0) = SW(X' , \mathfrak{s}')
\]
for any sharp extension $\mathfrak{s}'$ of $A \otimes \mathfrak{s}_0$.
}
\end{itemize}
\end{lemma}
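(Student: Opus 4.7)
The plan is to treat parts (1), (2) and parts (3), (4) by different mechanisms: the first pair by applying the $p=2$ specialization of Theorem \ref{thm:eqre} to the free $\mathbb{Z}_2$-action on $(\widetilde{X}_0, \widetilde{\mathfrak{s}}_0)$, and the second pair by applying the gluing formula Theorem \ref{thm:pscglue} to close up $\widetilde{X}_0$ (respectively $X_0$) by negative definite pieces.

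For parts (1) and (2), the key observation is that the two splittings $s_0, s_1: \mathbb{Z}_2 \to (\mathbb{Z}_2)_{\widetilde{\mathfrak{s}}_0}$ yield lifts of the covering involution whose quotient spin$^c$-structures on $X_0$ are $\mathfrak{s}_0$ and $A \otimes \mathfrak{s}_0$ respectively (since these are exactly the two spin$^c$-structures on $X_0$ that pull back to $\widetilde{\mathfrak{s}}_0$). Proposition \ref{prop:free} then gives $\overline{SW}^{s_0}_{\mathbb{Z}_2}(\widetilde{X}_0, \widetilde{\mathfrak{s}}_0) = SW(X_0, \mathfrak{s}_0)$ and $\overline{SW}^{s_1}_{\mathbb{Z}_2}(\widetilde{X}_0, \widetilde{\mathfrak{s}}_0) = SW(X_0, A \otimes \mathfrak{s}_0)$. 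Since each $R(n_i)$ deformation retracts to $\mathbb{RP}^2$ and so has $b_2 = 0$, every spin$^c$-structure on it is sharp with vanishing $ind_{APS}$; hence Theorem \ref{thm:pscglue} gives $SW(X_0, \mathfrak{s}_0) = SW(X, \mathfrak{s})$.

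To set up the indices in Theorem \ref{thm:eqre}, I would use Lemma \ref{lem:rpcov} to identify
\[
\delta_0 = \frac{d(X, \mathfrak{s})}{2}, \qquad \delta_1 = \frac{d(X_0, A \otimes \mathfrak{s}_0)}{2} = \delta_0 + \frac{n-2\varepsilon}{8},
\]
and use the general relation $\delta_i = d_i - (b_+(X_0)+1)/2$ that comes from $b_+(\widetilde{X}_0)^{\mathbb{Z}_2} = b_+(X_0)$ under a free action. The parity of $d(\widetilde{X}_0, \widetilde{\mathfrak{s}}_0) = 2d(X_0, \mathfrak{s}_0) + (r + n - \varepsilon)/2$ together with $\varepsilon \equiv n/2 \pmod{4}$ shows that this quantity is even precisely when $n + 2r \equiv 0 \pmod{8}$. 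In that case, substituting $m_0 + m_1 = d(\widetilde{X}_0, \widetilde{\mathfrak{s}}_0)/2$ into the $p=2$ expansion of Theorem \ref{thm:eqre} and taking the image in $H^0_{\mathbb{Z}_2}(pt) = \mathbb{Z}_2$ under the forgetful map yields part (1). When $n + 2r \equiv 4 \pmod 8$, $d(\widetilde{X}_0, \widetilde{\mathfrak{s}}_0)$ is odd and so ordinary $SW(\widetilde{X}_0, \widetilde{\mathfrak{s}}_0) = 0$; however the equivariant evaluation $SW_{\mathbb{Z}_2, \widetilde{X}_0, \widetilde{\mathfrak{s}}_0}(x^{m_0+m_1})$ lives in $H^1_{\mathbb{Z}_2}(pt) \cong \mathbb{Z}_2 \cdot u$, and the same formula with $m_0 + m_1 = (d(\widetilde{X}_0, \widetilde{\mathfrak{s}}_0)+1)/2$ reads off its $u$-coefficient, giving part (2).

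Parts (3) and (4) reduce to a direct application of Theorem \ref{thm:pscglue}. Consulting Section \ref{sec:spherical}, the attached pieces $P(D_{n_i/2 + 2})$ (for $n_i > 0$), $X(-4)$ (for $n_i = -2$) and $X(-2) \natural X(-2)$ (for $n_i = 0$) all have negative definite intersection form, and their boundaries match $\widetilde{Y}_i = Y(n_i/2)$ with the correct orientation. Sharpness of $\widehat{\mathfrak{s}}$ combined with Proposition \ref{prop:indle} implies that the attached piece $W$ satisfies $ind_{APS}(W, \widehat{\mathfrak{s}}|_W) = 0$, so Theorem \ref{thm:pscglue} gives $SW(\widehat{X}, \widehat{\mathfrak{s}}) = SW(\widetilde{X}_0, \widetilde{\mathfrak{s}}_0)$, independent of the choice of sharp extension by Proposition \ref{prop:spincext}. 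Part (4) is identical with $X'$ built from $X_0$ by attaching $P(D_{n_i + 2})$ (or $X(-2) \natural X(-2)$ when $n_i = 0$) and working with $A \otimes \mathfrak{s}_0$.

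The main obstacle is bookkeeping in the equivariant indices: specifically, verifying that the two splittings $s_0, s_1$ assign the binomial coefficients $\binom{m_1 - d_1}{\delta_0 - m_0}$ and $\binom{m_0 - d_0}{\delta_1 - m_1}$ to the correct terms $SW(X, \mathfrak{s})$ and $SW(X_0, A \otimes \mathfrak{s}_0)$, and confirming the parity dichotomy $n + 2r \pmod 8$ falls out exactly from $\varepsilon \equiv n/2 \pmod 4$. Once these numerical identifications are verified, everything else is a mechanical application of Theorem \ref{thm:eqre}, Proposition \ref{prop:free}, and the gluing theorem.
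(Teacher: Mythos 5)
Your proposal is correct and follows essentially the same route as the paper: identify the two splittings $s_0, s_1$ with $\mathfrak{s}_0$ and $A \otimes \mathfrak{s}_0$, invoke Proposition \ref{prop:free}, feed the numerical identities from Lemma \ref{lem:rpcov} into the $p=2$ specialization of Theorem \ref{thm:eqre}, and use Theorem \ref{thm:pscglue} for parts (3) and (4). The only point you make explicit that the paper leaves implicit is the substitution $SW(X_0, \mathfrak{s}_0) = SW(X, \mathfrak{s})$ via $b_2(R(n_i)) = 0$, and your parity verification that $d(\widetilde{X}_0, \widetilde{\mathfrak{s}}_0)$ is even precisely when $n + 2r \equiv 0 \pmod 8$ checks out against $\varepsilon \equiv n/2 \pmod 4$.
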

\begin{proof}
There are two lifts $s_0,s_1$ of $G = \mathbb{Z}_2$ to $\widetilde{\mathfrak{s}}_0$ and these correspond to the spin$^c$-structures $\mathfrak{s}_0, \mathfrak{s}_1 = A \otimes \mathfrak{s}_0$ on $X_0$. Then Proposition \ref{prop:free} gives
\[
\overline{SW}_{\mathbb{Z}_2}(\widetilde{X}_0 , \widetilde{\mathfrak{s}}_0)^{s_0} = SW(X_0 , \mathfrak{s}_0), \quad \overline{SW}_{G}(\widetilde{X}_0 , \widetilde{\mathfrak{s}}_0)^{s_1} = SW(X_0 , A \otimes \mathfrak{s}_0).
\]
Therefore, Theorem \ref{thm:eqre} gives:
\begin{align*}
& SW_{\mathbb{Z}_2 , \widetilde{X}_0 , \widetilde{\mathfrak{s}}_0}( x^{m_0}(x+v)^{m_1} ) \\
& \quad \quad = \left( \binom{m_1-d_1}{\delta_0-m_0}SW(X , \mathfrak{s}) + \binom{m_0-d_0}{\delta_1 - m_1}SW(X_0 , A \otimes \mathfrak{s}_0) \right)u^{2m_0+2m_1 - d(\widetilde{X}_0 , \widetilde{\mathfrak{s}}_0)}
\end{align*}
where $\delta_i = d(X_0 , \mathfrak{s}_i)/2$, $d_i = ind_{APS}(X_0 , \mathfrak{s}_i)$. Using Lemma \ref{lem:rpcov}, we have:
\begin{align*}
b_+(\widetilde{X}_0) &= 2b_+(X_0) + 1 - \frac{(n+2r)}{4} \\
d(\widetilde{X}_0 , \widetilde{\mathfrak{s}}_0) &= 4 \delta_0 + \frac{(r+n-\varepsilon)}{2} \\
\delta_1 &= \delta_0 + \frac{(n-2\varepsilon)}{8}.
\end{align*}
Also $\delta_i = d(X_0 , \mathfrak{s}_i)/2 = d_i - (b_+(X_0)+1)/2$. Hence (1) and (2) follow easily. (3) and (4) are immediate from Theorem \ref{thm:pscglue}.
\end{proof}

\begin{theorem}\label{thm:rpadj}
Let $X$ be a compact, oriented, smooth $4$-manifold with $b_+(X) > 1$ and $H_1(X ; \mathbb{Z}_2) = 0$. Suppose $S_1, \dots , S_r \subset X$ are disjoint smoothly embedded projective planes with Euler numbers $n_i = e(S_i)$. Suppose that any $r-1$ of $[S_1] , \dots , [S_r]$ are linearly independent in $H_2(X ; \mathbb{Z}_2)$ and that $[S_1] + \cdots + [S_r] = 0 \in H_2(X ; \mathbb{Z}_2)$. Suppose that $X$ that $SW(X , \mathfrak{s}) \neq 0 \; ({\rm mod} \; 2)$ for some spin$^c$-structure with $d(X , \mathfrak{s}) = 0$. Then the following hold:
\begin{itemize}
\item[(1)]{We have
\[
e(S_1) + \cdots + e(S_r) \le \max\{ 0 , 8-2r \}.
\]
}
\item[(2)]{If $r \le 3$, $e(S_i) \ge -2$ for all $i$ and $e(S_1) + \cdots + e(S_r) = 8-2r$, then $\widehat{X}$ does not have simple type.}
\end{itemize}
\end{theorem}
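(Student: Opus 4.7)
The plan is to exploit Lemmas~\ref{lem:rpcov} and~\ref{lem:swrp} together with Proposition~\ref{prop:indep}: evaluate the formula of Lemma~\ref{lem:swrp} at two choices of $(m_0,m_1)$ and use the fact that the value is independent of the choice to force a contradiction whenever the asserted inequality in~(1) fails. For setup, each disc bundle $R(n_i)$ has $b_2=0$, so attaching them to $X_0$ leaves $b_+$ unchanged and adds nothing to $\mathrm{ind}_{APS}$; hence $d(X_0,\mathfrak{s}_0)=d(X,\mathfrak{s})=0$ and Theorem~\ref{thm:pscglue} gives $SW(X_0,\mathfrak{s}_0)=SW(X,\mathfrak{s})\not\equiv 0 \pmod{2}$. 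In the notation of Lemma~\ref{lem:swrp} this means $\delta_0=0$, and with $n=e(S_1)+\cdots+e(S_r)$, Lemma~\ref{lem:rpcov} yields $d(\widetilde{X}_0,\widetilde{\mathfrak{s}}_0)=(r+n-\varepsilon)/2$ and $\delta_1=(n-2\varepsilon)/8$, where $|\varepsilon|\le r$ and $\varepsilon\equiv n/2 \pmod{4}$ (forcing $n$ even and $\delta_1\in\mathbb{Z}$).

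For part~(1), suppose for contradiction that $n>\max(0,8-2r)$; then $n>0$ yields $d(\widetilde{X}_0,\widetilde{\mathfrak{s}}_0)\ge 1$, so $m=\lceil d(\widetilde{X}_0,\widetilde{\mathfrak{s}}_0)/2\rceil\ge 1$. I will evaluate the formula of Lemma~\ref{lem:swrp}(1) (if $2r+n\equiv 0\pmod{8}$) or~(2) (if $2r+n\equiv 4\pmod{8}$) at $(m_0,m_1)=(0,m)$ and $(1,m-1)$. Since $\delta_0=0$, the coefficient $\binom{m_1-d_1}{-m_0}$ of $SW(X,\mathfrak{s})$ equals $1$ when $m_0=0$ and vanishes when $m_0\ge 1$. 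The coefficient of $SW(X_0,A\otimes\mathfrak{s}_0)$ at $(0,m)$ has lower index $\delta_1-m$, which computes to $-(n+2r)/8$ in the even case and $-(n+2r+4)/8$ in the odd case; both are negative integers under $n>0$, so the formula evaluates to $SW(X,\mathfrak{s})$. At $(1,m-1)$, this second lower index becomes $(8-n-2r)/8$ or $(4-n-2r)/8$, both negative under the stronger hypothesis $n>8-2r$, so the formula evaluates to $0$. Proposition~\ref{prop:indep} forces the two values to agree, yielding $SW(X,\mathfrak{s})\equiv 0\pmod{2}$---a contradiction.

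For part~(2), the hypotheses $n=8-2r$, $r\le 3$, and $e(S_i)\ge -2$ make $\widehat{X}$ well-defined. Here $d(\widetilde{X}_0,\widetilde{\mathfrak{s}}_0)=(8-r-\varepsilon)/2\ge 4-r\ge 1$, and since $\varepsilon\equiv 4-r\pmod{4}$ forces $r+\varepsilon\equiv 0\pmod{4}$, the dimension is always even, so $d\ge 2$. The $(0,d/2)$ computation from part~(1) now has $\delta_1-d/2=-(n+2r)/8=-1<0$, so the second term still vanishes and $SW(\widetilde{X}_0,\widetilde{\mathfrak{s}}_0)=SW(X,\mathfrak{s})\not\equiv 0$. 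Lemma~\ref{lem:swrp}(3) then gives $SW(\widehat{X},\widehat{\mathfrak{s}})=SW(\widetilde{X}_0,\widetilde{\mathfrak{s}}_0)\not\equiv 0$ for any sharp extension $\widehat{\mathfrak{s}}$, while $d(\widehat{X},\widehat{\mathfrak{s}})=d(\widetilde{X}_0,\widetilde{\mathfrak{s}}_0)\ge 2>0$, so $\widehat{X}$ is not of simple type.

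The main obstacle is the careful bookkeeping of the binomial coefficients: one must split into the even and odd cases of Lemma~\ref{lem:swrp} according to $2r+n\pmod{8}$, use $\varepsilon\equiv n/2\pmod{4}$ to guarantee integrality of the binomial lower indices, and verify that $m\ge 1$ (so that $(1,m-1)$ is a valid second choice), which under $n>0$ follows from $d\ge 1$ in the odd parity and $d\ge 2$ in the even parity.
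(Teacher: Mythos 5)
Your proof is correct and follows essentially the same route as the paper: both evaluate the formula of Lemma~\ref{lem:swrp} at $(m_0,m_1)=(0,m)$ and $(1,m-1)$ and exploit that the left-hand side is independent of the choice. Your version is slightly more economical in that you observe directly that the coefficient of $SW(X_0,A\otimes\mathfrak{s}_0)$ vanishes at both choices whenever $n+2r>8$, so there is no need to first establish $SW(X_0,A\otimes\mathfrak{s}_0)\equiv 1\pmod 2$ as the paper does; otherwise the arguments are the same, including the bookkeeping of the parity of $n+2r \pmod 8$ and the verification that $m\ge 1$.
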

\begin{proof}
First note that $n+2r$ is divisible by $4$, by Lemma \ref{lem:rpcov} and integrality of $b_+(\widetilde{X}_0)$. Suppose that $n > 0$, $n+2r \ge 16$ and $n+2r = 0 \; ({\rm mod} \; 8)$. Since $d(X , \mathfrak{s}) \ge 0$, Lemma \ref{lem:swrp} (1) gives
\[
SW(\widetilde{X}_0 , \widetilde{\mathfrak{s}}_0) = \binom{m_1-d_1}{-m_0} + \binom{m_0-d_0}{\delta_1 - m_1}SW(X_0 , A \otimes \mathfrak{s}_0) \; ({\rm mod} \; 2)
\]
where $m_0,m_1$ are non-negative and $m_0 + m_1 = m = (r+n-\varepsilon)/4 \ge n/4 > 0$. Suppose that $SW(X_0 , A \otimes \mathfrak{s}_0) = 0 \; ({\rm mod} \; 2)$. Then $SW(\widetilde{X}_0 , \widetilde{\mathfrak{s}}_0) = \binom{m_1-d_1}{-m_0} \; ({\rm mod} \; 2)$. However, setting $m_0 = 0$ and $m_0 = 1$ gives two different values for $SW(\widetilde{X}_0 , \widetilde{\mathfrak{s}}_0)$. This is impossible so $SW(X_0 , A \otimes \mathfrak{s}_0)  = 1 \; ({\rm mod} \; 2)$ and hence
\[
SW(\widetilde{X}_0 , \widetilde{\mathfrak{s}}_0) = \binom{m_1-d_1}{-m_0} + \binom{m_0-d_0}{\delta_1 - m_1} \; ({\rm mod} \; 2).
\]
Suppose $m \ge \delta_1 + 2$. Then setting $(m_0 , m_1) = (0, m)$ or $(1,m-1)$ gives two different values for $SW(\widetilde{X}_0 , \widetilde{\mathfrak{s}}_0)$. This is impossible, so $m \le \delta_1 + 1$. We have $\delta_1 = (n - 2\varepsilon)/8$ and $m =  (r+n-\varepsilon)/4$, hence $-1 \le \delta_1 - m = -(n+2r)/8 \le -2$, a contradiction. So we can not have $n>0$, $n+2r \ge 16$ and $n+2r = 0 \; ({\rm mod} \; 8)$.

Suppose now that $n > 0$, $n+2r \ge 12$ and $n+2r = 4 \; ({\rm mod} \; 8)$. Lemma \ref{lem:swrp} (2) gives
\[
SW_{\mathbb{Z}_2 , \widetilde{X}_0 , \widetilde{\mathfrak{s}}_0}( x^{m_0+m_1} ) = \left( \binom{m_1-d_1}{-m_0} + \binom{m_0-d_0}{\delta_1 - m_1}SW(X_0 , A \otimes \mathfrak{s}_0) \right)u.
\]
where $m_0,m_1$ are non-negative and $m_0 + m_1 = m = (r+n+2-\varepsilon)/4 \ge (n+2)/4 > 0$. The same argument as above shows that $SW(X_0 , A \otimes \mathfrak{s}_0) = 1 \; ({\rm mod} \; 2)$ and that $m \le \delta_1 + 1$. Hence $-1 \le -(n+2r+4)/8 \le -2$, a contradiction. Hence either $n \le 0$ or $n+2r \le 8$. Therefore $n \le \max\{ 0 , 8-2r\}$, which proves (1).

Now suppose that $r \le 3$, $e(S_i) \ge -2$ for all $i$ and $n = e(S_1) + \cdots + e(S_r) = 8-2r$. Then as shown above we have
\[
SW(\widetilde{X}_0 , \widetilde{\mathfrak{s}}_0) = \binom{m_1-d_1}{-m_0} + \binom{m_0-d_0}{\delta_1 - m_1} \; ({\rm mod} \; 2).
\]
Also $m = \delta_1 + 1$ and $\delta_1 \ge 0$. Choosing $m_0=0$, $m_1 = \delta_1+1$ gives $SW(\widetilde{X}_0 , \widetilde{\mathfrak{s}}_0) = 1 \; ({\rm mod} \; 2)$. Then from Lemma \ref{lem:swrp} (3), we have $SW(\widehat{X} , \widehat{\mathfrak{s}}) = SW(\widetilde{X}_0 , \widetilde{\mathfrak{s}}_0) = 1 \; ({\rm mod} \; 2)$. Also $d(\widehat{X} , \widehat{\mathfrak{s}}) = d(\widetilde{X}_0 , \widetilde{\mathfrak{s}}_0) = 2m = \delta_1 + 1 > 0$, so $\widehat{X}$ does not have simple type.
\end{proof}

\begin{theorem}
Let $X$ be a compact, oriented, smooth $4$-manifold with $b_+(X) > 1$ and $H_1(X ; \mathbb{Z}_2) = 0$. Suppose $S \subset X$ is a smoothly embedded projective plane such that $[S] = 0 \in H_2(X ; \mathbb{Z}_2)$. Suppose that $SW(X , \mathfrak{s}) \neq 0 \; ({\rm mod} \; 2)$ for some spin$^c$-structure with $d(X , \mathfrak{s}) = 0$. Then 
\begin{itemize}
\item[(1)]{We have $e(S) \le 6$.}
\item[(2)]{If $b_+(X) = 3 \; ({\rm mod} \; 4)$, then $e(S) \le 2$.}
\item[(3)]{If $e(S) = 6$, then both $\widehat{X}$ and $X'$ do not have simple type.}
\end{itemize}
\end{theorem}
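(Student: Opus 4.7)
The plan is to specialize the branched double cover analysis of Section~\ref{sec:rp2} to $r = 1$ and $n = e(S)$, and then squeeze extra information out of Lemma~\ref{lem:swrp} by exploiting the freedom in the choice of $(m_0, m_1)$. Part~(1) is immediate as the $r = 1$ instance of Theorem~\ref{thm:rpadj}(1), which gives $e(S) \le \max\{0, 6\} = 6$.

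For part~(2) I would focus on the borderline value $n = 6$. Here $n + 2r = 8 \equiv 0 \; ({\rm mod} \; 8)$, so Lemma~\ref{lem:swrp}(1) applies. The integrality condition $\varepsilon \equiv n/2 \; ({\rm mod} \; 4)$ together with $|\varepsilon| \le 1$ forces $\varepsilon = -1$, and the parameters then work out to $m = 2$, $\delta_0 = 0$, $\delta_1 = 1$, and $d_0 = -(b_+(X)+1)/2$ (using $b_+(X_0) = b_+(X)$ since $R(6)$ has $b_2 = 0$ and bounds a rational homology sphere). Evaluating the formula at the three admissible pairs $(m_0, m_1) = (0,2), (1,1), (2,0)$ yields in that order $SW(\widetilde{X}_0, \widetilde{\mathfrak{s}}_0) \equiv 1$, then $SW(X_0, A \otimes \mathfrak{s}_0) \equiv 1$, and finally --- via the binomial coefficient $\binom{2-d_0}{1} \equiv d_0 \; ({\rm mod} \; 2)$ --- $d_0 \equiv 1 \; ({\rm mod} \; 2)$. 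Since $-(b_+(X)+1)/2$ is odd precisely when $b_+(X) \equiv 1 \; ({\rm mod} \; 4)$, the hypothesis $b_+(X) \equiv 3 \; ({\rm mod} \; 4)$ rules out $n = 6$; combined with the congruence $n \equiv 2 \; ({\rm mod} \; 4)$ and $n \le 6$, this gives $e(S) \le 2$.

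For part~(3), assume $e(S) = 6$. The statement that $\widehat{X}$ is not of simple type is the $r = 1$ case of Theorem~\ref{thm:rpadj}(2), which applies because $e(S) \ge -2$. The computation above also produced $SW(X_0, A \otimes \mathfrak{s}_0) \equiv 1 \; ({\rm mod} \; 2)$. Picking a sharp extension $\mathfrak{s}'$ of $A \otimes \mathfrak{s}_0$ to $X'$ (which fills the $Y(6)$-boundary with the plumbing $P(D_5)$), Lemma~\ref{lem:swrp}(4) gives $SW(X', \mathfrak{s}') \equiv 1 \; ({\rm mod} \; 2)$, while Lemma~\ref{lem:rpcov} evaluates $d(X', \mathfrak{s}') = d(X_0, A \otimes \mathfrak{s}_0) = 0 + 3/2 + 1/2 = 2 > 0$. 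So $X'$ carries a nonzero mod $2$ Seiberg--Witten invariant in positive even dimension and hence is not of simple type.

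The main obstacle is the parity argument in part~(2). The formula of Lemma~\ref{lem:swrp}(1) must hold simultaneously for every admissible $(m_0, m_1)$ with $m_0 + m_1 = 2$, and only the extremal choice $(m_0, m_1) = (2, 0)$ activates the $d_0$-dependent coefficient $\binom{2-d_0}{1}$. It is this single binomial that converts the internal consistency requirement into a mod $4$ constraint on $b_+(X)$, and thereby sharpens the basic inequality $e(S) \le 6$ to $e(S) \le 2$ in the prescribed congruence class.
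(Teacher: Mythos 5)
Your proposal is essentially correct and parts (1) and (3) follow the paper's own argument. Part (2), however, takes a genuinely different route worth noting, and there is one small slip in part (3).

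For part (2), the paper first uses consistency at $(m_0,m_1)=(0,2)$ and $(1,1)$ to conclude $SW(X_0,A\otimes\mathfrak{s}_0)\equiv 1\pmod 2$, computes $d(X_0,A\otimes\mathfrak{s}_0)=2>0$, and then invokes the external mod-$4$ obstruction of Bauer--Furuta (Theorem 3.7 of \cite{bf}, or Theorem 1.8 of \cite{bar}): when $b_+\equiv 3\pmod 4$ an odd Seiberg--Witten invariant forces formal dimension zero, giving a contradiction. You instead squeeze a third consistency relation out of the remaining choice $(m_0,m_1)=(2,0)$, turning the binomial coefficient $\binom{2-d_0}{1}\equiv d_0\pmod 2$ into a direct parity constraint on $d_0=(b_+(X_0)+1)/2$, which rules out $n=6$ when $b_+(X)\equiv 3\pmod 4$ without citing any external result. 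This is a valid alternative: it trades the appeal to \cite{bf} for one more application of the overdetermined system encoded in Lemma~\ref{lem:swrp}(1) (equivalently, Proposition~\ref{prop:indep}). Your calculation of the parity is correct and is in fact insensitive to the sign of $d_0$, so the conclusion $b_+(X)\equiv 1\pmod 4$ is sound. Both approaches are really two faces of the same phenomenon --- the mod-$4$ constraint on $b_+$ is exactly what makes the binomial-coefficient reduction formula internally consistent --- but the internal derivation is cleaner and avoids a detour.

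One small correction in part (3): you describe $X'$ as filling $Y(6)$ with $P(D_5)$, but $P(D_{n/2+2})=P(D_5)$ is the filling used for the cover $\widehat{X}$; the filling for $X'$ is $P(D_{n+2})=P(D_8)$, matching the statement of the theorem in the introduction. This does not affect the argument, since what you actually use is $SW(X',\mathfrak{s}')=SW(X_0,A\otimes\mathfrak{s}_0)\equiv 1\pmod 2$ via the gluing formula (Lemma~\ref{lem:swrp}(4)) together with $d(X',\mathfrak{s}')=d(X_0,A\otimes\mathfrak{s}_0)=2>0$ for a sharp extension, and those are correct.
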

\begin{proof}
Part (1) is immediate from Theorem \ref{thm:rpadj} (1). Suppose $b_+(X) = 3 \; ({\rm mod} \; 4)$ and $e(S) > 2$. Then $e(S) = 6$, $n+2r = 8$ and by the same argument used in the proof of Theorem \ref{thm:rpadj}, we have $SW(X_0 , A \otimes \mathfrak{s}_0) = 1 \; ({\rm mod} \; 2)$. However, we also have $d(X_0 , A \otimes \mathfrak{s}_0) = (3 - \varepsilon)/2 > 0$. But this is impossible since $b_+(X) = 3 \; ({\rm mod} \; 4)$ implies that if the Seiberg--Witten invariant of a spin$^c$-structure $\mathfrak{s}'$ on $X_0$ is odd, then $d(X_0 , \mathfrak{s}') = 0$ (see \cite[Theorem 3.7]{bf} for the case of a closed $4$-manifold. Alternatively see \cite[Theorem 1.8]{bar} where the result is given for abstract monopole maps). Hence $e(S) \le 2$. If $e(S) = 6$ and $b_+(X) = 1 \; ({\rm mod} \; 4)$ then the same calculation shows that $\widehat{X}$ and $X'$ do not have simple type.
\end{proof}

\section{Embedded spheres with cusps}\label{sec:cusp}

Let $X$ be a smooth $4$-manifold. By an embedded sphere in $X$ with a cusp, we mean a $2$-sphere $C$ and a smooth injective map $i : C \to X$ such that $i$ is an embedding except at a single point $c \in C$, the cusp, where locally $C$ is given by the cone over a $(2,3)$-torus knot $T \subset S^3$. We will say $C$ is a {\em left-handed cusp} or {\em right-handed cusp} according to whether $T$ is a left or right-handed torus knot. Our notation is that $T = T_{2,3}$ denotes the right-handed $(2,3)$-torus knot and $-T_{2,3}$ the left-hand $(2,3)$-torus knot.

An embedded cusp has a fundamental class $[C] \in H_2( X ; \mathbb{Z})$ which is the image under $i$ of the fundamental class of $C$. If $X$ is compact and oriented we will identify $[C]$ with its Poincar\'e dual class $[C] \in H^2(X ; \mathbb{Z})$. Let $n = [C]^2$ be the self-intersection number. $C$ can be constructed as the union of the cone over $T$ and a $2$-cell which has self-intersection $n$ relative the Seifert framing on $T$. It follows that a closed neighbourhood of $C$ in $X$ can be identified with the $4$-manifold $X_{n}(T)$, the trace of the $n$-surgery on $T$. The boundary of $X_n(T)$ is $S_n(T)$, the $n$-surgery on $T$.

\begin{proposition}\label{prop:E8-p}
Let $1 \le p \le 7$. 
\begin{itemize}
\item[(1)]{$S_{-p}(-T_{2,3})$ has positive scalar curvature and bounds a negative almost rational plumbing (or a boundary connected sum of two negative definite plumbings in the case $p=6$).}
\item[(2)]{Suppose $C \subset X$ is a left-handed cusp with self-intersection $-p$. Let $\mathfrak{s}$ be a spin$^c$-structure on $X$ for which $| \langle c(\mathfrak{s}) , [C] \rangle | + [C]^2 = 0$. Then $\mathfrak{s}|_{X_{-p}}(-T_{2,3})$ is not sharp in the sense of Definition \ref{def:sharp}.}
\end{itemize}
\end{proposition}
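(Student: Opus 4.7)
For part (1), the plan is to exhibit $S_{-p}(-T_{2,3})$ explicitly as a small Seifert fibered space and construct a negative-definite plumbing bounding it. Using $S_{-p}(-T_{2,3}) = -S_p(T_{2,3})$ together with Moser's classification of surgeries on torus knots, for $1 \le p \le 7$ with $p \neq 6$ the $p$-surgery on the right-handed trefoil is Seifert fibered over $S^2$ with exceptional fibers of multiplicities $(2,3,|p-6|)$; the base orbifold has positive Euler characteristic, so the total space is spherical and thus carries a psc metric. The plumbing $P$ is obtained by standard blow-downs from the $(-p)$-framed $(-T_{2,3})$ unknot, yielding a negative-definite star-shaped graph on three legs; this is almost rational in the sense of N\'emethi since lowering the weight of the central vertex produces a rational graph. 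The exceptional case $p = 6$ is Moser's reducible slope: $S_{-6}(-T_{2,3})$ is diffeomorphic to $L(2,1) \# L(3,1)$ (up to orientation), admits a psc metric, and bounds the boundary connected sum of two negative-definite linear plumbings.

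For part (2), the first step is to compute $\delta(W,\mathfrak{s})$ directly, where $W := X_{-p}(-T_{2,3})$. Since $H_2(W;\mathbb{Z}) = \mathbb{Z}\langle [C]\rangle$ with $[C]^2 = -p$ and $\sigma(W) = -1$, a spin$^c$-structure $\mathfrak{s}$ with $a := \langle c(\mathfrak{s}),[C]\rangle$ satisfies $c(\mathfrak{s})^2 = -a^2/p$. The hypothesis $|a| + [C]^2 = 0$ forces $|a| = p$, so $c(\mathfrak{s})^2 = -p$ and
\[
\delta(W,\mathfrak{s}) = \frac{c(\mathfrak{s})^2 - \sigma(W)}{8} = \frac{1-p}{8}.
\]
Sharpness of $\mathfrak{s}|_W$ would amount to $\delta(Y, \mathfrak{s}|_Y) = (1-p)/8$, where $Y = \partial W = S_{-p}(-T_{2,3})$. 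To rule this out, I use the plumbing $P$ from part (1): by N\'emethi's sharpness theorem (as cited earlier in the paper), every spin$^c$-structure on $Y$ extends sharply to $P$, so $\delta(Y,\mathfrak{s}|_Y) = (K^2 - \sigma(P))/8$ for a characteristic covector $K$ in the plumbing lattice representing $\mathfrak{s}|_Y$ and minimizing $K^2$ within its spin$^c$-class.

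The strategy is then to track the spin$^c$-structure under the Kirby-move conversion from $W$ to $P$ (each blow-down changes the Chern class by an explicit transformation rule), identify the induced spin$^c$-class on $Y$, and verify by a direct lattice computation that the minimal $K^2$ in that class strictly exceeds $\sigma(P) + 1 - p$, so that $\delta(Y,\mathfrak{s}|_Y) > (1-p)/8$. The main obstacle is the case-by-case verification for $p \in \{1,\dots,7\}$, but each plumbing has small $b_2$ (at most $8$, attained at $p=1$), so the required enumeration is finite and tractable. As a sanity check, $p = 1$ gives $Y = \Sigma(2,3,5)$ with its unique spin$^c$-structure having $\delta = 1 > 0 = (1-1)/8$. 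The reducible case $p = 6$ follows from additivity of $\delta$ under connected sum applied to the two lens space summands.
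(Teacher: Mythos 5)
Your computation of $\delta(W,\mathfrak{s}) = (1-p)/8$ and the overall strategy (show $\delta(Y,\mathfrak{s}|_Y) > (1-p)/8$ by comparing with the bounding plumbing $P$) match the paper. However, the two steps you leave to ``a direct lattice computation'' are precisely where the real content lies, and the paper handles them by a noticeably cleaner route.

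Two issues in your proposal. First, a small but substantive error: you say to take the characteristic covector $K$ \emph{minimizing} $K^2$ in its spin$^c$-class; since $P$ is negative definite and $\delta(Y,\mathfrak{s}|_Y) = \max (K^2 - \sigma(P))/8$ over characteristic extensions, you want the one \emph{maximizing} $K^2$, i.e.\ closest to $0$. Second, and more significantly, you do not actually identify the spin$^c$-structure $\mathfrak{s}|_Y$ on $Y$, nor carry out the lattice optimization; these are deferred. Tracking spin$^c$-structures through blow-downs is possible but fiddly to do uniformly for all $p$, and you do not indicate how the two computations (one arising from $X_{-p}(-T_{2,3})$ and one from $P$) are matched up.

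The paper short-circuits both of these at once. It exhibits $\delta(Y,\mathfrak{s}'_Y) = (9-p)/8$ for the restriction $\mathfrak{s}'_Y$ of the unique zero-characteristic spin$^c$-structure on the (all-weight-$-2$) plumbing $P$, where sharpness is immediate since $K=0$ maximizes $K^2$. Then, instead of Kirby moves, it observes that for any spin$^c$-structure on $Y$ the integrality of $\mathrm{ind}_{APS}(X_{-p}(-T_{2,3}),\cdot)$ forces $\delta(Y,\mathfrak{s}''_Y) \equiv -(c'')^2/(8p) + 1/8 \pmod{\mathbb{Z}}$ with $c'' \equiv p \pmod 2$ and $|c''| \le p$; a one-line check shows that, for $1 \le p \le 7$, the congruence class of $(9-p)/8$ is attained only when $|c''| = p$. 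This pins down $\mathfrak{s}|_Y = \mathfrak{s}'_Y$ without any case-by-case plumbing enumeration, giving $\delta(Y,\mathfrak{s}|_Y) = (9-p)/8 = 1 + (1-p)/8 > \delta(W,\mathfrak{s}|_W)$. If you carried out your Kirby-move-plus-lattice plan carefully you would land in the same place, but the paper's congruence argument is both shorter and more robust (it covers the reducible $p=6$ case and the lens-space $p=7$ case in the same breath). Your $p=1$ sanity check is correct and consistent.
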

\begin{proof}
The result of surgery on a torus knot was calculated in \cite{mo}, see also \cite{kash} for another proof in which the orientation of the resulting manifold is more apparent. For $1 \le p \le 5$ we get that $S_{-p}(-T_{2,3}) = M( 1 \, ; (2,1) , (3,1) , (6-p , 1))$ (where our notation for Seifert manifolds is taken from \cite[\textsection 1.1.4]{sav}), which is the boundary of the negative definite $E_{(9-p)}$-plumbing (where $E_5 = D_5$, $E_4 = A_4$). When $p=6$, we instead get $S_{-6}(-T_{2,3}) = L(2,1) \# L(3,2)$, which is the boundary of the boundary connected sum of two negative definite plumbings (of types $A_1$ and $A_2$). For $p=7$, we get $S_{-7}(-T_{2,3}) = L(7,2)$, which is a lens space hence bounds a negative definite almost rational plumbing.

Now suppose that $C \subset X$ is a left-handed cusp with self-intersection $-p$. Let $\mathfrak{s}$ be a spin$^c$-structure on $X$ for which $| \langle c(\mathfrak{s}) , [C] \rangle | + [C]^2 = 0$. Let $\mathfrak{s}_Y = \mathfrak{s}|_Y$ be the restriction of $\mathfrak{s}$ to $Y = \partial X_{-p}(-T_{2,3}) = S_{-p}(-T_{2,3})$. We need to show that $\delta( X_{-p}(-T_{2,3}) , \mathfrak{s}|_{X_{-p}(-T_{2,3})} ) < \delta( Y , \mathfrak{s}_Y)$. Since $| \langle c(\mathfrak{s}) , [C] \rangle | = -[C]^2 = p$, it follows that 
\[
\delta( X_{-p}(-T_{2,3}) , \mathfrak{s}|_{X_{-p}(-T_{2,3})} ) = -\frac{p^2}{8p} + \frac{1}{8} = -\frac{(p-1)}{8}.
\]
Let $\mathfrak{s}'_Y$ be the spin-structure on $Y$ defined as follows. Let $W_p$ denote the negative definite $E_{(9-p)}$-plumbing if $p \neq 6,7$. Define $W_6$ to be the boundary connected sum of the $A_1$ and $A_2$ plumbings and define $W_7$ to be the plumbing on the graph with two vertices of degrees $-8,-2$ and an edge joining them. Then $Y = \partial W_p$. Since all vertices in the plumbing has even degree, there is a (unique) spin$^c$-structure $\mathfrak{s}_W$ on $W$ with $c( \mathfrak{s}_W) = 0$. Let $\mathfrak{s}'_Y = \mathfrak{s}_W |_Y$. Since $b_2(W) = 9-p$, we find that
\[
\delta( Y , \mathfrak{s}'_Y ) = \delta(W , \mathfrak{s}_W) = \frac{(9-p)}{8} = 1 - \frac{(p-1)}{8}.
\]
Then (2) will follow if we can show that $\mathfrak{s}'_Y = \mathfrak{s}_Y$. To see this we first note that every spin $^c$-structure $\mathfrak{s}''_Y$ on $Y$ is the restriction of a spin$^c$-structure $\mathfrak{s}''$ on $X_{-p}$ such that $|c''| \le p$, where $c'' = \langle c(\mathfrak{s}'') , [C] \rangle$. Since $c(\mathfrak{s}'')$ is characteristic it follows that $c'' = p \; ({\rm mod} \; 2)$. Furthermore, since $ind_{APS}( X_{-p}(-T_{2,3}) , \mathfrak{s}'') = \delta( X_{-p})(-T_{2,3}) , \mathfrak{s}'') - \delta(Y , \mathfrak{s}''_Y) \in \mathbb{Z}$, we have
\[
\delta(Y , \mathfrak{s}''_Y) = \delta(X_{-p}(-T_{2,3}) , \mathfrak{s}'') = -\frac{(c'')^2}{8p} + \frac{1}{8} \; ({\rm mod} \; \mathbb{Z}).
\]
One finds by direct calculation that if $-(c'')^2/8p + 1/8 = \delta(Y , \mathfrak{s}'_Y) \; ({\rm mod} \; \mathbb{Z})$, where $|c''| \le p$, $c'' = p \; ({\rm mod} \; 2)$, $1 \le p \le 7$, then $|c''| = p$. But $|c''| = p$ is precisely the case where $\mathfrak{s}''_Y = \mathfrak{s}_Y$. Hence $\mathfrak{s}_Y = \mathfrak{s}'_Y$ and the result is proven.
\end{proof}

\begin{proposition}\label{prop:lcusp}
Let $X$ be a compact, oriented, smooth $4$-manifold with $b_1(X) = 0$, $b_+(X) > 1$ and suppose $X$ has simple type. Let $C \subseteq X$ be an embedded sphere with a cusp. Suppose that $SW(X , \mathfrak{s}) \neq 0$ for some spin$^c$-structure $\mathfrak{s}$.
\begin{itemize}
\item[(1)]{We have $| \langle c(\mathfrak{s}) , [C] \rangle | + [C]^2 \le 0$.}
\item[(2)]{If $C$ has a left-handed cusp, $-7 \le [C]^2 \le -1$ and $| \langle c(\mathfrak{s}) , [C] \rangle | + [C]^2 = 0$, then the $4$-manifold $X'$ obtained by removing a neighbourhood of $C$ and attaching the plumbing $W_{-[C]^2}$ as in Proposition \ref{prop:E8-p} does not have simple type.}
\end{itemize}

\end{proposition}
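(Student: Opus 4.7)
For part (1), the plan is to smooth the cusp and apply the standard genus-one adjunction inequality. A small $4$-ball around the cusp point $c \in C$ meets $C$ in the cone on $\pm T_{2,3}$, and since $T_{2,3}$ has Seifert genus $1$, I would excise this cone and graft in a smoothly embedded genus-$1$ Seifert surface. The result is a smoothly embedded torus $T \subset X$ with $[T] = [C]$ and $[T]^2 = [C]^2$. If $[C]$ is torsion the claimed inequality is automatic; otherwise the $g=1$ adjunction inequality (valid under simple type with $SW(X,\mathfrak{s}) \ne 0$) gives $|\langle c(\mathfrak{s}) , [C] \rangle| + [C]^2 \le 2g-2 = 0$. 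The handedness of the cusp plays no role here since the Seifert-genus argument is symmetric in $T_{2,3}$ and $-T_{2,3}$.

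For part (2), the plan is to compare $X$ and $X'$ using the gluing formula of Theorem \ref{thm:pscglue}, exploiting the failure of sharpness supplied by Proposition \ref{prop:E8-p}. Set $p := -[C]^2 \in \{1,\dots,7\}$, let $N$ be a closed tubular neighbourhood of $C$ (diffeomorphic to $X_{-p}(-T_{2,3})$), and let $X_0 := X \setminus \mathrm{int}(N)$, so that $X = X_0 \cup_Y N$ and $X' = X_0 \cup_Y W_p$, where $Y = S_{-p}(-T_{2,3})$ carries psc and $W_p$ is the negative definite plumbing from that proposition. By Proposition \ref{prop:E8-p}(2), $\mathfrak{s}|_Y$ equals the restriction to $Y$ of the characteristic spin$^c$-structure $\mathfrak{s}_W$ on $W_p$ (the unique one with $c(\mathfrak{s}_W)=0$), so $\mathfrak{s}_0 := \mathfrak{s}|_{X_0}$ and $\mathfrak{s}_W$ glue to a spin$^c$-structure $\mathfrak{s}'$ on $X'$. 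The delta invariants from that proof give
\[
ind_{APS}(N , \mathfrak{s}|_N) \;=\; -\tfrac{p-1}{8} - \tfrac{9-p}{8} \;=\; -1, \qquad ind_{APS}(W_p , \mathfrak{s}_W) \;=\; 0,
\]
the second equality being sharpness of $\mathfrak{s}_W$.

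The final step is to feed these indices into Theorem \ref{thm:pscglue}. Since $b_+(N) = b_+(W_p) = 0$ and $b_+(X_0) = b_+(X) > 1$, the gluing formula gives
\[
SW_{X , \mathfrak{s}}(x^m) \;=\; SW_{X_0 , \mathfrak{s}_0}(x^{m+1}), \qquad SW_{X' , \mathfrak{s}'}(x^{m'}) \;=\; SW_{X_0 , \mathfrak{s}_0}(x^{m'}).
\]
Simple type with $SW(X,\mathfrak{s}) \ne 0$ forces $d(X,\mathfrak{s}) = 0$, so $m=0$ yields $SW_{X_0 , \mathfrak{s}_0}(x^1) = SW(X , \mathfrak{s}) \ne 0$. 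Additivity of $d$ under gluing then forces $d(X' , \mathfrak{s}') = d(X_0 , \mathfrak{s}_0) = d(X , \mathfrak{s}) + 2 = 2$, and taking $m' = 1$ produces $SW(X' , \mathfrak{s}') = SW(X , \mathfrak{s}) \ne 0$. Thus $X'$ carries a spin$^c$-structure with positive expected dimension and non-vanishing Seiberg--Witten invariant, so $X'$ is not of simple type. The real work is absorbed into Proposition \ref{prop:E8-p}: identifying $\mathfrak{s}|_Y$ as the spin$^c$-structure induced from the even plumbing $W_p$, and converting this into a one-unit jump in $ind_{APS}$. Once that is in hand, the argument above is a routine application of the psc gluing theorem.
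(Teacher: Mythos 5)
Your proof is correct and follows essentially the same route as the paper for both parts: smooth the cusp by a genus-one Seifert surface and apply the adjunction inequality for (1), then decompose $X$ and $X'$ over $Y = S_{-[C]^2}(-T_{2,3})$ and run the psc gluing formula together with the non-sharpness supplied by Proposition \ref{prop:E8-p}(2) for (2). The only difference is that you compute the indices $ind_{APS}(N,\mathfrak{s}|_N) = -1$ and $ind_{APS}(W_p,\mathfrak{s}_W) = 0$ explicitly (and hence pin down $d(X',\mathfrak{s}') = 2$), whereas the paper argues qualitatively: non-sharpness gives $ind_{APS}(N,\mathfrak{s}|_N) < 0$ by Proposition \ref{prop:indle}, and it invokes the existence of a sharp extension of $\mathfrak{s}_0$ over $W_p$ (via the almost-rational plumbing result) rather than naming $\mathfrak{s}_W$ explicitly; both routes show $d(X',\mathfrak{s}') > 0$ while preserving the invariant.
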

\begin{proof}
By removing a neighbourhood of the cusp of $C$ and replacing it by a genus $1$ surface with one boundary component, we obtain a smoothly embedded torus $C' \subset X$ with $[C'] = [C]$. Since $X$ has simple type the adjunction inequality says that $| \langle c(\mathfrak{s}) , [C] \rangle | + [C]^2 \le 0$.

Now suppose $C$ has a left-handed cusp and $1 \le p \le 7$, where $p = -[C]^2$. Since $| \langle c(\mathfrak{s}) , [C] \rangle | = p$, we have that $\mathfrak{s}_{X_{-p}(-T_{2,3})}$ is not sharp by, Proposition \ref{prop:E8-p} (2). Now let $X_0$ be $X$ with a neighbourhood of $C$ removed and $X' = W_p \cup_Y X_0$, $Y = S_{-p}(-T_{2,3})$. Let $\mathfrak{s}_0 = \mathfrak{s}|_{X_0}$ and let $\mathfrak{s}'$ be a sharp extension of $\mathfrak{s}_0$. Then $SW(X' , \mathfrak{s}') = SW(X , \mathfrak{s}) \neq 0$ by Theorem \ref{thm:pscglue} and $d(X' , \mathfrak{s}') > d(X , \mathfrak{s}) = 0$ since $\mathfrak{s}|_{X_{-p}(-T_{2,3})}$ is not sharp. Hence $X'$ does not have simple type.
\end{proof}

\begin{corollary}\label{cor:lcusp0}
Let $X$ be a compact, oriented, smooth $4$-manifold with $b_1(X) = 0$, $b_+(X) > 1$ and suppose $X$ has simple type. Let $C \subseteq X$ be an embedded sphere with a left-handed cusp and $[C]^2 = 0$. Suppose that $SW(X , \mathfrak{s}) \neq 0$ for some spin$^c$-structure $\mathfrak{s}$. Let $X'$ be the $4$-manifold obtained from $X \# \overline{\mathbb{CP}^2}$ by removing a neighbourhood of $C \# E$ (where $E \subset \overline{\mathbb{CP}^2}$ is a sphere of self-intersection $-1$) and attaching the $E_8$ plumbing. Then $X'$ does not have simple type. If $SW(X , \mathfrak{s}) \neq 0 \; ({\rm mod} \; 2)$ and $b_+(X) = 3 \; ({\rm mod} \; 4)$ then no such $C$ exists.
\end{corollary}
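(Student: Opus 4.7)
The plan is to reduce to Proposition \ref{prop:lcusp}(2) via a single blow-up that drops the self-intersection of the cusp sphere down to $-1$. Set $\widetilde{X} = X \# \overline{\mathbb{CP}^2}$ and $\widetilde{C} = C \# E$, where $E \subset \overline{\mathbb{CP}^2}$ is the exceptional $(-1)$-sphere and the connected sum is performed at a regular point of $C$. Then $\widetilde{C}$ is a smoothly embedded sphere in $\widetilde{X}$ with a single left-handed cusp (the cusp structure is local and is unaffected by connected sum at a regular point), and $[\widetilde{C}]^2 = [C]^2 + [E]^2 = -1$. The manifold $\widetilde{X}$ satisfies $b_1(\widetilde{X}) = 0$, $b_+(\widetilde{X}) = b_+(X) > 1$, and it inherits simple type.

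Next I would verify the hypothesis $|\langle c(\widetilde{\mathfrak{s}}), [\widetilde{C}]\rangle| + [\widetilde{C}]^2 = 0$ for an appropriate blow-up lift of $\mathfrak{s}$. Smoothing the cusp of $C$ to a genus-$1$ Seifert surface produces a smooth torus homologous to $[C]$, and the simple-type adjunction inequality combined with $[C]^2 = 0$ forces $\langle c(\mathfrak{s}), [C]\rangle = 0$. The Fintushel--Stern blow-up formula then supplies a spin$^c$-structure $\widetilde{\mathfrak{s}}$ on $\widetilde{X}$ with $c(\widetilde{\mathfrak{s}}) = c(\mathfrak{s}) + e$, where $e = \mathrm{PD}[E]$, satisfying $SW(\widetilde{X}, \widetilde{\mathfrak{s}}) = SW(X, \mathfrak{s}) \neq 0$. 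A direct computation gives
\[
\langle c(\widetilde{\mathfrak{s}}), [\widetilde{C}]\rangle = \langle c(\mathfrak{s}), [C]\rangle + \langle e, [E]\rangle = 0 + (-1) = -1,
\]
so $|\langle c(\widetilde{\mathfrak{s}}), [\widetilde{C}]\rangle| + [\widetilde{C}]^2 = 0$. Applying Proposition \ref{prop:lcusp}(2) to $(\widetilde{X}, \widetilde{C}, \widetilde{\mathfrak{s}})$ with $p = 1$ then yields a $4$-manifold obtained from $\widetilde{X}$ by excising a neighbourhood of $\widetilde{C}$ and attaching the plumbing $W_1$ (which is the $E_8$ plumbing of the corollary), and this $4$-manifold is not of simple type. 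It coincides with $X'$.

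For the second assertion, assume additionally that $SW(X, \mathfrak{s}) \not\equiv 0 \pmod{2}$ and $b_+(X) \equiv 3 \pmod{4}$, and suppose for contradiction that such a $C$ exists. The blow-up formula preserves mod-$2$ Seiberg--Witten invariants, so $SW(\widetilde{X}, \widetilde{\mathfrak{s}}) \not\equiv 0 \pmod{2}$. Running the proof of Proposition \ref{prop:lcusp}(2) through the gluing formula (Theorem \ref{thm:pscglue}) yields a spin$^c$-structure $\mathfrak{s}'$ on $X'$ with $SW(X', \mathfrak{s}') \not\equiv 0 \pmod{2}$ and $d(X', \mathfrak{s}') > 0$. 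Since the attached plumbing is negative definite, $b_+(X') = b_+(\widetilde{X}) = b_+(X) \equiv 3 \pmod{4}$, and \cite[Theorem 1.8]{bar} (cf.\ \cite[Theorem 3.7]{bf}) forces $d(X', \mathfrak{s}') = 0$ whenever $b_+ \equiv 3 \pmod{4}$ and the Seiberg--Witten invariant is odd. This contradicts $d(X', \mathfrak{s}') > 0$, so no such $C$ can exist.

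The main obstacle, beyond bookkeeping, is keeping the sign conventions consistent in the blow-up formula and matching the plumbing $W_1$ from Proposition \ref{prop:E8-p}(1) with the $E_8$ plumbing named in the corollary. Once these identifications are in place, the corollary is a clean double invocation of Proposition \ref{prop:lcusp}(2) together with the standard blow-up formula and the mod-$4$ constraint on $b_+$ already used elsewhere in the paper.
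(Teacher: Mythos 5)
Your proposal is correct and follows the same route as the paper: blow up once so that $\widetilde C = C\#E$ becomes a left-handed cusp sphere of self-intersection $-1$, use adjunction plus the blow-up formula to exhibit a spin$^c$-structure realising the borderline adjunction equality with odd Seiberg--Witten invariant, apply Proposition \ref{prop:lcusp}(2), and then invoke the mod-$4$ constraint on $b_+$ from \cite[Theorem 1.8]{bar} to rule out the case $b_+(X)\equiv 3\pmod 4$. The paper's proof is terse; you have simply filled in the routine verifications (that $\widetilde X$ has $b_1=0$, $b_+>1$, simple type, and that the blown-up spin$^c$-structure satisfies the adjunction equality), so the two arguments are essentially identical.
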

\begin{proof}
Since $C \# E$ is a left-handed cusp with self-intersection $-1$, it follows form Proposition \ref{prop:lcusp} (2) that $X'$ does not have simple type. Suppose that $SW(X,\mathfrak{s}) \neq 0 \; ({\rm mod} \; 2)$. Then $X'$ has a spin$^c$-structure $\mathfrak{s}'$ with $SW(X , \mathfrak{s}') = SW(X , \mathfrak{s}) \neq 0 \; ({\rm mod} \; 2)$ and $d(X , \mathfrak{s}') > 0$. Then we must have  $b_+(X') = 1 \; ({\rm mod} \; 4)$ (using \cite[Theorem 1.8]{bar}). But $b_+(X') = b_+(X)$, so if $b_+(X) = 3 \; ({\rm mod} \; 4)$ then no such $C$ exists.
\end{proof}

\begin{corollary}
Let $X$ be a compact, oriented, smooth $4$-manifold with $b_1(X) = 0$, $b_+(X) > 1$ and suppose $X$ has simple type. Let $C \subseteq X$ be an embedded sphere with a cusp. Suppose that $SW(X , \mathfrak{s}) \neq 0 \; ({\rm mod} \; 2)$ for some spin$^c$-structure $\mathfrak{s}$ and that $b_+(X) = 3 \; ({\rm mod} \; 4)$. If $[C]^2 \ge -7$, then $| \langle c(\mathfrak{s}) , [C] \rangle | + [C]^2 < 0$.
\end{corollary}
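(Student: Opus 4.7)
The plan is to argue by contradiction, assuming the borderline equality $|\langle c(\mathfrak{s}),[C]\rangle| + [C]^2 = 0$. Proposition \ref{prop:lcusp}(1) supplies the weak inequality $|\langle c(\mathfrak{s}),[C]\rangle| + [C]^2 \le 0$ unconditionally, so in particular $[C]^2 \le 0$. Combined with the hypothesis $[C]^2 \ge -7$, the assumed borderline equality therefore reduces to one of the cases $[C]^2 = 0$ or $-7 \le [C]^2 \le -1$. As in the preceding propositions I read ``cusp'' as left-handed cusp (the right-handed $[C]^2 = 0$ case is realized by a cusp fibre of an elliptic surface, which the introduction flags explicitly as abundant in simple-type $4$-manifolds, so no statement ruling it out can hold).

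The case $[C]^2 = 0$ is immediate from Corollary \ref{cor:lcusp0}, whose final assertion is precisely that under the standing hypotheses ($X$ of simple type, $SW(X,\mathfrak{s}) \neq 0 \pmod 2$, $b_+(X) \equiv 3 \pmod 4$) no embedded sphere with a left-handed cusp of self-intersection zero can exist in $X$. This contradicts the existence of $C$.

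For $-7 \le [C]^2 \le -1$, write $p = -[C]^2$ and apply Proposition \ref{prop:lcusp}(2) to form the surgered manifold $X' = W_p \cup_Y X_0$, where $X_0$ is $X$ with a neighbourhood of $C$ removed and $W_p$ is the negative definite plumbing furnished by Proposition \ref{prop:E8-p}. Its proof yields a sharp extension $\mathfrak{s}'$ on $X'$ of $\mathfrak{s}|_{X_0}$ with
\[
SW(X',\mathfrak{s}') = SW(X,\mathfrak{s}) \neq 0 \pmod 2
\]
via the gluing formula (Theorem \ref{thm:pscglue}), together with $d(X',\mathfrak{s}') > d(X,\mathfrak{s}) = 0$, the inequality coming from the non-sharpness of $\mathfrak{s}|_{X_{-p}(-T_{2,3})}$ established in Proposition \ref{prop:E8-p}(2). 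Since the removed cobordism $X_{-p}(-T_{2,3})$ has $b_+ = 0$ and the attached plumbing $W_p$ is negative definite (hence also has $b_+ = 0$), one has $b_+(X') = b_+(X) \equiv 3 \pmod 4$.

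I would then finish exactly as in the proof of Corollary \ref{cor:lcusp0}: by \cite[Theorem 1.8]{bar}, on a $4$-manifold with $b_+ \equiv 3 \pmod 4$, any spin$^c$-structure with odd mod-$2$ Seiberg--Witten invariant must satisfy $d = 0$. Applied to $(X',\mathfrak{s}')$ this contradicts $d(X',\mathfrak{s}') > 0$, completing the argument. The only real obstacle is confirming $b_+(X') = b_+(X)$ uniformly across $p \in \{1,\ldots,7\}$, but this is a routine check since the removed and attached pieces both have $b_+ = 0$ by construction; everything substantive has already been done in Propositions \ref{prop:E8-p} and \ref{prop:lcusp}.
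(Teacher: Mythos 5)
Your proof is correct and is exactly the argument the paper has in mind (the paper's own proof is simply the one-line remark that the argument is similar to that of Corollary~\ref{cor:lcusp0}, which you have unpacked faithfully). The split into the two cases $[C]^2 = 0$ (handled by Corollary~\ref{cor:lcusp0} directly) and $-7 \le [C]^2 \le -1$ (handled by Proposition~\ref{prop:lcusp}(2) together with the mod-$4$ constraint on $b_+$ from \cite[Theorem~1.8]{bar}) is the intended route, and your check that $b_+(X') = b_+(X)$, using that both the removed trace $X_{-p}(-T_{2,3})$ and the attached plumbing $W_p$ have $b_+ = 0$ across the boundary rational homology sphere, is the right justification. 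You are also right that the statement only makes sense for a \emph{left-handed} cusp: as written, without that qualifier, the cusp fibre of $K3$ (and its blow-downs $K3 \# p\,\overline{\mathbb{CP}^2}$ for $1 \le p \le 7$) would be counterexamples, and Proposition~\ref{prop:lcusp}(2) -- the ingredient the paper invokes -- is stated only for left-handed cusps. Your explicit flagging of this implicit hypothesis is a genuine improvement in clarity over the paper's terse statement and proof.
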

\begin{proof}
The argument is similar to the proof of Corollary \ref{cor:lcusp0}. 
\end{proof}

We finish this section by noting that there is a close relationship between embedded spheres with cusps and embedded projective planes. Let $C$ be an embedded sphere with a cusp. The link of the singularity is the $(2,3)$-torus knot, which bounds a M\"obius band of self-intersection $-6$ (for right-handed torus knots) or $6$ (for left-handed torus knots). Hence by removing a neighbourhood of the cusp and attaching such a M\"obius band, we obtain an embedded projective plane $S \subset X$ such that $[S] = [C] \; ({\rm mod} \; 2)$ and $e(S) = -6 + [C]^2$ for a right-handed cusp, or $e(S) = 6 + [C]^2$ for a left-handed cusp. Using this, the results of Section \ref{sec:rp2} imply corresponding results for embedded spheres with cusps. For instance, Theorem \ref{thm:rpadj} implies the following:

\begin{theorem}
Let $X$ be a compact, oriented, smooth $4$-manifold with $b_+(X) > 1$ and $H_1(X ; \mathbb{Z}_2) = 0$. Suppose $C_1, \dots , C_r \subset X$ are disjoint smoothly embedded spheres with left-handed cusps. Suppose that any $r-1$ of $[C_1] , \dots , [C_r]$ are linearly independent in $H_2(X ; \mathbb{Z}_2)$ and that $[C_1] + \cdots + [C_r] = 0 \in H_2(X ; \mathbb{Z}_2)$. Suppose that $X$ that $SW(X , \mathfrak{s}) \neq 0 \; ({\rm mod} \; 2)$ for some spin$^c$-structure with $d(X , \mathfrak{s}) = 0$. Then
\[
[C_1]^2 + \cdots + [C_r]^2 \le \max\{ -6r , 8-8r \}.
\]
\end{theorem}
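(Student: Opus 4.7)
The plan is to reduce directly to Theorem \ref{thm:rpadj} via the cusp-to-M\"obius-band construction explained in the paragraph preceding the statement. Recall that if $C \subset X$ is an embedded sphere with a left-handed cusp, one can excise a small neighbourhood of the cusp point (whose boundary is $S^3$ containing the link of the singularity, namely $-T_{2,3}$) and glue in a M\"obius band of self-intersection $+6$ bounded by $-T_{2,3}$. The result is a smoothly embedded real projective plane $S \subset X$ with $[S] = [C] \in H_2(X;\mathbb{Z}_2)$ and Euler number $e(S) = 6 + [C]^2$.

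First I would apply this construction to each of the cusped spheres $C_1, \dots, C_r$. Since the $C_i$ are pairwise disjoint, we may choose tubular neighbourhoods of their respective cusp points which are disjoint from one another and from the remaining cusped spheres; the M\"obius band surgery being local to these neighbourhoods, the resulting real projective planes $S_1, \dots, S_r$ are pairwise disjoint. By construction we have $[S_i] = [C_i] \in H_2(X;\mathbb{Z}_2)$ and $e(S_i) = 6 + [C_i]^2$ for every $i$.

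Next, the two homological hypotheses transfer verbatim from the $[C_i]$ to the $[S_i]$: any $r-1$ of $[S_1], \dots, [S_r]$ are linearly independent in $H_2(X;\mathbb{Z}_2)$ and $[S_1] + \cdots + [S_r] = 0$. The remaining hypotheses on $X$ and $\mathfrak{s}$ are already those of Theorem \ref{thm:rpadj}. That theorem therefore applies and yields
\[
\sum_{i=1}^{r} e(S_i) \le \max\{0,\, 8-2r\}.
\]
Substituting $e(S_i) = 6 + [C_i]^2$ and rearranging gives
\[
\sum_{i=1}^{r} [C_i]^2 \le \max\{0,\, 8-2r\} - 6r = \max\{-6r,\, 8-8r\},
\]
which is the claimed inequality.

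There is no genuine obstacle: the substantive analytic and topological input is packaged in Theorem \ref{thm:rpadj}, and the present argument is essentially a change of variables after the cusp-to-M\"obius-band trick. The only point needing care is the disjointness of the $S_i$, which is immediate from locality of the construction near each cusp point.
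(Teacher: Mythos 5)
Your proposal is correct and follows exactly the approach the paper indicates: convert each left-handed cusped sphere $C_i$ into a disjoint embedded $\mathbb{RP}^2$ $S_i$ with $[S_i]=[C_i]$ and $e(S_i)=6+[C_i]^2$ via the local M\"obius-band surgery, check that the homological hypotheses transfer, apply Theorem \ref{thm:rpadj}(1), and rearrange. This is the paper's own (implicit) argument, stated in the paragraph preceding the theorem.
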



\bibliographystyle{amsplain}

\end{document}